\newcommand{\wt}{\tilde{w}}
\tikzstyle directed=[postaction={decorate,decoration={markings,
    mark=at position #1 with {\arrow{>}}}}]
\newcommand{\hackcenter}[1]{
 \xy (0,0)*{#1}; \endxy}
\tikzset{->-/.style={decoration={
  markings,
  mark=at position #1 with {\arrow{>}}},postaction={decorate}}}
\tikzset{middlearrow/.style={
        decoration={markings,
            mark= at position 0.5 with {\arrow{#1}} ,
        },
        postaction={decorate}
    }
}
\def\P{\mathsf{P}}
\def\Q{\mathsf{Q}}
\newcommand{\SL}{\mathrm{SL}}
\def\id{\mathrm{id}}
\def\Id{\mathrm{Id}}
\theoremstyle{plain}
\newtheorem{theorem}{Theorem}
\newtheorem{corollary}[theorem]{Corollary}
\newtheorem{proposition}[theorem]{Proposition}
\newtheorem{lemma}[theorem]{Lemma}
\theoremstyle{definition}
\newtheorem{example}[theorem]{Example}
\newtheorem{definition}[theorem]{Definition}
\theoremstyle{definition}
\newtheorem{remark}[theorem]{Remark}
\numberwithin{equation}{section}
\numberwithin{theorem}{section}
\newcommand{\sym}{{\rm Sym}}
\newcommand{\maps}{\colon}
\newcommand{\refequal}[1]{\xy {\ar@{=}^{#1}
(-1,0)*{};(1,0)*{}};
\endxy}
\newcommand{\Hom}{{\rm Hom}}
\renewcommand{\to}{\rightarrow}
\def\Ind{{\mathrm{Ind}}}
\def\fmod{{\mathrm{-fmod}}}   
\def\Id{\mathrm{Id}}
\def\mf{\mathfrak}
\numberwithin{equation}{section}
\def\TL#1{\textcolor[rgb]{1.00,1.00,1.00}{[TL: #1]}}%
\def\AL#1{\textcolor[rgb]{1.00,0.00,0.00}{[AL: #1]}}%
\def\PS#1{\textcolor[rgb]{0.00,0.49,0.25}{[PS: #1]}}%
\def\JS#1{\textcolor[rgb]{0.40,0.00,0.90}{[JS: #1]}}%
\let\hat=\widehat
\let\tilde=\widetilde
\let\epsilon=\varepsilon
\def\C{{\mathbb{C}}}
\def\N{{\mathbbm N}}
\def\R{{\mathbbm R}}
\def\Z{{\mathbbm Z}}
\def\H{{\mathcal{H}}}
\def\cal#1{\mathcal{#1}}%
\def\1{\mathbbm{1}}%
\def\nn{\notag}
\def\la{\langle}
\def\ra{\rangle}
\def\cal#1{\mathcal{#1}}
\def \k {\mathbbm{C}}
\def \Z {\mathbbm{Z}}
\def \N {\mathbbm{N}}
\def \E {\mathcal{E}}
\def \Tr{\operatorname{Tr}}
\def \Span{\operatorname{Span}}
\def \Ob{\operatorname{Ob}}
\def \HH{\operatorname{HH}}
\def \Id {{\rm Id}}
\def\k{\C}
\def\bH{\mathbb{H}}
\def\bP{\mathbb{P}}
\def\bQ{\mathbb{Q}}
\newcommand\nc{\newcommand}
\nc\rnc{\renewcommand}
\nc\Kar{\operatorname{Kar}}
\nc\End{\operatorname{End}}
\nc\modQ {{\mathbb Q}}
\nc\modZ {{\mathbb Z}}
\nc\simeqto{\overset{\simeq}{\longrightarrow }}
\nc\K{\mathcal {K}}
\nc\CC{\mathbf{C}}
\nc\tE{\check{E}}
\nc\qh{\mathcal{H}}
\nc\hbm{\mathcal{B}}
\nc\bu{\mathbf{u}}
\nc\bk{\mathbf{\kappa}}
\nc\bZ{\mathbf{Z}}
\nc\theirs{\mathrm{theirs}}
\nc\ours{\mathrm{ours}}
\nc\hE{\mathcal{\hat E}}
\nc\bK{\mathbf{K}}
\nc\bw{\mathbf{w}}
\nc\ba{\mathbf{a}}
\nc\bb{\mathbf{b}}
\nc{\urcap}{\textrm{uRCap}}
\nc{\urcup}{\textrm{uRCup}}
\nc{\ulcap}{\textrm{uLCap}}
\nc{\ulcup}{\textrm{uLCup}}
\nc{\drcap}{\textrm{dRCap}}
\nc{\drcup}{\textrm{dRCup}}
\nc{\dlcap}{\textrm{dRCap}}
\nc{\dlcup}{\textrm{dRCup}}
\newcommand{\dah}{{\rm DH}}
\newcommand{\ah}{{\rm AH}}
\nc{\cl}{\mathrm{cl}}
\nc{\cala}{\mathcal{A}}
\nc{\calb}{\mathcal{B}}
\nc{\calc}{\mathcal{C}}
\nc{\bx}{\mathbf{x}}
\nc{\by}{\mathbf{y}}
\nc{\bE}{\mathbbm{E}}
\nc{\bElong}{{\mathbbm E^{\bullet, \geq}}}
\nc{\Sk}{\mathrm{Sk}}
\nc{\Hilb}{\mathrm{Hilb}}
\newcommand{\scs}{\scriptstyle}
\nc\col{\colon\thinspace}
\title{
The elliptic Hall algebra and the deformed Khovanov Heisenberg category
}
\begin{document}
\setcounter{tocdepth}{2}

\author{Sabin Cautis}
\email{cautis@math.ubc.ca}
\address{Department of Mathematics\\ University of British Columbia \\ Vancouver, Canada}

\author{Aaron D. Lauda}
\email{lauda@usc.edu}
\address{Department of Mathematics\\ University of Southern California \\ Los Angeles, CA}

\author{Anthony M. Licata}
\email{anthony.licata@anu.edu.au}
\address{Mathematical Sciences Institute\\ Australian National University \\ Canberra, Australia}

\author{Peter Samuelson}
\email{peter.samuelson@ed.ac.uk}
\address{Department of Mathematics\\ University of Edinburgh\\ Edinburgh, UK}

\author{Joshua Sussan}
\email{jsussan@mec.cuny.edu}
\address{Department of Mathematics \\ CUNY Medgar Evers \\ Brooklyn, NY}
\date{\today}

\maketitle

\begin{abstract}
We give an explicit description of the trace, or Hochschild homology, of the quantum Heisenberg category defined in \cite{LS13}. We also show that as an algebra, it is isomorphic to ``half'' of a central extension of the elliptic Hall algebra of Burban and Schiffmann \cite{BS12}, specialized at $\sigma = \bar\sigma^{-1} = q$. A key step in the proof may be of independent interest: we show that the sum (over $n$) of the Hochschild homologies of the positive affine Hecke algebras $\ah_n^+$ is again an algebra, and that this algebra injects into both the elliptic Hall algebra and the trace of the $q$-Heisenberg category. Finally, we show that a natural action of the trace algebra on the space of symmetric functions agrees with the specialization of an action constructed by Schiffmann and Vasserot using Hilbert schemes.
\end{abstract}

\tableofcontents

\section{Introduction}
Given a $\mathbbm k$-linear monoidal category $\mathcal C$, there are two ways to decategorify it to obtain an algebra. One may take the Grothendieck group $K_0(\mathcal C)$, or instead take the zeroth Hochschild homology, or trace $\Tr(\mathcal C)$, which is defined as
\[
\Tr(\mathcal C) := \frac{\bigoplus_{M \in Ob(\mathcal C)} \End_{\mathcal C}(M)} {\{f\circ g-g\circ  f \mid f:X \to Y,\, g:Y \to X\}}.
\]
The multiplication on $\Tr(\mathcal C)$ is given by $[f]\cdot [g] := [f\otimes g]$. There is a natural \emph{Chern character} map $K_0(\mathcal C) \to \Tr(\mathcal C)$ which sends an object to the class of its identity morphism. In many cases the Chern character is an isomorphism, but recently several examples have been given where $\Tr(\mathcal C)$ is a much larger, more interesting algebra than $K_0(\mathcal C)$
\cite{BGHL,BHLW,BHLZ,EL14,CLLS15}.

In this paper we study the trace of the quantum Heisenberg category $\H$, which is a monoidal category coming from higher representation theory \cite{Kh-Heis, LS13}.  We give an explicit presentation of the trace $\Tr(\H)$, and we show that it is isomorphic to the $\sigma=q=\bar\sigma^{-1}$ specialization of ``half'' of a central extension of the elliptic Hall algebra $\E$. This algebra was defined by Burban and Schiffmann \cite{BS12}, who showed it is the `universal' Hall algebra of elliptic curves over finite fields. We construct this isomorphism by relating a triangular piece of $\Tr(\H)$ to the HOMFLYPT skein algebra $\Sk_q(T^2)$ of the torus. This algebra comes from low dimensional topology and was related to the same specialization of $\E$ in \cite{MS14}. Below we briefly describe these algebras, give a precise statement of the results, and provide a rough outline of the proof.

\subsection{The Heisenberg category}

In \cite{Kh-Heis}, Khovanov introduced a monoidal category $\H_{q=1}$ whose Grothendieck group contains (and is conjecturally isomorphic to) the Heisenberg algebra. In this category, objects can be thought of as compositions of induction and restriction functors for the symmetric groups $S_n$, and morphisms are generated by certain natural transformations between these functors. These transformations satisfy certain relations which are independent of $n$, and it is natural to define this category diagrammatically. In this definition, the objects are sequences of upward or downward pointing arrows on a line in the plane, and the morphisms are certain oriented diagrams connecting these arrows, modulo relations defined locally on diagrams.

The trace of the category $\H_{q=1}$ was described explicitly in \cite{CLLS15}, where it was shown to be isomorphic to an algebra referred to as $W_{1+\infty}$.
It was shown in \cite{SV13b} that $W_{1+\infty}$ is a specialization of a central extension of a limit of degenerate double affine Hecke algebras.  The centrally extended elliptic Hall algebra was related to a limit of double affine Hecke algebras in \cite{SV13}.
This gives an indication that a deformation of the category $\H_{q=1}$ may be related to the elliptic Hall algebra itself.

A deformation $\H$ of Khovanov's Heisenberg category was given by the third author and Savage in \cite{LS13}. In this construction the symmetric groups $S_n$ are replaced by Hecke algebras $H_n$, and the resulting diagrammatic relations depend on a parameter $q$. It was shown in \cite{LS13} that the Heisenberg algebra also injects into $K_0(\H)$. This injection is conjectured to be an isomorphism, and if it
is, then the Grothendieck group of $\H$ is not sensitive to $q$ (at
least for generic $q$). However, as we will see, the trace $\Tr(\H)$ is a much larger algebra than what the algebra $K_0(\H)$ is conjectured to be, and the relations in $\Tr(\H)$ depend on $q$ in an essential way.

We prove the following theorem
(which is combination of Theorem \ref{thm_mainthm} and Proposition \ref{prop_tridecomp}):
\begin{theorem}\label{thm_intropres}
The algebra $\Tr(\H)$ has a presentation with generators $w_{a,b}$ for $a \in \Z$ and $b \in \N$, subject to the relations
\begin{equation*}
[w_\bx,w_\by] = \left\{\begin{array}{cl}
-k & \textrm{if } \bx = (k,0) = -\by\\
-\left(q^{d/2}-q^{-d/2}\right) w_{\bx+\by} & \textrm{otherwise}\end{array}\right.
\end{equation*}
where $d = \mathrm{det}(\bx\,\by)$ and $\bx,\by \in \Z \times \Z_{\geq 0}$.
\end{theorem}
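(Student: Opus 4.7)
The plan is to split the assertion into two parts matching ``Theorem \ref{thm_mainthm} and Proposition \ref{prop_tridecomp}''. First I would establish a triangular decomposition of $\Tr(\H)$ into three subspaces: $\Tr(\H)^+$ consisting of trace classes of morphisms on objects $\uparrow^n$ (i.e., the trace of the positive affine Hecke algebras $\ah_n^+$), $\Tr(\H)^-$ the analogous subspace for $\downarrow^n$, and a ``mixed'' piece $\Tr(\H)^0$ consisting of classes of closed bubble-like diagrams with no free endpoints. The cyclicity of the trace, together with the diagrammatic relations of \cite{LS13}, should allow one to pull all cups and caps to the bottom of a closed diagram and reduce any such diagram to a sum of products of these three types of pieces.

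Next I would define the generators $w_{a,b}$ explicitly. For $b=0$ and $a>0$, take $w_{a,0}$ to be the image under trace of a natural lift of the $a$-th power sum to $\ah_a^+$ (a sum of $a$-th powers of Jucys--Murphy--type elements); for $b=0$ and $a<0$, do the analogous construction in $\ah_{|a|}^-$. For $b>0$, take $w_{a,b}$ to be a closed curl/bubble diagram of winding $a$ decorated by a power sum of degree $b$, providing the natural $q$-deformation of the generators in \cite{CLLS15}. One then checks by direct diagrammatic manipulation that these classes span the three triangular pieces: on $\Tr(\H)^\pm$ this is the statement (itself a key step, flagged in the abstract) that $\Tr(\ah^\pm)$ is generated by power sums and embeds into $\E$, while on $\Tr(\H)^0$ it follows from the bubble generator structure in $\H$ together with the reduction procedure of the previous paragraph.

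Finally I would verify the commutation relations. The central case $[w_{(k,0)}, w_{(-k,0)}] = -k$ should follow from a standard computation of the trace pairing between power sums in $\ah^+$ and $\ah^-$, pushed through the biadjunction relations in $\H$. The generic relation
\begin{equation*}
[w_\bx, w_\by] = -\bigl(q^{d/2} - q^{-d/2}\bigr) w_{\bx+\by}
\end{equation*}
is the main obstacle: the prefactor $q^{d/2} - q^{-d/2}$ reflects the genuinely quantum (as opposed to degenerate) Hecke relations, and the integer $d = \det(\bx\,\by)$ should count essential crossings produced when sliding a curl of ``slope'' $(a_1,b_1)$ past one of slope $(a_2,b_2)$. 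Rather than attempt an exhaustive diagrammatic case analysis, I would verify the relation on a single well-chosen family of pairs (for example, commutators of the form $[w_{(a,0)}, w_{(a',1)}]$ that inductively create the higher $b$ generators) and then appeal to the injection of $\Tr(\H)$ into ``half'' of the centrally extended elliptic Hall algebra $\E$, constructed in the paper via the HOMFLYPT skein algebra of the torus \cite{MS14} and the Burban--Schiffmann presentation \cite{BS12}, to propagate the relation and rule out any further relations. The hardest technical step is thus the triangular decomposition together with the spanning claim inside $\Tr(\H)^\pm$; once spanning is in hand, the embedding into $\E$ (where the analogous relations are known to hold exactly) pins down the presentation.
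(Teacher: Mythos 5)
Your architecture matches the paper's: a triangular decomposition of $\Tr(\H)$ (the paper's Lemma \ref{HHVSLEMMA}), explicit diagrammatic generators $w_{a,b}$ essentially as you describe (Definition \ref{def_ws}), a dimension count to rule out extra relations (Proposition \ref{prop_trdim} against Corollary \ref{cor_homflydim}), and a reduction of the full relation set to a finite list of cross-relations between the triangular pieces. The one step that does not survive scrutiny as stated is your final "propagation": you propose to verify the generic commutator on one family of pairs and then \emph{appeal to the injection of $\Tr(\H)$ into} $\mathbb{E}$ to propagate. No such injection is available a priori --- constructing an algebra isomorphism between $\Tr(\H)$ and $\bElong$ is precisely the content of Theorem \ref{thm_mainthm}, so invoking it here is circular. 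The only injection the paper builds independently is $\Tr(\ah^+)\hookrightarrow \Sk_q^{>,\geq}(T^2)$ (Theorem \ref{thm_ahatoskein}), which handles the positive triangular piece alone; it says nothing about commutators that mix upward strands, downward strands, and bubbles. The correct replacement is the paper's Proposition \ref{prop_tridecomp}: one proves, entirely on the Hall-algebra side by induction on $\lvert\det(\bx\,\by)\rvert$ using the Jacobi-identity trick of Lemma \ref{lemma_trueforab}, that the five explicit families of cross-relations \eqref{eq_relhard}--\eqref{eq_rel6} together with the internal relations of each piece already imply the full commutation law. Only then does checking those five families in $\Tr(\H)$ suffice.

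Two further calibration points. First, your proposed test family $[w_{(a,0)},w_{(a',1)}]$ is not enough: the presentation forced by Proposition \ref{prop_tridecomp} also requires relations involving the bubble generators $w_{0,k}$ (the paper's {\bf CR1}--{\bf CR3}), since the $w_{0,k}$ are not generated by the strand pieces. Second, you locate the hardest work in the triangular decomposition and spanning, but in the paper the decomposition follows rather directly from \cite[Prop.~3.11]{LS13} plus cyclicity of the trace; the genuinely hard part is the diagrammatic verification of the cross-relations in Section \ref{sec_crossrelations}, in particular {\bf CR2} and {\bf CR5}, whose proofs require the bubble-slide formulas \eqref{eq:c-slide}--\eqref{eq:cc-slide} and the nontrivial cancellation identities of Lemma \ref{lemma_crazyidentity}. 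With the propagation step restructured as above and the full list of cross-relations verified, your outline becomes the paper's proof.
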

This theorem gives a generators and relation description of $\Tr(\H)$, but this description is only valid when $q-1$ is invertible. In particular, the generators $w_{a,b}$ in the previous theorem have poles at $q=1$, so this presentation does not reproduce the description of $\Tr(\H_{q=1})$ from \cite{CLLS15}.

\subsection{The elliptic Hall algebra}
An abelian category $\mathcal A$ is called \emph{finitary} if $\Hom(M,N)$ and $\mathrm{Ext}^1(M,N)$ are finite sets, for all objects $M$ and $N$. Examples of such categories are $\mathrm{Coh}(X)$, where $X$ is a smooth variety over a finite field, or representations of quivers over finite fields. If $\mathcal A$ is finitary, then its \emph{Hall algebra} is the space formally spanned (over a field $\Bbbk$) by isomorphism classes $[M]$ of objects in $\mathcal A$, with multiplication given by
\begin{align*}
F_{M,N}^E &:= \# \left\{E' \subset E \, \mid E' \simeq M,\,\, E / E' \simeq N\right\}\\
[M]\cdot [N]&:= \sum_{[E]} F_{M,N}^E [E]
\end{align*}
(The finitary condition implies both the structure constants and the sum are finite.) One of the early appearances of Hall algebras in representation theory is due to Ringel, who showed that the Hall algebra of quiver representations (over $\mathbb F_q$) for a Dynkin quiver is isomorphic to the positive part of the quantum group with that type \cite{Rin90}.

Given a smooth elliptic curve $X$ over a finite field $\mathbb F_{p^k}$, let $\E^+_X$ be the Hall algebra of the category of coherent sheaves over $X$. In \cite{BS12}, Burban and Schiffmann gave an explicit definition of the \emph{elliptic Hall algebra}  $\E_{q,t}$ using generators and relations. They showed that if $\sigma(X)$ and $\bar \sigma(X)$ are the Frobenius eigenvalues of $X$, then the specialization $\E_{q=\sigma(X),t=\bar\sigma(X)^{-1}}$ is isomorphic to the Drinfeld double $\E_X$ of $\E^+_X$. They also constructed $\SL_2(\Z)$ actions on $\E_{q,t}$ and $\E_X$ and showed their isomorphism is $\SL_2(\Z)$-equivariant.

It turns out that the algebra $\E_{q,t}$ (or one of its cousins\footnote{By `cousin' we mean either the `positive half' $\E^+_{q,t}$ or a central extension of $\E_{q,t}$.}) has been realized in several contexts:
\begin{itemize}
\item a generalized quantum affine algebra in \cite{DI97},
\item `a $(q,\gamma)$ analog of the $W_{1+\infty}$ algebra' in \cite{Mik07},
\item the `shuffle algebra' of \cite{FT11} (see also \cite{Neg14}),
\item the `spherical $\mathrm{gl}_\infty$ double affine Hecke algebra' in \cite{SV11} (see also \cite{FFJMM11a}),
\item the `quantum continuous $\mathrm{gl}_\infty$' in \cite{FFJMM11a},
\item an algebra acting on $\oplus_n K^T(\mathrm{Hilb}_n(\C^2))$ in \cite{SV13} (see also \cite{FT11}, \cite{FFJMM11a}, \cite{Neg15}),
\item the `skein algebra of the torus,' (when $q=t$, see \cite{MS14}).
\end{itemize}

We describe the last two items in more detail since they will be most relevant for this paper. First, there are natural operators on the equivariant $K$-theory $\sym := \oplus_n K^{\C^*\times \C^*}(\mathrm{Hilb}_n(\C^2))$ of the Hilbert schemes which come from convolution on certain flag Hilbert schemes. Schiffmann and Vasserot used these operators to construct an action of $\hE_{q,t}$ on $\sym$, where $\hE_{q,t}$ is a central extension of the elliptic Hall algebra $\E_{q,t}$. The extension is by $\k[\bk_1^{\pm 1},\bk_2^{\pm 1}]$, and to recover $\E_{q,t}$ from the extension $\hE_{q,t}$, one specializes $\bk_1=\bk_2=1$. In Section \ref{sec:elliptic} we recall an explicit description of $\hE_{q,t}$ from \cite{SV13}.

Second, the HOMFLYPT skein algebra $\Sk_q(T^2)$ of the torus is the space formally spanned by closed, framed, oriented links in $T^2\times [0,1]$ modulo the local skein relations in Equation \eqref{eq:homfly}. The algebra structure is given by stacking links on top of each other (in the $[0,1]$ direction). This algebra was described explicitly in \cite{MS14}, and it was shown to be isomorphic to the $t=q$ specialization of $\E_{q,t}$ (without the central extension). Under this isomorphism, the natural action of $\SL_2(\Z)$ on $\Sk_q(T^2)$ agrees with the Burban-Schiffmann action on $\E_{q,t}$.

\subsection{The comparison}
We now describe the comparison between the trace of the quantum Heisenberg category and the elliptic Hall algebra. Let $\mathbb E$ be the algebra obtained from $\hE_{q,t}$ by first specializing $\bk_1=1$ and $\bk_2 = (qt^{-1})^{-1/2}$, and then specializing $t=q$.
The description of $\hE_{q,t}$ by Schiffmann and Vasserot provides certain generators $\bu_{a,b}$ for $a,b \in \Z^2$, and we let
$\bElong$ be the subalgebra generated by $\bu_{a,b}$ with $b \geq 0$.

Our main theorem is the following (see Theorem \ref{thm_mainthm}).
\begin{theorem}\label{thm:mainintro}
The algebras $\Tr(\H)$ and $\bElong$ are isomorphic.
\end{theorem}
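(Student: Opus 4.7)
The plan is to use the explicit presentation of $\Tr(\H)$ from Theorem~\ref{thm_intropres} and match it against the corresponding presentation of $\bElong$ obtained from the Schiffmann--Vasserot description of $\hE_{q,t}$. First I would recall that presentation, in which $\hE_{q,t}$ is generated by elements $\bu_{a,b}$ for $(a,b)\in\Z^2\setminus\{(0,0)\}$ subject to relations controlled by the central parameters $\bk_1,\bk_2$ and the deformation parameters $q,t$. Applying the two-step specialization $\bk_1 = 1$, $\bk_2 = (qt^{-1})^{-1/2}$, followed by $t = q$, I would verify that for $\bx,\by\in\Z\times\Z_{\ge 0}$ the bracket $[\bu_\bx,\bu_\by]$ collapses to a scalar multiple of $\bu_{\bx+\by}$, with scalar $-(q^{d/2}-q^{-d/2})$ where $d=\det(\bx\,\by)$, plus a central term $-k$ in the degenerate case $\bx=(k,0)=-\by$. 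This matches Theorem~\ref{thm_intropres} up to a diagonal renormalization, and defines a candidate homomorphism $\Phi\maps \Tr(\H) \to \bElong$ by $w_{a,b}\mapsto c_{a,b}\bu_{a,b}$.

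To promote $\Phi$ to an isomorphism, I would invoke the triangular/PBW-type decomposition of Proposition~\ref{prop_tridecomp}. The key structural ingredient is the algebra $T = \bigoplus_n \HH_0(\ah_n^+)$, the direct sum of Hochschild homologies of positive affine Hecke algebras, which (as indicated in the abstract) embeds independently into both $\Tr(\H)$ and $\bElong$. After identifying this common ``positive'' subalgebra on both sides, the full isomorphism reduces to matching the remaining ``horizontal'' generators indexed by $\Z\times\{0\}$, and to verifying the cross relations between the two factors, both of which are encoded in the two cases of the bracket formula of Theorem~\ref{thm_intropres}. Surjectivity is immediate because the $\bu_{a,b}$ with $b\ge 0$ generate $\bElong$, and injectivity follows from the PBW-type basis statement together with the compatibility of the triangular decompositions on the two sides.

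The main obstacle will be the first step: the Schiffmann--Vasserot relations are rational in $q,t,\bk_1,\bk_2$ and substantially more intricate than the Lie-bracket formula of Theorem~\ref{thm_intropres}. Verifying that they collapse to this clean form, and identifying the correct diagonal normalization $c_{a,b}$, requires a careful two-step limit. The order of specialization is essential: if one sent $t \to q$ first then $\bk_2 = (qt^{-1})^{-1/2}$ would be ill-defined, and it is precisely the finite residue extracted by performing the $\bk_2$-specialization before setting $t=q$ that produces the central term $-k$ in the resonant case. A secondary technical point is checking that the injection of $T$ into $\bElong$ preserves the grading and matches the one into $\Tr(\H)$ under $\Phi$, so that the two triangular decompositions are genuinely compatible factor by factor.
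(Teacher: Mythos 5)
There is a genuine circularity at the heart of your proposal. You take as your starting input ``the explicit presentation of $\Tr(\H)$ from Theorem~\ref{thm_intropres}.'' But that presentation is not an independently available fact: the paper states explicitly that Theorem~\ref{thm_intropres} is itself a \emph{combination} of Theorem~\ref{thm_mainthm} (the isomorphism you are asked to prove) and Proposition~\ref{prop_tridecomp} (the presentation of $\bElong$). In other words, the only reason one knows that $\Tr(\H)$ satisfies the clean bracket relations $[w_\bx,w_\by]=-\{\det(\bx\,\by)\}\,w_{\bx+\by}$ is that $\Tr(\H)$ has been identified with $\bElong$, where those relations are established via the skein algebra of the torus and Proposition~\ref{prop_tridecomp}. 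Using the presentation of $\Tr(\H)$ to build the isomorphism assumes the conclusion. Relatedly, you locate the ``main obstacle'' in checking that the Schiffmann--Vasserot relations collapse under the specialization $\bk_1=1$, $\bk_2=\tau$, $t=q$; in the paper this is the comparatively routine part (Section~\ref{sec_specialization} and Corollary~\ref{cor_trianglepres}). The genuinely hard content, which your proposal omits entirely, lives on the $\Tr(\H)$ side.

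Concretely, the paper's proof must establish three things about $\Tr(\H)$ from scratch: (i) a linear triangular decomposition of $\Tr(\H)$ into up-arrow, down-arrow, and bubble parts (Lemma~\ref{HHVSLEMMA}), together with the dimension count of Proposition~\ref{prop_trdim} that makes $\varphi$ a linear isomorphism; (ii) that $\Tr(\H)$ is generated by the elements $w_{\pm j,0}$ and $w_{0,k}$ (Proposition~\ref{genprop1}, via the chain of generation lemmas); and (iii) that the cross-relations {\bf CR1}--{\bf CR5} of Proposition~\ref{prop_cross} actually hold among the diagrammatically defined classes $\wt_{a,b}$ in $\Tr(\H)$. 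Step (iii) is carried out by explicit graphical computation in Section~\ref{sec_crossrelations} (star-dot slides, bubble slides, the combinatorial identities of Lemma~\ref{lemma_crazyidentity}, and the generating-function argument for the Heisenberg relation), and there is no way to bypass it: nothing forces the trace of a diagrammatic category to satisfy a prescribed presentation until one computes. The parts of your outline that do track the paper --- the common subalgebra $\bigoplus_n\HH_0(\ah_n^+)$ injecting into both sides via Theorem~\ref{thm_ahatoskein}, the use of Proposition~\ref{prop_tridecomp} to reduce everything to cross-relations, and injectivity via matching graded dimensions --- are correct in spirit, but they only reduce the problem to (i)--(iii); they do not discharge it.
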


We briefly outline the proof, which has three main steps. First, we note in Section \ref{sec:towers} that the sum
\[
A := \bigoplus_n \HH_0(\ah^+_n)
\]
of the Hochschild homologies of the (positive) affine Hecke algebras has a natural algebra structure. By a lemma in \cite{LS13}, the algebras $\ah^+_n$ appear in certain endomorphism spaces in $\H$, and this provides an algebra map $A \to \Tr(\H)$. On the other hand, by interpreting the affine Hecke algebra as the affine braid group modulo the HOMFLYPT skein relations, we construct a natural algebra map $A \to \Sk_q(T^2)$. We show in Theorem \ref{thm_ahatoskein} that this map is injective, and by \cite{MS14}, this gives an algebra map $A \hookrightarrow \bElong$. We define $\bE^+$ to be the image of this map, and we write $\varphi^+: \bE^+ \to \Tr(\H)$ for the induced map.

Second, we establish a triangular decomposition of $\bElong = \bE^+ \otimes_\C \bE^0 \otimes_\C \bE^{-}$, where each triangular piece is a subalgebra. In Proposition \ref{prop_tridecomp} we provide sufficient cross-relations between the triangular pieces to ensure this is an isomorphism of algebras.

Finally, we use a triangular decomposition of $\Tr(\H)$ (see Lemma \ref{HHVSLEMMA}) to show that the map $\varphi^+$ is injective, and to extend it to a linear isomorphism $\varphi:\bElong \to \Tr(\H)$. To finish the proof of Theorem \ref{thm:mainintro}, all that is left is to check that the cross-relations describing $\bElong$ hold in $\Tr(\H)$. We do this by explicit diagrammatic computation in Section \ref{sec_crossrelations}. We remark that in these computations there are some substantial cancellations (see, e.g.~Lemma \ref{lemma_crazyidentity} or Corollary \ref{cor:prob6}) -- in the end, these cancellations are due to a very careful choice of generators by Burban and Schiffmann.

\subsection{Representations}
After establishing this isomorphism, it is natural to ask what kinds of representations of $\Tr(\H)$ can be constructed using the category $\H$, and how these representations compare to previously constructed representations of the Hall algebra $\E_{q,t}$. Since the definition of $\H$ is based on a tower of (finite) Hecke algebras $H_n$ it naturally acts via induction and restriction functors on $\oplus_{n \ge 0} H_n\fmod$, the sum of the categories of finitely generated $H_n$-modules. This action categorifies the Bosonic Fock space representation of the Heisenberg algebra (see \cite{LS13}).

This categorical action of $\H$ induces an action of $\Tr(\H)$ on $\oplus_{n \ge 0} \Tr(H_n\fmod)$. We identify the latter with the ring $\sym$ of symmetric functions. In Section~\ref{sec:symaction} we characterize this action by explicitly describing the action of certain generators of $\Tr(\H)$ on $\sym$.

In \cite{SV13} Schiffmann and Vasserot constructed an action of $\hE_{\sigma,\bar \sigma}$ on $\sym$ using certain convolution operators on the equivariant $K$-theory of the Hilbert schemes $K^{\C^\times \times \C^\times}(\Hilb_n(\C^2))$. We also show that the $\sigma = \bar\sigma^{-1}$ specialization of the Schiffmann-Vasserot action on $\sym$ agrees with a twist of the above action of $\Tr(\H)$ on $\sym$.

We now briefly summarize the contents of the paper. In Section \ref{sec:traces} we discuss traces of categories. In Section \ref{sec:heckealgebras} we discuss various versions of Hecke algebras that we use, as well as their relations to the skein algebra of the torus. The elliptic Hall algebra (and the triangular decomposition we need) is discussed in Section \ref{sec:elliptic}. The $q$-Heisenberg category is defined in Section \ref{sec:qheis}, which also contains basic results, such as its triangular decomposition. In Section \ref{sec:mainthm} we prove the main theorem relating the trace of the quantum Heisenberg category with the elliptic Hall algebra. In Section \ref{sec:symaction} we compare the action of $\Tr(\H)$ and $\E_{q,t=q}$ on the ring $\sym$ of symmetric functions.

\subsection*{Acknowledgements} The authors are grateful to B. Elias, F. Goodman, S. Morrison, O. Schiffmann and D. Tubbenhauer for helpful conversations. S.C. was supported by an NSERC discovery/accelerator grant. A.D.L. was partially supported by NSF grant DMS-1255334 and by the Simons Foundation. A.M.L. was supported by an Australian Research Council Discovery Early Career fellowship. J.S. was supported by NSF grant DMS-1407394, PSC-CUNY Award 67144-0045, and an Alfred P. Sloan Foundation CUNY Junior Faculty Award.

%
\section{Traces and the Chern character map}\label{sec:traces}
%

%
\subsection{The trace decategorification}
%

The trace, or zeroth Hochschild homology, $\Tr(\mathcal{C} )$ of a $\Bbbk$-linear category $\mathcal{C}$ is the $\Bbbk$-vector space given by
\begin{gather*}
  \Tr(\cal{C} )=
\left( \bigoplus_{X\in \Ob(\mathcal{C} )}\mathcal{C} (X,X) \right)\bigg/ \mathcal{I},
\end{gather*}
where $\mathcal{C}(X,X)=\End_\cal{C}(X)$ and $\mathcal{I}= \Span_\Bbbk\{fg-gf\}$ where $f$ and $g$ run through all pairs of morphisms $f\col X\to Y$, $g\col Y\to X$ with $X,Y\in \Ob(\mathcal{C} )$.  For a morphism $f$ in $\cal{C}$ we denote its class in $ \Tr(\cal{C}) $ by $ [f]$.

If $\cal{C}$ is an additive category, then by ~\cite[Lemma 3.1]{BHLZ} we have
\begin{equation}
  [f\oplus g] = [f] + [g].
\end{equation}
The trace map is invariant under passage to the additive closure.  Similarly, if $\cal{C}$ is a monoidal category then $\Tr(\cal{C})$ acquires an algebra structure with
\begin{equation}
  [f] \cdot [g] := [f \otimes g] .
\end{equation}
When $\cal{C}$ is graded, so that it is equipped with an autoequivalence $\la 1\ra \maps \cal{C} \to \cal{C}$, then $\Tr(\cal{C})$ has the structure of $\Z[t,t^{-1}]$-module by defining
\begin{equation}
  [f\la s\ra ] := t^s[f],
\end{equation}
where, for $s \in \Z$, $\la s \ra$ denotes $\la 1\ra$ (or its inverse) applied $s$ times.

Taking the trace is a functor from small $\Bbbk$-linear categories to the category of $\Bbbk$-vector spaces. Moreover, if $F\col\mathcal{C}\to\mathcal{D}$ is a $\Bbbk$-linear functor, then $F$ induces a linear map
\begin{gather*}
  \Tr(F)\col\Tr(\mathcal{C})\to\Tr(\mathcal{D})
\end{gather*}
given by
$\Tr(F)([f]) = [F(f)]$
for endomorphisms $f\col x\to x$ in $\mathcal{C}$. For an introduction to the trace decategorification functor see~\cite{BGHL}.


%
\subsection{Split Grothendieck groups and the Chern character}
\label{sec:split-groth-groups}
%

For a $\Bbbk$-linear additive category $\mathcal{C} $, the {\em split
Grothendieck group} $K_0(\mathcal{C} )$ of $\mathcal{C} $ is the abelian group
generated by the isomorphism classes of objects of $\mathcal{C} $ with
relations $[x\oplus y]_{\cong}=[x]_{\cong}+[y]_{\cong}$ for
$x,y\in\Ob(\mathcal{C})$.  Here $[x]_{\cong}$ denotes the isomorphism class
of~$x$.  For $\cal{C}$ monoidal the split Grothendieck group is a ring with
$[x]_{\cong} \cdot [y]_{\cong}:= [x\otimes y]_{\cong}$.

For a $\Bbbk$-linear additive category $\mathcal{C}$, the {\em Chern character}
for $\cal{C}$ is the $\Bbbk$-linear map
\begin{gather*}
  h_\mathcal{C} \col K^\Bbbk_0(\cal{C}):=K_0(\mathcal{C} )\otimes\Bbbk\longrightarrow \Tr(\mathcal{C} )
\end{gather*}
defined by $h_\mathcal{C} ([x]_{\cong})=[1_x]$ for $x\in \Ob(\mathcal{C} )$.
(Although $h_\mathcal{C}$ can be defined on $K_0(\mathcal{C})$, we consider
only the above $\Bbbk$-linear version for simplicity.)  If $\cal{C}$ is monoidal then the Chern character map is an algebra homomorphism.    

Recall also that the trace is well behaved with respect to passing to the Karoubi envelope.

\begin{proposition}
\label{HH0HequalsHH0Hprime}
\cite[Proposition 3.2]{BHLZ}
The natural map $ \Tr(\mathcal{C}) \rightarrow \Tr(Kar(\mathcal{C}))$ induced by inclusion of categories is an isomorphism.
\end{proposition}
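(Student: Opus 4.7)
The plan is to construct an explicit two-sided inverse $\phi\colon \Tr(\Kar(\mathcal{C})) \to \Tr(\mathcal{C})$ of the natural map $\Tr(\iota)$, where $\iota\colon\mathcal{C} \hookrightarrow \Kar(\mathcal{C})$, $X\mapsto (X,1_X)$, is the fully faithful inclusion. By definition of the Karoubi envelope, any endomorphism $g$ of $(X,e)\in \Kar(\mathcal{C})$ is an honest morphism $g\colon X\to X$ in $\mathcal{C}$ satisfying $ege = g$, hence determines a class $[g]_{\mathcal{C}}\in\Tr(\mathcal{C})$. I would set $\phi([g]_{(X,e)}):= [g]_{\mathcal{C}}$ and check that this passes to the quotient defining $\Tr(\Kar(\mathcal{C}))$. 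A defining commutator relation in $\Tr(\Kar(\mathcal{C}))$ comes from a pair $\alpha\in\Hom_{\Kar}((A,e),(B,f))$ and $\beta\in\Hom_{\Kar}((B,f),(A,e))$; on forgetting the idempotent decorations, these are simply morphisms $\alpha\colon A\to B$ and $\beta\colon B\to A$ in $\mathcal{C}$, so the image $[\alpha\beta]_{\mathcal{C}} - [\beta\alpha]_{\mathcal{C}}$ is already zero in $\Tr(\mathcal{C})$ by cyclicity. This shows $\phi$ is well-defined.

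The identity $\phi\circ\Tr(\iota) = \id$ is immediate from the definitions. For the reverse composition, I would use that $(X,e)$ is a canonical retract of $(X,1_X)$ in $\Kar(\mathcal{C})$ via the morphisms $i\colon (X,1_X)\to(X,e)$ and $p\colon(X,e)\to(X,1_X)$ both represented by $e\in\End_{\mathcal{C}}(X)$. Writing $g = e g e$ and viewing the same underlying morphism also as an endomorphism of $(X,1_X)$, cyclicity of the trace gives
\[
[g]_{(X,e)} \;=\; [i\circ g\circ p]_{(X,e)} \;=\; [g\circ p \circ i]_{(X,1_X)} \;=\; [g\circ e]_{(X,1_X)} \;=\; [g]_{(X,1_X)},
\]
using $p\circ i = e^2 = e$ in $\End_{\mathcal{C}}(X)$ and $ge = ege\cdot e = g$. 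This is precisely the statement that $\Tr(\iota)\circ\phi = \id$.

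The main (and in fact only) subtle point is the well-definedness of $\phi$, and it reduces to the single observation that any commutator relation imposed in $\Tr(\Kar(\mathcal{C}))$ is, after forgetting the idempotent decorations on source and target, literally a commutator relation in $\mathcal{C}$. This is the conceptual reason that zeroth Hochschild homology is insensitive to the passage to the Karoubi envelope, and it is what makes the whole argument essentially formal once the right candidate inverse has been written down.
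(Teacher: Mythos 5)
Your argument is correct and complete. The paper itself gives no proof of this proposition --- it simply cites \cite[Proposition 3.2]{BHLZ} --- and your self-contained argument is the standard one underlying that reference: every object $(X,e)$ of $\Kar(\mathcal{C})$ is a retract of $(X,1_X)$, and the trace relation applied to the retraction pair identifies $[g]_{(X,e)}$ with $[g]_{(X,1_X)}$, while well-definedness of the inverse is exactly the observation that commutator relations in $\Kar(\mathcal{C})$ are commutator relations in $\mathcal{C}$ after forgetting the idempotent decorations. The only cosmetic issue is in the display: for $i\circ g\circ p$ to typecheck one should read the middle $g$ as the endomorphism of $(X,1_X)$ (legitimate since $g=ege$ is an honest endomorphism of $X$), after which $i\circ g\circ p=ege=g$ in $\End_{\Kar(\mathcal{C})}((X,e))$ and the cyclic permutation to $[p\circ i\circ g]_{(X,1_X)}=[eg]_{(X,1_X)}=[g]_{(X,1_X)}$ goes through exactly as you intend.
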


\begin{proposition}\cite[Lemma 2.1]{BHLW}
  \label{prop:indecomposables}
  Let $\cal{C}$ be a $\Bbbk$-linear additive category.
  Let $S\subset\Ob(\cal{C})$ be a subset such that every object in $\cal{C}$ is
  isomorphic to the direct sum of finitely many copies of objects in
  $S$.  Let $\cal{C}|_S$ denote the full subcategory of $\cal{C}$ with
  $\Ob(\cal{C}|_S)=S$.  Then, the inclusion functor $\cal{C}|_S\to \cal{C}$ induces
  an isomorphism
  \begin{gather}
    \label{e9}
    \Tr(\mathcal{C}|_S)\cong\Tr(\mathcal{C}).
  \end{gather}
\end{proposition}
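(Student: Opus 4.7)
The plan is to construct an explicit two-sided inverse $\Psi$ to the map $\Tr(i): \Tr(\cal{C}|_S) \to \Tr(\cal{C})$ induced by the inclusion $i: \cal{C}|_S \hookrightarrow \cal{C}$. For each $Y \in \Ob(\cal{C})$, I fix once and for all an isomorphism $\phi_Y: Y \xrightarrow{\sim} \bigoplus_k X_k^Y$ with $X_k^Y \in S$, adopting the convention $\phi_X = \id_X$ when $X \in S$ (possible by the hypothesis on $S$). Let $\iota_k^Y, \pi_k^Y$ denote the associated structural inclusions and projections and define
\[
\psi_Y : \End_\cal{C}(Y) \to \Tr(\cal{C}|_S), \qquad f \longmapsto \sum_k \bigl[\pi_k^Y \phi_Y f \phi_Y^{-1} \iota_k^Y\bigr],
\]
assembled into a map $\psi : \bigoplus_{Y \in \Ob(\cal{C})} \End_\cal{C}(Y) \to \Tr(\cal{C}|_S)$.

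The key step is to check that $\psi$ annihilates every generator $gh - hg$ of the defining ideal of $\Tr(\cal{C})$, where $g: Z \to Y$ and $h: Y \to Z$ are morphisms in $\cal{C}$. Writing $\phi_Y g \phi_Z^{-1} = (g_{kl})$ and $\phi_Z h \phi_Y^{-1} = (h_{lk})$ as matrices whose entries are morphisms in $\cal{C}|_S$, one computes
\[
\psi_Y(gh) = \sum_{k,l} [g_{kl} h_{lk}] \qquad\text{and}\qquad \psi_Z(hg) = \sum_{k,l} [h_{lk} g_{kl}],
\]
and these two sums are equal in $\Tr(\cal{C}|_S)$ term by term, since $g_{kl} h_{lk}$ and $h_{lk} g_{kl}$ are the two sides of a commutator relation entirely internal to $\cal{C}|_S$. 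Thus $\psi$ descends to a well-defined linear map $\Psi : \Tr(\cal{C}) \to \Tr(\cal{C}|_S)$. Specialising to $Y = Z$ and letting $g, h$ implement a change of direct-sum decomposition shows that $\Psi$ is independent of the initial choices of the $\phi_Y$.

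Finally I verify that $\Psi$ inverts $\Tr(i)$. The composition $\Psi \circ \Tr(i)$ acts as the identity on each class $[f]$ with $f \in \End_{\cal{C}|_S}(X)$, directly from the convention $\phi_X = \id_X$. Conversely, for $f \in \End_\cal{C}(Y)$, using the completeness relation $\sum_k \iota_k^Y \pi_k^Y = \id_{\bigoplus_k X_k^Y}$ together with the cyclic relations already available in $\Tr(\cal{C})$,
\[
\Tr(i)\bigl(\Psi[f]\bigr) = \sum_k [\pi_k^Y \phi_Y f \phi_Y^{-1} \iota_k^Y] = \sum_k [\iota_k^Y \pi_k^Y \phi_Y f \phi_Y^{-1}] = [\phi_Y f \phi_Y^{-1}] = [f].
\]
The main obstacle is the cyclic-invariance computation in the second paragraph: although morally it is the familiar matrix identity $\tr(AB) = \tr(BA)$, care is required because each ``entry'' is a morphism between objects of $S$ and the commutator relations must be invoked at the level of $\cal{C}|_S$ rather than at the level of the direct sums, which need not themselves lie in $\cal{C}|_S$. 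Once this is in hand, the remainder of the argument is bookkeeping with block matrices of morphisms.
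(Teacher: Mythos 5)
Your proof is correct. The paper gives no argument of its own for this proposition, citing \cite[Lemma 2.1]{BHLW}, and your construction — fixing a decomposition $\phi_Y$ for each object, defining the inverse by summing the diagonal blocks $[\pi_k^Y\phi_Y f\phi_Y^{-1}\iota_k^Y]$, checking that cyclicity of the trace is inherited blockwise from $\cal{C}|_S$, and using $\sum_k\iota_k^Y\pi_k^Y=\id$ to verify the two compositions — is exactly the standard argument used in that reference.
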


%
\section{Affine Hecke algebras and the skein of the torus}\label{sec:heckealgebras}
%

In this section we will relate the trace algebra of the tower of affine Hecke algebras to the skein algebra of the torus.

%
\subsection{Towers of algebras}
\label{sec:towers}
%

\begin{definition}
Let $\{A_n\}_{n\geq 1}$ be a collection of unital algebras over a field $\Bbbk$ with unit element denoted $1_n$.  We set $A_{0}:=\Bbbk$.  This collection defines a \emph{tower of algebras} if we are given unital algebra homomorphisms
\begin{equation}
 \mu_{m,n}: A_m \otimes_\Bbbk A_n \to A_{m+n}
\end{equation}
that satisfy the associativity conditions
\begin{equation}\label{eq_ass}
 \mu_{\ell+m,n}\circ (\mu_{\ell,m}\otimes \id_n) = \mu_{\ell,m+n}\circ(\id_\ell\otimes \mu_{m,n}).
\end{equation}
\end{definition}

A tower of algebras $\{A_n\}$ naturally defines a $\Bbbk$-linear monoidal category $\cal{A}$ whose objects are positive integers $n \in \Z_{\geq 0}$ and whose only nonzero morphisms are the endomorphism spaces $\Hom_{\cal{A}}(n,n) := A_n$.  The monoidal structure is given on objects by $n\otimes m := n+m$ and on morphisms via the maps $\mu_{m,n}$.  The unit object for this monoidal category is the object $0$, which we regard as indexing $A_0=\Bbbk$.

By definition, the trace $\Tr(\cal{A})$ is equal the $\Bbbk$-vector space
$$\bigoplus_{n\in \Z_{\geq 0}} \HH_0(A_n)$$
equipped with a graded algebra structure via $[a] \cdot [b] := [\mu_{m,n}(a\otimes b)]$ for $a \in A_m$ and $b \in A_n$. Here $\HH_0(A_n)$ denotes the zeroth Hochschild homology, which is the quotient of the algebra $A_n$ by the $\Bbbk$-linear subspace spanned by elements of the form $ab-ba$, for $a,b \in A_n$.

\begin{remark}
One should not confuse the category $\cal{A}$ defined above with potentially richer categories where the objects are (certain subcategories of) $(A_n,A_m)$-bimodules and the morphisms are natural transformations between them.
\end{remark}

%
\subsection{Hecke algebras}
%

\begin{definition} \label{def:finite-hecke}
The (finite) Hecke algebra $H_n$ is the $\k[q,q^{-1}]$-algebra generated by $t_1,t_2,\dots,t_{n-1}$ and relations
\begin{equation} \label{eq:fHecke}
\begin{split}
  t_i^2 &= (q-1)t_i + q, \\
  t_i t_j &= t_j t_i \qquad \text{if $|i-j|>1$,} \\
  t_it_{j}t_i &= t_jt_it_j \qquad \text{if $|i-j|=1$. }
\end{split}
\end{equation}
\end{definition}

\begin{definition} \hfill \label{def:affine-hecke}
\begin{enumerate}
\item The \emph{affine Hecke algebra} $\ah_n$ is the $\k[q,q^{-1}]$-algebra  with generators
\[
\{t_1,t_2, \dots, t_{n-1} \} \cup \{ x_1^{\pm1},x_2^{\pm1}, \dots, x_n^{\pm1}\}
\]
satisfying the following relations:
\begin{equation}\label{eq:affineheckerel}
\begin{aligned}[l]
  &t_i^2 = (q-1)t_i + q, \\
  &t_i t_j = t_j t_i \qquad \text{if $|i-j|>1$,} \\
  &t_it_{j}t_i = t_jt_it_j \quad \text{if $|i-j|=1$, }
\end{aligned}
\qquad
\begin{aligned}[l]
   &x_i x_j = x_j x_i,  \\
    &t_i x_{i+1}t_i = qx_{i},
  \end{aligned}
\qquad
\begin{aligned}[l]
  &x_ix_i^{-1}=1; \\
  &t_ix_j = x_jt_i \qquad \text{if $j \neq i,i+1$.}
\end{aligned}
\end{equation}
\item The \emph{positive affine Hecke algebra $\ah_{n}^+$} is the $\k[q,q^{-1}]$-subalgebra of $\ah_n$ generated by the $t_i$ and the $x_j$.
\end{enumerate}
\end{definition}

\begin{remark}
Our definition of \eqref{def:affine-hecke} differs from the usual definition, see for example \cite[Section 3.5]{LS13}, by the isomorphism sending $x_i \mapsto x_{n+1-i}$ and $t_i \mapsto t_{n-i}$.
\end{remark}

The algebra $\ah_n$ is $\Z$-graded, and $\ah_n^+$ is $\Z^+$-graded subalgebra of $\ah_n$, with $\deg(x_j^{\pm1}) = \pm 1$, and $\deg(t_i)=0$.

\begin{definition} \label{def:qdegen}
The \emph{$q$-degenerate affine Hecke algebra} $\dah_n$ is the $\k[q,q^{-1}]$-algebra  with generators
\[
\{t_1,t_2, \dots, t_{n-1} \} \cup \{ y_1,y_2, \dots, y_n\}
\]
and defining relations:
\begin{equation}
\begin{aligned}[c]
  t_i^2 &= (q-1)t_i + q, \\
  t_i t_j &= t_j t_i \qquad \text{if $|i-j|>1$,} \\
  t_it_{j}t_i &= t_jt_it_j \qquad \text{if $|i-j|=1$, }
\end{aligned}
\qquad \qquad
\begin{aligned}[c]
  y_it_k &= t_k y_i, \qquad i\neq k,k+1, \\
y_it_i &=t_iy_{i+1} +(q-1)y_{i+1} +q, \\
  t_i y_i&= y_{i+1}t_i +(q-1)y_{i+1}+q.
\end{aligned}
\end{equation}
\end{definition}

The algebra $\dah_n$ is a $\Z^+$-filtered with with $\deg(y_j ) =  1$, and $\deg(t_i)=0$.
If we set $q=1$ in Definition~\ref{def:qdegen} we get the definition of the \emph{degenerate affine Hecke algebra} $\dah_n(q=1)$.

\begin{lemma}[Lemma 3.8 \cite{LS13}]  \label{lemma_filtgrad}
There is an isomorphism of filtered algebras
\[
 \dah_n \otimes_{\k[q,q^{-1}]}\k(q) \longrightarrow \ah_n^+ \otimes_{\k[q,q^{-1}]}\k(q).
\]
In particular,  the algebra $\dah_n$ has a graded presentation over $\k(q)$.
\end{lemma}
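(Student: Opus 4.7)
The plan is to prove the filtered isomorphism via PBW bases combined with a change-of-generators argument that becomes possible only after localization.  First, I would establish the PBW-type theorems for both algebras: namely, $\dah_n$ and $\ah_n^+$ are each free $\k[q,q^{-1}]$-modules with bases $\{y^a t_w\}$ and $\{x^a t_w\}$ for $a \in \Z_{\geq 0}^n$ and $w \in S_n$.  These follow from standard straightening arguments using the defining relations, and they provide matching $\k[q,q^{-1}]$-module structures on the two sides (and in particular the same ranks in each filtered piece).

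Next, after passing to $\otimes_{\k[q,q^{-1}]}\k(q)$, I would perform the linear shift $z_i := y_i + q/(q-1)$, which is invertible in $\k(q)$.  A direct substitution shows that the inhomogeneous cross-relations of $\dah_n$ become the homogeneous relations $z_i t_i = t_i z_{i+1} + (q-1) z_{i+1}$ and $t_i z_i = z_{i+1} t_i + (q-1) z_{i+1}$; the constant term $q$ is absorbed precisely by the shift.  The other defining relations (commutativity of the $y_i$ and commutation with distant $t_k$) are unaffected.  This immediately proves the ``in particular'' portion of the lemma: in the generators $z_i$, all relations are homogeneous with $\deg z_i = 1$ and $\deg t_i = 0$, so $\dah_n \otimes \k(q)$ admits a graded presentation.

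To promote this to the actual isomorphism, I would construct a $\k(q)$-algebra map $\varphi: \dah_n \otimes \k(q) \to \ah_n^+ \otimes \k(q)$ sending $t_i \mapsto t_i$ and $z_i \mapsto Z_i$, with $Z_i$ chosen as an explicit degree-one element of $\ah_n^+ \otimes \k(q)$.  The naive guess $Z_i = x_i$ does not preserve the cross relation, since the shifted $\dah_n$ relation has $(q-1) z_{i+1}$ on the right whereas $\ah_n^+$ has $(q-1) x_i$.  The correct element $Z_i$ requires conjugating by a suitable Hecke-algebra factor; here one uses crucially that the element $t_i + (q-1)$ is invertible in $H_n \otimes \k(q)$ (its eigenvalues $2q-1$ and $q-2$ are nonzero in $\k(q)$), which is why the isomorphism exists only after localization.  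Bijectivity of $\varphi$ is then formal: by construction $\varphi$ is filtered, and a PBW comparison shows it induces an isomorphism on each graded piece, hence is itself a filtered isomorphism.

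The main obstacle is step three, namely pinning down an explicit $Z_i$ that simultaneously preserves commutativity among the $Z_i$, commutation with distant $t_k$, and the cross-relation with $t_i$; all other steps are formal consequences of PBW theory and the grading shift in step two.
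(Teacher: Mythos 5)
Your steps (1) and (2) are sound: the PBW bases give the rank comparison, and the shift $z_i := y_i + q/(q-1)$ does homogenize the cross-relations, which already yields the ``in particular'' clause. The genuine gap is step (3), which is where the entire content of the lemma lives and which you leave unconstructed. The paper's proof is a one-line substitution: the isomorphism sends $t_i \mapsto t_i$ and $y_i \mapsto (q-1)x_i - \frac{q}{q-1}$, i.e.\ in your notation $z_i \mapsto (q-1)x_i$ --- a scalar multiple of $x_i$, with no Hecke-algebra conjugation anywhere; the only thing that must be inverted is $q-1$ itself (for the shift and for undoing the scaling), not $t_i+(q-1)$. The index discrepancy you point out (the homogenized $\dah_n$ relation carries $(q-1)z_{i+1}$ where the computation from $t_ix_{i+1}t_i = qx_i$ and $t_i^2=(q-1)t_i+q$ gives $x_it_i = t_ix_{i+1}+(q-1)x_i$) is an artifact of reading the two definitions with incompatible index conventions (cf.\ the remark after Definition~\ref{def:affine-hecke} about the relabelling $x_i\mapsto x_{n+1-i}$, $t_i\mapsto t_{n-i}$); it cannot be repaired by the mechanism you propose. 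Rescaling does not move an index, and conjugating each $x_i$ by a factor built from $t_i+(q-1)$ assigns different conjugators to different $i$, which would generically destroy both $[Z_i,Z_j]=0$ and the relations $Z_jt_k = t_kZ_j$ for $j\neq k,k+1$. You give no candidate $Z_i$ and verify none of the three families of relations, and you flag this yourself as an unresolved obstacle, so the algebra map $\varphi$ is never actually defined.

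To close the gap, either reconcile the index conventions so that $z_i\mapsto (q-1)x_i$, $t_i\mapsto t_i$ visibly intertwines the cross-relations, or exhibit your $Z_i$ explicitly and check commutativity, distant commutation, and the cross-relation. Once the map exists on generators and respects the relations, your PBW argument for bijectivity is fine --- though it is more than needed, since the affine substitution is manifestly invertible over $\k[q,q^{-1},(q-1)^{-1}]$.
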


\begin{proof}
It is a straight-forward verification after setting
\begin{equation}\label{eq_ydef}
y_i := (q-1)x_i - \frac{q}{q-1}.
\end{equation}
\end{proof}

\begin{remark} \hfill
\begin{enumerate}[(i)]
  \item The isomorphism from Lemma~\ref{lemma_filtgrad} remains true if we restrict $\k(q)$  to the ring $\k[q,q^{-1},(q-1)^{-1}]$.

  \item Even though both $\dah_n$ and $\ah_n^+$  are defined (and flat) over $\k[q,q^{-1}]$, their $q\to 1$ limits are \emph{not} isomorphic as algebras.
\end{enumerate}

\end{remark}

\begin{lemma}\label{lemma_flat}
The algebras $\ah_n$, $\ah^+_n$, and $\dah_n$ are each flat over $\k[q,q^{-1}]$.
\end{lemma}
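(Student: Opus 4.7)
The plan is to show that each of the three algebras admits an explicit PBW-type basis as a free module over $\k[q,q^{-1}]$. Since $\k[q,q^{-1}]$ is a PID, any free module over it is flat, and this will give the claimed flatness for all three algebras simultaneously.

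First I would handle the affine Hecke algebra $\ah_n$. Using the defining relations \eqref{eq:affineheckerel}, one checks by a straight-line reduction argument that every word in the generators can be rewritten as a $\k[q,q^{-1}]$-linear combination of ordered monomials of the form $x_1^{a_1} \cdots x_n^{a_n} T_w$ with $a_i \in \Z$ and $w \in S_n$ (where $T_w$ denotes any chosen reduced expression product in the $t_i$'s, which is well-defined thanks to the braid relations). This shows such monomials span $\ah_n$ as a $\k[q,q^{-1}]$-module. For linear independence, the standard device is the faithful polynomial (Bernstein-Lusztig) representation: $\ah_n$ acts on $\k[q,q^{-1}][X_1^{\pm 1}, \ldots, X_n^{\pm 1}]$ with $x_i$ acting by multiplication and $t_i$ acting by the Demazure-Lusztig operator
\[
t_i \cdot f \;=\; q\, s_i(f) \;+\; (q-1)\,\frac{X_i \bigl(f - s_i(f)\bigr)}{X_i - X_{i+1}},
\]
and the images of the PBW monomials are easily seen to be linearly independent. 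This gives a free $\k[q,q^{-1}]$-basis of $\ah_n$, hence flatness.

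Next, for $\ah_n^+$, I would observe that the same reduction argument, restricted to the subalgebra generated by $t_i$ and $x_j$ (with no inverses), produces ordered monomials $x_1^{a_1} \cdots x_n^{a_n} T_w$ with $a_i \ge 0$ and $w \in S_n$. These monomials are a subset of the PBW basis for $\ah_n$ established above, so they are automatically linearly independent over $\k[q,q^{-1}]$. Hence $\ah_n^+$ is a free $\k[q,q^{-1}]$-module on this indexing set, and therefore flat. For $\dah_n$, I would carry out an entirely parallel argument: the relations in Definition \ref{def:qdegen} are sufficient to rewrite any word in the generators as a $\k[q,q^{-1}]$-linear combination of ordered monomials $y_1^{a_1} \cdots y_n^{a_n} T_w$, $a_i \ge 0$, $w \in S_n$. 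Independence can be shown by a faithful polynomial representation analogous to the one above, with $y_i$ acting by multiplication (after a shift) and $t_i$ acting by a suitable divided-difference-type operator built from the relations $t_i y_i = y_{i+1} t_i + (q-1)y_{i+1} + q$.

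The main obstacle, as always for PBW theorems, will be linear independence rather than spanning; spanning follows mechanically from the relations, while independence requires constructing or citing a faithful representation. For $\ah_n$ and $\ah_n^+$ this is entirely classical (Bernstein--Zelevinsky--Lusztig), so the cleanest write-up simply cites these results. The only slightly non-standard case is $\dah_n$, whose flatness cannot be transported from $\ah_n^+$ using Lemma \ref{lemma_filtgrad} because that isomorphism is only available after inverting $q-1$; an independent PBW argument over $\k[q,q^{-1}]$ is required, and this is where the bulk of the care must be taken.
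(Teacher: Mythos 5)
Your proposal is correct in substance and reaches the same underlying conclusion as the paper — each algebra is \emph{free} over $\k[q,q^{-1}]$ on a PBW-type monomial basis $\{x_1^{a_1}\cdots x_n^{a_n}T_w\}$ (resp.\ $y$'s), and freeness implies flatness — but the mechanism for verifying the basis differs. The paper invokes Bergman's Diamond Lemma: one orders the monomials, checks that all overlap ambiguities among the rewriting rules resolve, and obtains spanning and linear independence simultaneously, uniformly for all three algebras and with no auxiliary representation needed. You instead split the work into a rewriting argument for spanning plus a faithful polynomial (Bernstein--Lusztig/Demazure--Lusztig) representation for independence. For $\ah_n$ and $\ah_n^+$ this is the classical route and works (your observation that the $\ah_n^+$ monomials inherit independence from $\ah_n$ is exactly right, and the relations $t_ix_{i+1}=x_it_i-(q-1)x_i$, $t_ix_i=x_{i+1}t_i+(q-1)x_i$ indeed never introduce negative powers, so the restricted monomials do span the subalgebra). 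Two small cautions: your Demazure--Lusztig formula must be adapted to this paper's convention $t_ix_{i+1}t_i=qx_i$ (which differs from the more common $t_ix_it_i=qx_{i+1}$ by the reindexing noted after Definition~3.3), and for $\dah_n$ the faithful representation is asserted rather than exhibited — this is precisely the case the Diamond Lemma handles for free, since one only has to check that the ambiguities $t_it_jt_iy_k$ and $y_it_jt_k$ resolve, whereas your route requires actually constructing an integral-form module on which the $q$-degenerate relations act faithfully. You correctly identify that Lemma~3.8 cannot be used to transfer flatness to $\dah_n$ because that isomorphism requires inverting $q-1$; this is the one point where your plan leaves genuine work outstanding that the paper's (admittedly terse) Diamond Lemma argument covers.
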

\begin{proof}
In each case, we use Bergman's Diamond Lemma to show that the algebra in question is free (and flat, in particular) over the base ring $\k[q,q^{-1}]$. For example, in the case of the affine Hecke algebra $\ah_n$, the Diamond Lemma shows that the set $\{x_1^{j_1}\cdots x_n^{j_n}T_w\,\mid w\in S_n\}$ is a basis of $\ah_n$ over $\k[q,q^{-1}]$ (where the $T_w$ are the natural basis for the Hecke algebra).
\end{proof}

%
\subsection{Traces of Hecke algebras}
%

\begin{definition}\label{def_ahainc}
There are inclusions
\[
\mu_{m,n}: \ah_m \otimes \ah_n \hookrightarrow \ah_{m+n}
\]
given by
\begin{equation}\label{eq_ahainc}
t_i\otimes 1 \mapsto t_i,\quad x_j\otimes 1 \mapsto x_j,\quad 1\otimes t_i \mapsto t_{i+m},\quad 1\otimes x_j \mapsto x_{j+m}
\end{equation}
\end{definition}
These inclusions obviously satisfy the associativity condition \eqref{eq_ass} and also restrict to the subalgebras $\ah^+_n$ and $\dah_n$. We denote the corresponding monoidal categories (see section \ref{sec:towers}) by $\ah,\ah^+$ and $\dah$. Subsequently we have three trace algebras
\[
\Tr(\ah) := \bigoplus_{n\geq 1} \HH_0(\ah_n),\quad \quad
\Tr(\ah^+) := \bigoplus_{n\geq 1} \HH_0(\ah_n^+),\quad \quad
\Tr(\dah) := \bigoplus_{n\geq 1} \HH_0(\dah_n).
\]

\begin{remark}
The $\k(q)$-isomorphisms $\dah_n \cong \ah_n^+$ induce a $\k(q)$-isomorphism $\Tr(\dah) \cong \Tr(\ah^+)$, but this isomorphism does not extend to $q=1$.
\end{remark}

\begin{example}\label{example:trrel}
Here is an example of a relation in $\Tr(\ah)$ that occurs because of the trace condition.
We first note that if we view $x_1$ as an element of $\ah_n$, then $[x_1]\cdot [1_m]\in \Tr(\ah_{n+m})$ is the same as the class $[x_1 ]$, but $[1_m]\cdot [x_1] \in \Tr(\ah_{n+m})$ is equal to the class $[x_{m+1}]$ because of the inclusions defined by \eqref{eq_ahainc}.

We then compute in $\Tr(\ah_{n+1})$
\[
[x_1]\cdot [1_1]= [x_1] = q^{-1}[t_1 x_2 t_1] \equiv q^{-1}[t^2_1 x_2] = q^{-1}[((q-1) t_1 + q) x_2] = \frac{(q-1)}{q}[t_1 x_2] +[1_1]\cdot[x_1].
\]
We can therefore rewrite this relation as a commutator relation
\[
 [x_1]\cdot [1_1] - [1_1]\cdot[x_1] = \frac{(q-1)}{q}[t_1x_2].
\]
This is a basic example of the commutator relations that occur in $\Tr(\ah^+)$. A skein-theoretic interpretation of this computation is in Example \ref{example:diagrelation}.
\end{example}

\begin{lemma}
The algebra $\Tr(\ah)$ is $\Z_{\geq 1} \times \Z$-graded, $\Tr(\ah^+)$ is $\Z_{\geq 1} \times \Z_{\geq 0}$-graded, and $\Tr(\dah)$ is $\Z_{\geq 1}$ graded and $\Z_{\geq 0}$ filtered.
\end{lemma}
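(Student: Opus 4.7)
The plan is to install a polynomial grading on each algebra $\ah_n$, $\ah_n^+$, $\dah_n$ (coming from the $x_i$ or $y_i$ variables), descend it to zeroth Hochschild homology, and check that the multiplications $\mu_{m,n}$ of Definition~\ref{def_ahainc} are homogeneous. The $\Z_{\geq 1}$-grading on each trace algebra is then built in as the direct sum index $n$.

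First I would set $\deg(t_i)=0$, $\deg(x_i^{\pm 1})=\pm 1$, $\deg(y_i)=1$, and inspect each defining relation. For $\ah_n$ and $\ah_n^+$, every relation in Definition~\ref{def:affine-hecke} is homogeneous (the nontrivial checks are $t_ix_{i+1}t_i=qx_i$, with both sides of degree $1$, and the quadratic Hecke relation, of degree $0$), so $\ah_n$ is $\Z$-graded and $\ah_n^+$ is $\Z_{\geq 0}$-graded. For $\dah_n$, the relations $y_it_i = t_iy_{i+1}+(q-1)y_{i+1}+q$ and $t_iy_i=y_{i+1}t_i+(q-1)y_{i+1}+q$ pair a degree-$1$ element on the left with a degree-$0$ term $q$ on the right, so one only gets a $\Z_{\geq 0}$-filtration $F^{\leq d}\dah_n$ defined as the span of monomials of total $y$-degree $\leq d$.

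Next, if $a,b$ are homogeneous (resp.\ filtered) of polynomial degrees $d$ and $e$, then $ab-ba$ has degree $d+e$ (resp.\ filtration degree $\leq d+e$), so the commutator subspace is a graded (resp.\ filtered) subspace and $\HH_0$ inherits the structure. The inclusions $\mu_{m,n}$ send generators to generators of the same polynomial degree, hence are morphisms of graded (resp.\ filtered) algebras, so the pieces $\HH_0(\ah_n)$, $\HH_0(\ah_n^+)$, $\HH_0(\dah_n)$ assemble into the stated $\Z_{\geq 1}\times\Z$-, $\Z_{\geq 1}\times\Z_{\geq 0}$-, and $\Z_{\geq 1}$-graded/$\Z_{\geq 0}$-filtered algebras respectively. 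The argument is bookkeeping; the only conceptual point is that the failure of homogeneity in the $\dah_n$ relations -- already responsible for the appearance of a filtration rather than a grading in Lemma~\ref{lemma_filtgrad} -- is exactly why $\Tr(\dah)$ only inherits a filtration.
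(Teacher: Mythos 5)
Your proposal is correct and follows essentially the same route as the paper: assign $\deg(t_i)=0$, $\deg(x_i^{\pm 1})=\pm 1$ (resp.\ $\deg(y_i)=1$ giving only a filtration on $\dah_n$), observe that the commutator subspace is graded (resp.\ filtered) so $\HH_0$ inherits the structure, note compatibility with the embeddings $\mu_{m,n}$, and take the rank grading from the direct sum index. The paper's proof is just a terser version of this same bookkeeping.
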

\begin{proof}
Each algebra $\ah_n$ is graded, with $\deg(t_i) = 0$ and $\deg(x_i) = 1$ (so that $\deg(x_i^{-1}) = -1$). Since these gradings are preserved by the embeddings $\mu_{m,n}$, the algebra $\Tr(\ah)$ inherits this $\Z$-grading. The trace of a tower of algebras always has a rank grading, where $x \in \ah_n$ has degree $n$. The filtration on $\Tr(\dah)$ comes from the filtrations on the $\dah_n$, which are compatible with the embeddings $\mu_{m,n}$.
\end{proof}
\begin{definition}\label{def_grading}
We refer to the the grading coming from the subscript $n$ in $\ah_n$ as the \emph{rank} grading, and the internal $\Z$ (or $\Z_{\geq 0}$) grading induced by the gradings on the $\ah_n$ as the \emph{degree} grading. (We remark that this is consistent with the rank and degree gradings of the elliptic Hall algebra, which come from the rank and degree of coherent sheaves over elliptic curves.)
\end{definition}

\begin{proposition}\label{prop_trdim}
The $\Z_{\geq 1} \times \Z$-graded pieces of $\Tr(\ah^+)$ are finite dimensional, and their dimensions are the same as the dimensions of the graded pieces of the polynomial algebra freely generated by $\{ u_{a,b} \mid a \geq 1, b \geq 0\}$, where $u_{a,b}$ is assigned degree $(a,b) \in \Z_{\geq 1}\times \Z$.
\end{proposition}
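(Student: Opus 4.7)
My approach is a two-sided estimate on the dimensions of each bigraded piece: an upper bound via an explicit spanning set and a matching lower bound via a faithful map into the HOMFLYPT skein algebra $\Sk_q(T^2)$ of the torus.

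For the upper bound I would work with the PBW basis $\{x^\alpha T_w : \alpha\in\Z^n_{\geq 0},\ w\in S_n\}$ of $\ah_n^+$ supplied by Lemma~\ref{lemma_flat} and reduce each class $[x^\alpha T_w]\in\HH_0(\ah_n^+)$ to a canonical form. Conjugation by the invertible $T_\sigma$ is trivial in $\HH_0$, so I may assume that $w$ is a product of minimal-length disjoint cycles supported on consecutive blocks, whose cycle type is a partition $\lambda=(a_1,\ldots,a_k)\vdash n$. Along each cycle, the relation $t_i x_{i+1} t_i = q x_i$ together with the trace identifications of the type worked out in Example~\ref{example:trrel} let one cyclically shift $x$-exponents and consolidate them into a single $b\in\Z_{\geq 0}$ per cycle, modulo correction terms of strictly coarser cycle type. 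Since swapping equal-length cycles is realised by an $S_n$-conjugation, it is trivial in $\HH_0$, so each class is specified by an unordered multiset $\{(a_i,b_i)\}$ with $\sum a_i=n$ and $\sum b_i=d$. The cardinality of this spanning set matches precisely the dimension of the bidegree $(n,d)$ piece of the free polynomial algebra on $\{u_{a,b}\}$.

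For the lower bound I would exploit the diagrammatic realisation of $\ah_n^+$ as the HOMFLYPT skein of the annulus with $n$ points on each of the top and bottom, with the positivity constraint corresponding to loops around the puncture all oriented the same way. Taking the Markov closure produces a natural algebra map $\Tr(\ah^+)\to \Sk_q(T^2)$. Under this map the spanning class indexed by $\{(a_i,b_i)\}$ becomes a product of Morton--Samuelson torus links $P_{a_i,b_i}$, and the Morton--Samuelson theorem \cite{MS14} guarantees that these generators are algebraically independent and freely generate the positively-wound half of $\Sk_q(T^2)$. Hence the spanning set is in fact linearly independent.

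The main obstacle is the upper bound reduction: each application of the trace identity to shift an $x$-exponent along a cycle introduces correction terms of smaller cycle type (as is already visible in Example~\ref{example:trrel}), so a careful inductive scheme is needed. I would organise the induction using the $\Z_{\geq 0}$-filtration on $\ah_n^+$ by total $x$-degree, where these corrections lie in strictly lower filtered pieces, and invoke the filtered isomorphism $\ah_n^+\otimes\k(q)\cong \dah_n\otimes\k(q)$ of Lemma~\ref{lemma_filtgrad} to perform computations in the cleaner associated graded setting. The skein argument used for the lower bound relies only on the external input of \cite{MS14} and on the diagrammatic construction of the map $\Tr(\ah^+)\to \Sk_q(T^2)$, so it does not circularly depend on the later injectivity statement.
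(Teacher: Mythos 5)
Your strategy (an explicit spanning set for the upper bound plus a skein-theoretic lower bound) is genuinely different from the paper's proof, which avoids all cocenter computations: it uses the filtered isomorphism $\dah_n\otimes\k(q)\cong\ah_n^+\otimes\k(q)$ of Lemma~\ref{lemma_filtgrad} together with flatness (Lemma~\ref{lemma_flat}) to reduce the dimension count to the $q=1$ degenerate case, and then quotes that count from \cite{CLLS15}. Your route would make the proposition self-contained, which is appealing, but as written it has two gaps.

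First, the inductive scheme for the upper bound is organized by the wrong invariant. The dot-sliding relation in $\ah_n^+$ reads $x_{i+1}t_i = t_i x_i - (q-1)x_i$: the correction term has the \emph{same} total $x$-degree as the leading term (what drops is the length of the underlying permutation; equivalently the cycle type is refined). So the corrections do \emph{not} lie in strictly lower pieces of the $x$-degree filtration, and the induction as you set it up does not close. It can be repaired by inducting on $\ell(w)$ (or on the number of crossings), but that is a different argument from the one you describe. Second, in the lower bound, the closure of the canonical spanning class indexed by $\{(a_i,b_i)\}$ is \emph{not} the product $\prod_i P_{a_i,b_i}$: already for a single cycle with $\gcd(a,b)=d>1$, the element $P_{a,b}$ is a nontrivial linear combination of closures involving the element $p_d$ of Remark~\ref{rmk:allgens}, not the closure of one monomial. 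To conclude linear independence you would need a triangularity argument relating your spanning classes to the Morton--Samuelson basis of unordered words in the $P_\bx$; alternatively, you could replace this step entirely, since surjectivity of $\cl^+$ (proved as in \cite[Lemma 3.1]{MS14}) together with the dimension count for $\Sk_q^{>,\geq}(T^2)$ in Corollary~\ref{cor_homflydim} already yields the matching lower bound without identifying any individual images.
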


\begin{proof}
 First, by Lemma \ref{lemma_filtgrad}, the graded pieces of $\Tr(\ah^+)$ are isomorphic (as vector spaces) to the associated graded pieces of $\Tr(\dah)$. Since $\Tr(\dah)$ is flat over $\k[q]$ (see Lemma \ref{lemma_flat}), the associated graded pieces of $\Tr(\dah)$ and $\Tr(\dah(q=1))$ have the same dimensions.
 By \cite[Prop. 6, Cor. 38]{CLLS15}, the graded pieces of $D_{q=1}$ are finite dimensional, and their Poincare series is given by
 \[
  P(u,v) = \prod_{a>0,\,b\geq 0} \frac 1 {1-u^a v^b}.
 \]
 Then \cite[Prop. 7]{CLLS15} shows that this Poincare series is the same as the Poincare series of the polynomial algebra generated by $\{ u_{a,b} \mid a \geq 1, b \geq 0\}$, where $u_{a,b}$ is assigned degree $(a,b) \in \Z_{\geq 1} \times \Z$.
\end{proof}

%
\subsection{The skein algebra of the torus}
%

The HOMFLYPT skein module $\Sk_{q}(M)$ of a 3-manifold $M$ is the space of formal $\k(q^{\pm 1/2},v)$-linear\footnote{It suffices to work over the ground ring $\k[q^{\pm 1/2},(q-1)^{-1},v^{\pm 1}]$.} combinations of framed links in $M$ modulo the local `skein relations'
\begin{equation}\label{eq:homfly}
\begin{split}
\hackcenter{
\begin{tikzpicture}[scale =0.8]
\draw[very thick, directed=1] (-.5,0) -- (.5,1.5);
\draw[very thick ] (.5,0) -- (.1,.6);
\draw[very thick, directed=1 ] (-.1,.9) -- (-.5,1.5);
\end{tikzpicture}}
\quad - \quad
\hackcenter{
\begin{tikzpicture} [scale =0.8]
\draw[very thick, directed=1] (.5,0) -- (-.5,1.5);
\draw[very thick ] (-.5,0) -- (-.1,.6);
\draw[very thick, directed=1 ] (.1,.9) -- (.5,1.5);
\end{tikzpicture} }
\;\; = \;\;
(q^{1/2}-q^{-1/2}) \;\;
\hackcenter{
\begin{tikzpicture} [scale =0.8]
\draw[very thick, directed=1] (.5,0) .. controls ++(-.2,.4) and ++(-.2,-.4) .. (.5,1.5);
\draw[very thick,directed=1 ] (-.5,0)  .. controls ++(.2,.4) and ++(.2,-.4) .. (-.5,1.5);
\end{tikzpicture}}
\\
\hackcenter{
\begin{tikzpicture} [scale =0.8]
\draw[very thick ] (0,-.25) -- (0,.5)
    .. controls ++(0,.4) and ++(0,.5) .. (.75,.5)
    .. controls ++(0,-.4) and ++(0,-.15) .. (.25,.5);
\draw[very thick, directed=1 ] (.1,.95) ..controls  ++(-.1,.3) and ++(0,-.3).. (0,1.75);
\end{tikzpicture}}
\;\; = \;\; v^{-1}\;\;
\hackcenter{
\begin{tikzpicture} [scale =0.8]
\draw[very thick,directed=1 ] (0,-.25) -- (0,1.75);
\end{tikzpicture}}
\end{split}
\end{equation}
and the relation that a trivially framed unknot is equal to $(-v+v^{-1})/(q^{1/2}-q^{-1/2})$.
The pictures represent three links in $M$ which are identical outside of an embedded oriented ball in $M$ and which inside the ball appear as pictured.

If $M$ is a thickened surface $\Sigma \times [0,1]$, then we denote its skein module by $\Sk_q(\Sigma)$. This space is an algebra, where the product $L_1\cdot L_2$ of links is the link obtained by stacking $L_1$ on top of $L_2$. (Precisely, the product is induced by the map $\left(\Sigma\times [0,1] \right)\sqcup \left(\Sigma\times [0,1]\right) \to \Sigma \times [0,1]$ given in the first component by $(p,t) \mapsto (p,t/2)$ and in the second component by $(p',t') \mapsto (p',1/2 + t'/2)$.)

Let $\Sk_q(T^2)$ be the HOMFLYPT skein algebra of the torus, which is $\mathrm{H}_1(T^2) = \Z^2$-graded. We recall the following theorem from \cite{MS14} (where their $q^2$ is our $q$).

\begin{theorem}[\cite{MS14}]\label{thm_skeinpres}
There exists elements $P_\bx \in \Sk_q(T^2)$ for $\bx \in \Z\times \Z$ which generate $\Sk_q(T^2)$ and which satisfy the relations
\begin{equation}\label{eq_homflyrels}
[P_\bx, P_\by] = -(q^{d/2} - q^{-d/2}) P_{\bx+\by},\quad \quad d = \mathrm{det}[\bx\,\by].
\end{equation}
The algebra $\Sk_q(T^2)$ is isomorphic to the abstract algebra generated by the $P_\bx$ with the relation above.
\end{theorem}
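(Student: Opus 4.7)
The plan is to proceed in three stages: construct the generators $P_\bx$ explicitly, verify the commutation relations via a reduction using the mapping class group, and finally show that the resulting algebra has no additional relations by dimension counting.

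First I would construct the $P_\bx$. For a primitive vector $\bx=(a,b)\in \Z^2$ (i.e.\ $\gcd(a,b)=1$), let $\gamma_\bx$ be a simple closed curve on $T^2$ representing the homology class $\bx$, equipped with the framing coming from the surface, and set $P_\bx$ to be a suitable scalar multiple of its class in $\Sk_q(T^2)$. For a non-primitive vector $\bx=n\bx_0$ with $\bx_0$ primitive, define $P_\bx$ by the Newton power-sum formula applied to $n$ parallel copies of $\gamma_{\bx_0}$: that is, using the isomorphism $\Sk_q(A)\cong \Lambda$ between the skein algebra of an annular neighborhood of $\gamma_{\bx_0}$ and the ring of symmetric functions, define $P_\bx$ to be the image of the power-sum $p_n$. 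This mirrors the definition of the $\bu_{\bx}$ generators in the elliptic Hall algebra.

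Next I would establish the commutation relation~\eqref{eq_homflyrels}. The key input is that the mapping class group $SL_2(\Z)$ of the torus acts on $\Sk_q(T^2)$ and, by construction, permutes the $P_\bx$ compatibly with its action on $\Z^2$. Consequently the commutator $[P_\bx,P_\by]$ is $SL_2(\Z)$-covariant in $(\bx,\by)$, so the relation only needs to be checked on one representative of each $SL_2(\Z)$-orbit of ordered pairs, and such orbits are classified by the determinant $d=\det[\bx\,\by]$. For $d=\pm 1$ the curves are isotopic to a pair meeting transversally at a single point, and the relation follows from one application of the HOMFLYPT skein relation~\eqref{eq:homfly} (the two resolutions of the single crossing produce $P_{\bx+\by}$ with opposite signs, contributing the coefficient $q^{1/2}-q^{-1/2}$). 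For $|d|>1$ the argument reduces, via the $SL_2(\Z)$ action, to curves of the form $(1,0)$ and $(0,d)$, at which point one can cable the $(0,d)$ curve into $d$ parallel strands and compute the commutator inside the annular skein algebra using the known action of $p_1$-multiplication on power sums; alternatively one inducts on $|d|$, writing $P_\by$ as a bracket of lower-rank elements and using the Jacobi identity together with previously-established cases.

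Finally I would show no further relations hold. Using the commutation relations one can straighten any monomial in the $P_\bx$ into a normal form: choose a total order on $\Z^2$ (e.g.\ lexicographic on slope, then size), and write products of generators in weakly increasing order. This gives a spanning set indexed by finite multisets of elements of $\Z^2$, equivalently by partition-valued functions on the primitive classes, whose cardinality in each $\Z^2$-graded piece matches the known dimension of $\Sk_q(T^2)$ (for instance, the basis consisting of cabled simple closed curves indexed by a partition attached to each primitive homology class). The resulting surjection from the abstract algebra to $\Sk_q(T^2)$ is then a graded isomorphism.

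The main obstacle is the commutation relation for $|d|>1$: the naive approach of resolving all $|d|$ intersection points inductively produces an exponential tangle of terms, and the content of the calculation is to see that all but two of them cancel. Handling this via the $SL_2(\Z)$-equivariant reduction to the annular subalgebra, and exploiting the known power-sum/Macdonald structure there, is what makes the argument manageable.
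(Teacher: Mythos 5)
This theorem is quoted from \cite{MS14}; the paper does not prove it, so your sketch has to be measured against the Morton--Samuelson argument, whose key inductive lemma is in fact reproduced in this paper as Lemma \ref{lemma_trueforab}. Your construction of the generators (power sums on cabled primitive curves via the annulus skein) and your final step (normal-ordering plus a dimension count against Przytycki's basis of partition-labelled cablings) both match what is actually done.

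The gap is in your main route to the commutation relation. Orbits of ordered pairs $(\bx,\by)$ under $\SL_2(\Z)$ are \emph{not} classified by $d=\det[\bx\,\by]$: the divisibilities $d(\bx)$ and $d(\by)$ are separate invariants, so for instance $((2,0),(0,2))$ and $((1,0),(0,4))$ both have determinant $4$ but lie in different orbits, and a pair in which both vectors are imprimitive can never be moved to the form $((1,0),(0,d))$. Hence the proposed reduction ``check one representative per determinant'' does not cover all cases, and the cabling computation in the annulus only settles pairs with a primitive member. What actually closes the argument is the alternative you mention only in passing: induct on $|d|$ by splitting one vector as $\ba+\bb$ with $(\ba,\bb)$ already known to be good and applying the Jacobi identity --- this is precisely Lemma \ref{lemma_trueforab}, and the nontrivial content is the bookkeeping that chooses splittings so that all four auxiliary pairs either have strictly smaller determinant or lie in an already-understood (e.g.\ commutative or annular) subalgebra; the case analysis in Proposition \ref{prop_tridecomp} shows how involved this is. You should promote that induction from an aside to the backbone of the proof, and also record the degenerate case $d=0$ separately (parallel classes commute because the corresponding cablings live in a common embedded annulus).
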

We write
\begin{equation}\label{eq:def-qbrace}
\begin{split}
\{d\}_q &:= q^{d/2}-q^{-d/2}, \\
\end{split}
\end{equation}
so that $[P_\bx, P_\by] = -\{d\}_q P_{\bx+\by}$.
When we omit the subscript in ~\eqref{eq:def-qbrace} we will assume it is $q$.

\begin{remark}
 The $P_\bx$ must be chosen carefully for the above commutation relations to be so simple.
 Generically there are a large number of crossings in the skein product of $P_\bx$ and $P_\by$, so it surprising that the right-hand side has so few terms.  A similar phenomena occurs in the trace of the $q$-Heisenberg category. (See, for example,
Lemma \ref{lemma_crazyidentity} or the last step in the proof of Corollary \ref{cor:prob6}.)
%
\end{remark}

We will use the following subalgebras of $\Sk_q(T^2)$.

\begin{definition} Fix an identification $T^2 \cong S^1 \times S^1$, and call the first and second copies of $S^1$ the \emph{meridian} and \emph{longitude}, respectively. We will visualize the meridian as `horizontal' and the longitude as `vertical.'
\begin{enumerate}
\item Let $\Sk_q^{>,\bullet}(T^2)$ be the subalgebra of $\Sk_q(T^2)$ generated by loops which have a representative which only crosses the meridian positively.
\item Let $\Sk_q^{>,\geq}(T^2)$ be the subalgebra of $\Sk_q(T^2)$ generated by loops which have a representative which only crosses the meridian positively, and which only crosses the longitude non-negatively.
\end{enumerate}
\end{definition}

\begin{remark}\label{rmk_presentation}
The subalgebra $\Sk_q^{>,\bullet}(T^2)$ is generated by $\{P_{a,b} \mid a > 0\}$ and the subalgebra $\Sk^{>,\geq}_q(T^2)$ is generated by $\{P_{a,b}\mid a > 0, b \geq 0\}$. Since all relations between generators of these subalgebras are words in these generators, Theorem \ref{thm_skeinpres} also gives a presentation of $\Sk_q^{>,\bullet}(T^2)$ and $\Sk_q^{>,\geq}(T^2)$.
\end{remark}

\begin{corollary}\label{cor_homflydim}
 The graded pieces of $\Sk_q^{>,\geq}(T^2)$ are finite dimensional, and their dimensions are the same as the dimensions of the graded pieces of the polynomial algebra generated by $\{ u_{a,b} \mid a \geq 1, b \geq 0\}$, where $u_{a,b}$ is assigned degree $(a,b) \in \Z_{\geq1} \times \Z_{\geq 0}$.
\end{corollary}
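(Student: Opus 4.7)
The plan is to produce an explicit basis of $\Sk_q^{>,\geq}(T^2)$ indexed by multisets of pairs in $\Z_{\geq 1}\times \Z_{\geq 0}$; the Poincar\'e series will then match that of $\k[u_{a,b}:a\geq 1, b\geq 0]$ by inspection. By Remark \ref{rmk_presentation} the subalgebra is generated by $\{P_{a,b}:a\geq 1, b\geq 0\}$, so I only need to analyze words in these generators under the commutator relation \eqref{eq_homflyrels}.

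First I would check that the commutator $[P_\bx, P_\by] = -\{d\}_q P_{\bx+\by}$ stays inside the subalgebra, which is immediate from closure of $\Z_{\geq 1}\times \Z_{\geq 0}$ under addition. Fix a total order $\prec$ on this index set. Swapping two adjacent factors in a word $P_{\bx_1}\cdots P_{\bx_k}$ using the commutator produces an ordered pair plus a single term of strictly smaller length and identical total bidegree. In bidegree $(A,B)$ any word has length $k\leq A$ (since each factor contributes at least $1$ to the rank), so induction on length shows that $\Sk_q^{>,\geq}(T^2)$ is spanned in each bidegree by a finite set of ordered monomials $P_{\bx_1}\cdots P_{\bx_k}$ with $\bx_1\preceq\cdots\preceq\bx_k$. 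The cardinality of this set in bidegree $(A,B)$ is exactly the number of multisets of pairs $(a_i,b_i)\in\Z_{\geq 1}\times\Z_{\geq 0}$ with $\sum a_i=A$ and $\sum b_i=B$, which is the $(A,B)$-graded dimension of $\k[u_{a,b}:a\geq 1, b\geq 0]$. This already yields finite-dimensionality and an upper bound on the graded dimensions.

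The main obstacle is linear independence of these ordered monomials, since a priori unseen relations could shrink the subalgebra. I would handle this by running the same ordering argument on the full generating set $\{P_\bx : \bx \in \Z^2\}$ of $\Sk_q(T^2)$: this yields a spanning set of ordered monomials for the whole skein algebra, and their linear independence is a Diamond-Lemma consequence of Theorem \ref{thm_skeinpres}, in which the only ambiguity $P_\bx P_\by P_\bz$ (for $\bx \succ \by \succ \bz$) is resolved by the Jacobi identity for commutators in any associative algebra, forcing a compatibility among the structure constants $\{d\}_q$ that one can verify directly. Restricting the resulting basis of $\Sk_q(T^2)$ to those ordered monomials whose indices lie in $\Z_{\geq 1}\times \Z_{\geq 0}$ produces a basis of $\Sk_q^{>,\geq}(T^2)$, and its cardinality matches the spanning count above, completing the proof.
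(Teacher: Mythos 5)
Your argument is correct, and it reaches the conclusion by a route that differs from the paper's in the one place where real input is needed. The spanning half is identical: both proofs use Remark \ref{rmk_presentation} and the relation \eqref{eq_homflyrels} to straighten words into ordered monomials, giving finite-dimensionality and the upper bound on graded dimensions. For linear independence, the paper simply quotes a PBW-type theorem of Przytycki (via \cite[Cor.~3.3]{MS14}) asserting that $\Sk_q(T^2)$ has a basis of unordered words in the $P_\bx$, and restricts that basis to the subalgebra. You instead derive this basis algebraically from Theorem \ref{thm_skeinpres}: since the presentation there has the form ``commutator of two generators is a scalar multiple of a generator,'' the abstract algebra is the universal enveloping algebra of the Lie algebra $\mf{g}$ spanned by the $P_\bx$ with bracket $[P_\bx,P_\by]=-\{d\}_q P_{\bx+\by}$, \emph{provided} this bracket satisfies the Jacobi identity; the Diamond Lemma ambiguity you identify is exactly this check, and it reduces to the identity $\{a\}\{b-c\}+\{b\}\{c-a\}+\{c\}\{a-b\}=0$ with $a=\det(\bx\,\by)$, $b=\det(\by\,\bz)$, $c=\det(\bz\,\bx)$, which holds by direct expansion of $\{n\}=q^{n/2}-q^{-n/2}$ (all twelve monomials cancel in pairs). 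You should carry out that computation rather than only asserting it, but it does go through, after which classical PBW yields the ordered-monomial basis of $\Sk_q(T^2)$ and hence of the subalgebra. The trade-off: the paper's proof is shorter given the literature it leans on, while yours is self-contained relative to Theorem \ref{thm_skeinpres} as stated in this paper, and it makes explicit the (useful) observation that $\Sk_q(T^2)$ is an enveloping algebra.
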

\begin{proof}
 By Remark \ref{rmk_presentation}, the algebra $\Sk^{>,\geq}_q(T^2)$ has a presentation where generators are $\{P_{a,b}\mid a > 0,\, b \geq 0\}$ and the relations are given by \eqref{eq_homflyrels}, which shows that the graded pieces of $\Sk_q^{>,\geq}(T^2)$ are at most the size claimed. Using a PBW-type theorem of Przytycki from \cite{Prz92}, it was shown in \cite[Cor. 3.3]{MS14} that $\Sk_q(T^2)$ has a basis indexed by unordered words in the generators $P_\bx$. This implies that a basis of $\Sk^{>, \geq}_q(T^2)$ is given by unordered words in the set $\{P_{a,b} \mid a > 0,\, b \geq 0\}$.
\end{proof}

%
\subsection{The comparison}\label{sec_comparison}
%

The proof of the relations in $\Sk_q(T^2)$ involved the $\mathrm{SL}_2(\Z)$ symmetry of $\Sk_q(T^2)$ in an essential way. One of the goals of this paper is to translate these relations to the trace of the quantum Heisenberg category, which does not have such a large symmetry group. This begins by relating the tower of affine Hecke algebras to the skein algebra of the torus.

\begin{definition}
Let $BH_n$ be the space of annular braids modulo the HOMFLYPT skein relations. This is an algebra where the product is given by composition of braids (which is vertical composition in the diagrams in \eqref{def:braidgens} below).
\end{definition}
We will represent annular braids diagrammatically by drawing them in a square whose vertical edges are to be identified. For example, we use the following braids:
\begin{equation}\label{def:braidgens}
\sigma_i \;\; = \;\;
\hackcenter{
\begin{tikzpicture} [scale=0.8]
\fill[gray!20!white] (-1.8,0) rectangle (1.8,2);
\draw[ thick, red, directed=0.6,directed=0.68] (-1.8,0) -- (-1.8,2);
\draw[ thick, red, directed=0.6,directed=0.68] (1.8,0) -- (1.8,2);
\draw[ thick, blue ] (1.8,0) -- (-1.8,0);
\draw[ thick, blue] (1.8,2) -- (-1.8,2);
\draw[ultra thick,->] (-1.2,0) to (-1.2,2);
\draw[ultra thick,->] (1.2,0) to (1.2,2);
\draw[ultra thick,->] (.3,0) .. controls ++(0,1.2) and ++(0,-1.2) .. (-.3,2);
    \fill[gray!20!white] (-0.1,.8) rectangle (0.1,1.2);
\draw[ultra thick,->] (-.3,0) .. controls ++(0,1.2) and ++(0,-1.2) .. (.3,2);
\node at (-.75,1) {$\cdots$};
\node at (.75,1) {$\cdots$};
\node at (-1.2,-.25) {$\scs 1$};
\node at (-.3,-.25) {$\scs i$};
\node at (.35,-.25) {$\scs i+1$};
\node at (1.2,-.25) {$\scs n$};
\end{tikzpicture} }
\qquad \quad
X_i \;\; = \;\;
\hackcenter{
\begin{tikzpicture} [scale=0.8]
\fill[gray!20!white] (-3,0) rectangle (1.8,3);
\draw[ thick, red, directed=0.7,directed=0.78] (-3,0) -- (-3,3);
\draw[ thick, red, directed=0.7,directed=0.78] (1.8,0) -- (1.8,3);
\draw[ thick, blue ] (1.8,0) -- (-3,0);
\draw[ thick, blue] (1.8,3) -- (-3,3);
\draw[ultra thick,<-] (-.6,3) .. controls ++(0,-1.2) and ++(1.2,0) .. (-3,1.5);
    \fill[gray!20!white] (-2.55,1.2) rectangle (-2.25,2.2);
    \fill[gray!20!white] (-1.35,1.2) rectangle (-1.05,2.2);
\draw[ultra thick,->] (-2.4,0) to (-2.4,3);
\draw[ultra thick,->] (-1.2,0) to (-1.2,3);
\draw[ultra thick,->] (0,0) to (0,3);
\draw[ultra thick,->] (1.2,0) to (1.2,3);
    \fill[gray!20!white] (-.1,.95) rectangle (.2,1.3);
    \fill[gray!20!white] (.55,1.3) rectangle (1.45,1.65);
\draw[ultra thick,->] (-.6,0) .. controls ++(0,1.2) and ++(-1.2,0) .. (1.8,1.5);
\node at (-1.8,.7) {$\cdots$};
\node at (.6,.7) {$\cdots$};
\node at (-2.4,-.25) {$\scs 1$};
\node at (-0.6,-.25) {$\scs i$};
\node at (1.2,-.25) {$\scs n$};
\end{tikzpicture}}
\end{equation}

\begin{lemma}\label{lemma:ahtobraid}
There is an algebra map $\ah_n \to BH_n$ which sends $t_i \mapsto q^{1/2}\sigma_i$ and $x_i \mapsto X_i$.
\end{lemma}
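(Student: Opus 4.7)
The plan is to show that each of the seven defining relations of $\ah_n$ listed in \eqref{eq:affineheckerel} is satisfied in $BH_n$ under the substitution $t_i \mapsto q^{1/2}\sigma_i$ and $x_i \mapsto X_i$. The relations split naturally into three groups according to the type of argument required.

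The quadratic Hecke relation $t_i^2 = (q-1)t_i + q$ becomes $\sigma_i^2 = (q^{1/2}-q^{-1/2})\sigma_i + 1$ after dividing by $q$, and this is the first HOMFLYPT skein relation \eqref{eq:homfly} (in the form $\sigma_i - \sigma_i^{-1} = q^{1/2}-q^{-1/2}$) multiplied by $\sigma_i$ on the right. The ``combinatorial'' relations --- the distant commutation $t_it_j = t_jt_i$ for $|i-j|>1$, the braid relation $t_it_jt_i = t_jt_it_j$ for $|i-j|=1$, the commutation $x_ix_j = x_jx_i$, the invertibility $x_ix_i^{-1}=1$, and the commutation $t_ix_j = x_jt_i$ for $j\neq i,i+1$ --- each follow from a direct ambient isotopy of annular braid diagrams, with no appeal to the skein relation. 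In particular, $X_iX_j = X_jX_i$ holds because the over/under conventions for the annular wraps built into the diagrams \eqref{def:braidgens} (indicated by the gray rectangles) allow the two wraps to be slid past each other without creating additional crossings.

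The essential relation is the affine relation $t_ix_{i+1}t_i = qx_i$, which under the substitution becomes $\sigma_i X_{i+1}\sigma_i = X_i$. This is the main technical point: it requires combining a nontrivial annular braid isotopy with the HOMFLYPT skein relation. The plan is to first perform an isotopy that brings $\sigma_iX_{i+1}\sigma_i$ into a form differing from $X_i$ only by a small local pattern of crossings (a bigon) between the strands at positions $i$ and $i+1$, then apply \eqref{eq:homfly} to resolve this bigon and recover exactly $X_i$. The main obstacle is tracking crossing signs and over/under conventions carefully through the isotopy; the choices made in the pictures of $\sigma_i$ and $X_i$ in \eqref{def:braidgens} are precisely what ensure that the skein resolution produces $X_i$ with coefficient $1$ and no extra terms.
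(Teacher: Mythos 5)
Your overall strategy --- verify each defining relation of $\ah_n$ in $BH_n$, with the quadratic relation corresponding to the skein relation and the remaining relations to diagram isotopies --- is exactly the paper's approach (its proof is essentially the one-line observation that the quadratic relation is sent to the skein relation and the rest is elementary to check). Your treatment of the quadratic relation and of the purely combinatorial relations is correct.

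However, your plan for the key affine relation $t_ix_{i+1}t_i = qx_i$, i.e.\ $\sigma_iX_{i+1}\sigma_i = X_i$, contains a genuine error: this relation does \emph{not} require the skein relation, and the step ``apply \eqref{eq:homfly} to resolve this bigon and recover exactly $X_i$ with coefficient $1$ and no extra terms'' cannot be carried out as stated. A HOMFLYPT resolution of a crossing always produces a second (smoothing) term with coefficient $q^{1/2}-q^{-1/2}$, so if the skein relation were genuinely needed here you would be left with an extra term to dispose of, which your proposal does not address. What actually happens, with the conventions of \eqref{def:braidgens} (the outgoing arc of $X_j$ passes \emph{over} the strands to its right and the returning arc passes \emph{under} the strands to its left), is that the wrap strand in $\sigma_iX_{i+1}\sigma_i$ meets the strand originally at position $i+1$ in three crossings; two of these (the under-crossing of the returning arc at position $i$ and the top $\sigma_i$) have opposite signs and form a Reidemeister~II bigon, which cancels by isotopy and leaves precisely the diagram of $X_i$. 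In other words, $\sigma_iX_{i+1}\sigma_i = X_i$ already holds in the annular braid group before any skein relation is imposed --- this is exactly why the paper can present the affine braid group in Remark \ref{rmk:allgens} by the relations \eqref{eq:affineheckerel} with only the quadratic relation removed. Your argument is repaired by replacing the skein-resolution step with this Reidemeister~II cancellation; as written, the central step does not go through.
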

\begin{proof}
It is elementary to check that all the relations in the affine Hecke algebra $\ah_n$ are satisfied in $BH_n$. For example, the quadratic relation involving $t_i$ is sent to the skein relation.
\end{proof}
\begin{remark}\label{rmk:product}
The map in the previous lemma intertwines the inclusions $\mu_{m,n}$ between affine Hecke algebras with the diagrammatic construction which takes two braids $a$ and $b$ and juxtaposes them horizontally with $a$ to the left of $b$, and all strands of $a$ going over all strands of $b$.
\end{remark}
\begin{example}\label{example:diagrelation}

We now give a skein theoretic interpretation of Example \ref{example:trrel}.  Using the definitions above one has
\begin{align}
  [x_1] \cdot[1_1] &\;\; = \;\;
\hackcenter{
\begin{tikzpicture} [scale=0.8]
\fill[gray!20!white] (-1.5,0) rectangle (1.5,3);
\draw[ thick, red, directed=0.7,directed=0.78] (-1.5,0) -- (-1.5,3);
\draw[ thick, red, directed=0.7,directed=0.78] (1.5,0) -- (1.5,3);
\draw[ thick, blue ] (1.5,0) -- (-1.5,0);
\draw[ thick, blue] (1.5,3) -- (-1.5,3);
\draw[ultra thick,<-] (-.4,3) .. controls ++(0,-1.0) and ++(.8,0) .. (-1.5,1.5);
\draw[ultra thick,->] (.4,0) to (.4,3);
    \fill[gray!20!white] (.2,1.05) rectangle (.6,1.45);
\draw[ultra thick,->] (-.4,0) .. controls ++(0,1.0) and ++(-1,0) .. (1.5,1.5);
\end{tikzpicture} }   \nn
\\
[x_2]\; = \;[1_1]\cdot [x_1] &\;\; = \;\;
\hackcenter{
\begin{tikzpicture} [scale=0.8]
\fill[gray!20!white] (-1.5,0) rectangle (1.5,3);
\draw[ thick, red, directed=0.7,directed=0.78] (-1.5,0) -- (-1.5,3);
\draw[ thick, red, directed=0.7,directed=0.78] (1.5,0) -- (1.5,3);
\draw[ thick, blue ] (1.5,0) -- (-1.5,0);
\draw[ thick, blue] (1.5,3) -- (-1.5,3);
\draw[ultra thick,<-] (.4,3) .. controls ++(0,-1.2) and ++(1,0) .. (-1.5,1.5);
    \fill[gray!20!white] (-.6,1.2) rectangle (-.2,1.8);
\draw[ultra thick,->] (-.4,0) to (-.4,3);
\draw[ultra thick,->] (.4,0) .. controls ++(0,1.0) and ++(-.6,0) .. (1.5,1.5);
\end{tikzpicture} }
\;\; \textcolor[rgb]{1.00,0.00,0.00}{=}\;\;
\hackcenter{
\begin{tikzpicture} [scale=0.8]
\fill[gray!20!white] (-1.5,0) rectangle (1.5,3);
\draw[ thick, red, directed=0.7,directed=0.78] (-1.5,0) -- (-1.5,3);
\draw[ thick, red, directed=0.7,directed=0.78] (1.5,0) -- (1.5,3);
\draw[ thick, blue ] (1.5,0) -- (-1.5,0);
\draw[ thick, blue] (1.5,3) -- (-1.5,3);
\draw[ultra thick,<-] (-.4,3) .. controls ++(0,-1.0) and ++(.8,0) .. (-1.5,1.5);
\draw[ultra thick,->] (-.4,0) .. controls ++(0,1.0) and ++(-1,0) .. (1.5,1.5);
    \fill[gray!20!white] (.2,1.0) rectangle (.6,1.4);
\draw[ultra thick,->] (.4,0) to (.4,3);
\end{tikzpicture} }\nn
\end{align}
in $\Sk_q(T^2)$, so by the skein relation we have
\begin{align} \nn
[x_1]\cdot [1_1]- [1_1]\cdot [x_1]
& \;\;\refequal{\eqref{eq:homfly}}\;\;
(q^{\frac{1}{2}} - q^{-\frac{1}{2}})\;\;
\hackcenter{
\begin{tikzpicture} [scale=0.8]
\fill[gray!20!white] (-1.5,0) rectangle (1.5,3);
\draw[ thick, red, directed=0.7,directed=0.78] (-1.5,0) -- (-1.5,3);
\draw[ thick, red, directed=0.7,directed=0.78] (1.5,0) -- (1.5,3);
\draw[ thick, blue ] (1.5,0) -- (-1.5,0);
\draw[ thick, blue] (1.5,3) -- (-1.5,3);
\draw[ultra thick,<-] (-.4,3) .. controls ++(0,-1.0) and ++(.8,0) .. (-1.5,1.5);
\draw[ultra thick,->] (.4,0) .. controls ++(0,1.0) and ++(-.8,0) .. (1.5,1.5);
\draw[ultra thick,->] (-.4,0) .. controls ++(0,1.3) and ++(0,-1.3) .. (.4,3);
\end{tikzpicture} }
\;\; =\;\;
q^{-\frac{1}{2}}(q - 1)\;\;
\hackcenter{
\begin{tikzpicture} [scale=0.8]
\fill[gray!20!white] (-1.5,0) rectangle (1.5,3);
\draw[ thick, red, directed=0.7,directed=0.78] (-1.5,0) -- (-1.5,3);
\draw[ thick, red, directed=0.7,directed=0.78] (1.5,0) -- (1.5,3);
\draw[ thick, blue ] (1.5,0) -- (-1.5,0);
\draw[ thick, blue] (1.5,3) -- (-1.5,3);
\draw[ thick, blue, dotted] (1.5,2) -- (-1.5,2);
\draw[ultra thick,<-] (.4,2) .. controls ++(0,-.8) and ++(.8,0) .. (-1.5,1);
    \fill[gray!20!white] (-.6,.8) rectangle (-.2,1.8);
\draw[ultra thick] (-.4,0) to (-.4,1.8);
\draw[ultra thick] (.4,0) .. controls ++(0,.8) and ++(-.6,0) .. (1.5,1);
\draw[ultra thick] (.4,2) .. controls ++(0,.4) and ++(0,-.4) .. (-.4,3);
    \fill[gray!20!white] (-.15,2.3) rectangle (.15,2.6);
\draw[ultra thick,->] (-.4,1.8) .. controls ++(0,.6) and ++(0,-.4) .. (.4,3);
\end{tikzpicture} }
\\
& \;\; = \;\;
\frac{(q - 1)}{q}\;\; [x_2 t_1].
\end{align}
In the notation used below (see Remark \ref{rmk:allgens}), we have $[x_1] = w_{1,1}$ and $[1_1] = w_{1,0}$, so this computation can be written $[w_{1,1}, w_{1,0}] = (q^{1/2}-q^{-1/2})w_{2,1}$.
\end{example}

\begin{theorem}\label{thm_ahatoskein}
There is a $\Z^2$-graded algebra isomorphism
\[
\cl: \Tr(\ah) \stackrel \sim \to  \Sk_q^{>,\bullet}(T^2).
\]
This map restricts to a $\Z^2$-graded algebra isomorphism
\[
\cl^+: \Tr(\ah^+) \stackrel \sim \to \Sk_q^{>,\geq}(T^2).
\]
\end{theorem}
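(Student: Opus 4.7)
My plan is to construct $\cl$ by composing the algebra map $\ah_n \to BH_n$ of Lemma~\ref{lemma:ahtobraid} with \emph{annular braid closure}. Viewing the annular braid as living in $(S^1 \times I) \times [0,1]$ and identifying $(S^1 \times I) \times S^1 \cong T^2 \times [0,1]$, the closure of an $n$-strand annular braid gives a link in the thickened torus. Conjugate braids yield isotopic closures (rotate around the meridional $S^1$; this is Markov~I for annular links), so the map descends to $\HH_0(\ah_n)$, and summing over $n$ defines $\cl$. By Remark~\ref{rmk:product}, the product in $\Tr(\ah)$ corresponds under $\ah\to BH$ to horizontal juxtaposition of annular braids with the left block crossing over the right; after closure the two pieces sit in disjoint meridional annuli and may be isotoped into stacked $[0,1]$-layers, thereby realizing the skein product. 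Hence $\cl$ is an algebra homomorphism. Since braid strands become positive meridional crossings while each $x_i$ adds one positive longitudinal loop, $\cl$ lands in $\Sk_q^{>,\bullet}(T^2)$, its restriction $\cl^+$ to $\Tr(\ah^+)$ lands in $\Sk_q^{>,\geq}(T^2)$, and both respect the $\Z^2$-bigrading.

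\textbf{Bijectivity of $\cl^+$.} By Remark~\ref{rmk_presentation}, $\Sk_q^{>,\geq}(T^2)$ is generated by $\{P_{a,b}\mid a>0,\,b\geq 0\}$, so I prove surjectivity of $\cl^+$ by exhibiting each $P_{a,b}$ as the closure of an explicit element of $\ah^+_a$. For coprime $(a,b)$, the Morton--Samuelson $P_{a,b}$ is an $(a,b)$-torus-knot-type loop drawn as the closure of an annular braid of the form $X_{i_1}^{c_1}\cdots X_{i_r}^{c_r}\sigma_w$ with $\sum c_j = b$; for general $(a,b)$, $P_{a,b}$ is a power-sum combination of coprime generators, again realized as the closure of a trace-class element of $\ah^+_a$. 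Injectivity follows from a dimension count: Proposition~\ref{prop_trdim} bounds the $(n,k)$-graded dimensions of $\Tr(\ah^+)$ from above by the number of unordered bidegree-$(n,k)$ words in $\{u_{a,b}\mid a\geq 1,\,b\geq 0\}$, while Corollary~\ref{cor_homflydim} shows $\Sk_q^{>,\geq}(T^2)$ attains that count exactly. A surjection between finite-dimensional graded spaces meeting matching upper and lower bounds is necessarily an isomorphism.

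\textbf{Extension to $\cl$, and the main obstacle.} To pass from $\cl^+$ to $\cl$ I exploit the fact that $(x_1\cdots x_n)^{\pm 1}$ is central and invertible in $\ah_n$: left multiplication by it is a $\k[q^{\pm 1}]$-linear automorphism of $\ah_n$ preserving commutators, and descends to an automorphism of $\HH_0(\ah_n)$ shifting the degree grading by $\pm n$. Equivalently, $\ah_n = \ah^+_n[(x_1\cdots x_n)^{-1}]$ is a central localization; since central localization is flat and commutes with $\HH_0$, one obtains $\HH_0(\ah_n) = \HH_0(\ah^+_n)[[x_1\cdots x_n]^{-1}]$. The PBW argument of Przytycki used in Corollary~\ref{cor_homflydim} equally produces a basis for $\Sk_q^{>,\bullet}(T^2)$ indexed by unordered words in $\{P_{a,b}\mid a > 0,\,b \in \Z\}$, and using closures involving $X_i^{-1}$ one sees that $\cl$ hits every such generator. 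Matching graded dimensions then promotes bijectivity of $\cl^+$ to bijectivity of $\cl$. The principal obstacle is the surjectivity step for $\cl^+$: producing explicit Hecke-algebra preimages of the delicately chosen Morton--Samuelson generators $P_{a,b}$, and verifying that the Proposition~\ref{prop_trdim}--Corollary~\ref{cor_homflydim} dimension bounds coincide precisely. Once that is in place, the extension to $\cl$ is a routine grading/localization argument.
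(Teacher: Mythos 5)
Your construction of $\cl$, the verification that it is a graded algebra map, the surjectivity of $\cl^+$ onto the generators $P_{a,b}$, and the finite-dimensional graded comparison via Proposition~\ref{prop_trdim} and Corollary~\ref{cor_homflydim} all match the paper's proof of $\cl^+$ being an isomorphism essentially step for step. The one place where your argument as written does not go through is the final promotion from $\cl^+$ to $\cl$ by ``matching graded dimensions'': the $\Z^2$-graded pieces of $\Sk_q^{>,\bullet}(T^2)$ (equivalently of $\Tr(\ah)$) are \emph{infinite}-dimensional in every rank $n\geq 2$ --- e.g.\ in bidegree $(2,0)$ the PBW basis already contains $P_{1,b}P_{1,-b}$ for all $b\geq 0$ --- so a surjection between pieces of ``the same dimension'' does not force injectivity, and this step is a genuine gap as stated.

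The repair is already contained in your own localization observation, and it is exactly what the paper does. Multiplication by the central invertible element $x_1\cdots x_n$ induces a degree-shifting linear automorphism $z_n$ of $\HH_0(\ah_n)$, compatible with the $\mu_{m,n}$, hence an algebra automorphism $Z$ of $\Tr(\ah)$ which $\cl$ intertwines with a Dehn twist $T$ of $T^2$ (an automorphism of $\Sk_q(T^2)$). Since any $a\in\Tr(\ah)$ satisfies $Z^k(a)\in\Tr(\ah^+)$ for $k\gg 0$, an element $a$ of $\ker\cl$ gives $\cl^+(Z^k(a))=T^k(\cl(a))=0$, whence $Z^k(a)=0$ and $a=0$ by invertibility of $Z$. (Surjectivity of $\cl$ onto $\Sk_q^{>,\bullet}(T^2)$ you already have, either by hitting all $P_{a,b}$ with $b\in\Z$ using $X_i^{-1}$, or by the same twisting.) If you prefer the localization language, the correct statement is that $\cl$ is the filtered colimit over $k$ of the isomorphisms $Z^{-k}\Tr(\ah^+)\to T^{-k}\Sk_q^{>,\geq}(T^2)$, not a dimension count; with that substitution your proof is complete and coincides with the paper's.
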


\begin{proof}
The strategy of proof is as follows. We first define the closure map linearly, and then show it is an algebra map. We then show it is graded and that $\cl$ and $\cl^+$ are both surjective. We then show $\cl^+$ is injective using a dimension count. Finally, we `bootstrap' this to show injectivity of $\cl$ itself by using a Dehn twist to twist an element $a \in \Tr(\ah)$ in the supposed kernel of $\cl$ into $\Tr(\ah^+)$.

There is a natural linear map $BH_n \to \Sk_q(T^2)$ given by gluing the top and bottom edges of the braid diagram to obtain a diagram of a closed link in the thickened torus $T^2 \times [0,1]$. Composing this with the map $\ah_n \to BH_n$ of Lemma \ref{lemma:ahtobraid}, we obtain a linear map $\ah_n \to \Sk_q(T^2)$. This factors through the trace to produce a linear map
\[
\cl_n: \mathrm{HH}_0(\ah_n) \to \Sk_q(T^2)
\]
(This is well-defined because in the closure $\cl_n(ab)$, the $a$ part of the braid can be slid around the torus to obtain the diagram $\cl_n(ba)$. In other words, the links $\cl_n(ab)$ and $\cl_n(ba)$ are isotopic inside the torus.)

Next we check that taking the closure is an algebra map. The algebra structure on $\Sk_q(T^2)$ is defined so that in the product $ab$, the diagram of $a$ is stacked on top of the diagram for $b$. The product in $\Tr(\ah)$ is defined using the inclusions $\mu_{m,n}:\ah_m\otimes \ah_n \to \ah_{m+n}$. By Remark \ref{rmk:product}, these two products agree, so the closure map is an algebra map.

Next we check that $\cl$ and $\cl^+$ are graded and surjective. Under the closure map, the number $n$ of strands in $\ah_n$ becomes the (algebraic) crossing number of the image with the (horizontal) meridian. The internal grading of $\ah_n$ (where $x_i$ has degree 1) turns into the (algebraic) crossing number of the image with the longitude. This shows $\cl$ is a graded map, that $\cl(\Tr(\ah)) \subset \Sk_q^{>,\bullet}(T^2)$, and that $\cl^+(\Tr(\ah^+)) \subset \Sk_q^{>,\geq}(T^2)$. The proof of surjectivity of both maps is essentially identical to \cite[Lemma 3.1]{MS14}.

The map $\cl^+$ is a graded, surjective map, and the graded pieces of the source and target are finite dimensional. By Corollary \ref{cor_homflydim} and Proposition \ref{prop_trdim}, these dimensions are the same, so $\cl^+$ is an isomorphism.

We now use the fact that $\cl^+$ is injective to show that $\cl$ is injective. Let $z_n:\ah_n \to \ah_n$ be the linear isomorphism given by multiplication by $x_1\cdots x_n$. Since this element is central and invertible, this map descends to a linear isomorphism $z_n: \HH_0(\ah_n) \to \HH_0(\ah_n)$. The maps $z_n$ are also compatible with the inclusions $\mu_{m,n}$, in the sense that $\mu_{m,n} \circ(z_m \otimes z_n) = z_{m+n}\circ \mu_{m,n}$ (again since $x_1\cdots x_n$ is central). Therefore, the $z_n$ induce an \emph{algebra isomorphism} $Z: \Tr(\ah) \to \Tr(\ah)$. (The closure map intertwines the isomorphism $Z$ with a Dehn twist of the torus $T^2$.) It is clear that for any $a \in \Tr(\ah)$, the element $Z^k(a) \in \Tr(\ah^+)$ for $k \gg 0$. Since the restriction of the closure map to $\Tr(\ah^+)$ was shown to be injective and since the closure map intertwines $Z$ with a Dehn twist, this implies that the closure map itself is injective.
\end{proof}

\begin{remark}\label{rmk:allgens}
 This theorem implies combined with \cite{MS14} shows that there are elements $w_{a,b}$ for $a > 0$ and $b \in \Z$ that generate $\Tr(\ah)$ as an algebra and that satisfy the relations\footnote{The reason additional minus sign (compared to \cite{MS14}) is that in \cite{MS14}, a generator $P_{a,b}$ was homologically equal to $a\bar x + b \bar y$, where $\bar x$ is the (horizontal) meridian, and $\bar y$ is the (vertical) longitude. However, in our conventions, $a$ and $b$ are the (algebraic) crossing numbers with the meridian and longitude, and are therefore switched from the $a$ and $b$ of \cite{MS14}.}
 \begin{equation}\label{eq_aharmk}
 [w_\bx,w_\by] = -\left(q^{d/2}-q^{-d/2}\right) w_{\bx+\by}
 \end{equation}
 for $d = \det[\bx\,\by]$ and $\bx,\by \in \N \times \Z$. For the sake of completeness, we give an algebraic definition of these generators (even though we won't formally use all of them).

 Let $B_n$ be the affine braid group, which has generators $\sigma_i$ and $x_j^{\pm 1}$ for $1\leq i < n$ and $1 \leq j \leq n$, which satisfy the relations of the affine Hecke algebra $\ah_n$ \eqref{eq:affineheckerel}, without the quadratic relation for $t_i$, and where $\sigma_i := q^{-1/2} t_i$. We will use a \emph{threading map}
 \[\Phi_{n,d}: B_n \to B_{dn}\]
which is a group homomorphism given by replacing each strand with $d$ parallel strands. Algebraically, it is defined as follows:
 \begin{align*}
  \Phi_{n,d}(x_j) &= x_{d(j-1)+1}x_{d(j-1) + 2}\cdots x_{d(j-1)+d}\\
  \Phi_{n,d}(\sigma_i) &= \sigma_{di, d(i-1) + 1}\sigma_{di+1,d(i-1)+2}\cdots \sigma_{di+d-1,d(i-1)+d}
 \end{align*}
where for $a > b$ we have written $\sigma_{a,b}$ for the product
$\sigma_{a}\sigma_{a-1}\cdots \sigma_b$. Note that $\Phi_{n,1}$ is the identity map. The group homomorphism $\Phi_{n,d}$ does \emph{not} descend to an algebra map between affine Hecke algebras, but it will be useful in describing the elements $w_{a,b}$. For clarity, let $\pi: B_n \to \ah_n$ be the projection. Define elements $p_k \in \ah_n$ as
 \[
p_k := \frac{\{1\}}{\{k\}} \sum_{i=0}^{k-1} \sigma_1\cdots\sigma_i \sigma_{i+1}^{-1}\cdots \sigma_{k-1}^{-1}
\]
where by convention, $p_1 := 1$. Given $n,m$ relatively prime and $d \geq 1$, define  $w_{dn,dm} \in \ah_{dn}$ via
\[
 w_{dn,dm} := p_d \cdot \pi\left( \Phi_{n,d}\left(x_n \sigma_{n-1}\cdots \sigma_1\right)^m\right).
\]
Then Theorem \ref{thm_ahatoskein} implies that these elements satisfy equation \eqref{eq_aharmk}.
\end{remark}

%
\section{The elliptic Hall Algebra} \label{sec:elliptic}
%

The elliptic Hall algebra $\E$ of Burban and Schiffmann \cite{BS12} is the Drinfeld double of the universal Hall algebra of the category of coherent sheaves over a smooth elliptic curve over a finite field. An explicit presentation of this algebra was given in \cite{BS12}, and the structure constants were shown to be Laurent polynomials in $\sigma$, $\bar \sigma$, which are the Frobenius eigenvalues of the curve. In \cite{MS14} it was shown that the $\sigma = \bar \sigma^{-1}$ specialization of $\E$ is isomorphic to the HOMFLYPT skein algebra $\Sk_q(T^2)$ of the torus.

In this paper we relate a central extension $\hE$ of $\E$ to the trace of the quantum Heisenberg category. In this section we recall a presentation of a central extension of $\E$ from \cite{SV13}. We then describe the specialization of this algebra that we use, and we give a presentation of the positive `half' of this algebra that we will need.

\subsection{Notation}\label{sec_notation}

We will deal with several algebras which are graded by (some subset of) $\Z \times \Z$, and most of these algebras will have a ``standard'' generator in each degree. We will also need to define various subalgebras generated by subsets of these generators, and we will use notation for these subalgebras which suggests what their generators are (with the convention that an inequality refers to $0$ and that the symbol ``$\bullet$'' stands for ``no restriction''). For example, if $\E$ has generators $\bu_\bx$ for each $\bx \in \Z^2$, then we have the following subalgebras:

\begin{equation}\label{eq_subalgs}
 \E^{>,\bullet} := \langle \bu_{a,b} \mid a > 0\rangle,\quad\quad \E^{>, \geq} := \langle \bu_{a,b}\mid a > 0, b \geq 0\rangle.
\end{equation}

%
\subsection{The definition}
%

The central extension $\hE \twoheadrightarrow \E$ is an algebra over $\bK := \k(\sigma^{1/2}, \bar \sigma^{1/2})$ generated by elements $\bu_\bx$ and $\bk_\bx$
for $\bx \in \Z^2$.   We recall a description of this algebra from \cite{SV13}. Before describing the relations we establish some notation. Let
\[
\bZ^+ = \{(i,j) \in \Z^2 \mid i > 0,\textrm{ or } i=0,\, j > 0\},\quad \quad \epsilon_\bx := \left\{\begin{array}{cl}
1&\textrm{if }\bx \in \bZ^+,\\
-1&\textrm{if }\bx \in -\bZ^+.
\end{array}
\right.
\]
Let $\epsilon_{\bx,\by} := sign(\det(\bx\,\by))$, and if $\bx = (a,b)$ we write $d(\bx) := gcd(a,b) \in \Z_{\geq 1}$. Define constants
\begin{eqnarray*}
 \alpha_n &:=& (1-(\sigma \bar \sigma)^{-n})(1-\sigma^{n})(1-\bar\sigma^n)/n\\
 \alpha(\bx,\by) &:=& \left\{
 \begin{array}{cl} \epsilon_\bx(\epsilon_\bx \bx + \epsilon_\by \by - \epsilon_{\bx+\by}(\bx+\by))/2 & \textrm{if } \epsilon_{\bx,\by} = 1\\
 \epsilon_\by(\epsilon_\bx \bx + \epsilon_\by \by - \epsilon_{\bx+\by}(\bx+\by))/2 & \textrm{if } \epsilon_{\bx,\by} = -1
 \end{array} \right.\\
 \delta(\bx,\by) &:=&
 \left\{\begin{array}{cl}
1&\textrm{if }\bx = \by \\
0&\textrm{otherwise}
\end{array}
\right.
\end{eqnarray*}
 We define elements $\theta_{j\bx}$ for $d(\bx)=1$ using the following equality of formal power series:
\begin{equation*}
 \sum_{j \geq 0} \theta_{j\bx} z^j := \mathrm{exp}\left( \sum_{r \geq 1}\alpha_r \bu_{r\bx} z^r\right).
\end{equation*}

The relations in $\hE$ satisfied by the generators $\bu_\bx$ and $\bk_\bx$ are the following:
\begin{itemize}
 \item The $\bk_\bx$ are central, $\bk_\bx\bk_\by = \bk_{\bx+\by}$, and $\bk_{0,0} = 1$.
 \item If $\bx,\by \in \Z^{2}-\{(0,0)\}$ are on the same line (through the origin), then
 \begin{equation}\label{eq_linerel}
  [\bu_\by, \bu_\bx] = \delta(\bx,-\by) \frac{\bk_\bx - \bk_{\bx}^{-1}}{\alpha_{d(\bx)}}.
 \end{equation}
 \item If $\bx,\by \in \Z^{2}-\{(0,0)\}$, $d(\bx) = 1$, and the triangle with vertices $\mathbf{0}$, $\bx$, and $\bx+\by$ has no interior lattice points, then
 \begin{equation}\label{eq_reltriangle}
  [\bu_\by,\bu_\bx] = \epsilon_{\bx,\by} \bk_{\alpha(\bx,\by)}\frac{\theta_{\bx+\by}}{\alpha_1}.
 \end{equation}
\end{itemize}

For convenience we set $\bu_{(0,0)}=1$. If we write $\bk_1 := \bk_{0,1}$ and $\bk_2 := \bk_{1,0}$, then the first relation implies the $\bk_\bx$ generate an algebra isomorphic to $\k[\bk_1^{\pm 1},\bk_2^{\pm 1}]$. We may therefore view $\hE$ as an algebra over $\bK[\bk_1^{\pm 1},\bk_2^{\pm 1}]$.

\begin{definition}
We also define some algebras related to $\hE$.
\begin{enumerate}
\item The algebra $\hE^+$ is the $\bK[\bk_1^{\pm 1},\bk_2^{\pm 1}]$-subalgebra of $\hE$ generated by $\bu_\bx$ for all $\bx \in \bZ^+$.
\item The algebra $\E$ is defined as the quotient of $\hE$ by the relations $\bk_1 = \bk_2 = 1$.
\item The algebra $\E^+$ is the quotient of $\hE^+$ by $\bk_1=\bk_2=1$.
\end{enumerate}
\end{definition}

\begin{remark}\label{rmk_plusnoextension}
 We note that if $\bx,\by \in \bZ^+$, then $\alpha(\bx,\by) = (0,0) \in \Z^2$.
Then the commutator in \eqref{eq_linerel} is always zero
 and
$\bk_{\alpha(\bx,\by)} = 1$ in \eqref{eq_reltriangle}.
This implies that $\hE^+$ is isomorphic to $\E^+ \otimes_\bK \bK[\bk_1^{\pm 1},\bk_2^{\pm 1}]$. Similarly, $\E^{>,\bullet}\otimes_\bK \bK[\bk_1^{\pm 1},\bk_2^{\pm 1}]$ and $\hE^{>,\bullet}$ are isomorphic. (See \eqref{eq_subalgs} for notational conventions.)
\end{remark}

 We now recall a very useful triangular decomposition of $\hE$.

\begin{proposition}[{\cite[Proposition 1.1]{SV13}}]\label{prop_svtriangle}
 The multiplication map induces a $\bK[\bk_1^{\pm 1},\bk_2^{\pm 1}]$-linear isomorphism
 \[
 \hE^{<,\bullet} \otimes \hE^{0,\bullet} \otimes \hE^{>,\bullet} \stackrel \sim \to \hE.
 \]
 The algebra $\hE$ is isomorphic to the algebra generated by $\hE^{<,\bullet}$, $\hE^{0,\bullet}$, and $\hE^{>,\bullet}$, subject to the following relations:
 \begin{equation}\label{eq_nogoodnameforthis}
  \, [\bu_{0,k},\bu_{1,l}]
 \;=\;
 \left\{
   \begin{array}{ll}
    \bu_{1,l+k}, & \hbox{$k >0$,} \\
     -\bk_1^k \bu_{1,l+k}, & \hbox{$k < 0$,}
   \end{array}
 \right.
 \qquad \quad
 \,[\bu_{-1,l},\bu_{0,k}] \; =\;
 \left\{
   \begin{array}{ll}
     \bk_1^{-k}\bu_{-1,k+l}, & \hbox{$k > 0$,} \\
     -\bu_{-1,k+l}, & \hbox{$k<0$,}
   \end{array}
 \right.
\end{equation}
\begin{equation} \label{eq_oneone}
 \,[\bu_{-1,k},\bu_{1,l}]
 \; = \;
 \left\{
   \begin{array}{ll}
    \bk_2\bk_1^{-k}\frac{\theta_{0,k+l}}{\alpha_1}, & \hbox{$k+l>0$,} \\
     \frac{\bk_1^{-k}\bk_2-\bk_1^k\bk_2^{-1}}{\alpha_1}, & \hbox{$k=-l$,} \\
       -\bk_2^{-1}\bk_1^{-l}\frac{\theta_{0,k+l}}{\alpha_1}, & \hbox{$k+l<0$.}
   \end{array}
 \right.
\end{equation}
\end{proposition}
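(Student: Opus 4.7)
The plan is to establish Proposition \ref{prop_svtriangle} in three stages: first verify that the listed cross-relations \eqref{eq_nogoodnameforthis} and \eqref{eq_oneone} are consequences of the defining relations \eqref{eq_linerel} and \eqref{eq_reltriangle}; second, show that these cross-relations together with the intrinsic relations of $\hE^{<,\bullet}$, $\hE^{0,\bullet}$, and $\hE^{>,\bullet}$ are sufficient to recover all of $\hE$; and third, deduce the linear triangular decomposition using a faithful representation to get injectivity.

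For the first stage, I would note that \eqref{eq_nogoodnameforthis} is an instance of \eqref{eq_reltriangle} applied to $\bx=(1,l)$ and $\by=(0,\pm k)$: since $d(\bx)=1$ and one edge of the triangle $\{(0,0),(1,l),(1,l\pm k)\}$ lies on the line $x=1$, the triangle has no interior lattice points. The sign $\epsilon_{\bx,\by}$ and the offset $\alpha(\bx,\by)$ are computed directly from the definitions, and one observes that $d(\bx+\by)=1$ so that expanding the exponential generating function to first order gives $\theta_{\bx+\by}=\alpha_1\bu_{\bx+\by}$. For \eqref{eq_oneone} with $k+l\neq 0$, the relevant triangle is $\{(0,0),(1,l),(0,k+l)\}$, and now $d(\bx+\by)=|k+l|$ can be larger than $1$, so $\theta_{0,k+l}$ is genuinely the higher-order expansion of the exponential and stays on the right-hand side. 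The case $k=-l$ is handled instead by \eqref{eq_linerel}, since $(-1,k)$ and $(1,-k)$ lie on the same line through the origin; tracking $\bk_\bx$ via $\alpha(\bx,\by)$ produces the precise combination $(\bk_1^{-k}\bk_2-\bk_1^k\bk_2^{-1})/\alpha_1$.

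For the second stage, the central observation is that arbitrary mixed commutators $[\bu_\by,\bu_\bx]$ with $\bx\in\bZ^+$, $\by\in -\bZ^+$ can be reduced to iterated commutators involving only $\bu_{\pm 1,l}$ and $\bu_{0,k}$, together with rewriting inside each subalgebra. Indeed, the higher-rank elements $\bu_{a,b}$ with $a>1$ can be recursively produced from $\bu_{1,l}$ via the triangle relations applied against $\hE^{0,\bullet}$, and symmetrically for $a<-1$. Thus it suffices to verify the cross-relations between the ``$a=\pm 1$'' and ``$a=0$'' strips, which is exactly the content of \eqref{eq_nogoodnameforthis} and \eqref{eq_oneone}. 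Once this is done, iteratively applying the cross-relations to any word in generators lets one move all $\bu_\bx$ with $\bx\in -\bZ^+$ to the left and all $\bu_\bx$ with $\bx\in \bZ^+$ to the right, showing the multiplication map $\hE^{<,\bullet}\otimes\hE^{0,\bullet}\otimes\hE^{>,\bullet}\to\hE$ is surjective.

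For injectivity, I would invoke the representation of $\hE$ on the equivariant $K$-theory $\bigoplus_n K^{T}(\mathrm{Hilb}_n(\C^2))$ (or, equivalently at this specialization, on the ring of symmetric functions) constructed by Schiffmann--Vasserot, in which the three triangular pieces act as mutually distinguishable creation, Cartan-type, and annihilation operators. Exhibiting that an ordered monomial in the three pieces acts by a nonzero operator distinct from any other ordered monomial of the same bidegree forces linear independence of the decomposition. The main obstacle I anticipate is the bookkeeping in verifying \eqref{eq_oneone}: the interplay between the formal power series definition of $\theta_{j\bx}$ and the central corrections $\bk_\bx$ in \eqref{eq_reltriangle} produces precisely the asymmetric $\bk_1,\bk_2$-weights appearing on the right-hand side, and keeping the $\epsilon$-signs, $\alpha(\bx,\by)$-offsets, and the $d(\bx+\by)>1$ case of the exponential expansion all consistent is where the genuine computational content lies.
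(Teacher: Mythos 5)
The paper does not prove this proposition: it is quoted verbatim from \cite[Proposition 1.1]{SV13} and used as a black box, so there is no internal proof to compare your argument against. Judged on its own terms, your first two stages are essentially right. The verification that \eqref{eq_nogoodnameforthis} and \eqref{eq_oneone} follow from \eqref{eq_linerel} and \eqref{eq_reltriangle} is a correct and routine check (the triangles $\{(0,0),(1,l),(1,l+k)\}$ and $\{(0,0),(1,l),(0,k+l)\}$ indeed have no interior lattice points, $\theta_{\bx+\by}=\alpha_1\bu_{\bx+\by}$ exactly when $d(\bx+\by)=1$, and the $k=-l$ case is \eqref{eq_linerel} with $\bk_{(1,-k)}=\bk_2\bk_1^{-k}$). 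The surjectivity argument via straightening is also the standard one, resting on the fact (from \cite{BS12}) that $\hE^{>,\bullet}$ is generated by $\{\bu_{1,l}\}_{l\in\Z}$ and $\hE^{<,\bullet}$ by $\{\bu_{-1,l}\}_{l\in\Z}$; note only that these elements are produced by commutators \emph{within} $\hE^{>,\bullet}$ (e.g.\ $[\bu_{1,l'},\bu_{1,l}]$ up to $\theta$'s), not by commuting against $\hE^{0,\bullet}$ as you wrote, and that reducing an arbitrary cross-commutator to the listed ones requires an induction that you have only gestured at.

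The genuine gap is in your injectivity step. The Schiffmann--Vasserot action on $\bigoplus_n K^{T}(\mathrm{Hilb}_n(\C^2))$ exists only after specializing the central elements ($\bk_1=1$ and $\bk_2$ to a fixed value), so it cannot detect relations that are divisible by $\bk_1-1$ or otherwise vanish at that central character; a priori the multiplication map could fail to be injective over $\bK[\bk_1^{\pm1},\bk_2^{\pm1}]$ while still looking injective in that representation. Moreover, even at a fixed central character, ``the ordered monomials act by pairwise distinct nonzero operators'' does not imply they act by \emph{linearly independent} operators, which is what you actually need. The argument in \cite{SV13} avoids both problems: there $\hE$ is realized as a Drinfeld double of the positive half (tensored with half the center) with its opposite, and the triangular decomposition with its PBW property is automatic from the double construction rather than being extracted from a representation. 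If you want a representation-theoretic proof you would need a family of faithful modules covering generic central characters, or a filtration/specialization argument reducing to a case where a PBW basis is already known.
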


%
\subsection{The specialization}\label{sec_specialization}
%

We would like to specialize $\sigma = \bar \sigma^{-1} = q$ and $\bk_1=\bk_2=1$, but since the structure constants of $\hE$ have poles at these specializations, we must specify an integral version of $\hE$ and the order in which we perform these specializations. To shorten notation, let $\tau := (\sigma \bar \sigma)^{-1/2}$
and let $R := \k[\sigma^{\pm 1/2},\bar \sigma^{\pm 1/2}]$.  Recall that we also write $\{d\}_q = q^{d/2}-q^{-d/2}$.

Let $\hE_{1,\tau}$ be the $R$-subalgebra of $\hE$ generated by the $\bu_\bx$ and specialized at $\bk_1 = 1$ and $\bk_2 = \tau$. This has the effect of changing the middle equation of \eqref{eq_oneone} to the following:
\[
[\bu_{-1,k},\bu_{1,l}]= \frac{-\tau^{-1}}{(1-\sigma)(1-\bar\sigma)},\quad k=-l.
\]
\begin{definition}
Let $\mathbb{E}$ be the $\k[q^{\pm 1/2}]$-algebra
obtained from $\hE_{1,\tau}$ by specializing $\sigma = q$ and $\bar \sigma = q^{-1}$.
\end{definition}
In this specialization we have the following straightforward identity (see, e.g. \cite[Lemma 5.4]{MS14})
\[
 \left.\frac{\theta_\bx}{\alpha_1} \right\vert_{\bar\sigma=\sigma^{-1}} =
 \frac{\{d(\bx)\}_\sigma^2}{\{1\}_\sigma^2} \bu_\bx
\]

We define renormalized generators
\begin{equation} \label{eq:defbw}
\bw_\bx := \left(q^{d(\bx)/2}-q^{-d(\bx)/2}\right)\bu_\bx,
\end{equation}
 and for clarity we rewrite the presentation of the specialization $\mathbb{E}$ in terms of these generators.
\begin{corollary}\label{cor_trianglepres}
 The multiplication map induces a $\k(q)$-linear isomorphism
 \[
 \mathbb{E}^{>,\bullet} \otimes \mathbb{E}^{0,\bullet} \otimes \mathbb{E}^{<,\bullet} \stackrel \sim \to \mathbb{E}.
 \]
We also have the following:
\begin{enumerate}
\item \label{boldE-trianglepres1}The algebra $\mathbb{E}$ is isomorphic to the algebra generated by $\mathbb{E}^{<,\bullet}$, $\mathbb{E}^{0,\bullet}$, and $\mathbb{E}^{>,\bullet}$, subject to the following cross relations:
 \begin{align*}
  \, [\bw_{0,k},\bw_{1,l}] &= \{k\}\bw_{1,l+k}, \\
 \,[\bw_{-1,l},\bw_{0,k}] &= \{k\}\bw_{-1,k+l}, \\
 \,[\bw_{-1,k},\bw_{1,l}] &=
 \left\{
   \begin{array}{ll}
     \{k+l\} \bw_{0,k+l}, & \hbox{$k+l\not= 0$,} \\
     1, & \hbox{$k=-l$.}
   \end{array}
 \right.
 \end{align*}
 \item \label{boldE-trianglepres2} The assignment $\bw_\bx \mapsto P_\bx$ extends to an algebra isomorphism $\mathbb{E}^{>,\bullet}\stackrel{\sim}{\to} \Sk^{>,\bullet}_q(T^2)$ between the positive parts of the (central extension of the) elliptic Hall algebra and the skein algebra.
 \item \label{boldE-trianglepres3} The assignment $\bw_{a,b} \mapsto \bw_{-a,b}$ extends to an anti-involution on $\mathbb{E}$ which restricts to the identity on the commutative subalgebra $\mathbb{E}^{0,\bullet}$ and switches $\mathbb{E}^{>,\bullet}$ and $\mathbb{E}^{<,\bullet}$.
\end{enumerate}
\end{corollary}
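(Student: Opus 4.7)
The strategy is to derive all four parts directly from the Schiffmann-Vasserot triangular decomposition of $\hE$ (Proposition \ref{prop_svtriangle}), via the sequential specialization $\bk_1=1, \bk_2 = \tau$ (giving $\hE_{1,\tau}$), then $\sigma = q, \bar\sigma = q^{-1}$ (giving $\mathbb{E}$), combined with the rescaling of generators $\bw_\bx = \{d(\bx)\}_q\bu_\bx$. For part (1), I would base-change the triangular decomposition of $\hE$ along the specialization map. The three triangular subalgebras each have a natural monomial basis in the $\bu_\bx$'s preserved under specialization, so the multiplication map remains an isomorphism of $\k(q)$-modules on $\mathbb{E}$.

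For part (\ref{boldE-trianglepres1}), I would specialize each of the relations \eqref{eq_nogoodnameforthis} and \eqref{eq_oneone} in turn, then convert to the $\bw$ generators. In \eqref{eq_nogoodnameforthis}, setting $\bk_1 = 1$ turns $\pm\bk_1^{\pm k}\bu_{\pm 1,l+k}$ into $\pm \bu_{\pm 1,l+k}$; the sign from conversion (noting $\{|k|\}_q = \pm\{k\}_q$ depending on the sign of $k$, since $d(0,k)=|k|$) absorbs the explicit sign to produce $\{k\}_q\bw_{\pm 1,l+k}$ uniformly in the sign of $k$. For \eqref{eq_oneone} with $k+l \neq 0$, I would invoke the identity $\theta_\bx/\alpha_1|_{\bar\sigma=\sigma^{-1}} = \{d(\bx)\}_\sigma^2/\{1\}_\sigma^2 \cdot \bu_\bx$ displayed just above the corollary, which after rescaling produces $\{k+l\}_q\bw_{0,k+l}$. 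The delicate case is $k = -l$, where naive substitution yields $0/0$: I would resolve it using the factorization $\alpha_1 = -\tau(\tau-\tau^{-1})(1-\sigma)(1-\bar\sigma)$ to cancel the $\tau - \tau^{-1}$ numerator \emph{before} specializing $\sigma,\bar\sigma$, reducing the right-hand side first to $-\tau^{-1}/((1-\sigma)(1-\bar\sigma))$ (as in the text preceding the definition of $\mathbb{E}$), and then to $1/\{1\}_q^2$ using $(1-q)(1-q^{-1}) = -\{1\}_q^2$; this is canceled exactly by the rescaling factor $\{1\}_q^2$ from $\bw_{-1,k}\bw_{1,-k}$, yielding $1$ as claimed.

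For part (\ref{boldE-trianglepres2}), I would compare presentations. By Remark \ref{rmk_presentation}, Theorem \ref{thm_skeinpres} presents $\Sk_q^{>,\bullet}(T^2)$ on generators $P_\bx$ with $\bx_1 > 0$ subject to $[P_\bx,P_\by] = -\{\det[\bx\,\by]\}_q P_{\bx+\by}$. The same relations hold among the $\bw_\bx$ with $\bx_1>0$ in $\mathbb{E}^{>,\bullet}$: the line relations \eqref{eq_linerel} give commutativity (the $\delta$ term vanishes since no two elements of $\bZ^+$ are negatives of each other), while the empty-triangle relations \eqref{eq_reltriangle}, combined with the $\theta/\alpha_1$ identity and rescaling, produce $[\bw_\by,\bw_\bx] = \epsilon_{\bx,\by}\{d(\bx+\by)\}_q\bw_{\bx+\by}$. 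A short determinant computation, using that $d(\bx) = 1$ forces $d(\bx+\by) = |\det[\bx\,\by]|$ in the empty-triangle case, matches the two signed expressions, so $\bw_\bx \mapsto P_\bx$ extends to an isomorphism.

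For part (\ref{boldE-trianglepres3}), the map $\bw_{a,b} \mapsto \bw_{-a,b}$ clearly interchanges $\mathbb{E}^{>,\bullet}$ and $\mathbb{E}^{<,\bullet}$ and fixes $\mathbb{E}^{0,\bullet}$ pointwise, and the line relations \eqref{eq_linerel} are symmetric under $\bx \mapsto -\bx$ while the empty-triangle relations \eqref{eq_reltriangle} are mapped to empty-triangle relations via reflection across the vertical axis (both operations preserve determinants up to sign, which is compensated by the anti-homomorphism property swapping $\bx,\by$). The cross relations of part (\ref{boldE-trianglepres1}) then follow automatically. The main obstacle throughout the proof is the $k=-l$ case of part (\ref{boldE-trianglepres1}): this is the only place where the order of specializations genuinely matters, and where one must exploit the factorization of $\alpha_1$ to avoid a pole.
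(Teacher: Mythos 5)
Your handling of the triangular decomposition and of parts (1) and (\ref{boldE-trianglepres3}) is correct and is essentially the paper's argument, just with the specialization written out: the paper performs the $k=-l$ pole cancellation once and for all in Section \ref{sec_specialization} (where $\hE_{1,\tau}$ and then $\mathbb{E}$ are defined) and then only remarks that the signs in \eqref{eq_nogoodnameforthis} are absorbed into $\{k\}=q^{k/2}-q^{-k/2}$; your computation via the factorization of $\alpha_1$ is exactly the right way to see that the middle case of \eqref{eq_oneone} specializes to $1$ after rescaling.

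Part (\ref{boldE-trianglepres2}) is where there is a genuine gap. The paper does not reprove this statement; it quotes \cite[Thm.~5.6]{MS14} together with Remark \ref{rmk_plusnoextension}. You instead try to match presentations directly, but the two presentations are not of the same shape: Theorem \ref{thm_skeinpres} imposes $[P_\bx,P_\by]=-\{\det[\bx\,\by]\}P_{\bx+\by}$ for \emph{all} pairs, whereas the Burban--Schiffmann relations \eqref{eq_linerel}--\eqref{eq_reltriangle} only constrain collinear pairs and pairs with $d(\bx)=1$ and empty triangle. Deducing the full family of skein relations from the elliptic Hall relations (equivalently, proving injectivity of the surjection your relation-check produces) is precisely the nontrivial inductive content of \cite[Thm.~5.6]{MS14}, carried out by repeated use of Lemma \ref{lemma_trueforab}-type reductions --- compare the length of the paper's own Proposition \ref{prop_tridecomp}, which is the analogous statement for the upper half. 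Your ``short determinant computation'' does not supply this. There is also a concrete error in the formula you assert for the empty-triangle case: rescaling \eqref{eq_reltriangle} gives $[\bw_\by,\bw_\bx]=\epsilon_{\bx,\by}\,\{d(\by)\}\{d(\bx+\by)\}\{1\}^{-1}\,\bw_{\bx+\by}$, not $\epsilon_{\bx,\by}\{d(\bx+\by)\}\bw_{\bx+\by}$; these agree with $\epsilon_{\bx,\by}\{\lvert\det[\bx\,\by]\rvert\}\bw_{\bx+\by}$ only after one checks (e.g.\ via Pick's theorem) that an empty triangle forces $\min(d(\by),d(\bx+\by))=1$, and when $d(\by)>1$ your coefficient is simply wrong (take $\bx=(1,1)$, $\by=(2,0)$: the correct coefficient is $\{2\}$, yours is $\{1\}$). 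The clean fix is to do what the paper does and cite \cite[Thm.~5.6]{MS14}, or else to reproduce the induction.
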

\begin{proof}
The triangular decomposition  and the first statement \eqref{boldE-trianglepres1} follow immediately from the definition of $\bw_{\bx}$ in \eqref{eq:defbw} and Proposition~\ref{prop_svtriangle}.  (We note that the signs in the relations of equation \eqref{eq_nogoodnameforthis} are absorbed into the definition of $\{k\} = q^{k/2}-q^{-k/2}$, since $d(\bx) = d(-\bx)$.)
In \cite[Thm.\ 5.6]{MS14} an isomorphism is given between a specialization of the elliptic Hall algebra and $\Sk^{>,\bullet}_q(T^2)$.  Remark~\ref{rmk_plusnoextension} identifies the positive parts of $\hE$ and $\E$, so that the second assertion~\eqref{boldE-trianglepres2} follows.  The last statement \eqref{boldE-trianglepres3} follows from \eqref{boldE-trianglepres1}.

\end{proof}

\begin{remark}\label{rmk_ellheis}
Let $\bx = (k,0)= -\by$ with $k > 0$. We note that if we specialize relation \eqref{eq_linerel} at $\bk_1=1$ and $\bk_2 = \tau$, then it becomes
\[
[\bu_{-k,0}, \bu_{k,0}] = \frac{\bk_2^k - \bk_2^{-k}}{\alpha_{k}} = \frac{(1-\tau^{-2k})\tau^k k}{(1-\tau^{2k})(1-\sigma^k)(1-\bar \sigma^k)}.
\]
If we then specialize $\sigma = q$ and $\bar \sigma = q^{-1}$, and rewrite this relation in terms of the renormalized generators $\bw_\bx = \{d(\bx)\} \bu_\bx$ we obtain
\[
[\bw_{-k,0},\bw_{k,0}] = \{k\}^2 [\bu_{-k,0},\bu_{k,0}] = \{k\}^2 \frac{-k}{\{k\}\{-k\}} = k.
\]
\end{remark}

%
\subsection{A triangular decomposition of the ``upper half''}\label{sec_upperhalf}
%

The algebra which we eventually relate to the trace of the $q$-Heisenberg is the subalgebra $\mathbb{E}^{\bullet,\geq} = \langle \bw_{j,k} \mid j \in \Z,\, k \geq 0\rangle$ of the algebra $\mathbb{E}$ described in the previous subsection. The goal of this section is to give a presentation of  $\mathbb{E}^{\bullet, \geq}$ using a triangular decomposition modulo cross relations. (See Section \ref{sec_notation} for notational conventions regarding subalgebras of $\mathbb{E}$, which we use throughout this section.)

We will use Corollary \ref{cor_trianglepres} to give a triangular decomposition of $\mathbb{E}^{\bullet,\geq}$. However, one key fact used in the triangular decomposition for $\mathbb{E}$ is that the pieces are generated by the elements
$\{\bw_{1,b}\}$, $\{\bw_{0,b}\}$, and $\{\bw_{-1,b}\}$, respectively, for $b \in \Z$. Unfortunately, this is no longer true for the pieces of the subalgebra $\mathbb{E}^{\bullet,\geq}$, so the analogue of the presentation in Corollary \ref{cor_trianglepres} is necessarily more complicated. For example, to generate $\mathbb{E}^{>, \geq}$ we need the elements $\{\bw_{1,k},\bw_{j,0}\}$ for $k,j > 0$.

In what follows, we write $d(\bx, \by) = \det\left(\bx\,\, \by\right)$ for $\bx, \by \in \Z^2$ and $d(\bx) = gcd(m,n)$ when $\bx = (m,n)$.
\begin{definition}
We will use the following terminology:
\begin{equation}\label{eq_defofgood}
(\bx,\by) \in \Z^2 \times \Z^2 \textrm{ is \emph{good} if    } [\bw_\bx,\bw_\by] = -\{d(\bx,\by)\}\bw_{\bx+\by}.
\end{equation}	
\end{definition}

\begin{remark}\label{rmk_positivegood}
By Corollary \ref{cor_trianglepres}(\ref{boldE-trianglepres2}), if $x_1,x_2 > 0$, then the pair $((x_1,y_1), (x_2,y_2))$ is good.
\end{remark}

We first recall a useful technical lemma from \cite{MS14} (for the convenience of the reader we include the proof, since we will need to refer to it later). The idea is that to compute a commutator $[\bw_\bx,\bw_\by]$, we will induct on the determinant of the matrix $\left(\bx\,\,\by\right)$, and we use the lemma below to perform the inductive step.

\begin{lemma}[{\cite[Lemma 3.4]{MS14}}]\label{lemma_trueforab}
Assume $\ba + \bb = \bx$ and that $(\ba,\bb)$ is good. Further assume that the four pairs of vectors $(\by, \ba)$, $(\by, \bb)$, $(\by+\ba,\bb)$, and $(\by+\bb,\ba)$ are good. Then the pair $(\bx,\by)$ is good.
\end{lemma}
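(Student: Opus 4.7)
The plan is to reduce the computation of $[\bw_\bx,\bw_\by]$ to commutators that are controlled by the hypothesis via the Jacobi identity, and then check that everything collapses to the predicted answer by a $q$-number identity.

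First, I would use the goodness of $(\ba,\bb)$ to rewrite $\bw_\bx$. Since $\ba+\bb=\bx$, the defining relation gives
\begin{equation*}
\bw_\bx \;=\; -\frac{1}{\{d(\ba,\bb)\}}\,[\bw_\ba,\bw_\bb].
\end{equation*}
Substituting into $[\bw_\bx,\bw_\by]$ and applying the Jacobi identity produces
\begin{equation*}
[\bw_\bx,\bw_\by] \;=\; -\frac{1}{\{d(\ba,\bb)\}}\Bigl([\bw_\ba,[\bw_\bb,\bw_\by]] - [\bw_\bb,[\bw_\ba,\bw_\by]]\Bigr).
\end{equation*}

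Next, I would evaluate each nested commutator using the four assumed goodness hypotheses. The inner brackets $[\bw_\bb,\bw_\by]$ and $[\bw_\ba,\bw_\by]$ simplify by goodness of $(\by,\bb)$ and $(\by,\ba)$ to multiples of $\bw_{\bb+\by}$ and $\bw_{\ba+\by}$ respectively. Then the outer brackets $[\bw_\ba,\bw_{\bb+\by}]$ and $[\bw_\bb,\bw_{\ba+\by}]$ simplify by goodness of $(\by+\bb,\ba)$ and $(\by+\ba,\bb)$. Writing $\alpha=d(\ba,\bb)$, $\beta=d(\ba,\by)$, $\gamma=d(\bb,\by)$ and using bilinearity of $\det$, the two determinants appearing are $d(\by+\bb,\ba)=-\alpha-\beta$ and $d(\by+\ba,\bb)=\alpha-\gamma$. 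Combining everything, the computation collapses to
\begin{equation*}
[\bw_\bx,\bw_\by] \;=\; -\frac{1}{\{\alpha\}}\bigl(\{\gamma\}\{\alpha+\beta\} + \{\beta\}\{\alpha-\gamma\}\bigr)\,\bw_{\bx+\by}.
\end{equation*}

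The final step is a purely arithmetic identity for $q$-numbers: one must verify
\begin{equation*}
\{\gamma\}\{\alpha+\beta\} + \{\beta\}\{\alpha-\gamma\} \;=\; \{\alpha\}\{\beta+\gamma\},
\end{equation*}
which is a standard Ptolemy-type relation for $\{n\}=q^{n/2}-q^{-n/2}$ and is checked by expanding both sides as Laurent polynomials in $q^{1/2}$. Since $d(\bx,\by)=\det(\ba+\bb,\by)=\beta+\gamma$, this identity delivers exactly $[\bw_\bx,\bw_\by]=-\{d(\bx,\by)\}\bw_{\bx+\by}$, which is the claim that $(\bx,\by)$ is good.

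The main obstacle here is bookkeeping rather than any deep structural step: one has to keep careful track of signs (noting $\{-n\}=-\{n\}$), of the order of arguments in each determinant (since the ``good'' relation involves a specific sign convention $[\bw_\bx,\bw_\by]=-\{d(\bx,\by)\}\bw_{\bx+\by}$), and of which of the four hypotheses applies to each nested commutator after Jacobi. Once the determinants are expressed in terms of $\alpha,\beta,\gamma$, the closing Ptolemy identity is routine.
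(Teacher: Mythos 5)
Your proof is correct and follows essentially the same route as the paper: express $\bw_\bx$ via the goodness of $(\ba,\bb)$, expand $[[\bw_\ba,\bw_\bb],\bw_\by]$ by the Jacobi identity, apply the four goodness hypotheses to the nested commutators, and close with the $q$-number identity $\{\gamma\}\{\alpha+\beta\}+\{\beta\}\{\alpha-\gamma\}=\{\alpha\}\{\beta+\gamma\}$ (which the paper leaves as ``a short computation''). Your determinant bookkeeping and sign conventions check out, so this is a complete and faithful rendering of the paper's argument.
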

\begin{proof}
 By the first assumption, we have $[\bw_\ba, \bw_\bb] = - \{d(\ba, \bb)\}\bw_\bx$. We then use the Jacobi identity and the remaining assumptions to compute
 \begin{eqnarray*}
  \{d(\ba, \bb)\}[\bw_\bx,\bw_\by] &=& -[[\bw_\ba,\bw_\bb],\bw_\by]\\
  &=& [[\bw_\by,\bw_\ba],\bw_\bb] + [[\bw_\bb,\bw_\by],\bw_\ba]\\
  &=& - \{d(\by,\ba)\}[\bw_{\by+\ba},\bw_\bb] - \{d(\bb,\by)\}[\bw_{\bb+\by},\bw_\ba]\\
  &=& \Big(\{d(\by,\ba)\}\{d(\by+\ba,\bb)\} + \{d(\bb,\by)\}\{d(\bb+\by,\ba)\}\Big) \bw_{\bx+\by}\\
  &=:& c\bw_{\bx+\by}.
 \end{eqnarray*}
A short computation then shows $c = -\{d(\ba,\bb)\}\{d(\bx,\by)\}$ which completes the proof of the lemma.
\end{proof}

%
We can now prove the following triangular decomposition of $\mathbb{E}^{\bullet, \geq}$.\\

\begin{proposition}\label{prop_tridecomp}
 The algebra $\mathbb{E}^{\bullet,\geq}$ has a triangular decomposition $$\mathbb{E}^{\bullet,\geq} = \mathbb{E}^{>,\geq} \otimes \mathbb{E}^{0,\geq} \otimes \mathbb{E}^{<,\geq}.$$
Furthermore, the algebra $\mathbb{E}^{\bullet,\geq}$ is isomorphic to the algebra $\tE$ generated  by the subalgebras $\mathbb{E}^{>,\geq}$, $\mathbb{E}^{0,\geq}$, and $\mathbb{E}^{<,\geq}$ subject to the following cross-relations:
 \begin{eqnarray}
 \,[\bw_{\pm 1,r},\bw_{0,k}] &=& \{\mp k\} \bw_{\pm 1, r+k}\label{eq_relhard}\\
 \,[\bw_{\pm 1,0},\bw_{\mp 1,k}] &=& \{\mp k\}\bw_{0,k}\label{eq_rel2b}\\
 \,[\bw_{\pm 1,k},\bw_{\mp 1,1}]&=& \mp \{k+1\}\bw_{0,k+1}\label{eq_rel3}\\
\,[\bw_{i,0},\bw_{j,0}]&=& \left\{ \begin{array}{cl} -i & \mathrm{if }\,\, i=-j>0\\ 0 & \mathrm{otherwise}\end{array}\right.\label{eq_rel4}\\
 \,[\bw_{j,0},\bw_{\pm 1,1}] &=& \{-j\} \bw_{j \pm 1,1}.\label{eq_rel6}
\end{eqnarray}
(In these relations, $i,j \in \Z$ and $r,k \in \Z_{\geq 0}$. We have also abused notation by referring to the generators of $\tE$ as $\bw_\bx$, instead of the more cumbersome $\check{\bw}_\bx$.)

Finally, the algebra $\mathbb{E}^{\bullet, \geq}$ is generated by $\bw_{a,b}$ with $b \geq 0$ subject to the relations
\begin{equation}\label{eq_mainrel}
[\bw_\bx,\bw_\by] = \left\{\begin{array}{cl}
-k & \textrm{if } \bx = (k,0) = -\by\\
-\{d(\bx,\by)\} \bw_{\bx+\by} & \textrm{otherwise}\\\end{array}\right.
\end{equation}
In particular, the algebra $\mathbb{E}^{>,\geq}$ is isomorphic to the skein algebra $\Sk^{>,\geq}_q(T^2)$.
\end{proposition}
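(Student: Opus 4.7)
The plan is to establish the three claims of the proposition in order. For the vector-space triangular decomposition $\mathbb{E}^{\bullet,\geq} = \mathbb{E}^{>,\geq}\otimes\mathbb{E}^{0,\geq}\otimes\mathbb{E}^{<,\geq}$, I would start from Corollary~\ref{cor_trianglepres} together with the fact that each triangular piece of $\mathbb{E}$ admits a PBW-type basis indexed by unordered tuples of lattice points (coming from the skein identification of Corollary~\ref{cor_trianglepres}(\ref{boldE-trianglepres2}) for the outer pieces and directly from \eqref{eq_linerel} for the middle piece). Restricting each such basis to tuples with all second coordinates $\geq 0$ exhibits $\mathbb{E}^{>,\geq}$, $\mathbb{E}^{0,\geq}$, and $\mathbb{E}^{<,\geq}$ as subalgebras of the corresponding triangular pieces, and restricting the full PBW basis of $\mathbb{E}$ gives the desired decomposition of $\mathbb{E}^{\bullet,\geq}$.

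For the identification $\tE \cong \mathbb{E}^{\bullet,\geq}$, I would consider the natural algebra map $\tE \to \mathbb{E}^{\bullet,\geq}$ induced by the three triangular inclusions. It is surjective by construction; well-definedness amounts to verifying the cross-relations \eqref{eq_relhard}-\eqref{eq_rel6} in $\mathbb{E}^{\bullet,\geq}$, which follows directly from Corollary~\ref{cor_trianglepres}(\ref{boldE-trianglepres1}) (with Remark~\ref{rmk_ellheis} supplying the nontrivial case of \eqref{eq_rel4}). For injectivity I would show that $\tE$ itself carries a vector-space triangular decomposition $\mathbb{E}^{>,\geq}\otimes\mathbb{E}^{0,\geq}\otimes\mathbb{E}^{<,\geq}$, by using the cross-relations to rewrite any product of generators in standard triangular order. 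The critical tool is Lemma~\ref{lemma_trueforab}: the listed cross-relations handle the base cases where one generator has first coordinate in $\{0,\pm 1\}$, and Lemma~\ref{lemma_trueforab} propagates these to all cross-commutators $[\bw_\bx,\bw_\by]$ by induction on $|\det(\bx\,\by)|$, choosing at each step $\bx = \ba + \bb$ so that the four auxiliary pairs either reduce to base cases or have strictly smaller determinants.

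For the final generators-and-relations presentation, I would assemble the presentations of the three triangular pieces together with the cross-relations of the second claim. The piece $\mathbb{E}^{>,\geq}$ inherits a presentation from $\mathbb{E}^{>,\geq}\cong\Sk_q^{>,\geq}(T^2)$ (Corollary~\ref{cor_trianglepres}(\ref{boldE-trianglepres2}) and Remark~\ref{rmk_presentation}), which is exactly the restriction of \eqref{eq_mainrel} to the positive cone; the anti-involution of Corollary~\ref{cor_trianglepres}(\ref{boldE-trianglepres3}) produces an analogous presentation of $\mathbb{E}^{<,\geq}$; and $\mathbb{E}^{0,\geq}$ is commutative by \eqref{eq_linerel}, since no two of its generators are opposites. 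Combining these with the cross-relations yields the single relation family \eqref{eq_mainrel}, and the skein identification is then immediate.

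The main obstacle is the injectivity step in the second claim. The listed cross-relations only directly cover commutators in which one factor has first coordinate in $\{0,\pm 1\}$, so everything depends on Lemma~\ref{lemma_trueforab} extending them to general cross-commutators; combinatorially one must exhibit a decomposition $\bx = \ba + \bb$ at every stage for which all four auxiliary pairs have strictly smaller determinants or fall into a handled case. A key simplifying point is that the positivity constraint on second coordinates is preserved by vector addition, so the induction never leaves $\mathbb{E}^{\bullet,\geq}$; this is the main distinguishing feature from the analogous presentation of $\mathbb{E}$ given in Proposition~\ref{prop_svtriangle}.
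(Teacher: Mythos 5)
Your strategy is essentially the paper's: both arguments reduce everything to showing that the single relation family \eqref{eq_mainrel} is a consequence of the cross-relations together with the internal relations of the triangular pieces, by induction on $\lvert\det(\bx\,\by)\rvert$ with Lemma~\ref{lemma_trueforab} as the inductive engine, and both deduce the vector-space decomposition of $\mathbb{E}^{\bullet,\geq}$ from that of $\mathbb{E}$ (Corollary~\ref{cor_trianglepres}); the rewriting-into-triangular-order argument for injectivity of $\tE\to\mathbb{E}^{\bullet,\geq}$ is also the same.

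One point in your plan is stated too strongly and would fail as written: you claim that at every stage the four auxiliary pairs of Lemma~\ref{lemma_trueforab} ``either reduce to base cases or have strictly smaller determinants,'' i.e.\ are \emph{good} in the sense of \eqref{eq_defofgood}. This is false for pairs of the form $\big((-k,0),(k,0)\big)$: by the Heisenberg relation \eqref{eq_rel4} their commutator is the nonzero scalar $\pm k$ rather than $-\{0\}\bw_{0,0}=0$, so such a pair is never good. This situation genuinely occurs in the induction --- e.g.\ when proving $[\bw_{-1,0},\bw_{m,n}]=\{n\}\bw_{m-1,n}$ one splits $\bx=(1,0)+(m-1,n)$ and the pair $(\by,\ba)=\big((-1,0),(1,0)\big)$ appears. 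The hypothesis of Lemma~\ref{lemma_trueforab} is violated, but its conclusion survives because the defective commutator enters only inside the iterated bracket $[[\bw_\by,\bw_\ba],\bw_\bb]$, and a central scalar has vanishing bracket with everything. Your proposal needs this explicit patch (which the paper supplies); without it the inductive step is not justified at that point. The rest --- surjectivity from Corollary~\ref{cor_trianglepres} and Remark~\ref{rmk_ellheis}, the anti-involution $\bw_{a,b}\mapsto\bw_{-a,b}$ to halve the case analysis, and the assembly of the final presentation from the presentations of the pieces --- matches the paper.
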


\begin{proof}
It will be useful in the following to note that the map
\begin{equation}\label{eq_anti}
\bw_{a,b} \mapsto \bw_{-a,b}
\end{equation}
extends to an algebra anti-involution of the algebra $\tE$ since the cross relations we impose are preserved by this map.

We begin by noting that there is an obvious surjection $\tE \twoheadrightarrow \mathbb{E}$, since the cross-relations \eqref{eq_relhard}--\eqref{eq_rel6} hold in $\mathbb{E}$. (For relation \eqref{eq_rel4}, see Remark \ref{rmk_ellheis}.) In what follows we will show that relation \eqref{eq_mainrel} follows from the cross-relations and the relations inside each of the triangular pieces. This will show that elements of $\tE$ can be written as sums of products of the form $\bw_{x_1,y_1}\bw_{0,y_2}\bw_{x_3,y_3}$ with $x_1<0$ and $x_3>0$. This shows that ${\tE}^{\bullet, \geq}$ and $\mathbb{E}^{\bullet,\geq}$ are isomorphic. Finally, the triangular decomposition of $\mathbb{E}^{\bullet,\geq}$ follows from the triangular decomposition of $\mathbb{E}$ in Corollary \ref{cor_trianglepres}.

We will prove relation \eqref{eq_mainrel} by induction on $d := \lvert \det\left(\bx\,\by\right)\rvert$. The statement when $d=0$ (i.e. when $\bx$ and $\by$ are collinear) follows from the fact that the triangular pieces of $\mathbb{E}$ are subalgebras and from relation \eqref{eq_rel4}.

We then need to show that if $\lvert d(\bx, \by) \rvert \geq 1$, then $(\bx,\by)$ is \emph{good} (see equation \eqref{eq_defofgood} for the definition).
For the rest of the proof we use the following notation:
\begin{equation}\label{eq_notation}
\by = (-j,k),\quad \bx = (m,n),\quad d := \lvert\, d(\bx,\by)\,\rvert  = mk+jn, \quad j,k,m,n \geq 0.
\end{equation}
 To justify the inequality, we argue as follows. If the assumption $j,m \geq 0$ doesn't hold, then there are three possibilities. If one of $j,m$ is equal to 0, then the other is strictly negative, and we can switch $\bx$ and $\by$. If both $j$ and $m$ are strictly negative, we can again switch $\bx$ and $\by$. Finally, if one of $j,m$ is strictly positive and the other is strictly negative, then both $\bw_\bx$ and $\bw_\by$ are in the same triangular piece since $-j$ and $m$ have the same sign. Then $(\bx,\by)$ is good because the triangular pieces are subalgebras with the desired presentation by Theorem ~\ref{thm_skeinpres} and Corollary ~\ref{cor_trianglepres}, item ~\eqref{boldE-trianglepres2}.
This justifies the inequality at the end of \eqref{eq_notation}.

For the base case $d =1$, one of the indices $j,k,m,n$ must be equal to 0, and by the symmetry \eqref{eq_anti}, we may assume either $j=0$ or $k=0$. If $j=0$, then $m=k=1$, and the pair $(\bx,\by)$ is good by relation \eqref{eq_relhard}. If $k=0$, then $j = n = 1$, and we need to prove $[\bw_{-1,0},\bw_{m,1}] = \{1\}\bw_{m-1,1}$. To save time later, we instead prove the following more general equality:
\begin{equation}\label{eq_case0}
[\bw_{-1,0},\bw_{m,n}] = \{n\}\bw_{m-1,n},\quad \quad n \geq 1.
\end{equation}
We prove this by induction in $m$, and the base case $m=1$ follows from \eqref{eq_rel2b}.
For the inductive step, $m \geq 2$, and we write $\bx = \ba + \bb = (1,0) + (m-1,n)$ and use Lemma \ref{lemma_trueforab}. The pair $(\ba,\bb)$ is good because both $x$-coordinates are strictly positive. The pair $(\by, \bb)$ is good by induction. The pair $(\by+\bb, \ba)$ is good because both $x$-coordinates are strictly positive if $m > 2$, and by relation \eqref{eq_relhard} if $m=2$. The pair $(\by,\ba)$ is not good since the commutator of $\bw_\by$ and $\bw_\ba$ is $1$ instead of $0$. However, this commutator only appears in the proof of Lemma \ref{lemma_trueforab} in the second line of the computation in the term $[[\bw_\by,\bw_\ba],\bw_\bb]$, and this term is still $0$ as the proof requires, since $1$ is central. In other words, the conclusion of Lemma \ref{lemma_trueforab} is still true. This completes the induction, which completes the base case $d=1$.
\begin{equation*}
\begin{array}{cc|cc|c|l}
\textrm{pair}& &\textrm{vectors}& &\lvert \det \rvert&
\textrm{good because}\\
\hline
\bx&\by&(m,n)&(-1,0)&m&\textrm{Lemma \ref{lemma_trueforab} and the table below}\\
\hline
\ba&\bb& (1,0)&(m-1,n)&n&
\textrm{$x$-coordinates strictly positive}\\
\by&\bb&(-1,0)&(m-1,n)&n&\textrm{induction on $m$}\\
\by+\bb& \ba&(m-2,n)&(1,0)&n&\textrm{if $m=2$, eq. \eqref{eq_relhard},}\\
\,& & & &\, & \textrm{otherwise, $x$-coordinates strictly positive}
\end{array}
\end{equation*}



We now make the inductive assumption that equation \eqref{eq_mainrel} holds for all $\bx,\, \by$ with $1 \leq \lvert d(\bx,\by)\rvert < d$, and suppose $d(\bx, \by) = d$. We want to show relation \eqref{eq_mainrel}. We split the inductive step into cases.

\noindent \textbf{Case 1:} $j=0$. We have the following assumptions, and we split the proof into subcases
\[
\quad \by = (0,k),\quad \bx = (m,n),\quad d=mk.
\]
\begin{enumerate}
\item $m=1$. This is relation \eqref{eq_relhard}.

\item $m \geq 2$ and $n = 0$, so $\by = (0,k)$ and $\bx = (m,0)$.\\
In this subcase we will split $\by = \ba + \bb = (-1,0) + (1, k)$ and use Lemma \ref{lemma_trueforab} (with $\bx$ and $\by$ switched).

The pair $(\ba,\bb)$ is good by
the inductive assumption because $d(\ba,\bb) = k < mk$ (since $m \geq 2$). The pair $(\bx,\ba)$ is good by relation \eqref{eq_rel4}. The pairs $(\bx, \bb)$ and $(\bx+\ba,\bb)$ are good since they are both in one of the triangular pieces (since $m \geq 2$, both their $x$-coordinates are strictly positive).
The pair $(\bx+\bb, \ba)$ is good by induction, since $\lvert d(\bx+\bb,\ba) \rvert = k < mk = d$.

\begin{equation*}
\begin{array}{cc|cc|c|l}
\textrm{pair}& &\textrm{vectors}& &\lvert \det\rvert &
\textrm{good because}\\
\hline
\by&\bx&(0,k)&(m,0)&mk&\textrm{argument below}\\
\hline
\ba&\bb&(-1,0)&(1,k)&k&0<k < mk \textrm{ since } m \geq 2\\
\bx&\ba&(m,0)&(-1,0)&-&\textrm{relation \eqref{eq_rel4}}\\
\bx&\bb&(m,0)&(1,k)&-&\textrm{$x$-coordinates strictly positive}\\
\bx+\ba&\bb&(m-1,0)&
(1,k)&-&
\textrm{$x$-coordinates strictly positive $(m \geq 2)$}\\
\bx+\bb&\ba&(m+1,k)&(-1,0)&k&0<k < mk \textrm{ since } m \geq 2
\end{array}
\end{equation*}

\item $m \geq 2$ and $n \geq 1$, so $\by = (0,k)$ and $\bx = (m,n)$.\\
In this subcase we split $\bx = \ba + \bb = (1,0) + (m-1,n)$ and use Lemma \ref{lemma_trueforab}.

The pairs $(\ba,\bb)$, $(\by+\ba,\bb)$, and $(\by+\bb,\ba)$ are all in the same triangular piece and therefore are good. The pair
$(\by,\ba)$ is good by induction (since $d(\by,\ba) = k < mk$). The pair $(\by, \bb)$ is good by the induction hypothesis because $1 \leq d(\by,\bb) = k(m-1) < km = d$ (where we used $m \geq 2$). This finishes the proof of this subcase and the proof of Case 1.

\begin{equation*}
\begin{array}{cc|cc|c|l}
\textrm{pair}& &\textrm{vectors}& &\lvert \det\rvert &
\textrm{good because}\\
\hline
\bx&\by&(m,n)&(0,k)&mk&\textrm{argument below}\\
\hline
\ba&\bb&(1,0)&(m-1,n)&n&\textrm{$x$-coordinates strictly positive}\\
\by&\ba&(0,k)&(1,0)&k&0<k<mk \textrm{ since } m \geq 2\\
\by&\bb&(0,k)&(m-1,n)&k(m-1)&0 < k(m-1) < km \textrm{ since } m \geq 2\\
\by+\ba&\bb&(1,k)&(m-1,n)&-&
\textrm{$x$-coordinates strictly positive}\\
\by+\bb&\ba&(m-1,k+n)&(1,0)&-&\textrm{$x$-coordinates strictly positive}
\end{array}
\end{equation*}

\end{enumerate}

\noindent \textbf{Case 2:} $k=0$. If $m=0$ we can switch $\bx$ and $\by$ and use Case 1.  If $n=0$ then $d=0$, which we have already ruled out. We therefore have the following assumptions, and we split the proof into subcases
\[
 \by = (-j,0),\quad \bx = (m,n),\quad d = jn,\quad m,\, n \geq 1.
 \]

\begin{enumerate}
\item If $j=1$, this case was proved in \eqref{eq_case0}.
\item Suppose $j \geq 2$. We prove that the pair $(\by,\bx)$ is good by induction in $m$.

For the base case $m=1$, if $n=1$, then this is relation \eqref{eq_rel6}. Otherwise, $n \geq 2$ and we have
\[
 \by = (-j,0),\quad \bx = (1,n),\quad j,n \geq 2,\quad d=jn.
\]
We split $\bx = \ba + \bb = (0,1) + (1,n-1)$.

The pair $(\ba,\bb)$ is good by relation \eqref{eq_relhard}.
The pair $(\by,\ba)$ is good by the inductive hypothesis on $d$, since $1 \leq \lvert d(\by,\ba) \rvert = j < jn$ (we used $n \geq 2$).
The pair $(\by,\bb)$ is good since $1 \leq \lvert d(\by,\bb) \rvert = j(n-1) < jn = d$.
The pair $(\by+\ba,\bb)$ is good since $1 \leq \lvert d(\by+\ba,\bb)\rvert = j(n-1)+1 < jn$ (we used $j,n \geq 2$).
The pair $(\by+\bb,\ba)$ is good since $1 \leq \lvert d(\by+\bb,\ba)\rvert = j-1 < jn$ (we used $j \geq 2$). This completes the proof of the base case $m=1$.

\begin{equation*}
\begin{array}{cc|cc|c|l}
\textrm{pair}& &\textrm{vectors}& &\lvert \det\rvert &
\textrm{good because}\\
\hline
\bx&\by&(1,n)&(-j,0)&jn&\textrm{argument below}\\
\hline
\ba&\bb&(0,1)&(1,n-1)&1&\textrm{relation \eqref{eq_relhard}}\\
\by&\ba&(-j,0)&(0,1)&j&0<j<jn \textrm{ since } n \geq 2\\
\by&\bb&(-j,0)&(1,n-1)&j(n-1)&0<j(n-1)<jn \textrm{ since } n \geq 2\\
\by+\ba&\bb&(-j,1)&(1,n-1)&j(n-1)+1&0<j(n-1)+1<jn \textrm{ since } n,j \geq 2\\
\by+\bb&\ba&(-j+1,n-1)&(0,1)&j-1&0<j-1<jn \textrm{ since } n,j \geq 2
\end{array}
\end{equation*}

We may now assume both inductive assumptions, i.e. that $(\bx',\by')$ is good if either
\begin{equation}\label{eq_doubleind}
 1 \leq \lvert d(\bx',\by') \rvert < jn,\quad\textrm{ or } \quad \big[ \bx' = (m',n) \textrm{ and } \by = (-j,0) \textrm{ and } 0 < m' < m \big].
\end{equation}
We now split $\bx = \ba + \bb = (1,0) + (m-1,n)$, and recall $j,m,n \geq 2$ and $d=jn$.

The pair $(\ba,\bb)$ is good since both $x$-coordinates are positive, so $\bw_\ba$ and $\bw_\bb$ are in the same triangular piece.
The pair $(\by,\ba)$ is good by relation \eqref{eq_rel4}.
The pair $(\by,\bb)$ is good by the inductive assumption on $m$.
The pair $(\by+\ba,\bb)$ is good since $1 \leq \lvert d(\by+\ba,\bb)\rvert  = (j-1)n < d = jn$.
The pair $(\by+\bb,\ba)$ is good since $1 \leq \lvert d(\by+\bb,\ba) \rvert = n < d = jn$.
This finishes the inductive step for $m$, which finishes subcase (2) and Case 2.

\begin{equation*}
\begin{array}{cc|cc|c|l}
\textrm{pair}& &\textrm{vectors}& &\lvert \det\rvert &
\textrm{good because}\\
\hline
\bx&\by&(m,n)&(-j,0)&jn&\textrm{argument below}\\
\hline
\ba&\bb&(1,0)&(m-1,n)&-&\textrm{both $x$-coordinates strictly positive}\\
\by&\ba&(-j,0)&(1,0)&-&\textrm{relation \eqref{eq_rel4}}\\
\by&\bb&(-j,0)&(m-1,n)&-&\textrm{assumption \eqref{eq_doubleind}}\\
\by+\ba&\bb&(-j+1,0)&(m-1,n)&(j-1)n&0 < (j-1)n < jn \textrm{ since } j,n \geq 2\\
\by+\bb&\ba&(m-j-1,n)&(1,0)&n&0<n<jn \textrm{ since } j \geq 2
\end{array}
\end{equation*}

\end{enumerate}

\noindent \textbf{Case 3:} $j,k \geq 1$. By the symmetry \eqref{eq_anti} and Cases 1 and 2 we may also assume that $m,n \geq 1$
\[
\by = (-j,k),\quad \bx = (m,n),\quad j,k,m,n \geq 1,\quad d = jn+km.
\]
We now split into subcases.
\begin{enumerate}
\item $j = m = 1$, so $\by = (-1,k)$ and $\bx = (1,n)$.\\
We proceed by induction on $n$, and the base case $n=1$ is relation \eqref{eq_rel3}.

We now prove the inductive step, and may assume
\[
\by = (-1,k),\quad \bx = (1,n),\quad k \geq 1,\quad n \geq 2,\quad d = k+n.
\]
We split $\bx = \ba + \bb = (1,1) + (0,n-1)$.

The pair $(\ba,\bb)$ is good because $1 \leq d(\ba,\bb) = n-1 < k+n = d$ (we used $n \geq 2$). The pair $(\by,\ba)$ is good because $1 \leq d(\by,\ba) = k+1 < k+n$ (we used $n \geq 2$). The pair $(\by,\bb)$ is good because $1 \leq d(\by,\bb) = n-1 < n+k$. The pair $(\by+\ba,\bb)$ is good because both $x$-coordinates are 0. The pair $(\by+\bb,\ba)$ is good by relation \eqref{eq_rel3}. This finishes the induction and the proof of Subcase (1).
\begin{equation*}
\begin{array}{cc|cc|c|l}
\textrm{pair}& &\textrm{vectors}& &\lvert \det\rvert &
\textrm{good because}\\
\hline
\bx&\by&(1,n)&(-1,k)&k+n&\textrm{argument below}\\
\hline
\ba&\bb&(1,1)&(0,n-1)&n-1&0<n-1<k+n \textrm{ since } n \geq 2\\
\by&\ba&(-1,k)&(1,1)&k+1&0<k+1<k+n\textrm{ since }n \geq 2\\
\by&\bb&(-1,k)&(0,n-1)&n-1&0<n-1<k+n\textrm{ since }n \geq 2\\
\by+\ba&\bb&(0,k+1)&(0,n-1)&-&\textrm{both $x$-coordinates are 0}\\
\by+\bb&\ba&(-1,k+n-1)&(1,1)&-&\textrm{relation \eqref{eq_rel3}}
\end{array}
\end{equation*}

\item $m \geq 2$. (If $m=1$ and $j \geq 2$ we can apply the symmetry \eqref{eq_anti} and switch $\bx$ and $\by$.)\\
We split $\bx = \ba + \bb = (1,0) + (m-1,n)$.

The pair $(\ba,\bb)$ is good because both $x$-coordinates are strictly positive.
The pair $(\by,\ba)$ is good by induction because $1 \leq \lvert d(\by,\ba) \rvert = k < jn+mk = d$ (we used $j,k,n \geq 1$).
The pair $(\by,\bb)$ is good by induction because $1 \leq \lvert d(\by,\bb) \rvert = k(m-1)+jn < d$ (we used $j,k,n \geq 1$).
The pair $(\by+\ba,\bb)$ is good by induction because $1 \leq \lvert d(\by+\ba,\bb) \rvert = (j-1)n + k(m-1) < d$ (we used $m \geq 2$ and $k \geq 1$).
The pair $(\by+\bb,\ba)$ is good by induction because $1 \leq \lvert d(\by+\bb,\ba)\rvert = k + n < jn+km$. This finishes the proof of subcase (2) and Case 3, and finally finishes the proof of the proposition.
\begin{equation*}
\begin{array}{cc|cc|c|l}
\textrm{pair}& &\textrm{vectors}& &\lvert \det\rvert &
\textrm{good because}\\
\hline
\bx&\by&(m,n)&(-j,k)&d := jn+km&\textrm{argument below}\\
\hline
\ba&\bb&(1,0)&(m-1,n)&-&\textrm{$x$-coordinates strictly positive}\\
\by&\ba&(-j,k)&(1,0)&k&k<d\\
\by&\bb&(-j,k)&(m-1,n)&jn+k(m-1)&jn+k(m-1) < d \textrm{ since } m \geq 2\\
\by+\ba&\bb&(-j+1,k)&(m-1,n)&(j-1)n + k(m-1)&(j-1)n+k(m-1)<d\textrm{ since } m \geq 2\\
\by+\bb&\ba&(m-j-1,k+n)&(1,0)&k+n&k+n<d \textrm{ since } j,k,m,n \geq 1
\end{array}
\end{equation*}

\end{enumerate}

\end{proof}

\begin{corollary} \label{cor:generators}
The algebra $\mathbb{E}^{\bullet,\geq}$ is generated by the elements $\bw_{0,1}$ and
 $\bw_{k,0}$ for  $k \in \Z$.
\end{corollary}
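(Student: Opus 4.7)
The plan is to show by induction on $b \geq 0$ that every generator $\bw_{a,b}$ of $\mathbb{E}^{\bullet,\geq}$ (as described by Proposition \ref{prop_tridecomp}) lies in the subalgebra $S$ generated by $\bw_{0,1}$ and $\{\bw_{k,0} \mid k \in \Z\}$. The base case $b=0$ is immediate from the choice of generators. For the $b=1$ layer, the element $\bw_{0,1}$ is already in $S$, and for each $a \neq 0$ one computes $[\bw_{a,0},\bw_{0,1}]$ via \eqref{eq_mainrel}: the determinant is $\det\!\begin{pmatrix} a & 0 \\ 0 & 1 \end{pmatrix} = a \neq 0$, so this bracket equals a nonzero scalar multiple of $\bw_{a,1}$, allowing us to solve for $\bw_{a,1} \in S$.

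For the inductive step with $b \geq 2$, I would treat the cases $a \neq 0$ and $a = 0$ separately. When $a \neq 0$, the commutator $[\bw_{a,b-1},\bw_{0,1}]$ has determinant $a \neq 0$, hence yields $-\{a\}\bw_{a,b}$ by \eqref{eq_mainrel}, and the factor $\bw_{a,b-1}$ is in $S$ by induction. This handles everything off the $y$-axis.

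The main obstacle is the $a=0$ case, since $\bw_{0,1}$ and any $\bw_{0,b-1}$ are collinear through the origin and hence commute, so no iterated bracket staying on the $y$-axis will produce $\bw_{0,b}$. The remedy is to bracket transversely: consider $[\bw_{-1,1},\bw_{1,b-1}]$. The determinant is $\det\!\begin{pmatrix} -1 & 1 \\ 1 & b-1 \end{pmatrix} = -b \neq 0$ for $b \geq 2$, so \eqref{eq_mainrel} gives $\{b\}\bw_{0,b}$ up to the nonzero scalar $\{b\}$. Both $\bw_{-1,1}$ (from the $b=1$ step) and $\bw_{1,b-1}$ (by induction on $b$) already belong to $S$, so we recover $\bw_{0,b}$.

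Since the three cases above exhaust all generators, the induction closes and $S = \mathbb{E}^{\bullet,\geq}$. The only subtlety worth flagging is the need for a transverse bracket to escape the $y$-axis; aside from that, the argument is a straightforward verification that the relevant determinants are nonzero so that \eqref{eq_mainrel} can be inverted at each step.
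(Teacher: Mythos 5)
Your proof is correct and takes essentially the same route as the paper's one-line argument, which simply observes that the vectors $(k,0)$ and $(0,1)$ generate the index monoid and invokes \eqref{eq_mainrel}. Your extra care in the $a=0$, $b\geq 2$ case is warranted — collinear brackets vanish, so monoid generation alone does not literally suffice, and the transverse bracket $[\bw_{-1,1},\bw_{1,b-1}]$ with determinant $-b$ is exactly the right fix for a point the paper's proof glosses over.
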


\begin{proof}
This follows from \eqref{eq_mainrel} since the $(k,0)$ and $(0,1)$ generate the lattice $\N \times \Z$ as a monoid.
\end{proof}

%
\section{The $q$-deformed Heisenberg category}\label{sec:qheis}
%

%
\subsection{The integral Heisenberg algebra}
%
We begin by defining an integral version of the infinite Heisenberg algebra. Let $\mathfrak{h}_{\mathbb{Z}}$ be the unital ring with generators $a_n, b_n$, $n \in \mathbb{Z}_{\geq 1}$ and relations
\begin{equation*}
a_n b_m = b_m a_n + b_{m-1}a_{n-1}, \hspace{.5in} a_n a_m = a_m a_n,
\hspace{.5in}
b_n b_m = b_m b_n.
\end{equation*}
Here we adopt the convention that $a_0 = b_0 = 1$.

A diagrammatic approach to the categorification of $\mathfrak{h}$ was first developed by Khovanov ~\cite{Kh-Heis}.  He constructed a category whose Grothendieck group contains $\mathfrak{h}$.  Furthermore he conjectured that the inclusion of the Heisenberg algebra into the Grothendieck group is an isomorphism.

A $q$-deformation of the Khovanov Heisenberg category was introduced in \cite{LS13}.  We recall its definition now.

%
\subsection{Definition of $\cal{H}$}
%
We define an additive $\k[q,q^{-1}]$-linear strict monoidal category $\H$ as follows. The set of objects is generated by two objects $\P$ and $\Q$. Thus an arbitrary object of $\H$ is a finite direct sum of tensor products $\P_{\epsilon} := \P_{\epsilon_1} \otimes \cdots \otimes \P_{\epsilon_n}$, where $\epsilon = (\epsilon_1, \ldots, \epsilon_n)$ is a finite sequence of $+$ and $-$ signs and we set $\P_+:=\P$ and $\P_-:=\Q$. The unit object is $1 = \P_{\emptyset}$, which we represent diagramatically as the empty diagram.
The space of morphisms $\Hom_{\H}(\P_{\epsilon}, \P_{\epsilon'} )$
is the $\k[q, q^{-1}]$-module generated by planar diagrams modulo local relations. The diagrams are oriented compact one-manifolds immersed in the strip $\R \times  [0, 1]$, modulo rel boundary isotopies. The endpoints of the one-manifold are located at $\{1,\ldots,m\} \times \{0\}$ and $\{1,\ldots,k\}\times \{1\}$, where $m$ and $k$ are the lengths of the sequences $\epsilon$ and $\epsilon'$ respectively. The orientation of the one-manifold at the endpoints must agree with the signs in the sequences $\epsilon$ and $\epsilon'$ and triple intersections are not allowed.  The local relations are given below.

\begin{minipage}{0.45\textwidth}
\begin{align}
\label{heis:up-double}
\hackcenter{\begin{tikzpicture}[scale=0.8]
    \draw[thick] (0,0) .. controls ++(0,.5) and ++(0,-.5) .. (.75,1);
    \draw[thick] (.75,0) .. controls ++(0,.5) and ++(0,-.5) .. (0,1);
    \draw[thick, ->] (0,1 ) .. controls ++(0,.5) and ++(0,-.5) .. (.75,2);
    \draw[thick, ->] (.75,1) .. controls ++(0,.5) and ++(0,-.5) .. (0,2);
    \node at (0,-.25) {$\;$};
    \node at (0,2.25) {$\;$};
\end{tikzpicture}}
&\;\; = \;\;
q\;
\hackcenter{\begin{tikzpicture}[scale=0.8]
    \draw[thick, ->] (0,0) -- (0,2);
    \draw[thick, ->] (.75,0) -- (.75,2);
\end{tikzpicture}}
\;\; +(q-1) \;
\hackcenter{\begin{tikzpicture}[scale=0.8]
    \draw[thick, ->] (0,0) .. controls ++(0,.75) and ++(0,-.75) .. (.75,2);
    \draw[thick, ->] (.75,0) .. controls ++(0,.75) and ++(0,-.75) .. (0,2);
\end{tikzpicture}}
\\
  \label{heis:downup}
  \hackcenter{\begin{tikzpicture}[scale=0.8]
    \draw[thick,<-] (0,0) .. controls ++(0,.5) and ++(0,-.5) .. (.75,1);
    \draw[thick] (.75,0) .. controls ++(0,.5) and ++(0,-.5) .. (0,1);
    \draw[thick, ->] (0,1 ) .. controls ++(0,.5) and ++(0,-.5) .. (.75,2);
    \draw[thick] (.75,1) .. controls ++(0,.5) and ++(0,-.5) .. (0,2);
\end{tikzpicture}}
&\;\; = \;\;
q\;\;
\hackcenter{\begin{tikzpicture}[scale=0.8]
    \draw[thick, <-] (0,0) -- (0,2);
    \draw[thick, ->] (.75,0) -- (.75,2);
\end{tikzpicture}}
\;\; - \;\;
q\;
\hackcenter{\begin{tikzpicture}[scale=0.8]
    \draw[thick, <-] (0,0) .. controls ++(0,.75) and ++(0,.75) ..(.75,0);
    \draw[thick, <-] (.75,2) .. controls ++(0,-.75) and ++(0,-.75) .. (0,2);
\end{tikzpicture}}
  \\
  \label{eq:heis-bub}
\hackcenter{\begin{tikzpicture}[scale=0.8]
\draw [shift={+(0,0)}](0,0) arc (180:360:0.5cm) [thick];
\draw [shift={+(0,0)}][->](1,0) arc (0:180:0.5cm) [thick];
\node at (0,-1) {$\;$};
\node at (0,1) {$\;$};
\end{tikzpicture}}
& \;\; = \;\; 1,
\end{align}
\end{minipage}%
\hfill
\begin{minipage}{0.45\textwidth}
\begin{align} \label{heis:up-triple}
\hackcenter{\begin{tikzpicture}[scale=0.8]
    \draw[thick, ->] (0,0) .. controls ++(0,1) and ++(0,-1) .. (1.2,2.5);
    \draw[thick, ] (.6,0) .. controls ++(0,.5) and ++(0,-.5) .. (0,1.25);
    \draw[thick, ->] (0,1.25) .. controls ++(0,.5) and ++(0,-.5) .. (0.6,2.5);
    \draw[thick, ->] (1.2,0) .. controls ++(0,1) and ++(0,-1) .. (0,2.5);
\end{tikzpicture}}
&\;\; = \;\;
\hackcenter{\begin{tikzpicture}[scale=0.8]
    \draw[thick, ->] (0,0) .. controls ++(0,1) and ++(0,-1) .. (1.2,2.5);
    \draw[thick, ] (.6,0) .. controls ++(0,.5) and ++(0,-.5) .. (1.2,1.25);
    \draw[thick, ->] (1.2,1.25) .. controls ++(0,.5) and ++(0,-.5) .. (0.6,2.5);
    \draw[thick, ->] (1.2,0) .. controls ++(0,1) and ++(0,-1) .. (0,2.5);
\end{tikzpicture}}
\\
\label{heis:up down}
  \hackcenter{\begin{tikzpicture}[scale=0.8]
    \draw[thick] (0,0) .. controls ++(0,.5) and ++(0,-.5) .. (.75,1);
    \draw[thick,<-] (.75,0) .. controls ++(0,.5) and ++(0,-.5) .. (0,1);
    \draw[thick] (0,1 ) .. controls ++(0,.5) and ++(0,-.5) .. (.75,2);
    \draw[thick, ->] (.75,1) .. controls ++(0,.5) and ++(0,-.5) .. (0,2);
\end{tikzpicture}}
&\;\; = \;\;
q\;\;
\hackcenter{\begin{tikzpicture}[scale=0.8]
    \draw[thick,->] (0,0) -- (0,2);
    \draw[thick, <-] (.75,0) -- (.75,2);
\end{tikzpicture}}
\\
\hackcenter{
\begin{tikzpicture}[scale=0.8]
\draw  [shift={+(5,0)}](0,0) .. controls (0,.5) and (.7,.5) .. (.9,0) [thick];
\draw  [shift={+(5,0)}](0,0) .. controls (0,-.5) and (.7,-.5) .. (.9,0) [thick];
\draw  [shift={+(5,0)}](1,-1) .. controls (1,-.5) .. (.9,0) [thick];
\draw  [shift={+(5,0)}](.9,0) .. controls (1,.5) .. (1,1) [->] [thick];
\end{tikzpicture}}
&\;\;=\;\; 0
\end{align}
\end{minipage}%

Let $Kar(\H)$ be the Karoubi envelope of $\H$.

\begin{remark}
In \cite{LS13} the authors write $\H'(q)$ for the category we call $\H$ and $\H(q)$ for our category $Kar(\H)$.
\end{remark}

\begin{theorem}
\cite[Theorem 4.12]{LS13}
There is an injective map $\phi \colon \mathfrak{h} \rightarrow K_0(Kar(\H))$.
\end{theorem}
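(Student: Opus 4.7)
The plan is to construct explicit objects in $Kar(\H)$ that realize the Heisenberg generators and then prove injectivity by exhibiting a faithful categorical action that descends to the standard Fock representation. First, observe that the endomorphism algebra $\End_\H(\P^{\otimes n})$ contains the finite Hecke algebra $H_n$ of Definition \ref{def:finite-hecke}, with the crossings of adjacent upward strands playing the role of the $t_i$ (the quadratic relation is built into \eqref{heis:up-double}); similarly the ``downward'' Hecke algebra sits inside $\End_\H(\Q^{\otimes n})$. After passing to $\k(q)$, the Hecke algebra $H_n$ splits and contains a symmetrizer idempotent $e_n^+$ (and an antisymmetrizer $e_n^-$). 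Define in $Kar(\H)$ the objects
\[
 \P^{(n)} := (\P^{\otimes n}, e_n^+), \qquad \Q^{(n)} := (\Q^{\otimes n}, e_n^+),
\]
and set $\phi(b_n) := [\P^{(n)}]$ and $\phi(a_n) := [\Q^{(n)}]$.

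Second, to show $\phi$ is a well-defined ring homomorphism we check the three defining relations of $\mathfrak{h}$ at the categorical level. The commuting relations $a_na_m=a_ma_n$ and $b_nb_m=b_mb_n$ come from isomorphisms $\P^{(n)}\otimes\P^{(m)} \cong \P^{(m)}\otimes\P^{(n)}$ and $\Q^{(n)}\otimes\Q^{(m)}\cong \Q^{(m)}\otimes\Q^{(n)}$ constructed by threading the relevant idempotents through a braiding built from crossings. The mixed relation $a_nb_m = b_ma_n+b_{m-1}a_{n-1}$ is the content of a categorical decomposition
\[
 \Q^{(n)}\otimes \P^{(m)} \;\cong\; \P^{(m)}\otimes \Q^{(n)} \;\oplus\; \P^{(m-1)}\otimes \Q^{(n-1)}.
\]
This is obtained by iterating the local relations \eqref{heis:downup} and \eqref{heis:up down} on $\Q^{\otimes n}\otimes\P^{\otimes m}$ to produce a sum of two terms, then inserting the symmetrizers $e_n^+, e_m^+$ on each side and checking that the resulting morphisms are mutually inverse idempotent decompositions of the identity on $\Q^{(n)}\otimes\P^{(m)}$.

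Third, for injectivity we construct a categorical action of $\H$ on $\bigoplus_{n\geq 0} H_n\fmod$ with $\P$ acting as induction $\Ind_{H_n}^{H_{n+1}}$ and $\Q$ as restriction $\Res^{H_n}_{H_{n-1}}$ (the local relations of $\H$ correspond to natural transformations between these biadjoint functors). The induced action of $K_0(Kar(\H))$ on $\bigoplus_{n\geq 0} K_0(H_n\fmod) \cong \sym$ sends $[\P^{(n)}]$ to multiplication by the complete symmetric function $h_n$ (since $e_n^+$ projects onto the trivial representation of $H_n$), and $[\Q^{(n)}]$ to the adjoint operator $h_n^\perp$. These operators generate the standard bosonic Fock representation of $\mathfrak{h}$, which is well known to be faithful. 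Hence the composite $\mathfrak{h}\xrightarrow{\phi} K_0(Kar(\H)) \to \End_\C(\sym)$ is injective, and so is $\phi$.

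The main obstacle is the categorical mixed relation: tracking the interaction of the crossings in $\Q^{\otimes n}\otimes \P^{\otimes m}$ with the symmetrizer idempotents requires careful skein-theoretic bookkeeping, and the $q$-deformed version introduces additional terms (compare \eqref{heis:up-double} with its $q=1$ specialization) that were absent in Khovanov's original argument. Once that decomposition is in place, the faithfulness step is essentially classical.
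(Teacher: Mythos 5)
Your overall strategy (realize the generators as idempotent-truncated powers of $\P$ and $\Q$, verify the relations categorically, then deduce injectivity from faithfulness of the Fock space representation) is the same as in the cited source [LS13] (and Khovanov's $q=1$ argument), and the injectivity step is sound as far as it goes. But there is a concrete error in the construction that breaks the well-definedness of $\phi$: you put the symmetrizer $e_n^+$ on \emph{both} $\P^{\otimes n}$ and $\Q^{\otimes n}$. With that choice the correct decomposition is
\[
\Q^{(n)}\otimes \P^{(m)} \;\cong\; \bigoplus_{k\geq 0} \P^{(m-k)}\otimes \Q^{(n-k)},
\]
i.e.\ the full geometric-series relation $a_nb_m=\sum_{k\geq 0}b_{m-k}a_{n-k}$, not the two-term relation $a_nb_m=b_ma_n+b_{m-1}a_{n-1}$ defining $\mathfrak{h}_{\Z}$ in this paper. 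This is exactly the difference between items (3) and (4) of Proposition~\ref{prop:relations}: the two-term identity holds for the pair $\bigl([e(n)]_{\downarrow},[e'(m)]_{\uparrow}\bigr)$, where $e(n)$ is the row (symmetrizer) idempotent and $e'(m)$ the column (antisymmetrizer) idempotent, while the symmetrizer--symmetrizer pair satisfies the full sum. Your claimed two-term isomorphism cannot hold alongside the (true) full-sum decomposition, since subtracting would force $[\P^{(m-k)}][\Q^{(n-k)}]=0$ in $K_0$ for $k\geq 2$, contradicting Krull--Schmidt. Consequently the assignment $\phi(a_n)=[\Q^{(n)}]$, $\phi(b_m)=[\P^{(m)}]$ with both symmetrizers does not respect the presented relations of $\mathfrak{h}_{\Z}$, so $\phi$ is not even a well-defined ring map before one gets to injectivity.

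The fix is small and restores agreement with [LS13]: use conjugate idempotents on the two families, e.g.\ $\phi(b_m)=[(\P^{\otimes m},e'(m))]$ and $\phi(a_n)=[(\Q^{\otimes n},e(n))]$. Then the mixed relation is precisely Proposition~\ref{prop:relations}(3) (categorically, [LS13, Thm.~4.7]), and on the Fock space these classes act as multiplication by $e_m$ and as $h_n^{\perp}$ (or, after relabeling, as $h_m\cdot$ and $e_n^{\perp}$), which do satisfy $a_nb_m=b_ma_n+b_{m-1}a_{n-1}$; your faithfulness argument then goes through verbatim. A second, more minor point: the ``careful skein-theoretic bookkeeping'' you defer is genuinely the content of the theorem in the $q$-deformed setting --- one must check that the extra $(q-1)$-terms in \eqref{heis:up-double} and \eqref{heis:downup} are absorbed by the Hecke-algebra idempotents $e(n)$, $e'(m)$ rather than producing new summands --- so as written that step is asserted rather than proved.
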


%
\subsection{Deformed degenerate affine Hecke algebra in $\H$}
%

Inside $\H$ consider the element $X_i \in \End(\P^n)$, acting on the $i$th factor $\P$, as illustrated in the left hand side of (\ref{eq:curl}). In \cite{Kh-Heis} this element was studied in the $q=1$ setting and it was encoded diagrammatically by a solid dot, as shown 
\begin{equation}\label{eq:curl}
\hackcenter{\begin{tikzpicture}[scale=0.8]
\draw  (1.9,0) .. controls (1.9,.5) and (1.3,.5) .. (1.1,0) [thick];
\draw  (1.9,0) .. controls (1.9,-.5) and (1.3,-.5) .. (1.1,0) [thick];
\draw  (1,-1) .. controls (1,-.5) .. (1.1,0) [thick];
\draw  (1.1,0) .. controls (1,.5) .. (1,1) [->] [thick];
\draw  (2.5,0) node {$=:$};
\draw (3,-1) -- (3,1) [thick][->];
\filldraw [black] (3,0) circle (2pt);
\end{tikzpicture}}
\end{equation}

In \cite{LS13} it was shown that these $X_i$'s together with the finite Hecke algebra generated by the crossings $H_n\subset \End(\P^n)$ generate a copy of the $q$-deformed degenerate affine Hecke algebra $\dah_n$.  In particular, using the relations in $\H$ the equations
\begin{align} \label{eq:nil-dot}
\hackcenter{\begin{tikzpicture}[scale=0.8]
    \draw[thick, ->] (0,0) .. controls (0,.75) and (.75,.75) .. (.75,1.5)
        node[pos=.25, shape=coordinate](DOT){};
    \draw[thick, ->] (.75,0) .. controls (.75,.75) and (0,.75) .. (0,1.5);
    \filldraw  (DOT) circle (2.5pt);
\end{tikzpicture}}
\quad-\quad
\hackcenter{\begin{tikzpicture}[scale=0.8]
    \draw[thick, ->] (0,0) .. controls (0,.75) and (.75,.75) .. (.75,1.5)
        node[pos=.75, shape=coordinate](DOT){};
    \draw[thick, ->] (.75,0) .. controls (.75,.75) and (0,.75) .. (0,1.5);
    \filldraw  (DOT) circle (2.5pt);
\end{tikzpicture}}
\quad=\quad
\hackcenter{\begin{tikzpicture}[scale=0.8]
    \draw[thick, ->] (0,0) .. controls (0,.75) and (.75,.75) .. (.75,1.5);
    \draw[thick, ->] (.75,0) .. controls (.75,.75) and (0,.75) .. (0,1.5)
        node[pos=.75, shape=coordinate](DOT){};
    \filldraw  (DOT) circle (2.5pt);
\end{tikzpicture}}
\quad-\quad
\hackcenter{\begin{tikzpicture}[scale=0.8]
    \draw[thick, ->] (0,0) .. controls (0,.75) and (.75,.75) .. (.75,1.5);
    \draw[thick, ->] (.75,0) .. controls (.75,.75) and (0,.75) .. (0,1.5)
        node[pos=.25, shape=coordinate](DOT){};
      \filldraw  (DOT) circle (2.5pt);
\end{tikzpicture}}
&\quad=\quad
(q-1)\;
\hackcenter{\begin{tikzpicture}[scale=0.8]
    \draw[thick, ->] (0,0) -- (0,1.5) node[pos=.55, shape=coordinate](DOT){};;
    \draw[thick, ->] (.75,0) -- (.75,1.5);
    \filldraw  (DOT) circle (2.5pt);
\end{tikzpicture}}
\;\; + \;\;
q\;
\hackcenter{\begin{tikzpicture}[scale=0.8]
    \draw[thick, ->] (0,0) -- (0,1.5);
    \draw[thick, ->] (.75,0) -- (.75,1.5);
\end{tikzpicture}}
\end{align}
follow. More precisely, denote a crossing of the $i$th and $(i+1)$st strands by $T_i$.
The algebra generated by $ T_i $ for $i=1, \ldots, n-1$ and $ X_i $ for $ i=1,\ldots,n$ satisfying the relations in Definition~\ref{def:qdegen} is the $q$-deformed degenerate affine Hecke algebra $\dah_n$.

We now define bubbles which are endomorphisms of $\mathbf{1}=P_{+}^0$ which can be tensored with endomorphisms of $\P^n$ to give new endomorphisms. 

\begin{equation}\label{eq:bubbles}
\begin{tikzpicture}
\draw  [shift={+(0,0)}](-1,0) node {$c_n:=$};
\draw  [shift={+(0,0)}](.2,.125) node {$n$};
\filldraw [shift={+(0,0)}][black] (0,.125) circle (2pt);
\draw [shift={+(0,0)}](0,0) arc (180:360:0.5cm) [thick];
\draw [shift={+(0,0)}][<-](1,0) arc (0:180:0.5cm) [thick];
\draw  [shift={+(5,0)}](-1,0) node {$\tilde{c}_n :=$};
\draw  [shift={+(5,0)}](.2,.125) node {$n$};
\filldraw [shift={+(5,0)}][black] (0,.125) circle (2pt);
\draw [shift={+(5,0)}][->](0,0) arc (180:360:0.5cm) [thick];
\draw [shift={+(5,0)}][](1,0) arc (0:180:0.5cm) [thick];
\end{tikzpicture}
~.
\end{equation}

%
\subsection{Helpful relations in $\H$}
%

We collect below some additional relations that follow from those in the previous section.
\begin{equation} \label{heis:mix-triple}
\hackcenter{\begin{tikzpicture}[scale=0.8]
    \draw[thick, ->] (0,0) .. controls ++(0,1) and ++(0,-1) .. (1.2,2.5);
    \draw[thick,<- ] (.6,0) .. controls ++(0,.5) and ++(0,-.5) .. (0,1.25);
    \draw[thick ] (0,1.25) .. controls ++(0,.5) and ++(0,-.5) .. (0.6,2.5);
    \draw[thick, ->] (1.2,0) .. controls ++(0,1) and ++(0,-1) .. (0,2.5);
\end{tikzpicture}}
\;\; = \;\;
\hackcenter{\begin{tikzpicture}[scale=0.8]
    \draw[thick, ->] (0,0) .. controls ++(0,1) and ++(0,-1) .. (1.2,2.5);
    \draw[thick, <- ] (.6,0) .. controls ++(0,.5) and ++(0,-.5) .. (1.2,1.25);
    \draw[thick ] (1.2,1.25) .. controls ++(0,.5) and ++(0,-.5) .. (0.6,2.5);
    \draw[thick, ->] (1.2,0) .. controls ++(0,1) and ++(0,-1) .. (0,2.5);
\end{tikzpicture}}
  \;\; + q(q-1)\;\;
\hackcenter{\begin{tikzpicture}[scale=0.8]
    \draw[thick, ->] (0,0) -- (0,2.5);
    \draw[thick, <- ] (.6,0) .. controls ++(0,.75) and ++(0,.75) .. (1.2,0);
    \draw[thick, <-] (1.2,2.5) .. controls ++(0,-.75) and ++(0,-.75) .. (0.6,2.5);
\end{tikzpicture}}
\end{equation}

\subsection{Circle crossing relations}

The crossing is invertible with inverse given by
\begin{align} \label{eq:inv-crossing}
\hackcenter{\begin{tikzpicture}[scale=0.8]
    \draw[thick, ->] (0,0) .. controls (0,.75) and (.75,.75) .. (.75,1.5)
        node[pos=.25, shape=coordinate](DOT){};
    \draw[thick, ->] (.75,0) .. controls (.75,.75) and (0,.75) .. (0,1.5);
    \draw  (.375,.75) circle (4pt);
\end{tikzpicture}}
\quad := \quad
q^{-1} \;\;
\hackcenter{\begin{tikzpicture}[scale=0.8]
    \draw[thick, ->] (0,0) .. controls (0,.75) and (.75,.75) .. (.75,1.5)
        node[pos=.75, shape=coordinate](DOT){};
    \draw[thick, ->] (.75,0) .. controls (.75,.75) and (0,.75) .. (0,1.5);
\end{tikzpicture}}
\;\; -\;\; \frac{(q-1)}{q}\;\;
\hackcenter{\begin{tikzpicture}[scale=0.8]
    \draw[thick, ->] (0,0) -- (0,1.5);
    \draw[thick, ->] (.75,0)-- (.75,1.5);
\end{tikzpicture}}
\end{align}
in $\H$. One can readily check that the identities
\begin{equation} \label{heis:circle_up down}
  \hackcenter{\begin{tikzpicture}[scale=0.8]
    \draw[thick] (0,0) .. controls ++(0,.5) and ++(0,-.5) .. (.8,1);
    \draw[thick,<-] (.8,0) .. controls ++(0,.5) and ++(0,-.5) .. (0,1);
    \draw[thick] (0,1 ) .. controls ++(0,.5) and ++(0,-.5) .. (.8,2);
    \draw[thick, ->] (.8,1) .. controls ++(0,.5) and ++(0,-.5) .. (0,2);
    \draw  (.4,.5) circle (4pt);
\end{tikzpicture}}
\;\; = \;\;
  \hackcenter{\begin{tikzpicture}[scale=0.8]
    \draw[thick] (0,0) .. controls ++(0,.5) and ++(0,-.5) .. (.8,1);
    \draw[thick,<-] (.8,0) .. controls ++(0,.5) and ++(0,-.5) .. (0,1);
    \draw[thick] (0,1 ) .. controls ++(0,.5) and ++(0,-.5) .. (.8,2);
    \draw[thick, ->] (.8,1) .. controls ++(0,.5) and ++(0,-.5) .. (0,2);
    \draw  (.4,1.5) circle (4pt);
\end{tikzpicture}}
\;\; = \;\;
\;\;
\hackcenter{\begin{tikzpicture}[scale=0.8]
    \draw[thick,->] (0,0) -- (0,2);
    \draw[thick, <-] (.75,0) -- (.75,2);
\end{tikzpicture}}
\qquad \qquad
  \hackcenter{\begin{tikzpicture}[scale=0.8]
    \draw[thick] (0,0) .. controls ++(0,.5) and ++(0,-.5) .. (.8,1);
    \draw[thick,<-] (.8,0) .. controls ++(0,.5) and ++(0,-.5) .. (0,1);
    \draw[thick] (0,1 ) .. controls ++(0,.5) and ++(0,-.5) .. (.8,2);
    \draw[thick, ->] (.8,1) .. controls ++(0,.5) and ++(0,-.5) .. (0,2);
    \draw  (.4,1.5) circle (4pt);
    \draw  (.4,.5) circle (4pt);
\end{tikzpicture}}
\;\; = \;\;
q^{-1}\;\;
\hackcenter{\begin{tikzpicture}[scale=0.8]
    \draw[thick,->] (0,0) -- (0,2);
    \draw[thick, <-] (.75,0) -- (.75,2);
\end{tikzpicture}}
\;\; + \;\;
\frac{(q  -1)^2}{q^2}\;\;
\hackcenter{\begin{tikzpicture}[scale=0.8]
    \draw[thick, ->] (0,0) .. controls ++(0,.75) and ++(0,.75) ..(.75,0);
    \draw[thick, ->] (.75,2) .. controls ++(0,-.75) and ++(0,-.75) .. (0,2);
\end{tikzpicture}}
\end{equation}
\begin{equation} \label{eq:circle-left-twist}
\hackcenter{
\begin{tikzpicture}[scale=0.8]
    \draw  [thick](0,0) .. controls (0,.5) and (.7,.5) .. (.9,0);
    \draw  [thick](0,0) .. controls (0,-.5) and (.7,-.5) .. (.9,0);
    \draw  [thick](1,-1) .. controls (1,-.5) .. (.9,0);
    \draw  [thick,->](.9,0) .. controls (1,.5) .. (1,1) ;
      \draw  (.88,0) circle (4pt);
\end{tikzpicture}}
\;\; = \;\;
\;\;  -\;\; \frac{(q-1)}{q}\;\;
\hackcenter{
\begin{tikzpicture}[scale=0.8]
    \draw[thick,->]  [thick](0,-1) -- (0,1);
\end{tikzpicture}}
\end{equation}
follow from the defining relations in $\H$.

Almost all of the triple point moves one can imagine still hold,
\begin{equation} \label{eq:circ-triple}
\hackcenter{\begin{tikzpicture}[scale=0.8]
    \draw[thick, ->] (0,0) .. controls ++(0,1) and ++(0,-1) .. (1.2,2.5);
    \draw[thick, ] (.6,0) .. controls ++(0,.7) and ++(0,-.5) .. (0,1.25);
    \draw[thick, ->] (0,1.25) .. controls ++(0,.5) and ++(0,-.7) .. (0.6,2.5);
    \draw[thick, ->] (1.2,0) .. controls ++(0,1) and ++(0,-1) .. (0,2.5);
    \draw  (.25,1.75) circle (4pt);
\end{tikzpicture}}
\;\; = \;\;
\hackcenter{\begin{tikzpicture}[scale=0.8]
    \draw[thick, ->] (0,0) .. controls ++(0,1) and ++(0,-1) .. (1.2,2.5);
    \draw[thick, ] (.6,0) .. controls ++(0,.7) and ++(0,-.7) .. (1.2,1.25);
    \draw[thick, ->] (1.2,1.25) .. controls ++(0,.5) and ++(0,-.5) .. (0.6,2.5);
    \draw[thick, ->] (1.2,0) .. controls ++(0,1) and ++(0,-1) .. (0,2.5);
    \draw  (1.0,.75) circle (4pt);
\end{tikzpicture}}
\qquad \qquad
\hackcenter{\begin{tikzpicture}[scale=0.8]
    \draw[thick, ->] (0,0) .. controls ++(0,1) and ++(0,-1) .. (1.2,2.5);
    \draw[thick, ] (.6,0) .. controls ++(0,.7) and ++(0,-.5) .. (0,1.25);
    \draw[thick, ->] (0,1.25) .. controls ++(0,.5) and ++(0,-.7) .. (0.6,2.5);
    \draw[thick, ->] (1.2,0) .. controls ++(0,1) and ++(0,-1) .. (0,2.5);
    \draw  (.25,.75) circle (4pt);\draw  (.6,1.22) circle (4pt);
\end{tikzpicture}}
\;\; = \;\;
\hackcenter{\begin{tikzpicture}[scale=0.8]
    \draw[thick, ->] (0,0) .. controls ++(0,1) and ++(0,-1) .. (1.2,2.5);
    \draw[thick, ] (.6,0) .. controls ++(0,.7) and ++(0,-.7) .. (1.2,1.25);
    \draw[thick, ->] (1.2,1.25) .. controls ++(0,.5) and ++(0,-.5) .. (0.6,2.5);
    \draw[thick, ->] (1.2,0) .. controls ++(0,1) and ++(0,-1) .. (0,2.5);
    \draw  (.6,1.3) circle (4pt);\draw  (1.0,1.75) circle (4pt);
\end{tikzpicture}}
\qquad \qquad
\hackcenter{\begin{tikzpicture}[scale=0.8]
    \draw[thick, ->] (0,0) .. controls ++(0,1) and ++(0,-1) .. (1.2,2.5);
    \draw[thick, ] (.6,0) .. controls ++(0,.7) and ++(0,-.5) .. (0,1.25);
    \draw[thick, ->] (0,1.25) .. controls ++(0,.5) and ++(0,-.7) .. (0.6,2.5);
    \draw[thick, ->] (1.2,0) .. controls ++(0,1) and ++(0,-1) .. (0,2.5);
    \draw  (.25,.75) circle (4pt);
    \draw  (.6,1.22) circle (4pt);
    \draw  (.25,1.75) circle (4pt);
\end{tikzpicture}}
\;\; = \;\;
\hackcenter{\begin{tikzpicture}[scale=0.8]
    \draw[thick, ->] (0,0) .. controls ++(0,1) and ++(0,-1) .. (1.2,2.5);
    \draw[thick, ] (.6,0) .. controls ++(0,.7) and ++(0,-.7) .. (1.2,1.25);
    \draw[thick, ->] (1.2,1.25) .. controls ++(0,.5) and ++(0,-.5) .. (0.6,2.5);
    \draw[thick, ->] (1.2,0) .. controls ++(0,1) and ++(0,-1) .. (0,2.5);
    \draw  (1.0,.75) circle (4pt);
    \draw (.6,1.3) circle (4pt);
    \draw  (1.0,1.75) circle (4pt);
\end{tikzpicture}}
\end{equation}

\begin{equation}
\hackcenter{\begin{tikzpicture}[scale=0.8]
    \draw[thick, ->] (0,0) .. controls ++(0,1) and ++(0,-1) .. (1.2,2.5);
    \draw[thick,<- ] (.6,0) .. controls ++(0,.5) and ++(0,-.5) .. (0,1.25);
    \draw[thick ] (0,1.25) .. controls ++(0,.5) and ++(0,-.5) .. (0.6,2.5);
    \draw[thick, ->] (1.2,0) .. controls ++(0,1) and ++(0,-1) .. (0,2.5);
     \draw  (.6,1.25) circle (4pt);
\end{tikzpicture}}
\;\; = \;\;
\hackcenter{\begin{tikzpicture}[scale=0.8]
    \draw[thick, ->] (0,0) .. controls ++(0,1) and ++(0,-1) .. (1.2,2.5);
    \draw[thick, <- ] (.6,0) .. controls ++(0,.5) and ++(0,-.5) .. (1.2,1.25);
    \draw[thick ] (1.2,1.25) .. controls ++(0,.5) and ++(0,-.5) .. (0.6,2.5);
    \draw[thick, ->] (1.2,0) .. controls ++(0,1) and ++(0,-1) .. (0,2.5);
     \draw  (.6,1.25) circle (4pt);
\end{tikzpicture}}
\qquad \qquad
\hackcenter{\begin{tikzpicture}[scale=0.8]
    \draw[thick, ->] (0,0) .. controls ++(0,1) and ++(0,-1) .. (1.2,2.5);
    \draw[thick, -> ] (.6,0) .. controls ++(0,.5) and ++(0,-.5) .. (0,1.25);
    \draw[thick ] (0,1.25) .. controls ++(0,.5) and ++(0,-.5) .. (0.6,2.5);
    \draw[thick, <-] (1.2,0) .. controls ++(0,1) and ++(0,-1) .. (0,2.5);
     \draw  (.25,.75) circle (4pt);
\end{tikzpicture}}
\;\; = \;\;
\hackcenter{\begin{tikzpicture}[scale=0.8]
    \draw[thick, ->] (0,0) .. controls ++(0,1) and ++(0,-1) .. (1.2,2.5);
    \draw[thick, -> ] (.6,0) .. controls ++(0,.5) and ++(0,-.5) .. (1.2,1.25);
    \draw[thick ] (1.2,1.25) .. controls ++(0,.5) and ++(0,-.5) .. (0.6,2.5);
    \draw[thick, <-] (1.2,0) .. controls ++(0,1) and ++(0,-1) .. (0,2.5);
     \draw  (1,1.75) circle (4pt);
\end{tikzpicture}}
\end{equation}
however, the triple point move below
\begin{equation}
\hackcenter{\begin{tikzpicture}[scale=0.8]
    \draw[thick, ->] (0,0) .. controls ++(0,1) and ++(0,-1) .. (1.2,2.5);
    \draw[thick, ] (.6,0) .. controls ++(0,.5) and ++(0,-.5) .. (0,1.25);
    \draw[thick, ->] (0,1.25) .. controls ++(0,.5) and ++(0,-.5) .. (0.6,2.5);
    \draw[thick, ->] (1.2,0) .. controls ++(0,1) and ++(0,-1) .. (0,2.5);
    \draw  (.6,1.25) circle (4pt);
\end{tikzpicture}}
\;\; \mathbf{\neq} \;\;
\hackcenter{\begin{tikzpicture}[scale=0.8]
    \draw[thick, ->] (0,0) .. controls ++(0,1) and ++(0,-1) .. (1.2,2.5);
    \draw[thick, ] (.6,0) .. controls ++(0,.5) and ++(0,-.5) .. (1.2,1.25);
    \draw[thick, ->] (1.2,1.25) .. controls ++(0,.5) and ++(0,-.5) .. (0.6,2.5);
    \draw[thick, ->] (1.2,0) .. controls ++(0,1) and ++(0,-1) .. (0,2.5);
    \draw  (.6,1.25) circle (4pt);
\end{tikzpicture}}
\end{equation}
does {\bf not} hold.

Dots slide through circle crossings via a similar formula to \eqref{eq:nil-dot}.
\begin{align} \label{eq:nil-circle-dotslide}
\hackcenter{\begin{tikzpicture}[scale=0.8]
    \draw[thick, ->] (0,0) .. controls (0,.75) and (.75,.75) .. (.75,1.5)
        node[pos=.25, shape=coordinate](DOT){};
    \draw[thick, ->] (.75,0) .. controls (.75,.75) and (0,.75) .. (0,1.5);
    \filldraw  (DOT) circle (2.5pt);
    \draw  (.375,.75) circle (4pt);
\end{tikzpicture}}
\quad-\quad
\hackcenter{\begin{tikzpicture}[scale=0.8]
    \draw[thick, ->] (0,0) .. controls (0,.75) and (.75,.75) .. (.75,1.5)
        node[pos=.75, shape=coordinate](DOT){};
    \draw[thick, ->] (.75,0) .. controls (.75,.75) and (0,.75) .. (0,1.5);
    \filldraw  (DOT) circle (2.5pt);
    \draw  (.375,.75) circle (4pt);
\end{tikzpicture}}
\quad=\quad
\hackcenter{\begin{tikzpicture}[scale=0.8]
    \draw[thick, ->] (0,0) .. controls (0,.75) and (.75,.75) .. (.75,1.5);
    \draw[thick, ->] (.75,0) .. controls (.75,.75) and (0,.75) .. (0,1.5)
        node[pos=.75, shape=coordinate](DOT){};
    \filldraw  (DOT) circle (2.5pt);
    \draw  (.375,.75) circle (4pt);
\end{tikzpicture}}
\quad-\quad
\hackcenter{\begin{tikzpicture}[scale=0.8]
    \draw[thick, ->] (0,0) .. controls (0,.75) and (.75,.75) .. (.75,1.5);
    \draw[thick, ->] (.75,0) .. controls (.75,.75) and (0,.75) .. (0,1.5)
        node[pos=.25, shape=coordinate](DOT){};
      \filldraw  (DOT) circle (2.5pt);
    \draw  (.375,.75) circle (4pt);
\end{tikzpicture}}
&\quad=\quad
 \;\; \frac{(q-1)}{q}\;\;
\hackcenter{\begin{tikzpicture}[scale=0.8]
    \draw[thick, ->] (0,0) -- (0,1.5);;
    \draw[thick, ->] (.75,0) -- (.75,1.5)node[pos=.55, shape=coordinate](DOT){};
    \filldraw  (DOT) circle (2.5pt);
\end{tikzpicture}}
\;\; + \;\;
\hackcenter{\begin{tikzpicture}[scale=0.8]
    \draw[thick, ->] (0,0) -- (0,1.5);
    \draw[thick, ->] (.75,0) -- (.75,1.5);
\end{tikzpicture}}
\end{align}

%
\subsection{Star calculus}
%

Motivated by the relationship between the algebra $\dah_n$ and $\ah_n^+$, it will be useful to make the following definition
\begin{equation}\label{eq:def-star}
\hackcenter{\begin{tikzpicture}[scale=0.8]
    \draw[thick, ->] (0,0) -- (0,1.5) node[pos=.55, shape=coordinate](DOT){};;
    \node[star,star points=6,star point ratio=0.5, fill] at  (DOT) {} ;
\end{tikzpicture}}
\;\; := \;\;
 \frac{1}{q-1} \;\;
\hackcenter{\begin{tikzpicture}[scale=0.8]
    \draw[thick, ->] (0,0) -- (0,1.5) node[pos=.55, shape=coordinate](DOT){};;
    \filldraw  (DOT) circle (2.5pt);
\end{tikzpicture}} \;\; + \;\; \frac{q}{(q-1)^2} \;\;
\hackcenter{\begin{tikzpicture}[scale=0.8]
    \draw[thick, ->] (0,0) -- (0,1.5) ;
\end{tikzpicture}}
\end{equation}
which we will refer to as a star dot.
These new generators interact with our previous generators via the following equations which are straightforward to establish.
\begin{align} \label{eq:nil-star}
\hackcenter{\begin{tikzpicture}[scale=0.8]
    \draw[thick, ->] (0,0) .. controls (0,.75) and (.75,.75) .. (.75,1.5)
        node[pos=.25, shape=coordinate](DOT){};
    \draw[thick, ->] (.75,0) .. controls (.75,.75) and (0,.75) .. (0,1.5);
   \node[star,star points=6,star point ratio=0.5, fill] at  (DOT) {} ;
\end{tikzpicture}}
\quad-\quad
\hackcenter{\begin{tikzpicture}[scale=0.8]
    \draw[thick, ->] (0,0) .. controls (0,.75) and (.75,.75) .. (.75,1.5)
        node[pos=.75, shape=coordinate](DOT){};
    \draw[thick, ->] (.75,0) .. controls (.75,.75) and (0,.75) .. (0,1.5);
    \node[star,star points=6,star point ratio=0.5, fill] at  (DOT) {} ;
\end{tikzpicture}}
\quad=\quad
\hackcenter{\begin{tikzpicture}[scale=0.8]
    \draw[thick, ->] (0,0) .. controls (0,.75) and (.75,.75) .. (.75,1.5);
    \draw[thick, ->] (.75,0) .. controls (.75,.75) and (0,.75) .. (0,1.5)
        node[pos=.75, shape=coordinate](DOT){};
    \node[star,star points=6,star point ratio=0.5, fill] at  (DOT) {} ;
\end{tikzpicture}}
\quad-\quad
\hackcenter{\begin{tikzpicture}[scale=0.8]
    \draw[thick, ->] (0,0) .. controls (0,.75) and (.75,.75) .. (.75,1.5);
    \draw[thick, ->] (.75,0) .. controls (.75,.75) and (0,.75) .. (0,1.5)
        node[pos=.25, shape=coordinate](DOT){};
      \node[star,star points=6,star point ratio=0.5, fill] at  (DOT) {} ;
\end{tikzpicture}}
&\quad=\quad
(q-1)\;
\hackcenter{\begin{tikzpicture}[scale=0.8]
    \draw[thick, ->] (0,0) -- (0,1.5) node[pos=.55, shape=coordinate](DOT){};;
    \draw[thick, ->] (.75,0) -- (.75,1.5);
    \node[star,star points=6,star point ratio=0.5, fill] at  (DOT) {} ;
\end{tikzpicture}}
\\ \label{eq:nil-circle-star}
\hackcenter{\begin{tikzpicture}[scale=0.8]
    \draw[thick, ->] (0,0) .. controls (0,.75) and (.75,.75) .. (.75,1.5)
        node[pos=.25, shape=coordinate](DOT){};
    \draw[thick, ->] (.75,0) .. controls (.75,.75) and (0,.75) .. (0,1.5);
   \node[star,star points=6,star point ratio=0.5, fill] at  (DOT) {} ;
     \draw  (.375,.75) circle (4pt);
\end{tikzpicture}}
\quad-\quad
\hackcenter{\begin{tikzpicture}[scale=0.8]
    \draw[thick, ->] (0,0) .. controls (0,.75) and (.75,.75) .. (.75,1.5)
        node[pos=.75, shape=coordinate](DOT){};
    \draw[thick, ->] (.75,0) .. controls (.75,.75) and (0,.75) .. (0,1.5);
    \node[star,star points=6,star point ratio=0.5, fill] at  (DOT) {} ;
    \draw  (.375,.75) circle (4pt);
\end{tikzpicture}}
\quad=\quad
\hackcenter{\begin{tikzpicture}[scale=0.8]
    \draw[thick, ->] (0,0) .. controls (0,.75) and (.75,.75) .. (.75,1.5);
    \draw[thick, ->] (.75,0) .. controls (.75,.75) and (0,.75) .. (0,1.5)
        node[pos=.75, shape=coordinate](DOT){};
    \node[star,star points=6,star point ratio=0.5, fill] at  (DOT) {} ;
    \draw  (.375,.75) circle (4pt);
\end{tikzpicture}}
\quad-\quad
\hackcenter{\begin{tikzpicture}[scale=0.8]
    \draw[thick, ->] (0,0) .. controls (0,.75) and (.75,.75) .. (.75,1.5);
    \draw[thick, ->] (.75,0) .. controls (.75,.75) and (0,.75) .. (0,1.5)
        node[pos=.25, shape=coordinate](DOT){};
      \node[star,star points=6,star point ratio=0.5, fill] at  (DOT) {} ;
    \draw  (.375,.75) circle (4pt);
\end{tikzpicture}}
&\quad=\quad
\;\;   \frac{(q-1)}{q}\;\;
\hackcenter{\begin{tikzpicture}[scale=0.8]
    \draw[thick, ->] (0,0) -- (0,1.5);;
    \draw[thick, ->] (.75,0) -- (.75,1.5)node[pos=.55, shape=coordinate](DOT){};
    \node[star,star points=6,star point ratio=0.5, fill] at  (DOT) {} ;
\end{tikzpicture}}
\end{align}

\begin{align} \label{eq:nil-cool}
  \hackcenter{\begin{tikzpicture}[scale=0.8]
    \draw[thick, ->] (0,0) .. controls (0,.75) and (.75,.75) .. (.75,1.5)
        node[pos=.25, shape=coordinate](DOT){};
    \draw[thick, ->] (.75,0) .. controls (.75,.75) and (0,.75) .. (0,1.5);
    \node[star,star points=6,star point ratio=0.5, fill] at  (DOT) {} ;
    \draw  (.375,.75) circle (4pt);
\end{tikzpicture}}
\;\; &= \;\;
q^{-1} \;\;\hackcenter{\begin{tikzpicture}[scale=0.8]
    \draw[thick, ->] (0,0) .. controls (0,.75) and (.75,.75) .. (.75,1.5)
        node[pos=.75, shape=coordinate](DOT){};
    \draw[thick, ->] (.75,0) .. controls (.75,.75) and (0,.75) .. (0,1.5);
    \node[star,star points=6,star point ratio=0.5, fill] at  (DOT) {} ;
\end{tikzpicture}} \qquad \qquad
\hackcenter{\begin{tikzpicture}[scale=0.8]
    \draw[thick, ->] (0,0) .. controls (0,.75) and (.75,.75) .. (.75,1.5);
    \draw[thick, ->] (.75,0) .. controls (.75,.75) and (0,.75) .. (0,1.5)
        node[pos=.75, shape=coordinate](DOT){};
    \node[star,star points=6,star point ratio=0.5, fill] at  (DOT) {} ;
    \draw  (.375,.75) circle (4pt);
\end{tikzpicture}}
\;\; = \;\; q^{-1}
\hackcenter{\begin{tikzpicture}[scale=0.8]
    \draw[thick, ->] (0,0) .. controls (0,.75) and (.75,.75) .. (.75,1.5);
    \draw[thick, ->] (.75,0) .. controls (.75,.75) and (0,.75) .. (0,1.5)
        node[pos=.25, shape=coordinate](DOT){};
   \node[star,star points=6,star point ratio=0.5, fill] at  (DOT) {} ;
\end{tikzpicture}}
\end{align}
\begin{align} \label{eq:ind-star}
\hackcenter{\begin{tikzpicture}[scale=0.8]
    \draw[thick, ->] (0,0) .. controls (0,.75) and (.75,.75) .. (.75,1.5)
        node[pos=.25, shape=coordinate](DOT){};
    \draw[thick, ->] (.75,0) .. controls (.75,.75) and (0,.75) .. (0,1.5);
   \node[star,star points=6,star point ratio=0.5, fill] at  (DOT) {} ;
   \node at (-.25,.35) {$\scs k$};
\end{tikzpicture}}
\quad-\quad
\hackcenter{\begin{tikzpicture}[scale=0.8]
    \draw[thick, ->] (0,0) .. controls (0,.75) and (.75,.75) .. (.75,1.5)
        node[pos=.75, shape=coordinate](DOT){};
    \draw[thick, ->] (.75,0) .. controls (.75,.75) and (0,.75) .. (0,1.5);
    \node[star,star points=6,star point ratio=0.5, fill] at  (DOT) {} ;
    \node at (1,1.15) {$\scs k$};
\end{tikzpicture}}
\quad=\quad
\hackcenter{\begin{tikzpicture}[scale=0.8]
    \draw[thick, ->] (0,0) .. controls (0,.75) and (.75,.75) .. (.75,1.5);
    \draw[thick, ->] (.75,0) .. controls (.75,.75) and (0,.75) .. (0,1.5)
        node[pos=.75, shape=coordinate](DOT){};
    \node[star,star points=6,star point ratio=0.5, fill] at  (DOT) {} ;
 \node at (-.25,1.15) {$\scs k$};
\end{tikzpicture}}
\quad-\quad
\hackcenter{\begin{tikzpicture}[scale=0.8]
    \draw[thick, ->] (0,0) .. controls (0,.75) and (.75,.75) .. (.75,1.5);
    \draw[thick, ->] (.75,0) .. controls (.75,.75) and (0,.75) .. (0,1.5)
        node[pos=.25, shape=coordinate](DOT){};
      \node[star,star points=6,star point ratio=0.5, fill] at  (DOT) {} ;
      \node at (1,.35) {$\scs k$};
\end{tikzpicture}}
&\quad=\quad
(q-1)\;
\sum_{i=1}^{k}
\hackcenter{\begin{tikzpicture}[scale=0.8]
    \draw[thick, ->] (0,0) -- (0,1.5) node[pos=.55, shape=coordinate](DOT){};;
    \draw[thick, ->] (.75,0) -- (.75,1.5)node[pos=.55, shape=coordinate](DOT2){};
    \node[star,star points=6,star point ratio=0.5, fill] at  (DOT) {} ;
    \node[star,star points=6,star point ratio=0.5, fill] at  (DOT2) {} ;
          \node at (1.2,1.15) {$\scs k-i$};
      \node at (-.3,1.15) {$\scs i$};
\end{tikzpicture}}
\end{align}

\begin{equation}
 \hackcenter{ \begin{tikzpicture}
 \draw  [shift={+(0,0)}](.3,.125) node {};
 \node[star,star points=6,star point ratio=0.5, fill] at  (1,.125) {} ;
\draw  (0,0) arc (180:360:0.5cm) [thick];
\draw[->](1,0) arc (0:180:0.5cm) [thick];
\end{tikzpicture} }
\;\; = \;\;
\frac{q}{(q-1)^2}
\end{equation}
\begin{align}\label{eq:star-up-r2}
  \hackcenter{\begin{tikzpicture}[scale=0.8]
    \draw[thick] (0,0) .. controls ++(0,.5) and ++(0,-.5) .. (.8,1);
    \draw[thick] (.8,0) .. controls ++(0,.5) and ++(0,-.5) .. (0,1);
    \draw[thick,->] (0,1 ) .. controls ++(0,.5) and ++(0,-.5) .. (.8,2);
    \draw[thick, ->] (.8,1) .. controls ++(0,.5) and ++(0,-.5) .. (0,2);
    \node[star,star points=6,star point ratio=0.5, fill] at  (.8,1) {} ;;
\end{tikzpicture}}
\;\; = \;\; q\;
\hackcenter{\begin{tikzpicture}[scale=0.8]
    \draw[thick,->] (0,0) -- (0,2);
    \draw[thick, ->] (.75,0) -- (.75,2);
    \node[star,star points=6,star point ratio=0.5, fill] at  (0,1) {} ;;
\end{tikzpicture}}
\end{align}
\begin{align} \label{eq:star-curl}
\hackcenter{
\begin{tikzpicture}[scale=0.8]
    \draw  [thick](0,0) .. controls (0,.5) and (.7,.5) .. (.9,0);
    \draw  [thick](0,0) .. controls (0,-.5) and (.7,-.5) .. (.9,0);
    \draw  [thick](1,-1) .. controls (1,-.5) .. (.9,0);
    \draw  [thick,->](.9,0) .. controls (1,.5) .. (1,1) ;
      \draw  (.88,0) circle (4pt);
      \node[star,star points=6,star point ratio=0.5, fill] at  (0,0) {} ;;
\end{tikzpicture}}
\;\; = \;\;
 0 \qquad \quad
\hackcenter{
\begin{tikzpicture}[scale=0.8]
    \draw  [thick](0,0) .. controls (0,.5) and (.7,.5) .. (.9,0);
    \draw  [thick](0,0) .. controls (0,-.5) and (.7,-.5) .. (.9,0);
    \draw  [thick](1,-1) .. controls (1,-.5) .. (.9,0);
    \draw  [thick,->](.9,0) .. controls (1,.5) .. (1,1) ;
      \node[star,star points=6,star point ratio=0.5, fill] at  (0,0) {} ;;
\end{tikzpicture}}
\;\; = \;\;
\frac{q}{(q-1)}\;\;\;
\hackcenter{
\begin{tikzpicture}[scale=0.8]
    \draw[thick,->]  [thick](0,-1) -- (0,1);
\end{tikzpicture}}
\end{align}

\begin{align}\label{eq:star-up-circler2}
  \hackcenter{\begin{tikzpicture}[scale=0.8]
    \draw[thick] (0,0) .. controls ++(0,.5) and ++(0,-.5) .. (.8,1);
    \draw[thick] (.8,0) .. controls ++(0,.5) and ++(0,-.5) .. (0,1);
    \draw[thick,->] (0,1 ) .. controls ++(0,.5) and ++(0,-.5) .. (.8,2);
    \draw[thick, ->] (.8,1) .. controls ++(0,.5) and ++(0,-.5) .. (0,2);
    \draw  (.4,1.5) circle (4pt);
    \node[star,star points=6,star point ratio=0.5, fill] at  (.8,1) {} ;;
\end{tikzpicture}}
\;\; = \;\;
\hackcenter{\begin{tikzpicture}[scale=0.8]
    \draw[thick,->] (0,0) -- (0,2);
    \draw[thick, ->] (.75,0) -- (.75,2);
    \node[star,star points=6,star point ratio=0.5, fill] at  (0,1) {} ;;
\end{tikzpicture}}
\;\; -\;\;
(q -1)\;\;
\hackcenter{\begin{tikzpicture}[scale=0.8]
    \draw[thick, ->] (0,0) .. controls (0,.75) and (.75,.75) .. (.75,1.5)
        node[pos=.25, shape=coordinate](DOT){};
    \draw[thick, ->] (.75,0) .. controls (.75,.75) and (0,.75) .. (0,1.5);
   \node[star,star points=6,star point ratio=0.5, fill] at  (DOT) {} ;
\draw  (.375,.75) circle (4pt);
\end{tikzpicture}}
\;\; = \;\;
\hackcenter{\begin{tikzpicture}[scale=0.8]
    \draw[thick,->] (0,0) -- (0,2);
    \draw[thick, ->] (.75,0) -- (.75,2);
    \node[star,star points=6,star point ratio=0.5, fill] at  (0,1) {} ;;
\end{tikzpicture}}
\;\; -\;\;
\frac{(q -1)}{q}\;\;
\hackcenter{\begin{tikzpicture}[scale=0.8]
    \draw[thick, ->] (0,0) .. controls (0,.75) and (.75,.75) .. (.75,1.5)
        node[pos=.75, shape=coordinate](DOT){};
    \draw[thick, ->] (.75,0) .. controls (.75,.75) and (0,.75) .. (0,1.5);
   \node[star,star points=6,star point ratio=0.5, fill] at  (DOT) {} ;
\end{tikzpicture}}
\end{align}

\begin{equation}
  \hackcenter{\begin{tikzpicture}[scale=0.8]
    \draw[thick,<-] (0,0) .. controls ++(0,.5) and ++(0,-.5) .. (.75,1);
    \draw[thick] (.75,0) .. controls ++(0,.5) and ++(0,-.5) .. (0,1);
    \draw[thick, ->] (0,1 ) .. controls ++(0,.5) and ++(0,-.5) .. (.75,2);
    \draw[thick] (.75,1) .. controls ++(0,.5) and ++(0,-.5) .. (0,2);
    \draw  (.375,.5) circle (4pt);
\end{tikzpicture}}
\;\; = \;\;
\;\;
\hackcenter{\begin{tikzpicture}[scale=0.8]
    \draw[thick, <-] (0,0) -- (0,2);
    \draw[thick, ->] (.75,0) -- (.75,2);
\end{tikzpicture}}
\;\; - \;\; \frac{(q-1)^2}{q}
\;
\hackcenter{\begin{tikzpicture}[scale=0.8]
    \draw[thick, <-] (0,0) .. controls ++(0,.75) and ++(0,.75) ..(.75,0);
    \draw[thick, <-] (.75,2) .. controls ++(0,-.75) and ++(0,-.75) .. (0,2) node[pos=.55, shape=coordinate](DOT){};;
    \node[star,star points=6,star point ratio=0.5, fill] at  (DOT) {} ;
\end{tikzpicture}}
\end{equation}

\begin{equation} \label{eq:doub-star}
  \hackcenter{\begin{tikzpicture}[scale=0.8]
    \draw[thick,<-] (0,0) .. controls ++(0,.5) and ++(0,-.5) .. (.75,1);
    \draw[thick] (.75,0) .. controls ++(0,.5) and ++(0,-.5) .. (0,1);
    \draw[thick, ->] (0,1 ) .. controls ++(0,.5) and ++(0,-.5) .. (.75,2);
    \draw[thick] (.75,1) .. controls ++(0,.5) and ++(0,-.5) .. (0,2);
    \node[star,star points=6,star point ratio=0.5, fill] at  (.8,1) {} ;;
\end{tikzpicture}}
\;\; = \;\;
q\;\;
\hackcenter{\begin{tikzpicture}[scale=0.8]
    \draw[thick, <-] (0,0) -- (0,2);
    \draw[thick, ->] (.75,0) -- (.75,2);
    \node[star,star points=6,star point ratio=0.5, fill] at  (0,1) {} ;;
\end{tikzpicture}}
\;\; - \;\;
(q-1)^2\;
\hackcenter{\begin{tikzpicture}[scale=0.8]
    \draw[thick, <-] (0,0) .. controls ++(0,.75) and ++(0,.75) ..(.8,0);
    \draw[thick, <-] (.8,2) .. controls ++(0,-.75) and ++(0,-.75) .. (0,2);
    \node[star,star points=6,star point ratio=0.5, fill] at  (.4,1.5) {} ;;
    \node[star,star points=6,star point ratio=0.5, fill] at  (.4,.5) {} ;;
\end{tikzpicture}}
\end{equation}

%
\subsubsection{Closed Star calculus}
%
We introduce starred bubbles
\begin{equation}\label{eq:bubbles}
\begin{tikzpicture}
\draw  [shift={+(0,0)}](-1,0) node {$C_n:=$};
\draw  [shift={+(0,0)}](.3,.125) node {$n$};
 \node[star,star points=6,star point ratio=0.5, fill] at  (0,.125) {} ;
\draw [shift={+(0,0)}](0,0) arc (180:360:0.5cm) [thick];
\draw [shift={+(0,0)}][<-](1,0) arc (0:180:0.5cm) [thick];
\draw  [shift={+(5,0)}](-1,0) node {$\tilde{C}_n:=$};
\draw  [shift={+(5,0)}](.3,.125) node {$n$};
\node[star,star points=6,star point ratio=0.5, fill] at  (5,.125) {} ;
\draw [shift={+(5,0)}][->](0,0) arc (180:360:0.5cm) [thick];
\draw [shift={+(5,0)}][](1,0) arc (0:180:0.5cm) [thick];
\end{tikzpicture}
\end{equation}
which are endomorphisms of the monoidal unit $\1$.   The figure eight can be reduced in two possible ways by sliding either the $a$ stars or the $b$ stars
\begin{equation} \label{eq:figeight}
 \hackcenter{\begin{tikzpicture}[scale=0.8]
    \draw[thick] (0,2) .. controls ++(0,.45) and ++(0,.45) .. (.8,2)
        node[pos=.5, shape=coordinate](DOT2){};
    \node[star,star points=6,star point ratio=0.5, fill] at  (DOT2) {} ;
    \draw[thick] (.8,1) .. controls ++(0,-.45) and ++(0,-.45) .. (0,1);
    \draw[thick,->] (0,1) .. controls ++(0,.5) and ++(0,-.5) .. (.8,2)
           node[pos=.0, shape=coordinate](DOT){};
    \node[star,star points=6,star point ratio=0.5, fill] at  (DOT) {};
    \node at (-.5,.7) {$\scs b$};;
    \draw[thick, ->] (0,2).. controls ++(0,-.5) and ++(0,.5) .. (.8,1);
    \node at (.8,2.45) {$\scs a$};;
\end{tikzpicture}}
\;\; = \;\; (q-1)\sum_{i=1}^{a} \;\;
 \hackcenter{ \begin{tikzpicture}
 \draw  (.7,.4) node {$\scs i$};
 \node[star,star points=6,star point ratio=0.5, fill] at  (.7,.125) {} ;
\draw  (0,0) arc (180:360:0.35cm) [thick];
\draw[->](.7,0) arc (0:180:0.35cm) [thick];
\end{tikzpicture} }
\;
 \hackcenter{ \begin{tikzpicture}
 \draw  (-.3,.4) node {$\scs a+b-i$};
 \node[star,star points=6,star point ratio=0.5, fill] at  (0,.125) {} ;
\draw  (0,0) arc (180:360:0.35cm) [thick];
\draw[<-](.7,0) arc (0:180:0.35cm) [thick];
\end{tikzpicture} }
\;\; = \;\;
 \hackcenter{ \begin{tikzpicture}
 \draw  (-.2,.4) node {$\scs a+b$};
 \node[star,star points=6,star point ratio=0.5, fill] at  (0,.125) {} ;
\draw  (0,0) arc (180:360:0.35cm) [thick];
\draw[<-](.7,0) arc (0:180:0.35cm) [thick];
\filldraw  [black] (.1,-.25) circle (2.5pt);
\end{tikzpicture} }
\; -\;\;
(q-1)\sum_{i=1}^{b} \;\;
 \hackcenter{ \begin{tikzpicture}
 \draw  (.8,.4) node {$\scs a+i$};
 \node[star,star points=6,star point ratio=0.5, fill] at  (.7,.125) {} ;
\draw  (0,0) arc (180:360:0.35cm) [thick];
\draw[->](.7,0) arc (0:180:0.35cm) [thick];
\end{tikzpicture} }
\;
 \hackcenter{ \begin{tikzpicture}
 \draw  (-.2,.4) node {$\scs b-i$};
 \node[star,star points=6,star point ratio=0.5, fill] at  (0,.125) {} ;
\draw  (0,0) arc (180:360:0.35cm) [thick];
\draw[<-](.7,0) arc (0:180:0.35cm) [thick];
\end{tikzpicture} }
\end{equation}
Comparing both sides gives the identity that for $n\geq 1$
\begin{equation} \label{eq:bub-stardot}
 \hackcenter{ \begin{tikzpicture}
 \draw  [shift={+(0,0)}](.3,.125) node {$n$};
 \node[star,star points=6,star point ratio=0.5, fill] at  (0,.125) {} ;
\draw  (0,0) arc (180:360:0.5cm) [thick];
\draw[<-](1,0) arc (0:180:0.5cm) [thick];
\filldraw  [black] (.1,-.25) circle (2.5pt);
\end{tikzpicture} }
\;\; = \;\;
(q-1) \sum_{i=1}^{n}  \tilde{C}_{i} C_{n-i}.
\end{equation}

The following relations describe how to slide starred bubbles slide through vertical strands.
\begin{align} \label{eq:c-slide}
 \hackcenter{ \begin{tikzpicture}
 \draw[thick, ->] (1.2,-.75) -- (1.2,.75) node[pos=.55, shape=coordinate](DOT){};;
    \node[star,star points=6,star point ratio=0.5, fill] at  (DOT) {} ;
  \node at (1.45,.35) {$\scs j$};
 \draw  (.3,.125) node {$\scs n$};
 \node[star,star points=6,star point ratio=0.5, fill] at  (0,.125) {} ;
\draw  (0,0) arc (180:360:0.35cm) [thick];
\draw[<-](.7,0) arc (0:180:0.35cm) [thick];
\end{tikzpicture} }
&\;\; = \;\;
 \hackcenter{ \begin{tikzpicture}
 \draw[thick, ->] (-.5,-.75) -- (-.5,.75) node[pos=.55, shape=coordinate](DOT){};;
    \node[star,star points=6,star point ratio=0.5, fill] at  (DOT) {} ;
  \node at (-.75,.35) {$\scs j$};
 \draw  (.3,.125) node {$\scs n$};
 \node[star,star points=6,star point ratio=0.5, fill] at  (0,.125) {} ;
\draw  (0,0) arc (180:360:0.35cm) [thick];
\draw[<-](.7,0) arc (0:180:0.35cm) [thick];
\end{tikzpicture} }
\; + (n+1) \;
\frac{(q-1)^2}{q}\;
 \hackcenter{ \begin{tikzpicture}
 \draw[thick, ->] (-.5,-.75) -- (-.5,.75) node[pos=.55, shape=coordinate](DOT){};;
    \node[star,star points=6,star point ratio=0.5, fill] at  (DOT) {} ;
  \node at (0,.35) {$\scs j+n+1$};
\end{tikzpicture} }
 -n\;
 \hackcenter{ \begin{tikzpicture}
 \draw[thick, ->] (-.5,-.75) -- (-.5,.75) node[pos=.55, shape=coordinate](DOT){};;
    \node[star,star points=6,star point ratio=0.5, fill] at  (DOT) {} ;
  \node at (0,.35) {$\scs j+n$};
\end{tikzpicture} }
 - \;
\frac{(q-1)^2}{q}
\sum_{i=1}^n
i \;\;
 \hackcenter{ \begin{tikzpicture}
 \draw[thick, ->] (-.7,-.75) -- (-.7,.75) node[pos=.55, shape=coordinate](DOT){};;
    \node[star,star points=6,star point ratio=0.5, fill] at  (DOT) {} ;
  \node at (-1,.35) {$\scs j+i$};
 \draw  (-.1,.5) node {$\scs n-i$};
 \node[star,star points=6,star point ratio=0.5, fill] at  (0,.125) {} ;
\draw  (0,0) arc (180:360:0.35cm) [thick];
\draw[<-](.7,0) arc (0:180:0.35cm) [thick];
\end{tikzpicture} }
\\
 \label{eq:cc-slide}
 \hackcenter{ \begin{tikzpicture}
 \draw[thick, ->] (-.5,-.75) -- (-.5,.75) node[pos=.55, shape=coordinate](DOT){};;
    \node[star,star points=6,star point ratio=0.5, fill] at  (DOT) {} ;
  \node at (-.75,.35) {$\scs j$};
 \draw  (.3,.125) node {$\scs n$};
 \node[star,star points=6,star point ratio=0.5, fill] at  (0,.125) {} ;
\draw[->]  (0,0) arc (180:360:0.35cm) [thick];
\draw(.7,0) arc (0:180:0.35cm) [thick];
\end{tikzpicture} }
&\;\; = \;\;
 \hackcenter{ \begin{tikzpicture}
 \draw[thick, ->] (1.2,-.75) -- (1.2,.75) node[pos=.55, shape=coordinate](DOT){};;
    \node[star,star points=6,star point ratio=0.5, fill] at  (DOT) {} ;
  \node at (1.45,.35) {$\scs j$};
 \draw  (.3,.125) node {$\scs n$};
 \node[star,star points=6,star point ratio=0.5, fill] at  (0,.125) {} ;
\draw[->]  (0,0) arc (180:360:0.35cm) [thick];
\draw(.7,0) arc (0:180:0.35cm) [thick];
\end{tikzpicture} }
\; - \;
\frac{(q-1)^2}{q} \sum_{\ell=1}^{n-1}(n-\ell)
 \hackcenter{ \begin{tikzpicture}
 \draw[thick, ->] (1.2,-.75) -- (1.2,.75) node[pos=.55, shape=coordinate](DOT){};;
    \node[star,star points=6,star point ratio=0.5, fill] at  (DOT) {} ;
  \node at (1.7,.35) {$\scs j+n-\ell$};
 \draw  (.3,.125) node {$\scs \ell$};
 \node[star,star points=6,star point ratio=0.5, fill] at  (0,.125) {} ;
\draw[->]  (0,0) arc (180:360:0.35cm) [thick];
\draw(.7,0) arc (0:180:0.35cm) [thick];
\end{tikzpicture} }
\end{align}

%
\subsection{Triangular decomposition for $\Tr(\cal{H})$} \label{sec:tri-decomp}
%

Recall the following characterizations of hom spaces in $\H$.

 \begin{proposition}
\label{LSEndThm}
\cite[Theorem 3.9]{LS13}
There is an isomorphism of algebras
\begin{equation*}
\End_{\H}(\P^n) \cong \dah_n \otimes \k[q,q^{-1}][c_0, c_1, \ldots].
\end{equation*}
In particular, when $n=0$ we have $\End(\mathbf{1}) \cong \k[q,q^{-1}][c_0, c_1, \ldots]$.
\end{proposition}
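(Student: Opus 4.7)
The plan is to construct an explicit algebra homomorphism $\Phi$ from $\dah_n \otimes_{\k[q,q^{-1}]} \k[q,q^{-1}][c_0,c_1,\dots]$ to $\End_{\H}(\P^n)$, then show it is both surjective (via a diagrammatic normal form) and injective (via a faithful action). First I would define $\Phi$ on generators: send $t_i$ to the crossing $T_i$ of the $i$th and $(i+1)$st upward strands, send $y_j$ to $(q-1)X_j + \tfrac{q}{q-1}$ acting on the $j$th strand (so that the $q$-degenerate relations of Definition~\ref{def:qdegen} match the identities in \eqref{eq:nil-dot}), and send $c_k$ to the clockwise bubble with $k$ dots from \eqref{eq:bubbles} tensored with $\id_{\P^n}$. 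The Hecke relations on the $T_i$ follow from \eqref{heis:up-double} and \eqref{heis:up-triple}; the mixed relations between $t_i$ and $y_j$ are exactly \eqref{eq:nil-dot}; and the bubbles commute with everything on the $\P^n$ strands because they live in $\End(\mathbf 1)$, which tensors centrally.

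Next I would establish surjectivity. Given any diagram $D \in \End_{\H}(\P^n)$, one reduces $D$ to a normal form consisting of an upward-only braid diagram decorated with dots, tensored with a product of clockwise dotted bubbles. The reduction proceeds by sweeping caps upward and cups downward, using the triple point moves \eqref{heis:up-triple} and \eqref{heis:mix-triple} together with the downward-crossing resolution \eqref{heis:downup} and its mirror \eqref{heis:up down}; these relations allow one to push any downward portion of a strand to the periphery of the diagram, eventually either annihilating it by \eqref{eq:heis-bub} or isolating it as a floating closed loop. Curls are simplified using \eqref{eq:curl}, and counterclockwise floating bubbles vanish by the zig-zag-with-bubble relation in the last diagram of the second column of the relations block (as all $\tilde c_n = 0$). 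What remains is a diagram with only upward strands, dots on them, and clockwise floating bubbles $c_k$ — exactly an element in the image of $\Phi$.

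Finally I would prove injectivity, which I expect to be the main obstacle since the algebras on both sides are infinite-dimensional and standard dimension counting is unavailable. The cleanest route is to construct a faithful action. The category $\H$ acts on $\bigoplus_{n\ge 0} H_n\fmod$ by induction and restriction functors, and on $\End(\P^n)$ this action factors through a natural map to $\End(\Ind_{H_0}^{H_n}(\mathbf 1)) \otimes (\text{bubble algebra acting on the ground ring})$. Using the known PBW basis of $\dah_n$ (a product of a basis of the polynomial algebra in the $y_j$ with a basis $\{T_w : w \in S_n\}$ of $H_n$) combined with the algebraic independence of the $c_k$ (verified by evaluating the bubble $c_k$ on test modules where it acts as the $k$th power sum of Jucys–Murphy-type elements), one shows that the image under $\Phi$ of a tensor of basis elements acts by linearly independent operators. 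Equivalently, one may introduce a filtration on $\End_{\H}(\P^n)$ by the number of caps/cups and the dot degree, show that the associated graded surjects onto a polynomial algebra of the right rank, and conclude via a rank comparison with the PBW basis. The $n=0$ case is an immediate consequence since the reduction from surjectivity leaves only floating bubbles, and the algebraic independence of the $c_k$ yields the polynomial ring $\k[q,q^{-1}][c_0,c_1,\dots]$.
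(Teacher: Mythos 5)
First, a point of comparison: the paper does not prove this proposition at all --- it is imported verbatim as \cite[Theorem~3.9]{LS13}. So the relevant benchmark is the Licata--Savage argument, and your overall architecture (an explicit homomorphism, surjectivity by diagrammatic reduction to a normal form, injectivity via the categorical action on $\bigoplus_n H_n\fmod$) is indeed essentially their strategy, and Khovanov's before them. That said, two of your concrete steps are wrong as stated.

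The first is the definition of $\Phi(y_j)$. The solid dot of \eqref{eq:curl} already satisfies precisely the relations of Definition~\ref{def:qdegen} --- that is the content of \eqref{eq:nil-dot} --- so the correct assignment is $y_j\mapsto$ (dot on strand $j$), with no affine shift. If your $X_j$ is instead meant to be the affine Hecke generator $x_j$ (the star dot of \eqref{eq:def-star}), then \eqref{eq_ydef} gives $y_j=(q-1)x_j-\tfrac{q}{q-1}$, with a minus sign, not the plus sign you wrote. As written, $(q-1)X_j+\tfrac{q}{q-1}$ does not satisfy the relation $y_it_i=t_iy_{i+1}+(q-1)y_{i+1}+q$ for either reading of $X_j$, so your $\Phi$ is not a homomorphism. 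The second error is the parenthetical ``as all $\tilde c_n=0$'' in the surjectivity step. This is false: $\tilde c_0=1$ by \eqref{eq:heis-bub}, and for $n\geq 1$ the counterclockwise dotted bubbles are nonzero elements of $\End(\mathbf 1)$ that must be \emph{re-expressed} as polynomials in the clockwise bubbles $c_k$ rather than discarded (compare \eqref{eq:bub-stardot} and the proof of Lemma~\ref{genlemma9}, where $\tilde C_k$ is written as $p(C_0,\dots,C_{k-2})+kC_{k-1}+\cdots$). The conclusion of the normal-form reduction survives this correction, but the mechanism you give does not. Finally, be aware that injectivity is the real content of the theorem and your sketch of it is the thinnest part: the ``rank comparison with the PBW basis'' route presupposes an upper bound on the size of $\End_{\H}(\P^n)$ that you have not supplied, so in practice one is forced back to the faithfulness of the representation on the tower of Hecke-algebra module categories, which is exactly how \cite{LS13} closes the argument.
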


\begin{corollary} \label{cor:endone}
In terms of star-calculus we also have that
\[
\End(\mathbf{1}) \cong  \k[q,q^{-1}][C_{0}, C_{1}, \ldots].
\]
\end{corollary}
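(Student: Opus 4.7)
The plan is to reduce Corollary~\ref{cor:endone} to Proposition~\ref{LSEndThm} via an invertible triangular change of generators between the families $\{c_n\}_{n\ge 0}$ and $\{C_n\}_{n\ge 0}$. First I would observe that all dots and star dots on a single strand of a bubble are polynomials in the single endomorphism $X_1$, hence commute with one another, so the defining identity~\eqref{eq:def-star} may be iterated binomially: stacking $n$ star dots on one strand produces $\bigl(\tfrac{1}{q-1}X_1 + \tfrac{q}{(q-1)^2}\,\id\bigr)^{n}$, and applying this to a clockwise bubble yields
\[
C_n \;=\; \sum_{k=0}^{n} \binom{n}{k}\,\frac{q^{n-k}}{(q-1)^{2n-k}}\, c_k.
\]
The transition matrix from $(c_0, c_1, \dots)$ to $(C_0, C_1, \dots)$ is therefore upper triangular with invertible diagonal entries $\tfrac{1}{(q-1)^n}$.

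Next I would invert~\eqref{eq:def-star} to write the ordinary dot as $\bullet = (q-1)\,\star - \tfrac{q}{q-1}\,\id$ and run the same binomial computation in the reverse direction, expressing each $c_n$ as a polynomial in $C_0,\dots,C_n$. Together these two calculations show that the subalgebras of $\End(\mathbf{1})$ generated by $\{c_n\}$ and by $\{C_n\}$ coincide, and that the $C_n$ are algebraically independent (algebraic independence of the $c_n$ comes from Proposition~\ref{LSEndThm}, and upper triangularity of the change of generators transfers it to the $C_n$). Combining these two observations with Proposition~\ref{LSEndThm} then gives the claimed isomorphism.

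The only mild wrinkle---more a matter of bookkeeping than an actual obstacle---is that the change-of-generators formulas involve $(q-1)^{-1}$, so the identification is most naturally interpreted after inverting $q-1$. This is in keeping with the role of star-calculus throughout the paper (compare the remark following Lemma~\ref{lemma_filtgrad}), where the star dot plays the same role on the affine Hecke side as the generator $x_i$ played in passing between $\dah_n$ and $\ah_n^+$.
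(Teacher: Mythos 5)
Your proposal is correct and takes essentially the same route as the paper, whose proof consists of the single remark that \eqref{eq:def-star} yields a change of basis between the dotted bubbles $c_j$ and the starred bubbles $C_j$; you have merely made the triangular transition matrix and its invertible diagonal explicit. Your caveat about needing to invert $q-1$ is a fair observation and is consistent with the conventions the paper adopts implicitly whenever star calculus is used.
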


\begin{proof}
Using \eqref{eq:def-star} it is not difficult to define a change of basis between dotted bubbles $c_j$ and their starred counterparts $C_j$.
\end{proof}

Let $J_{m,n} $ be the ideal of $\End_{\H}(\P^m\Q^n)$ generated by diagrams which contain at least one arc connecting a pair of upper points.

\begin{proposition}
\label{LSEndThmPmQn}
\cite[Proposition 3.11]{LS13}
There exists a short exact sequence
\begin{equation*}
0 \rightarrow J_{m,n} \rightarrow \End_{\H}(\P^m\Q^n) \rightarrow \dah_m \otimes (\dah_n)^{op} \otimes \k[q,q^{-1}][c_0, c_1, \ldots] \rightarrow 0.
\end{equation*}
Furthermore, this sequence splits.
\end{proposition}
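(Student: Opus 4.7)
The plan is to construct an explicit section of the projection onto the quotient by exhibiting each tensor factor as living in a distinguished region of a diagram, and then to verify that this section identifies the quotient with $\dah_m \otimes (\dah_n)^{op} \otimes \k[q,q^{-1}][c_0, c_1, \ldots]$.

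First I would define an algebra homomorphism
\[
\varphi \colon \dah_m \otimes (\dah_n)^{op} \otimes \k[q,q^{-1}][c_0, c_1, \ldots] \longrightarrow \End_{\H}(\P^m\Q^n)
\]
diagrammatically. Given $a \otimes b \otimes p$, I place $a$ (expressed as dots and crossings on $m$ upward strands) in a vertical strip on the far left, the reverse diagram of $b$ on the $n$ downward strands in a strip on the far right, and the bubble polynomial $p$ between them. Since diagrams supported in disjoint vertical strips commute in $\H$, and since Proposition \ref{LSEndThm} already identifies $\End_\H(\P^m)$ and $\End_\H(\Q^n)$ with the expected $\dah$-algebras tensored with bubbles, this $\varphi$ is well defined; one then absorbs the two families of bubbles into a single one using the straightforward change of basis between the $c_n$ and their $\tilde c_n$ counterparts.

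Second, I would show that $\pi \circ \varphi$ is surjective, where $\pi$ is the quotient by $J_{m,n}$. The key tool is that every crossing between an upward and a downward strand can be rewritten modulo $J_{m,n}$ using the sideways-crossing relations \eqref{heis:downup} and \eqref{heis:up down}: each such rewrite produces either two uncrossed vertical strands or a term with a cap among the upper boundary points, and the latter lies in $J_{m,n}$ by definition. Combining this with the triple-point moves \eqref{heis:up-triple} and \eqref{heis:mix-triple} to reorganize the remaining crossings, every diagram can be reduced, modulo $J_{m,n}$, to a horizontal juxtaposition of a Hecke diagram on the $m$ upward strands, a Hecke diagram on the $n$ downward strands, and closed bubbles floating in the background. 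This is exactly the image of $\varphi$.

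The main obstacle will be to prove injectivity of the induced map $\pi \circ \varphi$, since the spanning argument above gives no lower bound on the size of the quotient. My plan here is to build a faithful representation: let $\H$ act on $\bigoplus_{k\geq 0} H_k\fmod$ via the induction and restriction functors, and use the induced action of $\End_\H(\P^m\Q^n)$ on $\Hom$-spaces between induction-restriction composites to separate normal forms from $J_{m,n}$. A cleaner alternative is to compare Poincar\'e series: on the algebraic side the right-hand factor is free in the sense of Lemma \ref{lemma_flat}, so a Bergman-style diamond-lemma argument exhibits the expected PBW basis for the quotient, matching dimensions. Once injectivity is established, the map $\varphi$ itself provides the desired splitting of the short exact sequence.
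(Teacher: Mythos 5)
This proposition is not proved in the paper at all: it is quoted verbatim from \cite[Proposition 3.11]{LS13}, so there is no in-paper argument to compare against. Your outline does, however, match the strategy of the cited source: spanning of the quotient by ``disentangled'' diagrams via the mixed double-crossing relations \eqref{heis:downup} and \eqref{heis:up down} (whose cup--cap error terms land in $J_{m,n}$), followed by linear independence via the categorical action on $\bigoplus_k H_k\fmod$. The only caveat is that of your two proposed routes to injectivity, the representation-theoretic one is the one that actually closes the argument --- it is also what is needed to see that the bubbles $c_0,c_1,\ldots$ are algebraically independent --- whereas a bare diamond-lemma/Poincar\'e-series count gives an upper bound on the quotient but no lower bound without some faithful model to compare against.
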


\begin{lemma}\label{lem:invert}
If $f,g\in \dah_n$ with $fg = 1$, then in fact $f,g\in H_n\subset \dah_n$.
\end{lemma}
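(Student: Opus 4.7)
The plan is to equip $\dah_n$ with its natural $\mathbb{N}$-filtration in which $\deg(t_i)=0$ and $\deg(y_j)=1$, so that $F_0\dah_n = H_n$; the lemma is then equivalent to showing that every unit of $\dah_n$ has filtration degree zero. Given $fg=1$, the degree-zero components must satisfy $f_0 g_0 = 1$ in $H_n$, so $f_0$ is already invertible in $H_n$; replacing $f$ by $f_0^{-1}f$ reduces the problem to showing that if $f = 1+f'$ with $f' \in F_{\geq 1}\dah_n$ then $f'=0$.

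For this I exploit the centrality of the elementary symmetric polynomials in the $y_j$'s. A direct check from the relations of Definition~\ref{def:qdegen}---combining the two cross-relations yields $t_i(y_i+y_{i+1}) = (y_i+y_{i+1})t_i$, while $[t_i,y_j]=0$ for $j\neq i,i+1$---forces each $e_k(y_1,\dots,y_n)$ to be central in $\dah_n$. Using the PBW basis $\{y^\alpha T_w\}$ together with Chevalley's coinvariants theorem, $\dah_n$ is then a free module of rank $(n!)^2$ over the central subring $Z := \k[q,q^{-1}][e_1,\dots,e_n]$, so $F\mapsto \det_Z(\lambda_F)$ is a well-defined multiplicative map to $Z$. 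The equation $fg=1$ forces $\det_Z(\lambda_f)$ to be a unit of $Z$, i.e., a non-zero scalar in $\k[q,q^{-1}]$.

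The final step is to show that for $f = 1+f'$ with $f'\neq 0$ of positive filtration degree, $\det_Z(\lambda_f)$ must carry a non-zero contribution of positive $y$-degree and hence cannot be a scalar; this gives the required contradiction. The main technical obstacle is ruling out accidental cancellation in the determinant expansion of the matrix of $\lambda_f$ on the PBW basis. The $S_n$-symmetrization inherent in passing from $\k[y_1,\dots,y_n]$ to $Z$ should prevent this, since the top symbol of $\det_Z(\lambda_f)$ is determined by the top symbol of $f'$ acting on the free module. An alternative route, which bypasses the cancellation issue, is to pass through Lemma~\ref{lemma_filtgrad} to $\ah_n^+\otimes\k(q)$, working directly with the induced $\mathbb{N}$-grading there. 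Once $f'=0$ is established, $f\in H_n$, and symmetrically $g = f^{-1}\in H_n$, completing the proof.
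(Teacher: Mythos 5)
Your route---normalize to $f=1+f'$ with $f'$ of positive degree and then show that the norm $\det_Z(\lambda_f)$ over the central subring $Z$ of symmetric polynomials cannot be a scalar---is genuinely different from the paper's, which works at the top of the filtration rather than the bottom: the paper multiplies leading symbols in the associated graded algebra $\mathrm{gr}(\dah_n)$ and argues that $\mathrm{gr}(f)\mathrm{gr}(g)=1$ forces both symbols into degree zero. But your very first reduction already has a gap: the projection onto the degree-zero part of the PBW decomposition of $\dah_n$ is not an algebra map, so $fg=1$ does not yield $f_0g_0=1$. Indeed, the relation $t_iy_{i+1}=y_it_i-(q-1)y_{i+1}-q$ shows that $q$ lies in the two-sided ideal generated by the $y_j$, so that ideal is all of $\dah_n$ and there is no algebra map $\dah_n\to H_n$ killing the $y_j$. (This particular defect disappears if you first pass to $\ah_n^+$, where setting all $x_j=0$ is an honest algebra map.)

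The fatal problem is the step you yourself flag as the ``main technical obstacle.'' If $f'$ is nilpotent, then $\lambda_{f'}$ is a nilpotent $Z$-linear endomorphism, so $\det_Z(1+\lambda_{f'})=1$ regardless of the degree of $f'$, and no amount of symmetrization can prevent this. Such $f'$ exist: in $\dah_2$ set $E=1+t_1$ and $F=q-t_1$, so that $FE=0$ by the quadratic relation, and put $N=Ey_1F$. Then $N^2=Ey_1(FE)y_1F=0$, while expanding in the PBW basis gives $N=qy_1-y_1t_1+(2-q)y_2t_1+(q^2-2q)y_2+q^2-qt_1\neq 0$. Thus $(1+N)(1-N)=1=(1-N)(1+N)$ with $1+N\notin H_2$; transporting through the isomorphism of Lemma~\ref{lemma_filtgrad} (or directly taking $(1+t_1)x_1(q-t_1)$) gives a unit of $\ah_2^+$ outside $H_2$, so the alternative route you suggest does not help either. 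This is not merely a gap in your argument: it is a counterexample to the statement of the lemma as written, and it equally defeats the associated-graded argument, since there $\mathrm{gr}(f)\mathrm{gr}(g)=-\mathrm{gr}(N)^2=0\neq 1$, which is possible because $\mathrm{gr}(\dah_n)$ has zero divisors and the symbol map is therefore not multiplicative. The weaker property actually needed in the proof of Lemma~\ref{lem:Hindecomposables}---that a one-sided inverse in $\dah_n$ is automatically two-sided---does hold, for instance because $\dah_n$ is a finite module over the Noetherian central subring $Z$ and hence is a Noetherian ring; but the containment $f,g\in H_n$ fails, and any argument relying on it needs to be restructured.
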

\begin{proof}
We consider the non-negative integral filtration on $\dah_n$, with associated graded $\mbox{gr}(\dah_n) \cong H_n\rtimes \k[q,q^{-1}][x_1,\dots,x_n]$, where $H_m$ is the finite Hecke algebra Definition~\ref{def:finite-hecke}.  The degree 0 part of this filtration is precisely $H_n$. Now $\mbox{gr}(f)\mbox{gr}(g) = 1$, which implies that $\mbox{gr}(f)$ and $\mbox{gr}(g)$ are in $H_n$. Thus $f$ and $g$ are in the degree 0 part of the filtration, as desired.
\end{proof}

\begin{lemma} \label{lem:Hindecomposables}
The indecomposable objects of $\H$ are of the form $\P^m\Q^n$ for $m,n \in \mathbb{Z}_{\geq 0}$.
\end{lemma}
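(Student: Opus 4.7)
The plan is to first show that every object of $\H$ is isomorphic to a direct sum of objects of the form $\P^m\Q^n$, and then to show that each such $\P^m\Q^n$ is indecomposable in $\H$.

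For the decomposition, the key ingredient is the categorified Heisenberg commutation relation $\Q\P \cong \P\Q \oplus \mathbf{1}$ in $\H$. Let $\alpha \colon \Q\P \to \P\Q$ and $\beta \colon \P\Q \to \Q\P$ denote the two upward-oriented crossings, and let $\epsilon \colon \mathbf{1} \to \Q\P$ and $\delta \colon \Q\P \to \mathbf{1}$ denote the cup and cap. Relations \eqref{heis:up down}, \eqref{heis:downup}, and \eqref{eq:heis-bub} give $\alpha\beta = q\cdot \mathrm{Id}_{\P\Q}$, $\beta\alpha = q\cdot\mathrm{Id}_{\Q\P} - q\epsilon\delta$, and $\delta\epsilon = 1$ respectively, whence $e_1 := q^{-1}\beta\alpha$ and $e_2 := \epsilon\delta$ are orthogonal idempotents in $\End_{\H}(\Q\P)$ summing to $\mathrm{Id}_{\Q\P}$, whose images in $Kar(\H)$ are isomorphic to $\P\Q$ and $\mathbf{1}$ respectively via the maps $(\alpha, q^{-1}\beta)$ and $(\delta, \epsilon)$. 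Since $\H \hookrightarrow Kar(\H)$ is fully faithful and the objects $\Q\P$, $\P\Q$, $\mathbf{1}$ already lie in $\H$, the isomorphism $\Q\P \cong \P\Q \oplus \mathbf{1}$ descends to $\H$ itself. Given any sequence $\P_\epsilon$, iterated application of this isomorphism at each $\Q\P$-adjacency expresses $\P_\epsilon$ as a finite direct sum of $\P^m\Q^n$'s, since the number of inversions (positions where a $\Q$ precedes a $\P$) strictly decreases at each step.

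For indecomposability, the plan is to use the injection $\phi \colon \mathfrak{h}_{\mathbb{Z}} \hookrightarrow K_0(Kar(\H))$ provided by the cited theorem. Under the identification $\phi(b_1) = [\P]$ and $\phi(a_1) = [\Q]$ (compatible with $\Q\P \cong \P\Q \oplus \mathbf{1}$ matching $a_1 b_1 - b_1 a_1 = 1$), one has $\phi(b_1^m a_1^n) = [\P^m\Q^n]$. The monomials $\{b_1^m a_1^n\}$ form a PBW basis of the Heisenberg subalgebra of $\mathfrak{h}_{\mathbb{Z}}$ generated by $1, a_1, b_1$, hence are linearly independent in $\mathfrak{h}_{\mathbb{Z}}$. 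Combined with injectivity of $\phi$ and of the natural map $K_0(\H) \hookrightarrow K_0(Kar(\H))$ (from full faithfulness), this gives linear independence of the classes $[\P^m\Q^n]$ in $K_0(\H)$. If $\P^m\Q^n \cong X \oplus Y$ in $\H$ with both summands nonzero, applying the first step to $X$ and $Y$ yields a nontrivial expression $[\P^m\Q^n] = \sum_i [\P^{m_i}\Q^{n_i}]$ with at least two terms, contradicting linear independence.

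The main obstacle is establishing $\Q\P \cong \P\Q \oplus \mathbf{1}$ in $\H$ itself rather than only in $Kar(\H)$; at the level of explicit morphisms this amounts to verifying vanishing of cross-terms such as $\alpha \circ \epsilon$ and $\delta \circ \beta$, which follows from the local defining relations of $\H$ but requires some diagrammatic care. Once this is in hand, the Grothendieck group argument concluding indecomposability is essentially formal.
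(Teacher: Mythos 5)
Your proof is correct, but your indecomposability step takes a genuinely different route from the paper's. The first half (splitting the orthogonal idempotents $e_1=q^{-1}\beta\alpha$ and $e_2=\epsilon\delta$ to realize $\Q\P\cong\P\Q\oplus\mathbf{1}$ in $\H$ and iterating) is exactly the paper's first sentence, written out in detail; note that the ``main obstacle'' you flag at the end is not actually one, since the cross terms vanish formally from orthogonality: $\alpha\epsilon=(\alpha e_1)(e_2\epsilon)=\alpha(e_1e_2)\epsilon=0$, and similarly $\delta\beta=0$, so no extra diagrammatic care is needed. For indecomposability of $\P^m\Q^n$ the paper stays inside endomorphism algebras: it rules out $\P^a\Q^b$ with $(a,b)\neq(m,n)$ as a retract (a nonzero morphism forces $a-b=m-n$, and if $a+b\neq m+n$ the round trip factors through cups/caps, so by Proposition~\ref{LSEndThmPmQn} it cannot be the identity), and then shows $gf=\mathrm{id}$ implies $fg=\mathrm{id}$ in $\End(\P^m\Q^n)$ using the splitting of Proposition~\ref{LSEndThmPmQn}, Lemma~\ref{lem:invert} (invertible elements of $\dah_m$ lie in $H_m$), and semisimplicity of $H_m$ at generic $q$. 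You instead pass to the split Grothendieck group and deduce indecomposability from the $\Z$-linear independence of the classes $[\P^m\Q^n]=[\P]^m[\Q]^n$, obtained from injectivity of $\phi\colon\mathfrak{h}\to K_0(Kar(\H))$. Your route is shorter and avoids the filtration and semisimplicity arguments, but it outsources the real work to the injectivity theorem of \cite{LS13} and uses two facts not contained in the bare statement as quoted in the paper: that $\phi$ is a ring homomorphism with $\phi(b_1)=[\P]$ and $\phi(a_1)=[\Q]$ (true, from its construction), and that the monomials $b_1^ma_1^n$ are independent in $\mathfrak{h}_{\mathbb{Z}}$ --- the subalgebra generated by $a_1,b_1$ is a priori only a quotient of the Weyl algebra, so independence should be checked on the Fock representation rather than asserted as a PBW fact. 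The paper's argument is self-contained relative to the structure theory of $\End(\P^m\Q^n)$ that it develops anyway for the triangular decomposition of $\Tr(\H)$, and it yields the slightly finer statement that any split self-injection of $\P^m\Q^n$ is an isomorphism.
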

\begin{proof}
An indecomposable object must be of the form $\P^m\Q^n$ because a subsequence $\Q\P$ produces a decomposition
$\P\Q \oplus \mathbf{1}$. There is a morphism from $\P^m\Q^n$ to $\P^a\Q^b$ if and only if $m-n=a-b$.
If $a+b \neq m+n$ then the composition $\P^a\Q^b \rightarrow \P^m\Q^n \rightarrow \P^a\Q^b$ produces cups and caps or circles.
By Proposition ~\ref{LSEndThmPmQn} it follows that this composition cannot be the identity.

Now suppose there are maps
$ f \colon \P^m \Q^n \rightarrow \P^m \Q^n $
and
$ g \colon \P^m \Q^n \rightarrow \P^m \Q^n $ such that $ gf$ is the identity.
We claim that in fact $fg$ is the identity, too. Since a composition containing caps or cups cannot be the identity,
Proposition ~\ref{LSEndThmPmQn} implies $f$ is a monoidal composition of $ f_1 $ and $ f_2 $ where
$ f_1 \colon \P^m \rightarrow \P^m $ and $ f_2 \colon \Q^n \rightarrow \Q^n$.
Therefore, $ f_1 $ can be identified with an element in $\dah_m$ and $ f_2$ may be identified with an element in $\dah_n^{op}$.
Similarly, $g$ is a monoidal composition of $g_1$ and $g_2$ where
$ g_1 \colon \P^m \rightarrow \P^m $ and $ g_2 \colon \Q^n \rightarrow \Q^n $, so that
 $g_1$ can be identified with an element in $\dah_m$ and $g_2$ may be identified with an element in
$\dah_n^{op}$.
But now, by Lemma \ref{lem:invert}, $f_1$ and $g_1$ must belong to $H_m$.  Since $H_m$ is semi-simple at generic $q$ (over an algebraically closed field),
$ g_1 f_1 $ is the identity if and only $f_1 g_1$ is the identity.
Thus $f$ is an isomorphism, and $\P^m\Q^n$ is indecomposable.
\end{proof}

\begin{lemma}
\label{HHVSLEMMA}
There is an isomorphism of vector spaces
\begin{equation*}
\Tr(\H) \cong
\bigoplus_{m,n \geq 0}
\Tr\left(\dah_m\right) \otimes \Tr\left((\dah_n)^{op}\right) \otimes \k[q,q^{-1}][c_0, c_1, \ldots].
\end{equation*}
\end{lemma}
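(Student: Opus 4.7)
The plan is to reduce the computation of $\Tr(\H)$ to the endomorphism algebras of the indecomposables $\P^m\Q^n$, project out the ``cup/cap ideal'' $J_{m,n}$ using cross-morphism trace relations, and then identify $\HH_0$ of what remains.

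First, by Lemma~\ref{lem:Hindecomposables} the indecomposables of $\H$ are precisely the $\P^m\Q^n$, so Proposition~\ref{prop:indecomposables} identifies $\Tr(\H)$ with $\Tr(\H|_S)$ for $S=\{\P^m\Q^n\mid m,n\geq 0\}$. Writing $A_{m,n}:=\dah_m\otimes (\dah_n)^{op}\otimes \k[q,q^{-1}][c_0,c_1,\ldots]$ for the quotient algebra in Proposition~\ref{LSEndThmPmQn} and $\pi_{m,n}\colon \End(\P^m\Q^n)\twoheadrightarrow A_{m,n}$ for the algebra projection (with kernel $J_{m,n}$), I would construct a candidate inverse to the natural map $\Psi\colon \bigoplus_{m,n}\HH_0(A_{m,n})\to \Tr(\H)$ by showing that $\bigoplus \pi_{m,n}$ descends to a well-defined map $\Tr(\H)\to \bigoplus_{m,n}\HH_0(A_{m,n})$. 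Commutators inside a single $\End(\P^m\Q^n)$ pose no difficulty; for a cross commutator coming from $f\colon\P^m\Q^n\to \P^{m'}\Q^{n'}$ with $(m,n)\neq(m',n')$ and $g$ in the opposite direction, any such $f$ must change the number of strands of a given sign and so contains a cup or cap, hence $gf\in J_{m,n}$ and $fg\in J_{m',n'}$ are both annihilated by the projections.

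Next I would establish that $\Psi$ is surjective by reducing arbitrary classes to ones supported on the $A_{m,n}$ factors. Given a diagram $e\in J_{m,n}$, one may factor $e=g\circ f$ where $g$ begins with a cap (or $f$ ends with a cup), and apply the trace relation $[gf]=[fg]$ to move $e$ to an endomorphism of a different $\P^{m'}\Q^{n'}$ in which the number of open cups/caps has strictly decreased. Iterating this and using the local relations \eqref{heis:up-double}--\eqref{eq:heis-bub} of $\H$ to absorb any closed bubbles that appear into the $\k[q,q^{-1}][c_0,c_1,\ldots]$ factor of some $A_{m',n'}$ (and in the worst case into $\End(\1)\subset A_{0,0}$), every class in $\Tr(\H)$ is represented by an element of some $A_{m,n}$. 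The desired form of the answer then follows from the K\"unneth-type identity $\HH_0(B\otimes R)\cong \HH_0(B)\otimes R$ for any $\k$-algebra $B$ and commutative $\k$-algebra $R$, applied to each $A_{m,n}$.

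The main obstacle is ensuring that the diagrammatic reduction in the surjectivity argument terminates and does not introduce unforeseen identifications: sliding a cap via the trace relation decreases the number of open arcs in a given diagram but can shift the class into $\End(\P^{m'}\Q^{n'})$ with $m'+n'>m+n$, so a careful induction (for instance on the lexicographic pair consisting of total cup/cap number and internal diagrammatic complexity) is needed, together with systematic use of the local $\H$-relations to convert any newly formed closed loops into sums of bubbles $c_k$.
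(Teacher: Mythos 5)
Your surjectivity argument --- isotoping caps and cups around the annulus via the trace relation and inducting on the number of cups and caps, absorbing closed loops into bubbles --- is essentially the argument the paper gives, and the termination issue you flag is handled the same way there. The genuine gap is in your candidate inverse. Your claim that for a cross-commutator $fg-gf$, with $f\colon\P^m\Q^n\to\P^{m'}\Q^{n'}$ and $(m,n)\neq(m',n')$, \emph{both} composites land in the respective cup/cap ideals is false: only the composite passing \emph{through} the smaller object does. A morphism to a smaller object has an arc joining two of its \emph{bottom} points, which becomes an interior arc of the composite, not an arc joining upper points. Concretely, take $f\colon\P\Q\to\1$ a cap and $g\colon\1\to\P\Q$ the cup of matching orientation. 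Then $gf\in\End(\P\Q)$ contains an arc joining the two upper points, so $gf\in J_{1,1}$ and $\pi_{1,1}(gf)=0$; but $fg\in\End(\1)$ is a closed circle, hence equal to $1$ by \eqref{eq:heis-bub} (or to $\tilde{c}_0$ for the other orientation), and since $J_{0,0}=0$ we get $\pi_{0,0}(fg)\neq 0$. Thus $\bigl(\bigoplus_{m,n}\pi_{m,n}\bigr)(fg-gf)\neq 0$, the map does not descend to $\Tr(\H)$, and your retraction does not exist.

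This is not a repairable technicality: the trace relation genuinely identifies the class of an element of $J_{m,n}$ with the class of a \emph{nonzero} element of $A_{m',n'}$ for a smaller object (in the example, $[gf]=[1_{\1}]$, the unit of $\Tr(\H)$), so any well-defined comparison map must record where elements of $J_{m,n}$ land after the diagrammatic reduction rather than simply annihilating them. The paper instead argues in one direction only: modulo the trace ideal every class is represented by an element of some $A_{m,n}=\dah_m\otimes(\dah_n)^{op}\otimes\k[q,q^{-1}][c_0,c_1,\ldots]$, and the relations among such representatives coming from within a fixed $\End(\P^m\Q^n)$ are exactly the internal commutators, yielding $\bigoplus_{m,n}\HH_0(A_{m,n})$; your K\"unneth step for each $A_{m,n}$ is then fine. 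To complete your proof you would need to replace the retraction by an argument of this kind (for instance, a filtration of $\Tr(\H)$ by the number of cups and caps, with the reduction strictly decreasing the filtration degree), rather than the projection $\bigoplus_{m,n}\pi_{m,n}$.
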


\begin{proof}
Recall that the trace is determined by endomorphisms of indecomposable objects~(see Proposition~\ref{prop:indecomposables}).
Hence, to compute $\Tr(\H)$ it suffices to consider endomorphisms of all indecomposable objects ($m$ products of $ \P$ followed by $n$ products of $\Q$) modulo the trace ideal $\mathcal{I}$.
That is
\begin{equation*}
\Tr(\H) \cong \bigoplus_{m,n \geq 0} \End(\P^m\Q^n) / \mathcal{I}.
\end{equation*}
Since any map from $\P^m\Q^n \to \P^{m'}\Q^{n'}$ with $m\neq m'$ or $n\neq n'$ must contain a cap or cup,  it is clear that the ideal $\cal{I}$ is equal to the direct sum of the ideal $J_{m,n}$ from Proposition ~\ref{LSEndThmPmQn} together with the ideal generated by $fg-gf$ where $f,g \colon \P^m \Q^n \rightarrow \P^m\Q^n$.
Using the trace relation together with the relations in $\H$, any map contained in the ideal $J_{m,n}$ can be reduced to a sum of endomorphisms of $\P^{m'} \Q^{n'}$ for some $m'$, $n'$ that are not in the ideal $J_{m,n}$ by induction on the number of cap/cups in the diagram.  Indeed, each cap/cup can be isotoped around the annulus using the trace relation and relations in $\H$ to eventually create a curl or circle, which can then be further reduced to diagrams containing fewer caps and cups.
By applying Proposition ~\ref{LSEndThmPmQn}
\begin{align*}
\Tr(\H) &\cong \bigoplus_{m,n \geq 0}
\Tr\left(\dah_m \otimes (\dah_n)^{op} \otimes \k[q,q^{-1}][c_0, c_1, \ldots] \right) \\%
&=
\bigoplus_{m,n \geq 0}
\Tr\left(\dah_m\right) \otimes \Tr\left((\dah_n)^{op}\right) \otimes \k[q,q^{-1}][c_0, c_1, \ldots].
\end{align*}

\end{proof}

%
\subsection{A generating set for $\Tr(\cal{H})$}
%

The definitions of the following elements in $\Tr(\H)$ are motivated by the definition of the generators $P_\bx$ for the trace of the affine Hecke category in Remark \ref{rmk:allgens} along with some computations involving their commutators.
\begin{definition}\label{def_ws}
For $j,k > 0$ we define the following elements in $\Tr(\qh)$:
\begin{align}
w_{j,0} &:= \frac{1-q^{-1}}{1-q^{-j}}\left[ \sum_{i=0}^{j-1}q^{-i}\;
\hackcenter{\begin{tikzpicture}[scale=0.8, xscale=-1.0]
  \draw[thick,->] (3.0,0) .. controls ++(0,1.25) and ++(0,-1.75) .. (-0.6,2);
  \draw[thick,->] (-0.6,0) .. controls ++(0,1) and ++(0,-1.2) .. (0,2);
  \draw[thick,->] (0,0) .. controls ++(0,1) and ++(0,-1.2) .. (.6,2);
  \draw[thick,->] (.6,0) .. controls ++(0,1) and ++(0,-1.2) .. (1.2,2);
  \draw[thick,->] (1.8,0) .. controls ++(0,1) and ++(0,-1.2) .. (2.4,2);
  \draw[thick,->] (2.4,0) .. controls ++(0,1) and ++(0,-1.2) .. (3,2);
  \node at (1.2,.35) {$\dots$};
  \node at (1.8,1.65) {$\dots$};
   \node at (0,-.2) {$\underbrace{\hspace{0.48in}}$};
   \node at (2.4,-.2) {$\underbrace{\hspace{0.48in}}$};
   \node at (2.4,-.5) {$\scs i+1$};
    \node at (0,-.5) {$\scs j-1-i$};
 \draw  (-.14,1.20) circle (4pt);
\draw  (.35,.99) circle (4pt);
\draw  (.87,.88) circle (4pt);
\end{tikzpicture}} \right] \qquad \text{($j$ strands)} \nn \\
w_{1,k} &:= \left[ \;\; \hackcenter{\begin{tikzpicture}[scale=0.8]
    \draw[thick, ->] (0,0) -- (0,1.5) node[pos=.55, shape=coordinate](DOT){};;
    \node[star,star points=6,star point ratio=0.5, fill] at  (DOT) {} ;
    \node at (.35,.8) {$\scs k$};
\end{tikzpicture}}
\; \right]
= \left[ \left(\frac{1}{q-1} \;\;
\hackcenter{\begin{tikzpicture}[scale=0.8]
    \draw[thick, ->] (0,0) -- (0,1.5) node[pos=.55, shape=coordinate](DOT){};;
    \filldraw  (DOT) circle (2.5pt);
\end{tikzpicture}} \;\; + \;\; \frac{q}{(q-1)^2} \;\;
\hackcenter{\begin{tikzpicture}[scale=0.8]
    \draw[thick, ->] (0,0) -- (0,1.5) ;
\end{tikzpicture}} \;\;\;\right)^k \right] \nn \\
w_{j,1} &:=  q^{\frac{(1-j)}{2}}\left[ \;\;
\hackcenter{
\begin{tikzpicture}[scale=0.8]
  \draw[thick,->] (3,0) .. controls ++(0,1.25) and ++(0,-1.75) .. (0,2);
  \draw[thick,->] (0,0) .. controls ++(0,1) and ++(0,-1.2) .. (.6,2);
  \draw[thick,->] (.6,0) .. controls ++(0,1) and ++(0,-1.2) .. (1.2,2);
  \draw[thick,->] (1.8,0) .. controls ++(0,1) and ++(0,-1.2) .. (2.4,2);
  \draw[thick,->] (2.4,0) .. controls ++(0,1) and ++(0,-1.2) .. (3,2);
  \node at (1.2,.35) {$\dots$};
  \node at (1.8,1.65) {$\dots$};
\node[star,star points=6,star point ratio=0.5, fill] at  (2.93 ,.3) {} ;
\end{tikzpicture}}
\;\;
\right]
  \;=\; q^{\frac{(j-1)}{2}}
\left[ \;\;
\hackcenter{
\begin{tikzpicture}[scale=0.8]
  \draw[thick,->] (3,0) .. controls ++(0,1.25) and ++(0,-1.75) .. (0,2);
  \draw[thick,->] (0,0) .. controls ++(0,1) and ++(0,-1.2) .. (.6,2);
  \draw[thick,->] (.6,0) .. controls ++(0,1) and ++(0,-1.2) .. (1.2,2);
  \draw[thick,->] (1.8,0) .. controls ++(0,1) and ++(0,-1.2) .. (2.4,2);
  \draw[thick,->] (2.4,0) .. controls ++(0,1) and ++(0,-1.2) .. (3,2);
  \node at (1.2,.35) {$\dots$};
  \node at (1.8,1.65) {$\dots$};
\draw  (.4,1.1) circle (4pt);
\draw  (.9,.94) circle (4pt);
\draw  (1.99,.72) circle (4pt);
\draw  (2.51,.57) circle (4pt);
\node[star,star points=6,star point ratio=0.5, fill] at  (.07 ,1.6) {} ;
\end{tikzpicture}}
\;\;
\right]  \qquad \text{($j$ strands)} \nn \\
w_{0,k} &:=
\frac{1}{\{k\}}\left( \tilde{C}_k + \frac{(q-1)^2}{q}\sum_{j=1}^k (k+1-j) \tilde{C}_j C_{k-j}\right)\nn
\end{align}
Let $\Psi$ be in the involution that flips diagrams across the $y$-axis and switches the orientations of the strands. This involution reverses the order of tensor products, so it induces a $\k[q,q^{-1}]$-linear algebra anti-involution $\Psi: \Tr(\H) \to \Tr(\H)$. We define
\[
 w_{-a,b} := \Psi(w_{a,b}).
\]
\end{definition}

\begin{definition}\label{def_yts}
For an element $y_1^{a_1} \cdots y_n^{a_n} t_w \in \ah^+$, recall that
$[y_1^{a_1} \cdots y_n^{a_n} t_w]$ denotes its image in $\Tr(\ah^+)$.
Via the isomorphism in Proposition ~\ref{LSEndThm} we define elements
\begin{align}
[y_1^{a_1} \cdots y_n^{a_n} t_w]_{\uparrow} &\in \Tr(\End_{\H}(\P^n)) \label{def_up}\\
[y_1^{a_1} \cdots y_n^{a_n} t_w]_{\downarrow}  &\in \Tr(\End_{\H}(\Q^n)) \label{def_down}.
\end{align}
The elements in ~\eqref{def_up} (respectively ~\eqref{def_down}) correspond to
upward (respectively downward) pointing diagrams with crossings governed by $t_w$ with $a_i$ dots on the $i$th strand on the top counting from the left. This is illustrated on the right hand side of the following diagram:
\end{definition}
\[
\hackcenter{\begin{tikzpicture}
    \draw[very thick] (-1.2,0) -- (-1.2,1.5);
    \draw[very thick] (-.6,0) -- (-.6,1.5);
    \draw[very thick] (.3,0) -- (.3,1.5);
    \draw[fill=white!20,] (-1.4,.5) rectangle (.5,1);
    \node () at (-.3,.75) {$t_w$};
    \node at (-.6,1.2) {$\bullet$};
    \node at (-.8,1.3) {$\scs a_2$};
        \node at (-1.2,1.2) {$\bullet$};
    \node at (-1.4,1.3) {$\scs a_1$};
    \node at (.3,1.2) {$\bullet$};
    \node at (.1,1.3) {$\scs a_n$};
    \node at (-.1, .2) {$\cdots$};
\end{tikzpicture}}
\quad \mapsto
\quad
\hackcenter{\begin{tikzpicture}
      \path[draw,blue, very thick, fill=blue!10]
        (-2.3,-.6) to (-2.3,.6) .. controls ++(0,1.85) and ++(0,1.85) .. (2.3,.6)
         to (2.3,-.6)  .. controls ++(0,-1.85) and ++(0,-1.85) .. (-2.3,-.6);
      \path[draw, blue, very thick, fill=white]
            (-0.1,0) .. controls ++(0,.35) and ++(0,.35) .. (0.3,0)
            .. controls ++(0,-.35) and ++(0,-.35) .. (-0.1,0);
    \draw[very thick] (-1.85,-.7) -- (-1.85, .7).. controls ++(0,.95) and ++(0,.95) .. (1.65,.7)
        to (1.65,-.7) .. controls ++(0,-.95) and ++(0,-.95) .. (-1.85,-.7);
    \draw[very thick] (-1.25,-.55) -- (-1.25,.55) .. controls ++(0,.65) and ++(0,.65) .. (1.1,.55)
        to (1.1,-.55) .. controls ++(0,-.65) and ++(0,-.65) .. (-1.25, -.55);
    \draw[very thick] (-.45,-.4) -- (-.45,.4) .. controls ++(0,.35) and ++(0,.35) .. (.55,.4)
        to (.55, -.4) .. controls ++(0,-.35) and ++(0,-.35) .. (-.45,-.4);
    \draw[fill=white!20,] (-2,-.25) rectangle (-.3,.25);
    \node () at (-1.2,0) {$t_w$};
    \node at (-2.05,.6) {$\scs a_1$};
    \node at (-1.85,.5) {$\bullet$};
    \node at (-1.45,.6) {$\scs a_2$};
    \node at (-1.25,.5) {$\bullet$};
    \node at (-.65,.55) {$\scs a_n$};
    \node at (-.45,.45) {$\bullet$};
    \node at (-.8, -.5) {$\cdots$};
\end{tikzpicture}}
 \]

\begin{definition} \label{def:si}
In a similar manner to the elements described in ~\ref{def_up} and ~\ref{def_down} there are elements
\begin{align}
[s_1^{a_1} \cdots s_n^{a_n} t_w]_{\uparrow} &\in \Tr(\End_{\H}(\P^n)) \label{def_upstar}\\
[s_1^{a_1} \cdots s_n^{a_n} t_w]_{\downarrow}  &\in \Tr(\End_{\H}(\Q^n)) \label{def_downstar}
\end{align}
where now the dots on the tops of the diagrams are starred dots.
\end{definition}

In the following sequence of lemmas, we describe a set of generators of $\Tr(\H)$ (as an algebra). We then decrease the size of this set in stages until we show in Proposition \ref{genprop1} that $\Tr(\H)$ is generated (as an algebra) by the elements in Definition \ref{def_ws}. The following elements in the Hecke algebra will play a crucial role.
\begin{equation*}
\gamma_n := t_{n-1} \cdots t_1 \hspace{.5in}
\gamma'_n := t_1 \cdots t_{n-1}.
\end{equation*}

\begin{lemma}
\label{genlemma1}
A generating set for the trace is given by
\begin{equation*}
\bigcup_{n \geq 1} \{[y_1^{a_1} \cdots y_n^{a_n} t_w]_{\uparrow}, [y_1^{a_1} \cdots y_n^{a_n} t_w]_{\downarrow}  | w \in S_n,  a_1, \ldots, a_n \in \Z_{\geq 0}  \};
\bigcup \{ \tilde{c}_{k}, c_{k} | k \in \Z_{\geq 0}     \}.
\end{equation*}
\end{lemma}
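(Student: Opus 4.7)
The plan is to combine the vector-space decomposition of Lemma \ref{HHVSLEMMA} with the fact that the product in $\Tr(\H)$ is induced by horizontal juxtaposition of diagrams. First I would use Lemma \ref{HHVSLEMMA} to write any class in $\Tr(\H)$ as a sum of elementary tensors $f \otimes g \otimes b$ with $f \in \dah_m$, $g \in \dah_n^{op}$, and $b$ a polynomial in $c_0, c_1, \ldots$. This reduces the problem to showing that each such elementary tensor is a product, in $\Tr(\H)$, of elements from the claimed generating set.

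Next I would handle each tensor factor separately. The $q$-degenerate affine Hecke algebra $\dah_m$ has a PBW-type basis $\{y_1^{a_1}\cdots y_m^{a_m} t_w\}$ (flatness from Lemma \ref{lemma_flat}, combined with the usual triangular decomposition for the degenerate affine Hecke algebra), so its trace $\Tr(\dah_m)$ is spanned by the classes $[y_1^{a_1}\cdots y_m^{a_m} t_w]$. Via the embedding in Proposition \ref{LSEndThm} these are precisely the elements $[y_1^{a_1}\cdots y_m^{a_m} t_w]_\uparrow$, and an identical argument handles the downward factor. The bubble factor $\k[q,q^{-1}][c_0, c_1, \ldots]$ is a polynomial algebra, generated multiplicatively by $\{c_k\}_{k\geq 0}$; the generators $\tilde c_k$ can be expressed as polynomials in the $c_k$ via the standard infinite-Grassmannian type identity but are included in the list for symmetry and later convenience.

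The last step, and the one I expect to be the main obstacle, is to verify that these vector-space spanning statements combine multiplicatively in $\Tr(\H)$. Concretely, I would show
\[
[f]_\uparrow \cdot [g]_\downarrow \cdot [c_{k_1}] \cdots [c_{k_r}] \;=\; \bigl[ f \otimes g \otimes c_{k_1} \otimes \cdots \otimes c_{k_r} \bigr]
\]
in $\Tr(\End_{\H}(\P^m \Q^n))$, where the right-hand side uses the splitting of Proposition \ref{LSEndThmPmQn}. This identity holds because the monoidal product on $\H$ is horizontal juxtaposition of diagrams and the trace-algebra product is induced by this monoidal product, so the combined endomorphism of $\P^m \Q^n$ obtained by placing an upward $\dah_m$-diagram, a downward $\dah_n^{op}$-diagram, and closed bubbles side by side represents the claimed tensor. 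Combining this multiplicative identification with the three spanning statements from the previous paragraph writes every class in each $(m,n)$-summand of Lemma \ref{HHVSLEMMA} as a product of the listed generators, completing the proof.
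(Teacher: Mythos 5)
Your proposal is correct and follows essentially the same route as the paper: the paper's proof likewise reduces to endomorphisms of the indecomposables $\P^m\Q^n$, eliminates caps and cups in the trace so that everything becomes a juxtaposition of an upward $\dah_m$-part, a downward $\dah_n^{op}$-part, and bubbles, and then invokes the spanning of $\End_{\H}(\P^m)$ by the monomials $y_1^{a_1}\cdots y_m^{a_m}t_w$ from \cite{LS13}. The only cosmetic difference is that you route through Lemma \ref{HHVSLEMMA} as a black box (and spell out the multiplicative compatibility of the juxtaposition product with the tensor decomposition), whereas the paper re-runs that reduction argument inline.
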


\begin{proof}
In the category (before idempotent completion) any object is a direct sum of objects of the form $\P^a \Q^b$ (see Lemma~\ref{lem:Hindecomposables}).
Now we argue as in \cite{CLLS15} that in the trace, any cap or cup is part of a bubble or a curl.
Thus any map between objects descends in the trace to a product of endomorphisms of $\P$, $\Q$ along with bubble terms.  Endomorphisms of $\P^m$ and $\Q^n$ are described in Theorem 3.9 of \cite{LS13}.  They are spanned by elements of the form $[y_1^{a_1} \cdots y_m^{a_m} t_{\alpha}]_{\uparrow} $
where $ \alpha$ is a reduced expression in $S_m$ and
$ [y_1^{b_1} \cdots y_n^{b_n} t_{\beta}]_{\downarrow}$
where $\beta$ is a reduced expression in $S_n$ respectively.
\end{proof}

\begin{lemma}
\label{genlemma2}
A generating set for the trace is given by
$$\bigcup_{n \geq 1} \{ [y_1^{a_1} \cdots y_n^{a_n} \gamma_n]_{\uparrow},
[y_1^{a_1} \cdots y_n^{a_n} \gamma_n]_{\downarrow}
 |  a_1, \ldots, a_n \in \Z_{\geq 0}  \}
\bigcup \{ \tilde{c}_{k}, c_{k} | k \in \Z_{\geq 0}  \}.$$
\end{lemma}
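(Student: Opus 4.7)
The strategy is to reduce an arbitrary element $[y_1^{a_1}\cdots y_n^{a_n}t_w]_\uparrow$ with $w\in S_n$ to the desired form by combining two tools: the cyclicity of the trace (which permits conjugation by any invertible element of $\dah_n$), and the monoidal algebra structure of $\Tr(\H)$ (which factors the trace of a block-diagonal endomorphism into a product of smaller traces via the inclusions $\dah_{m}\otimes \dah_{n-m}\hookrightarrow \dah_n$). The bubble generators $c_k, \tilde c_k$ appear in both generating sets unchanged, and the downward elements are handled by the evident symmetry flipping diagrams top-to-bottom, so I concentrate on the upward case. The plan is to induct on $n$, with a secondary induction on the complexity $\ell(w)+\sum_i a_i$ to absorb lower-order correction terms produced by the relations of Definition~\ref{def:qdegen}. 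Write $y^a := y_1^{a_1}\cdots y_n^{a_n}$ for short. The base case $n=1$ is immediate since $\gamma_1 = 1$.

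For the inductive step, split on the cycle structure of $w$. If $w$ has at least two disjoint cycles of cycle type $\lambda=(\lambda_1,\dots,\lambda_k)$, let $w_\lambda \in S_n$ denote the standard permutation whose cycles act on the consecutive intervals of sizes $\lambda_1,\dots,\lambda_k$, and choose $v\in S_n$ with $vwv^{-1}=w_\lambda$. Since $t_v$ is invertible in $\dah_n$, trace cyclicity gives $[y^a t_w]_\uparrow = [t_v\, y^a t_w\, t_v^{-1}]_\uparrow$. Expanding the conjugate using the moves $t_iy_i = y_{i+1}t_i + (q-1)y_{i+1} + q$ and its variant, the leading term in the filtration by total $y$-degree is a monomial of the form $y^{b}t_{w_\lambda}$ for some exponent sequence $b$ obtained by permuting $a$. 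By construction this monomial is block-diagonal with respect to $\P^n = \P^{\lambda_1}\otimes\cdots\otimes \P^{\lambda_k}$, so its trace factors as a product in $\Tr(\H)$ of $k$ smaller traces, each of which lies in $\Tr(\dah_{\lambda_j})$ with $\lambda_j<n$ and hence falls under the outer inductive hypothesis. The remaining correction terms from the conjugation have strictly smaller secondary complexity and are absorbed by the inner induction.

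If instead $w$ is an $n$-cycle, it is conjugate in $S_n$ to $\gamma_n$, so picking $v$ with $vwv^{-1}=\gamma_n$ and running the same conjugation-and-expand argument shows that $[y^a t_w]_\uparrow$ equals $[y^b\gamma_n]_\uparrow$ (for some permuted exponent $b$) modulo terms of strictly smaller $\ell(w')+\sum a'_i$. The leading term is already of the required form, the lower-order terms are cleared by the secondary induction, and together with Case A this completes the proof that the subalgebra generated by the listed elements contains the generating set of Lemma~\ref{genlemma1}.

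The main obstacle is ensuring that every correction term produced by the $\dah_n$ relations strictly decreases in the chosen complexity, since the move $t_iy_i = y_{i+1}t_i + (q-1)y_{i+1} + q$ can trade a crossing either for a $y$-dot or for a constant (and thus potentially shorten the permutation without changing the $y$-degree). This is handled cleanly by refining the secondary induction first by total $y$-degree and only then by $\ell(w)$, or equivalently by passing to the associated graded algebra $\k[y_1,\dots,y_n]\rtimes H_n$ (Lemma~\ref{lemma_filtgrad}), where conjugation by $t_v$ acts transparently as a permutation of indices, and then lifting the reduction back to $\dah_n$ order by order in the filtration.
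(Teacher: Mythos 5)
Your proposal is correct and follows essentially the same route as the paper: conjugate by an invertible $t_g$ to bring $w$ to a product of standard cycles, use cyclicity of the trace together with the monoidal (block-diagonal) factorization, and absorb the dot-sliding correction terms by an induction on a complexity measure. Your version is somewhat more explicit about the well-foundedness of that induction (ordering first by total $y$-degree, then by length) than the paper's brief appeal to the Bruhat order and exponent degrees, but the underlying argument is the same.
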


\begin{proof}
This is proved by induction on the Bruhat order and the degree of the exponents of the $y_i$. Let $w$ be a reduced expression in $S_n$.  Consider the elements
$[y_1^{a_1} \cdots y_n^{a_n} t_w]_{\uparrow}$ or $[y_1^{a_1} \cdots y_n^{a_n} t_w]_{\downarrow}$.

Assume that we could generate all elements
of the form
$[y_1^{b_1} \cdots y_n^{b_n} t_w']_{\uparrow}$ or $[y_1^{b_1} \cdots y_n^{b_n} t_w']_{\downarrow}$
where $w'$ is smaller than $w$ in the Bruhat order or $w=w' $ and $ b_i \leq a_i$ for all $i$ with at least one $b_i$ strictly smaller.

Let $ g$ be a reduced expression in $S_n$ such that $g w g^{-1}$ is a product of cycles.
Then using relations in the Hecke algebra, dot sliding relations, and the induction hypothesis we get that both
\begin{align*}
[y_1^{a_1} \cdots y_n^{a_n} t_w]_{\uparrow} &= [t_g]_{\uparrow} [y_1^{a_1} \cdots y_n^{a_n} t_w]_{\uparrow} [t_{g}^{-1}]_{\uparrow} \\
[y_1^{a_1} \cdots y_n^{a_n} t_w]_{\downarrow} &= [t_g]_{\downarrow} [y_1^{a_1} \cdots y_n^{a_n} t_w]_{\downarrow} [t_{g}^{-1}]_{\downarrow}
\end{align*}
are generated by elements described in the lemma.
\end{proof}

\begin{lemma}
\label{genlemma3}
A generating set for the trace is given by
$$\bigcup_{n \geq 1} \{ [y_1^{a_1} \gamma_n]_{\uparrow}, [y_1^{a_1} \gamma_n]_{\downarrow}   |  a_1  \in \Z_{\geq 0}  \}
\bigcup \{ \tilde{c}_{k}, c_{k} | k \in \Z_{\geq 0}  \}.$$
\end{lemma}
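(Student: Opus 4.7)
The plan is to reduce the multi-dot generators from Lemma~\ref{genlemma2} to those with a single dot on strand $1$ by iteratively absorbing the exponents $a_i$ for $i \ge 2$ onto strand $1$, using trace cyclicity together with a sliding identity for $\gamma_n$. By Lemma~\ref{genlemma2} it suffices to show that each $[y_1^{a_1}\cdots y_n^{a_n}\gamma_n]_\uparrow$ (and its downward analog) lies in the algebra generated by the proposed set.

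The key computational input, verified directly from the $\dah_n$ relations $t_iy_i=y_{i+1}t_i+(q-1)y_{i+1}+q$, $y_it_i=t_iy_{i+1}+(q-1)y_{i+1}+q$, and $y_jt_k=t_ky_j$ for $j\neq k,k+1$, is the sliding identity
\begin{equation*}
y_i\gamma_n \;\equiv\; \gamma_n\,y_{i+1\,\text{mod}\,n}
\end{equation*}
modulo elements whose $t$-factor has length strictly less than $\ell(\gamma_n)=n-1$. One obtains this by commuting $y_i$ successively through the factors of $\gamma_n = t_{n-1}\cdots t_1$; the only non-trivial swaps (those involving $t_{i-1}$ or $t_i$, or just $t_{n-1}$ when $i=n$) produce corrections that are missing at least one $t_j$.

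The reduction then proceeds by induction on $n$, with the base $n=1$ trivial since $\gamma_1=1$. For $n\ge 2$, the basic move is: slide $y_n^{a_n}$ through $\gamma_n$ to become $\gamma_n y_1^{a_n}$, cycle in the trace, and combine the resulting $y_1$'s, yielding $[y_1^{a_1}\cdots y_n^{a_n}\gamma_n]_\uparrow \equiv [y_1^{a_1+a_n}y_2^{a_2}\cdots y_{n-1}^{a_{n-1}}\gamma_n]_\uparrow$ modulo lower-$t$-length terms. For $a_i$ with $i<n$, one first slides $y_i^{a_i}$ through $\gamma_n$ (picking up $y_{i+1}^{a_i}$), uses cyclicity and commutativity of the $y$'s to move it back to its natural position on strand $i+1$, and repeats until the exponent sits on strand $n$, at which point the basic move applies. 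Iterating for $i=n,n-1,\ldots,2$ collapses the expression to $[y_1^{a_1+\cdots+a_n}\gamma_n]_\uparrow$ modulo a finite sum of terms of strictly lower $t$-length.

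These lower-$t$-length error terms are handled by the cycle-decomposition argument from the proof of Lemma~\ref{genlemma2}: any element $[y_1^{b_1}\cdots y_n^{b_n}t_{w'}]_\uparrow$ with $\ell(w')<n-1$ has $w'$ of cycle type $(m_1,m_2,\ldots)$ with all $m_i<n$, so conjugating (freely in the trace) into product-of-cycles form realizes it in $\Tr(\H)$ as a product of terms $[y_1^{c_1}\cdots y_{m_i}^{c_{m_i}}\gamma_{m_i}]_\uparrow$, which are in the generating set by the outer induction on $n$. The downward generators are handled identically via the flip symmetry $\Psi$, and the bubbles $\tilde c_k, c_k$ are in the set by definition. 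The main subtlety is ensuring the sliding identity only produces strictly-lower-$t$-length errors — this relies on $\gamma_n$ being a minimal-length $n$-cycle, so that each $\dah_n$ swap strictly decreases $t$-length, which is what keeps the double induction well-founded.
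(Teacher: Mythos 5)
Your proof is correct and follows essentially the same route as the paper's own (very terse) argument: slide the dots leftward through $\gamma_n$ using the relation \eqref{eq:nil-dot} together with trace cyclicity, and absorb the resulting corrections, which have strictly shorter $t$-part, via the conjugation-into-cycles argument of Lemma~\ref{genlemma2}. If anything your bookkeeping is sharper than the paper's: the paper phrases the induction as being ``on the number of dots,'' but the $(q-1)y_{i+1}$ correction in the dot-slide relation does not lose a dot, so the well-founded quantity is really the length of the underlying permutation (together with the rank), which is exactly the double induction you set up.
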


\begin{proof}
Using ~\eqref{eq:nil-dot} we may move a dot on a strand of
$ [y_1^{a_1} \cdots y_n^{a_n} \gamma_n]_{\uparrow}  $ or
$ [y_1^{a_1} \cdots y_n^{a_n} \gamma_n]_{\downarrow}  $
to the left, at the cost of lower order terms where that dot disappears.  Repeating this process (and inducting on the number of dots) we get that the sets described in the lemma generate the trace.

\end{proof}



\begin{lemma}
\label{genlemma5}
For $n \geq 1$,
\begin{itemize}
\item Elements $[\gamma_n]_{\uparrow}$ are generated by ${w}_{j,0}$ for $j \geq 1$,
\item Elements $[\gamma_n]_{\downarrow}$ are generated by ${w}_{-j,0}$ for $j \geq 1$.
\end{itemize}
\end{lemma}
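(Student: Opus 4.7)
I will prove the upward statement by strong induction on $n$; the downward case then follows immediately by applying the algebra anti-involution $\Psi$ from Definition~\ref{def_ws}, which sends $w_{j,0}$ to $w_{-j,0}$ by definition and sends $[\gamma_j]_\uparrow$ to $[\gamma_j]_\downarrow$ (noting that $\Psi(\gamma_j) = t_1 \cdots t_{j-1}$, whose closure in $\End(\Q^j)$ represents the same trace class as $[\gamma_j]_\downarrow$ by cyclic conjugacy in $H_j$). The base case $n=1$ is immediate: $\gamma_1$ is the identity of $H_1$, and Definition~\ref{def_ws} gives $w_{1,0} = [1_\P]$, so the two classes coincide.

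For the inductive step the plan is to produce a triangular relation of the form $w_{n,0} = \alpha_n [\gamma_n]_\uparrow + R_n$, where $\alpha_n \neq 0$ is a scalar and $R_n$ lies in the subalgebra $\langle w_{j,0}\rangle_{j<n}$. The diagrams appearing in Definition~\ref{def_ws} for $w_{n,0}$ all realize the $n$-cycle $\gamma_n$ as a permutation of $n$ upward strands, but with a prescribed subset of the crossings replaced by their inverses, drawn as circled crossings. I would apply the defining identity \eqref{eq:inv-crossing}, namely $(\text{circled crossing}) = q^{-1}(\text{crossing}) - \tfrac{q-1}{q}(\text{identity})$, to each circled crossing and expand $w_{n,0}$ as a sum indexed by subsets $S$ of the circled crossings, where $S$ records which are smoothed to the identity and $S^c$ those kept as genuine crossings. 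The summands with $S = \emptyset$ all reassemble the cycle $\gamma_n$, and after collecting the weights $q^{-i}$ and the overall normalization $\frac{1-q^{-1}}{1-q^{-n}}$ from the definition, they sum to a nonzero scalar multiple $\alpha_n [\gamma_n]_\uparrow$.

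For the remaining contributions, those with $|S|\geq 1$, I rely on the combinatorial fact that removing a transposition from the reduced expression $t_{n-1}\cdots t_1$ for $\gamma_n$ splits the resulting permutation into two disjoint cycles of strictly smaller length, and iterating this splitting yields a disjoint union of smaller cycles. The closure of any such smoothed diagram is therefore a product $[\gamma_{k_1}]_\uparrow \cdots [\gamma_{k_r}]_\uparrow$ with $\sum_j k_j = n$ and every $k_j < n$. By the inductive hypothesis each such product lies in $\langle w_{j,0}\rangle_{j < n}$, and hence so does $R_n$. Rearranging the relation $w_{n,0} = \alpha_n [\gamma_n]_\uparrow + R_n$ then exhibits $[\gamma_n]_\uparrow$ inside $\langle w_{j,0}\rangle_{j \leq n}$, completing the induction.

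The main obstacle will be verifying that the leading coefficient $\alpha_n$ is nonzero. This is expected: the normalization $\frac{1-q^{-1}}{1-q^{-n}}$ and the weighting $q^{-i}$ in the definition of $w_{n,0}$ parallel the factor $\frac{\{1\}}{\{n\}}$ appearing in the skein-theoretic formula for $p_n$ of Remark~\ref{rmk:allgens}, which is in turn engineered to produce the clean commutation relations of the skein algebra. A direct calculation, expanding the top-order contribution and summing the resulting geometric series over $i$, should yield an explicit nonzero rational function of $q$, at which point the induction closes.
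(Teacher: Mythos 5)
Your argument is correct and is essentially the paper's own proof: the paper's (much terser) justification likewise expands each circled crossing via \eqref{eq:inv-crossing} so that the leading term reassembles $[\gamma_n]_{\uparrow}$ while the smoothed terms split into products of shorter cycles handled inductively, and it treats the downward case by rewriting $[\gamma_n]_{\downarrow}$ in terms of $[\gamma'_n]_{\downarrow}$ just as you do via $\Psi$. The one detail you defer --- nonvanishing of $\alpha_n$ --- is immediate, since the leading coefficient is a sum of monomials $q^{-i-c_i}$ with positive integer coefficients and so cannot cancel.
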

\begin{proof}
One could express circle crossings in terms of the other crossing and identity terms using \eqref{eq:inv-crossing}.

For the proof of the second item, note that an element $[\gamma_n]_{\downarrow}$ could be written in terms of elements $[\gamma_j']_{\downarrow}$.  The rest of the proof is similar to the proof of the first item.
\end{proof}

\begin{lemma}
\label{genlemma6}
For $n \geq 1$,
\begin{itemize}
\item Elements $[y_1^{a_1} \gamma_n']_{\uparrow}$ are generated by ${w}_{j,1}$ for $j \geq 1$,
\item Elements $[y_1^{a_1} \gamma_n']_{\downarrow}$ are generated by ${w}_{-j,1}$ for $j \geq 1$.
\end{itemize}
\end{lemma}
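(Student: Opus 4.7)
The strategy is to reduce the statement from $y$-dots to star dots via \eqref{eq:def-star}, then transport the problem through the identification $\dah_n \otimes \k(q) \cong \ah_n^+ \otimes \k(q)$ of Lemma \ref{lemma_filtgrad} into the skein algebra of the torus via Theorem \ref{thm_ahatoskein}, where the commutation relations of Theorem \ref{thm_skeinpres} make the required generation statement transparent. Throughout, ``generated by $w_{j,1}$'' is understood in the presence of the $w_{j,0}$ provided by Lemma \ref{genlemma5} and of the $w_{1,k}$ (which for $n=1$ come from the present lemma directly by the definition of $w_{1,k}$).

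Since $y_1 = (q-1)s_1 - q/(q-1)$, the power $y_1^{a_1}$ is a polynomial of degree $a_1$ in $s_1$, so $[y_1^{a_1}\gamma_n']_\uparrow$ is a linear combination of classes $[s_1^k\gamma_n']_\uparrow$ for $0\le k\le a_1$. The $k=0$ term $[\gamma_n']_\uparrow=[\gamma_n]_\uparrow$ (equal in the trace by iterated cyclicity together with the commutations $t_it_j=t_jt_i$ for $|i-j|\ge 2$) is handled by Lemma \ref{genlemma5}. Proposition \ref{LSEndThm} combined with Lemma \ref{lemma_filtgrad} yields, after inverting $q-1$, an algebra homomorphism
\[
\iota\colon\Tr(\ah^+)\otimes_{\k[q,q^{-1}]}\k(q)\longrightarrow \Tr(\H)\otimes_{\k[q,q^{-1}]}\k(q)
\]
induced by $s_i\leftrightarrow x_i$ and $t_i\leftrightarrow q^{1/2}\sigma_i$; this sends $[x_1^k\sigma_1\cdots\sigma_{n-1}]$ to a scalar multiple of $[s_1^k\gamma_n']_\uparrow$. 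A direct comparison of Definition \ref{def_ws} with the explicit algebraic formulas of Remark \ref{rmk:allgens} then shows that $\iota$ carries the skein-theoretic generators $P_{j,1}$, $P_{j,0}$ and $P_{1,k}$ (up to explicit powers of $q^{1/2}$) to $w_{j,1}$, $w_{j,0}$ and $w_{1,k}$ respectively.

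It therefore suffices to express $[x_1^k\sigma_1\cdots\sigma_{n-1}]\in\Sk^{>,\ge}_q(T^2)$ as a polynomial in $\{P_{j,1}, P_{j,0}, P_{1,k'}\}$. This is an immediate consequence of Theorem \ref{thm_skeinpres}: the bracket $[P_{j-1,0},P_{1,k'}]$ equals a nonzero scalar multiple of $P_{j,k'}$ for all $j\ge 2$ and $k'\ge 1$, and together with the three generating families themselves this exhausts every $P_{j,k}$ with $j\ge 1$ and $k\ge 0$, hence all of $\Sk^{>,\ge}_q(T^2)$. Pulling back along $\iota$ gives the required expression for $[s_1^k\gamma_n']_\uparrow$, proving the upward statement; the downward case then follows by applying the anti-involution $\Psi$ of Definition \ref{def_ws}. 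The chief technical point to verify — and the principal obstacle in the argument — is the identification under $\iota$ of the diagrammatic elements $w_{j,1}$, $w_{j,0}$ and $w_{1,k}$ of Definition \ref{def_ws} with the skein generators $P_{j,1}$, $P_{j,0}$, $P_{1,k}$; this requires matching the sums of over- and under-crossings decorated with star dots in Definition \ref{def_ws} against the Coxeter-plus-dot products $x_n\sigma_{n-1}\cdots\sigma_1$ and the elements $p_d$ of Remark \ref{rmk:allgens}, carefully tracking the powers of $q^{1/2}$ introduced by the normalization $\sigma_i = q^{-1/2}t_i$.
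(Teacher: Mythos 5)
Your proposal is correct in substance, but it takes a genuinely different route from the paper, whose entire proof of this lemma is the one-sentence assertion that it ``follows from the definitions and relations in the category.'' The intended direct argument is that the diagram defining $w_{j,1}$ in Definition \ref{def_ws} is, up to a power of $q^{1/2}$ and trace moves, literally the class $[s_1\gamma_j']_{\uparrow}$, so that the cases $a_1\leq 1$ are immediate from \eqref{eq:def-star} and \eqref{eq_ydef}, with higher dot-powers absorbed by the neighbouring lemmas. You instead push the question through the map $\tilde\varphi$ of Corollary \ref{cor_upperiso} into $\Sk_q^{>,\geq}(T^2)$ and use the commutation relations of Theorem \ref{thm_skeinpres} to manufacture every $P_{j,k}$ ($j\geq 1$, $k\geq 0$) from $P_{j,0}$, $P_{1,k}$ and the bracket $[P_{j-1,0},P_{1,k}]$, then transport the resulting identity back. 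This is legitimate and not circular: Theorem \ref{thm_ahatoskein} and the identification $\tilde\varphi(P_{a,b})=w_{a,b}$ are established independently of the generation lemmas of Section \ref{sec:qheis} (the latter identification is exactly Proposition \ref{prop:uppertrianglemap}, i.e.\ the discussion of Section \ref{sec_diagdefs}, so the ``chief technical point'' you defer is one the paper does verify, just later in the text). What your route buys is an actual argument where the paper offers none; what it costs is importing the skein/trace dictionary earlier than the paper does.

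Two further remarks. First, your reinterpretation of ``generated by $w_{j,1}$'' as ``generated by $w_{j,1}$ together with the $w_{j,0}$ and $w_{1,k}$'' is not merely convenient but forced: a product of $d$ elements $w_{j_i,1}$ has internal degree exactly $d$ and rank $\sum_i j_i\geq d$, so neither $[\gamma_n']_{\uparrow}$ (degree $0$, rank $n\geq 2$) nor the top-degree part of $[y_1^{a_1}\gamma_n']_{\uparrow}$ when $a_1>n$ can lie in the subalgebra generated by the $w_{j,1}$ alone; the weaker statement you prove is exactly what Proposition \ref{genprop1} consumes. Second, the reduction of the downward case to the upward one via $\Psi$ needs the small additional observation that $\Psi([y_1^{a_1}\gamma_n']_{\uparrow})$, which is a class of the form $[y_n^{a_1}\gamma_n]_{\downarrow}$, agrees with $[y_1^{a_1}\gamma_n']_{\downarrow}$ modulo trace relations and lower-order terms; this is routine but should be said.
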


\begin{proof}
This follows from the definitions and relations in the category.
\end{proof}

\begin{lemma}
\label{genlemma7}
Elements of the form
$[y_1^{a_1} \gamma_n]_{\uparrow}, [y_1^{a_1} \gamma_n]_{\downarrow}$
are generated by ${w}_{j,0}$ and ${w}_{0,1}$.
\end{lemma}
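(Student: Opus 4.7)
The plan is to induct on the exponent $a_1 \geq 0$, exploiting the fact that multiplying by $w_{0,1}$ in $\Tr(\H)$ effectively introduces one more dot onto a strand modulo lower-order corrections. The base case $a_1 = 0$ is exactly Lemma~\ref{genlemma5}, so only the inductive step requires work.

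For the inductive step, the core diagrammatic computation is to expand
\[
 w_{0,1}\cdot [y_1^{a_1-1}\gamma_n]_\uparrow \;\in\; \Tr(\H).
\]
By Definition~\ref{def_ws}, $w_{0,1}$ is an explicit $\k(q)$-linear combination of closed starred bubbles, and in $\Tr(\H)$ the position of a bubble relative to the braid strands is significant, because the closed braid partitions the trace annulus into regions a bubble cannot cross freely. Applying the bubble-slide identities \eqref{eq:c-slide} and \eqref{eq:cc-slide}, the closed-bubble identity \eqref{eq:bub-stardot}, and the change of variables \eqref{eq:def-star} between $y$-dots and star-dots, one expects an identity of the form
\[
 w_{0,1}\cdot [y_1^{a_1-1}\gamma_n]_\uparrow \;=\; \lambda\,[y_1^{a_1}\gamma_n]_\uparrow \;+\; R,
\]
where $\lambda\in\k(q)^{\times}$ is an explicit nonzero scalar and $R$ is a $\k(q)$-linear combination of terms of the form $p\cdot [y_1^{b}\gamma_m]_\uparrow$ with $b<a_1$ and $p$ a polynomial in the bubble elements $w_{0,k}$. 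Each factor $[y_1^{b}\gamma_m]_\uparrow$ with $b<a_1$ lies in $\langle w_{j,0},w_{0,1}\rangle$ by induction, and the higher bubbles $w_{0,k}$ for $k\geq 2$ are themselves produced from $w_{0,1}$ and the $w_{j,0}$ by an interleaved inductive computation: the analogous bubble-slide for $w_{0,1}\cdot w_{j,0}$ generates $w_{j,1}$ up to an explicit scalar, and iterated multiplication by $w_{0,1}$ builds higher bubbles step by step. Solving for $[y_1^{a_1}\gamma_n]_\uparrow$ then expresses it as an element of $\langle w_{j,0},w_{0,1}\rangle$.

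The downward version is deduced by applying the anti-involution $\Psi$ of Definition~\ref{def_ws}, which reverses orientations and thereby sends $[y_1^{a_1}\gamma_n]_\uparrow \mapsto [y_1^{a_1}\gamma_n]_\downarrow$ (after the ordering flip absorbed by the trace) and $w_{j,0}\mapsto w_{-j,0}$. Since the upward conclusion already holds in every rank, the image $\Psi(w_{0,1})$ lies in the target subalgebra as well, and the downward claim follows.

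The principal obstacle is pinning down the leading scalar $\lambda$ as nonzero and verifying that every term in the remainder $R$ is genuinely lower in the induction order. Because the corrections produced by \eqref{eq:c-slide}--\eqref{eq:cc-slide} involve both additional dot-terms and additional bubble-terms, the induction must be carried out simultaneously on the dot exponent $a_1$ and on the rank $n$; the nonvanishing of $\lambda$ relies essentially on the particular normalizations chosen for $w_{0,1}$ and $w_{j,0}$ in Definition~\ref{def_ws}.
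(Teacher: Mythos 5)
Your proposal is essentially the paper's argument: the paper also adds dots one at a time using the bubble in $w_{0,1}$, via the slide identity $[[\gamma_n]_\uparrow,C_0]=-\{1\}^2\sum_{i=1}^n[s_i\gamma_n]_\uparrow$, with Lemma~\ref{genlemma5} as the starting point and the downward case handled symmetrically. The paper works with the commutator rather than the product, which makes the pass-through term disappear; more importantly, since $w_{0,1}$ involves only $C_0$, the relevant instance of \eqref{eq:c-slide} has an empty residual sum, so the higher bubbles $w_{0,k}$, $k\geq 2$, that you allow into your remainder $R$ never actually appear. That is fortunate, because your proposed fallback --- that $w_{0,k}$ lies in $\langle w_{j,0},w_{0,1}\rangle$ inside $\Tr(\H)$ --- is not available at this stage of the paper (it only follows after the main isomorphism is established), so you should restructure the induction to avoid invoking it.
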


\begin{proof}
Note that ${w}_{0,1}= \frac{1}{\{1\}} \left(\frac{1}{\{1\}^2} + C_0\right)$.
Now we know from a bubble slide equation that
\begin{equation*}
[[\gamma_n]_{\uparrow},C_0]=-\{1\}^2\sum_{i=1}^n
[s_i \gamma_n]_{\uparrow}.
\end{equation*}
Thus we could produce $[y_1^{a_1} \gamma_n]_{\uparrow}$ from repeatedly applying the adjoint action of ${w}_{0,1}$.

Generating the other element in the statement of the lemma may be accomplished in a similar manner.
\end{proof}



\begin{lemma}
\label{genlemma9}
Elements of the form $C_i$ and $\tilde{C}_i$ are generated by ${w}_{0,k}$ for $ k \leq i+1$.
\end{lemma}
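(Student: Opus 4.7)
My plan is to prove the lemma by induction on $i$, using a leading-term argument inside the polynomial algebra structure of $\End(\mathbf{1})$.

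Since $\End(\mathbf{1}) \cong \k[q,q^{-1}][C_0, C_1, \ldots]$ is a polynomial ring in the star-dotted bubbles by Corollary~\ref{cor:endone}, each $\tilde{C}_j$ is a polynomial expression in the $C_\ell$'s. The key structural input I would establish first is the following: $\tilde{C}_k$ lies in $\k[q,q^{-1}][C_0, \ldots, C_k]$ and the coefficient of the monomial $C_k$ (of degree one in the $C$'s) in $\tilde{C}_k$ is a nonzero element of $\k[q,q^{-1}]$. This I would verify by expanding each star dot into a regular dot via \eqref{eq:def-star} and combining the resulting expression with the triangular change of basis between $\{c_\ell, \tilde{c}_\ell\}$ and $\{C_\ell, \tilde{C}_\ell\}$, together with the Heisenberg-category bubble identities (such as the figure-eight relation \eqref{eq:figeight} and \eqref{eq:bub-stardot}) that relate the two families of bubbles.

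Granting this, the defining formula
\[
\{k\}\, w_{0,k} \;=\; \tilde{C}_k \;+\; \frac{(q-1)^2}{q}\sum_{j=1}^{k}(k+1-j)\,\tilde{C}_j\, C_{k-j}
\]
immediately shows that $w_{0,k} \in \k[q,q^{-1}][C_0, \ldots, C_k]$ and that the monomial $C_k$ appears in $w_{0,k}$ with nonzero scalar coefficient. Every summand $\tilde{C}_j C_{k-j}$ with $1 \leq j < k$ involves only $C_\ell$'s with $\ell \leq k-1$ and so contributes nothing to the monomial $C_k$; the summand $j = k$ equals $\tilde{C}_k C_0$ and always carries a factor of $C_0$, so it too contributes nothing to the degree-one monomial $C_k$. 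The sole contribution is from the isolated $\tilde{C}_k$ term and is scaled by $1/\{k\}$. An induction on $k$ then extracts $C_k$: solving the relation above for $C_k$ via the nonzero leading coefficient, and substituting the inductively produced expressions for $C_0, \ldots, C_{k-1}$, yields $C_k$ as a polynomial in $\{w_{0,m} : m \leq k\}$. Consequently $C_i \in \langle w_{0,1},\ldots,w_{0,i}\rangle \subseteq \langle w_{0,1},\ldots,w_{0,i+1}\rangle$, and $\tilde{C}_i$ lies in the same subalgebra because it is a polynomial in $C_0,\ldots,C_i$ by the structural claim.

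The main obstacle is the structural claim itself: that $\tilde{C}_k$ depends nontrivially on $C_k$ (equivalently, that $\tilde{c}_k$ depends nontrivially on $c_k$ in the polynomial presentation of $\End(\mathbf{1})$). I expect this can be confirmed either by a direct computation with the bubble slide and figure-eight relations already catalogued (the coefficient in question is explicit and nonzero for generic $q$), or by specializing $q \to 1$ after the rescaling of Lemma~\ref{lemma_filtgrad} and invoking the analogous analysis of the degenerate bubble algebra in \cite{CLLS15}, where the corresponding coefficient is a nonzero integer.
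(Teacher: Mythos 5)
Your overall strategy --- a triangularity/leading-term argument inside the polynomial ring $\k[q,q^{-1}][C_0,C_1,\ldots]$ followed by induction --- is the right one, and is in fact the paper's. But the structural claim on which you hang everything is false: the coefficient of the monomial $C_k$ in $\tilde{C}_k$ is \emph{zero}, not nonzero. Combining \eqref{eq:bub-stardot} with \eqref{eq:def-star} gives the recursion
\[
\tilde{C}_k \;=\; \frac{1}{\{1\}^2}\,\tilde{C}_{k-1} \;+\; \sum_{j=1}^{k-1}\tilde{C}_j\,C_{k-1-j},
\]
and since $\tilde{C}_0=1$ and $\tilde{C}_1=1/\{1\}^2$ are scalars, an immediate induction shows $\tilde{C}_k\in\k[q,q^{-1}][C_0,\ldots,C_{k-2}]$. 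So $\tilde{C}_k$ involves neither $C_k$ nor $C_{k-1}$, the isolated $\tilde{C}_k$ term in the definition of $w_{0,k}$ contributes nothing of the kind you need, and in fact $w_{0,k}$ does not involve $C_k$ at all.

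The term that actually does the work is one of the cross terms you discard: the $j=1$ summand $\{1\}^2\cdot k\,\tilde{C}_1 C_{k-1}=k\,C_{k-1}$. Thus
\[
\{k\}\,w_{0,k}\;=\;k\,C_{k-1}\;+\;\bigl(\text{a polynomial in }C_0,\ldots,C_{k-2}\bigr),
\]
and solving for $C_{k-1}$ gives the induction step. Note the resulting index shift: one obtains $C_i\in\langle w_{0,1},\ldots,w_{0,i+1}\rangle$, which is exactly what Lemma \ref{genlemma9} asserts ($k\le i+1$), whereas your conclusion $C_i\in\langle w_{0,1},\ldots,w_{0,i}\rangle$ is off by one and already fails for $i=0$ (one needs $w_{0,1}$ to produce $C_0$). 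With the leading term corrected in this way your argument becomes precisely the proof in the paper; the computation you propose as a fallback to verify the structural claim would instead refute it.
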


\begin{proof}
Since
${w}_{0,1} = \frac{1}{\{1\}} \left(\frac{1}{\{1\}^2} + C_0\right) $, the statement is true for $i=0$.

By definition
\begin{align*}
{w}_{0,k} &= \frac{1}{\{k\}} ( \tilde{C}_k + \{1\}^2 \sum_{j=1}^k (k+1-j) \tilde{C}_j C_{k-j} )\\
&=\frac{1}{\{k\}} (p(C_0, \ldots, C_{k-2}) + k C_{k-1} + \{1\}^2 \sum_{j=2}^k p_j(C_0, \ldots, C_{j-2}) C_{k-j})
\end{align*}
where $p$ and $p_j$ are polynomials in variables indicated above.
The lemma now follows by induction on $i$.
\end{proof}

\begin{lemma}
\label{genlemma10}
The elements $\tilde{c}_{r} $ are generated by ${w}_{0,k}$.
\end{lemma}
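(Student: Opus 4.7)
The plan is to reduce the lemma to the already-established Lemma \ref{genlemma9} by comparing the starred and unstarred bubbles via the change-of-variables formula for dots.

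First, I would invert equation \eqref{eq:def-star}, which defines a star dot as a linear combination of a regular dot and the identity strand. The relation
\[
\star \;=\; \frac{1}{q-1}\,\bullet \;+\; \frac{q}{(q-1)^2}\,\mathrm{id}
\]
is triangular and invertible over $\k[q^{\pm 1},(q-1)^{-1}]$, so that
\[
\bullet \;=\; (q-1)\,\star \;-\; \frac{q}{q-1}\,\mathrm{id}.
\]
Since the star dot and the regular dot on a single strand commute with themselves (both come from elements of $\dah_n$ on the same strand), I may take $n$-th powers on a strand and expand by the binomial theorem. Closing up into a bubble in the $\tilde{c}$-orientation, this yields a triangular change-of-basis expression
\[
\tilde{c}_n \;=\; \sum_{k=0}^{n}\binom{n}{k}(q-1)^{k}\Bigl(-\tfrac{q}{q-1}\Bigr)^{n-k}\tilde{C}_k,
\]
where the coefficient of $\tilde{C}_n$ is $(q-1)^n \neq 0$. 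This says that each $\tilde{c}_n$ lies in the $\k[q^{\pm 1}]$-subalgebra of $\End(\mathbf{1})$ generated by $\tilde{C}_0,\tilde{C}_1,\dots,\tilde{C}_n$ (the lower terms $\tilde{C}_k$ for $k<n$ come for free once we know them, and $\tilde{C}_0$ is a scalar multiple of the empty bubble coming from \eqref{eq:heis-bub}).

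Next, by Lemma \ref{genlemma9} each $\tilde{C}_k$ for $k \leq n$ lies in the subalgebra generated by the $w_{0,j}$ for $j \leq k+1 \leq n+1$. Substituting these expressions into the formula for $\tilde{c}_n$ above immediately shows that $\tilde{c}_n$ is generated by $w_{0,j}$ for $j \leq n+1$, as claimed.

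There is no real obstacle here beyond checking that the change-of-basis is invertible, which it is since it is triangular with unit-scalar leading coefficients. The only point to be slightly careful about is that we have inverted $(q-1)$ implicitly when defining the star dot, but this is consistent with the conventions of the whole section on star calculus, which are valid whenever $q-1$ is invertible. An analogous argument with $C_k$ in place of $\tilde{C}_k$ would also produce the $c_n$, although the lemma only asserts the statement for $\tilde{c}_r$.
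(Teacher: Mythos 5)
Your proof is correct and is essentially the paper's argument: the paper's one-line proof ("this follows directly from the definition of $\tilde{C}_r$ and Lemma \ref{genlemma9}") is exactly the triangular change of basis between dotted and starred bubbles that you spell out via the inversion of \eqref{eq:def-star} (cf.\ also Corollary \ref{cor:endone}). You have simply made explicit the binomial expansion and the invertibility of the leading coefficient $(q-1)^n$, which the paper leaves implicit.
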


\begin{proof}
This follows directly from the definition of $\tilde{C}_r$ and Lemma ~\ref{genlemma9}.
\end{proof}

\begin{proposition}
\label{genprop1}
The trace $\Tr(\H)$ is generated as an algebra by ${w}_{\pm j,0}$ for $j \geq 1$ and ${w}_{0,k}$ for $k \geq 1$.
\end{proposition}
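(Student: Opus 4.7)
The plan is to chain together the sequence of lemmas immediately preceding the proposition, which have already done essentially all the work; the only remaining task is to verify that no bubble-type generators have been left out.

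First, I would invoke Lemma \ref{genlemma1} to reduce the problem: $\Tr(\H)$ is generated as an algebra by the elements $[y_1^{a_1}\cdots y_n^{a_n}t_w]_\uparrow$, $[y_1^{a_1}\cdots y_n^{a_n}t_w]_\downarrow$ for $w\in S_n$ and $a_i\in\Z_{\geq 0}$, together with the bubble endomorphisms $c_k$ and $\tilde c_k$ of $\mathbf{1}$. Then I would apply Lemmas \ref{genlemma2} and \ref{genlemma3} in succession to further reduce to the elements $[y_1^{a_1}\gamma_n]_\uparrow$, $[y_1^{a_1}\gamma_n]_\downarrow$, together with the bubbles. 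This part is essentially just invoking the named lemmas in order.

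Next, for the ``braid-with-dots'' part, I would apply Lemma \ref{genlemma7}, which exactly states that these elements are generated by $w_{\pm j,0}$ (for $j\geq 1$) and the single extra element $w_{0,1}$. Since $w_{0,1}$ lies in the proposed generating set, this handles all of (i). For the bubble part, Lemma \ref{genlemma10} directly shows that $\tilde c_r$ is generated by the $w_{0,k}$.

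The one step that needs a brief argument beyond simply citing a lemma is showing that the dotted bubbles $c_k$ (with \emph{upward} orientation) are generated by the $w_{0,k}$. Here I would use the anti-involution $\Psi\maps \Tr(\H)\to\Tr(\H)$ introduced after Definition \ref{def_ws}, which flips diagrams horizontally and reverses orientations. By construction $\Psi$ exchanges $c_k\leftrightarrow \tilde c_k$ and $C_k\leftrightarrow \tilde C_k$, and inspecting the defining formula
\[
w_{0,k} = \frac{1}{\{k\}}\Bigl(\tilde C_k + \frac{(q-1)^2}{q}\sum_{j=1}^k (k+1-j)\tilde C_j C_{k-j}\Bigr)
\]
together with the fact that the bubbles live in the commutative subalgebra $\End(\mathbf{1})$ (so $\tilde C_j C_{k-j}=C_{k-j}\tilde C_j$), one sees that $\Psi(w_{0,k})=w_{0,k}$. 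Applying $\Psi$ to the conclusion of Lemma \ref{genlemma10} then gives that $c_r$ is generated by the same elements $w_{0,k}$. Combining (i) and (ii) yields the proposition. I do not expect any serious obstacle here: the real content has already been extracted in the preceding lemmas, and this proof is just a bookkeeping assembly.
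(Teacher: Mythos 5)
Your assembly of the lemmas is essentially the paper's own proof: the paper simply cites Lemmas \ref{genlemma3}, \ref{genlemma5}, \ref{genlemma6}, \ref{genlemma7} and \ref{genlemma10}, which is the same chain you describe (Lemmas \ref{genlemma5} and \ref{genlemma6} are inputs to Lemma \ref{genlemma7}, and you are right that for the proposition one can go straight to the latter). You are also right to flag that the clockwise dotted bubbles $c_k$ appear in the generating set of Lemma \ref{genlemma1} but are not literally covered by Lemma \ref{genlemma10}, which only mentions $\tilde c_r$; that is a genuine loose end in the chain of citations.

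However, your patch via $\Psi$ is the one step I would not accept as written. You assert both that $\Psi$ exchanges $C_k\leftrightarrow\tilde C_k$ and that $\Psi(w_{0,k})=w_{0,k}$, claiming the second follows ``by inspection'' from the first. It does not: applying the swap to the defining formula gives
\[
\frac{1}{\{k\}}\Bigl(C_k+\frac{(q-1)^2}{q}\sum_{j=1}^k(k+1-j)\,C_j\tilde C_{k-j}\Bigr),
\]
and after reindexing the sum the coefficients become $j+1$ rather than $k+1-j$ (together with a boundary term involving $\tilde C_0$), so the claimed equality is a nontrivial identity among bubbles, not a formal consequence of commutativity of $\End(\mathbf 1)$. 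Moreover, how $\Psi$ acts on decorated bubbles (whether it fixes them or swaps the two orientations) is itself something that must be verified from the definition of the dot as a curl; your two assertions are in tension with each other. The clean fix avoids $\Psi$ entirely: Lemma \ref{genlemma9} already shows that \emph{both} $C_i$ and $\tilde C_i$ are generated by the $w_{0,k}$, and Corollary \ref{cor:endone} says $\End(\mathbf 1)=\k[q,q^{-1}][C_0,C_1,\ldots]$, so every element of $\End(\mathbf 1)$ --- in particular every $c_k$ as well as every $\tilde c_k$ --- is a polynomial in elements generated by the $w_{0,k}$. With that substitution your proof is complete and coincides with the paper's.
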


\begin{proof}
This follows from Lemmas ~\ref{genlemma3}, ~\ref{genlemma5}, ~\ref{genlemma6}, ~\ref{genlemma7}, and ~\ref{genlemma10}.
\end{proof}

%
\section{An isomorphism between the elliptic Hall algebra and $\Tr(\cal{H})$}\label{sec:mainthm}
%

%
\subsection{The isomorphism}
%
In this subsection we construct a linear isomorphism $\varphi: \mathbb{E}^{\bullet, \geq} \to \Tr(\H)$ from the upper half of the elliptic Hall algebra described in Section \ref{sec_upperhalf} to the trace of the $q$-Heisenberg category. We define  $\varphi = \varphi^-\otimes \varphi^0\otimes \varphi^+$ using the triangular decomposition $\mathbb{E}^{\bullet,\geq} = \mathbb{E}^{<,\geq}\otimes \mathbb{E}^{0,\geq}\otimes \mathbb{E}^{>,\geq}$.
Each component of $\varphi$ will be an algebra morphism, and we will show $\varphi$ is a $\k$-linear isomorphism by a dimension count. We will show that $\varphi$ itself is an algebra morphism by checking the cross-relations of Proposition \ref{prop_cross}.

As an immediate Corollary of the algebra homomorphism $\dah_n \to \Tr(\End_{\H}(\P^n))$ from Proposition~\ref{LSEndThm} and the isomorphism $\dah_n\cong \ah_n^+$ we have the following.

\begin{corollary}\label{cor_upperiso}
 There is an algebra map $\tilde \varphi: \Tr(\ah^+)\to \Tr(\H)$ from the trace of the positive affine Hecke category to the trace of the $q$-Heisenberg category. Precomposing $\tilde \varphi$ with the isomorphism $\Tr(\ah^+) \to \Sk_q^{>, \geq}(T^2)$ of Theorem \ref{thm_ahatoskein} and the isomorphism $\Sk_q^{>,\geq}(T^2) \to \E^{>, \geq}$ in the last statement of Proposition \ref{prop_tridecomp}, we obtain an algebra morphism
 \[
 \varphi^+: \mathbb{E}^{>,\geq} \to \Tr(\H)
 \]
\end{corollary}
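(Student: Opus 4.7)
The plan is to assemble $\tilde\varphi$ stage by stage and then compose with the chain of isomorphisms already at our disposal. First, for each $n\geq 1$, I would invoke the isomorphism $\ah_n^+ \cong \dah_n$ of Lemma~\ref{lemma_filtgrad} and compose it with the inclusion $\dah_n \hookrightarrow \End_{\H}(\P^n)$ provided by Proposition~\ref{LSEndThm} to obtain an algebra homomorphism $\tilde\varphi_n\col \ah_n^+ \to \End_{\H}(\P^n)$. Diagrammatically this sends $t_i$ to the crossing of the $i$th and $(i{+}1)$st strands (rescaled by $q^{1/2}$ as in Lemma~\ref{lemma:ahtobraid}), $x_i$ to the curl on the $i$th strand from \eqref{eq:curl}, and hence $y_i$ to the appropriate linear combination dictated by \eqref{eq_ydef}.

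Second, passing to zeroth Hochschild homology is functorial for algebra maps, so $\tilde\varphi_n$ descends to a $\k[q,q^{-1}]$-linear map $\HH_0(\ah_n^+) \to \HH_0(\End_{\H}(\P^n))$. Composing with the canonical map $\HH_0(\End_{\H}(\P^n)) \to \Tr(\H)$ coming from the inclusion of the object $\P^n$ (and using Proposition~\ref{prop:indecomposables}), and summing over $n$, one gets a $\k[q,q^{-1}]$-linear map $\tilde\varphi\col \bigoplus_n \HH_0(\ah_n^+) \to \Tr(\H)$.

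The third and key step is to check that $\tilde\varphi$ respects the algebra structure. By definition, multiplication in $\Tr(\ah^+)$ is induced by the embeddings $\mu_{m,n}\col \ah_m^+\otimes \ah_n^+ \hookrightarrow \ah_{m+n}^+$ of Definition~\ref{def_ahainc}, while the multiplication on $\Tr(\H)$ is induced by horizontal juxtaposition of diagrams, i.e.\ by the monoidal structure $\End_{\H}(\P^m)\otimes \End_{\H}(\P^n) \to \End_{\H}(\P^{m+n})$. Comparing the image of $\mu_{m,n}$ on generators $t_i, x_j$ with horizontal juxtaposition (placing the first factor to the left of the second, with all strands of the first passing \emph{over} those of the second — compare Remark~\ref{rmk:product}) shows these two structures agree under $\tilde\varphi$. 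This is the main bookkeeping obstacle, but it is essentially forced by the definitions once one fixes the convention for the tensor product.

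Finally, having $\tilde\varphi\col \Tr(\ah^+)\to \Tr(\H)$ as an algebra homomorphism, one simply pre-composes with the algebra isomorphism $\cl^+\col \Tr(\ah^+)\xrightarrow{\sim}\Sk_q^{>,\geq}(T^2)$ of Theorem~\ref{thm_ahatoskein}, followed by the inverse of the algebra isomorphism $\mathbb{E}^{>,\geq}\xrightarrow{\sim}\Sk_q^{>,\geq}(T^2)$ from Proposition~\ref{prop_tridecomp} (and Corollary~\ref{cor_trianglepres}\eqref{boldE-trianglepres2}), producing the desired algebra morphism $\varphi^+\col \mathbb{E}^{>,\geq}\to \Tr(\H)$.
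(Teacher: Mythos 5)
Your proposal is correct and follows essentially the same route as the paper, which treats the statement as an immediate consequence of the map $\dah_n\to\End_{\H}(\P^n)$ from Proposition~\ref{LSEndThm} and the isomorphism $\dah_n\cong\ah_n^+$ of Lemma~\ref{lemma_filtgrad}, followed by composition with the isomorphisms of Theorem~\ref{thm_ahatoskein} and Proposition~\ref{prop_tridecomp}. Your extra verification that the products agree (comparing $\mu_{m,n}$ with horizontal juxtaposition of diagrams) is exactly the bookkeeping the paper leaves implicit.
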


Let $\varphi^-$ be the composition of $\varphi$ with the anti-automorphism $\Psi$ of Definition \ref{def_ws}, precomposed with the anti-automorphism of $\mathbb{E}$ sending $\bw_{a,b} \mapsto \bw_{-a,b}$. (In other words, $\varphi^-: \mathbb{E}^{<,\geq} \to \Tr(\H)$ is an algebra morphism with $\varphi^-(\bw_{a,b}) = w_{a,b}$, for $a < 0$ and $b \geq 0$.) Let $\varphi^0: \mathbb{E}^{0,\geq} \to \Tr(\H)$ be the map sending $\bw_{0,k}$ to $w_{0,k}$.

\begin{proposition}
The map $\varphi:\mathbb{E}^{\bullet, \geq} \to \Tr(\H)$ defined as $\varphi = \varphi^-\otimes \varphi^0\otimes \varphi^+$ is a linear isomorphism.
\end{proposition}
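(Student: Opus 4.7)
The plan is to compare the triangular decomposition of $\mathbb{E}^{\bullet,\geq}$ from Proposition \ref{prop_tridecomp} with the vector space decomposition of $\Tr(\H)$ supplied by Lemma \ref{HHVSLEMMA}:
$$\Tr(\H) \cong \bigoplus_{m,n\geq 0}\Tr(\dah_m)\otimes \Tr((\dah_n)^{op})\otimes \k[q,q^{-1}][c_0,c_1,\ldots].$$
Both decompositions share the same upper/bubble/lower shape, so my goal is to show that each of $\varphi^+$, $\varphi^0$, $\varphi^-$ is a linear bijection onto the corresponding factor of the target, and then to verify that multiplication in $\Tr(\H)$ of elements from the three factors reproduces the tensor product structure of Lemma \ref{HHVSLEMMA}.

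For $\varphi^+$, Corollary \ref{cor_upperiso} factors it as
$$\mathbb{E}^{>,\geq}\xrightarrow{\sim}\Sk_q^{>,\geq}(T^2)\xrightarrow{\sim}\Tr(\ah^+)\xrightarrow{\tilde\varphi}\Tr(\H),$$
where the first two arrows are isomorphisms by Proposition \ref{prop_tridecomp} and Theorem \ref{thm_ahatoskein}. For the last arrow, the identification $\Tr(\ah^+_m)\cong \Tr(\dah_m)$ from Lemma \ref{lemma_filtgrad}, combined with the splitting $\End_\H(\P^m)\cong \dah_m\otimes \k[q,q^{-1}][c_0,c_1,\ldots]$ of Proposition \ref{LSEndThm}, shows that $\tilde\varphi$ is injective with image exactly the summand $\bigoplus_m\Tr(\dah_m)$ of Lemma \ref{HHVSLEMMA}. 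By symmetry, the anti-involution $\Psi$ of Definition \ref{def_ws} exchanges the roles of $\P$ and $\Q$, so $\varphi^-=\Psi\circ\varphi^+$ is a linear bijection from $\mathbb{E}^{<,\geq}$ onto $\bigoplus_n\Tr((\dah_n)^{op})$.

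For $\varphi^0$, Corollary \ref{cor:endone} rewrites the bubble algebra as $\k[q,q^{-1}][C_0,C_1,\ldots]$, and the explicit formula for $w_{0,k}=\varphi^0(\bw_{0,k})$ produced in the proof of Lemma \ref{genlemma9} expresses it as a polynomial in the $C_j$ whose highest-index term is a nonzero multiple of $C_{k-1}$. An easy induction on $k$ then shows that the $w_{0,k}$ are algebraically independent generators of the bubble subalgebra, so $\varphi^0$ is a linear isomorphism onto that factor. With all three pieces identified, the multiplication in $\Tr(\H)$ of an upward endomorphism of $\P^m$, a bubble, and a downward endomorphism of $\Q^n$ is (by the monoidal structure of $\H$) the class of the corresponding endomorphism of $\P^m\Q^n$, which under the splitting of Proposition \ref{LSEndThmPmQn} is exactly the pure tensor in the appropriate summand of Lemma \ref{HHVSLEMMA}. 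Hence $\varphi=\varphi^-\otimes \varphi^0\otimes \varphi^+$ is a linear isomorphism.

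The main obstacle is ensuring that this multiplicative identification is non-degenerate: one must check that the product $\varphi^-(a)\cdot\varphi^0(b)\cdot\varphi^+(c)$ in $\Tr(\H)$ really recovers the pure tensor $a\otimes b\otimes c$ without unexpected corrections coming from the ideal $J_{m,n}$ of Proposition \ref{LSEndThmPmQn}. This is where the proof of Lemma \ref{HHVSLEMMA} does the work for us: the splitting of $\End_\H(\P^m\Q^n)$ is canonical, and any cap or cup appearing in a representative of $J_{m,n}$ can be isotoped around the annulus and reduced to bubbles and curls using the trace relation. Therefore the three subspaces $\varphi^{\pm}(\mathbb{E}^{\pm,\geq})$ and $\varphi^0(\mathbb{E}^{0,\geq})$ assemble inside $\Tr(\H)$ precisely into the tensor product of Lemma \ref{HHVSLEMMA}, which finishes the argument.
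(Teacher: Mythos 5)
Your proposal reaches the right conclusion but by a genuinely different route from the paper. The paper's proof is a three-line citation: it combines the vector-space decomposition of $\Tr(\H)$ from Lemma \ref{HHVSLEMMA} with the generation statement (Proposition \ref{genprop1}) and the graded dimension count of Proposition \ref{prop_trdim}, so injectivity and surjectivity are ultimately extracted from matching Poincar\'e series. You instead identify each component map explicitly: $\varphi^+$ is injective onto the summand $\bigoplus_m \Tr(\dah_m)$ because the composite $\Tr(\dah_m)\to\Tr(\H)\to\Tr(\dah_m)$ induced by the splitting of Proposition \ref{LSEndThm} is the identity, $\varphi^-$ follows by $\Psi$, and $\varphi^0$ is handled by the triangular change of generators $w_{0,k}\leftrightarrow C_{k-1}$ from Lemma \ref{genlemma9}. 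This is more self-contained at the level of this proposition (though the dimension count still enters through your reliance on Theorem \ref{thm_ahatoskein} and Proposition \ref{prop_tridecomp} to know $\mathbb{E}^{>,\geq}\cong\Tr(\ah^+)$), and it has the virtue of exhibiting the image of each factor rather than inferring bijectivity from equality of dimensions.

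One step needs repair: the assertion that $\varphi^-(a)\cdot\varphi^0(b)\cdot\varphi^+(c)$ is \emph{exactly} the pure tensor $a\otimes b\otimes c$ under Lemma \ref{HHVSLEMMA} is false as stated. Already $[1_{\Q}]\cdot[1_{\P}]=[1_{\Q\P}]=[1_{\P\Q}]+[1_{\mathbf{1}}]$ by relation \eqref{heis:downup}, so the product of classes from the three factors is the pure tensor \emph{plus} terms supported on summands $\Tr(\dah_{m-k})\otimes\Tr((\dah_{n-k})^{op})\otimes\k[q,q^{-1}][c_0,c_1,\ldots]$ with $k\geq 1$ (and similarly bubble slides such as \eqref{eq:c-slide} contribute correction terms when the bubble is moved past strands). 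The conclusion survives because these corrections are strictly lower in the $(m,n)$ rank bifiltration: the assembled multiplication map is block-unitriangular with identity diagonal blocks, hence still bijective. You should state this leading-term argument explicitly rather than attributing the absence of corrections to the canonicity of the splitting in Proposition \ref{LSEndThmPmQn}, which does not by itself eliminate them.
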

\begin{proof}
This follows from the triangular decomposition of $\Tr(\H)$ from Proposition~\ref{HHVSLEMMA}, the generation Proposition~\ref{genprop1}, and our control of over the size of the trace from Proposition \ref{prop_trdim}.
\end{proof}

\begin{theorem}\label{thm_mainthm}
The map $\varphi: \mathbb{E}^{\bullet, \geq} \to \Tr(\H)$ is an isomorphism of algebras.
\end{theorem}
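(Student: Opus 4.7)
The plan is to leverage everything we have already built and reduce the theorem to a finite list of explicit diagrammatic verifications. The previous proposition establishes that $\varphi$ is a linear isomorphism, so the only remaining content is that it respects multiplication. By construction, $\varphi^+$ is an algebra morphism via Corollary \ref{cor_upperiso}, $\varphi^-$ is an algebra morphism via the composition with the anti-involution $\Psi$, and $\varphi^0$ is an algebra morphism because $\mathbb{E}^{0,\geq}$ is commutative (its generators $\bw_{0,k}$ lie on a common line so they commute by \eqref{eq_linerel}) and the images $w_{0,k}\in\Tr(\H)$ all lie in $\End(\1)$, which is commutative by Corollary \ref{cor:endone}. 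Hence each triangular factor of $\varphi$ is already a morphism of algebras, and by the presentation part of Proposition \ref{prop_tridecomp}, $\varphi$ will be an algebra isomorphism as soon as we verify that the cross-relations \eqref{eq_relhard}--\eqref{eq_rel6} are satisfied by the elements $w_{\bx}\in\Tr(\H)$ of Definition \ref{def_ws}.

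First I would dispose of \eqref{eq_rel4}. The subcase $i,j$ of the same sign is automatic because $\varphi^+$ and $\varphi^-$ are algebra morphisms and the corresponding pieces of $\mathbb{E}$ inject into $\Sk_q^{>,\geq}(T^2)$ (Theorem \ref{thm_ahatoskein}), so there the commutators vanish. The non-trivial subcase is the ``Heisenberg'' commutator $[w_{k,0},w_{-k,0}]=-k$, which is a diagrammatic computation: one writes $w_{k,0}$ and $w_{-k,0}$ as closures of the $\gamma_k$-type braids, composes them, and applies the mixed up-down relations \eqref{heis:downup}, \eqref{heis:up down}, together with the bubble evaluation \eqref{eq:heis-bub} and the curl relations to reduce systematically to a scalar multiple of the identity; the combinatorics matches the computation in Remark \ref{rmk_ellheis}.

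Next I would handle \eqref{eq_relhard} and \eqref{eq_rel6}, which couple one of $\varphi^{\pm}$ to $\varphi^0$. The generator $w_{0,k}$ is (up to lower bubble corrections) the starred counterclockwise bubble $\tilde C_k$, so the commutator $[w_{\pm 1,r},w_{0,k}]$ reduces to a bubble-slide computation: use \eqref{eq:c-slide} and \eqref{eq:cc-slide} to move the bubble through the strand(s) representing $w_{\pm 1,r}$ (or through the closure representing $w_{j,0}$ in the case of \eqref{eq_rel6}), collect all star-dot corrections using \eqref{eq:ind-star} and \eqref{eq:bub-stardot}, and check that the mountain of correction terms telescopes to give precisely $\{\mp k\}w_{\pm 1,r+k}$ or $\{-j\}w_{j\pm 1,1}$. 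This is the step where the careful renormalization implicit in $w_{0,k}$ (the polynomial $\frac{1}{\{k\}}(\tilde C_k+\tfrac{(q-1)^2}{q}\sum\cdots)$) pays off, and the cancellations will be quite delicate.

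Finally I would handle \eqref{eq_rel2b} and \eqref{eq_rel3}, the commutators between $\varphi^+$ and $\varphi^-$ generators. Here the product $w_{\pm 1,a}w_{\mp 1,b}$ is represented by a single up-strand and a single down-strand, with dot decorations, stacked on the annulus. The commutator is computed by isotoping one strand past the other using \eqref{heis:downup} and \eqref{heis:up down} (and their circle-crossing analogues), which produces two terms: a ``passed-through'' term whose two contributions in $[\cdot,\cdot]$ cancel, and a ``cap-cup'' term which — after sliding all dots onto a single arc via \eqref{eq:nil-dot}/\eqref{eq:nil-star}, resolving the resulting curl with \eqref{eq:star-curl}, and re-expressing the resulting bubble in terms of $w_{0,a+b}$ using the defining relation for $w_{0,k}$ — gives the asserted right-hand side. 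The main obstacle throughout will be this last type of computation: tracking the substantial diagrammatic cancellations foreshadowed by Lemma \ref{lemma_crazyidentity} and Corollary \ref{cor:prob6}, where a priori one sees many terms but only the combination compatible with the elliptic Hall relations survives. This is ultimately why the generators $w_{\bx}$ were renormalized exactly as in Definition \ref{def_ws}.
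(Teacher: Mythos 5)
Your overall skeleton is exactly the paper's: $\varphi$ is already known to be a linear isomorphism with each triangular component an algebra map, so by the presentation in Proposition \ref{prop_tridecomp} everything reduces to verifying the cross-relations \eqref{eq_relhard}--\eqref{eq_rel6} in $\Tr(\H)$. Your treatment of \eqref{eq_relhard}, \eqref{eq_rel2b} and \eqref{eq_rel3} (bubble slides with star-dot corrections; strand-past-strand isotopy producing a cancelling through-term plus a cap--cup term that becomes $w_{0,a+b}$) is essentially what the paper carries out in Corollary \ref{cor_prob3tilde}, Proposition \ref{prop_lrvert} and Proposition \ref{prop:prob4}.

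There are, however, two genuine gaps. First, for the Heisenberg relation in \eqref{eq_rel4} you propose to compute $[w_{k,0},w_{-k,0}]$ by directly composing the braid closures and ``reducing systematically to a scalar,'' citing Remark \ref{rmk_ellheis} --- but that remark is a computation inside $\mathbb{E}$, not in $\Tr(\H)$, so it verifies nothing on the trace side. The element $w_{n,0}$ is a sum of $n$ distinct braid closures with $q$-power coefficients, and you give no mechanism by which the cross terms against $w_{-m,0}$ collapse to $-n\,\delta_{m,n}$. The paper sidesteps this entirely: it imports the idempotent relations $[e(n)]_{\downarrow}[e(m)]_{\uparrow}=\sum_{k\geq 0}[e(m-k)]_{\uparrow}[e(n-k)]_{\downarrow}$ from \cite{LS13} via the Chern character (Proposition \ref{prop:relations}), uses the Morton--Manchon identities $H^{+}(u)=e^{A(u)}$, $H^{-}(v)=e^{B(v)}$ (Proposition \ref{prop_mortonseries}), and extracts the commutator by a formal $\exp$/$\log$ argument (Proposition \ref{prop:heis-prob5}). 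Some idea of this kind is needed; the direct diagrammatic route is not obviously tractable. Second, you misread \eqref{eq_rel6}: it is $[\bw_{j,0},\bw_{\pm1,1}]$, which involves no bubble and no element of $\mathbb{E}^{0,\geq}$, so the bubble-slide argument you describe does not apply to it. In the paper this is Corollary \ref{cor:prob6}, proved via Proposition \ref{prop_prob6commutator}: a star-calculus computation (Lemmas \ref{lem:wj0star}, \ref{lem:left}--\ref{lem:right}) expressing $[\wt_{n,0},\wt_{-1,1}]$ in terms of the already-established commutators $[\wt_{\ell,1},\wt_{n-1-\ell,0}]$ in the positive half, after which the coefficients collapse to $-q^{-n}(q^n-1)^2(q-1)^{-1}\wt_{n-1,1}$. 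You would need to supply an argument of this kind for that relation.
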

\begin{proof}
We have already shown that $\varphi$ is a linear isomorphism, and that each component of $\varphi$ is an algebra morphism. By Proposition \ref{prop_tridecomp}, it suffices to show that the cross-relations \eqref{eq_relhard}--\eqref{eq_rel6} hold after applying $\varphi$. We will translate these relations to relations between diagrammatic generators in Proposition \ref{prop_cross}. These cross-relations will be proven diagrammatically in Section \ref{sec_crossrelations}.
\end{proof}

%
\subsection{Diagrammatic generators}\label{sec_diagdefs}
%
In this section we give diagrammatic definitions of the generators $\varphi(\bw_\bx)$ needed to check the cross-relations \eqref{eq_relhard}--\eqref{eq_rel6}. We first recall the definition of the skein elements $P_\bx \in \Sk_q(T^2)$ used in \cite{MS14}.

Let $\sigma_i = q^{-1/2}t_i$ in the finite Hecke algebras (see Definition~\ref{def:finite-hecke}), so that
$(\sigma_i-q^{1/2})(\sigma_i+q^{-1/2}) = 0$. In the skein model of the finite Hecke algebra, $\sigma_i$ is the positive crossing of strand $i$ and $i+1$ (see Definition \ref{def:braidgens}). We define elements $A_{i,j} \in H_{i+j+1}$ in the finite Hecke algebra as follows:
\[
A_{i,j} = \sigma_1\cdots\sigma_i \sigma_{i+1}^{-1}\cdots \sigma_{i+j}^{-1}
\]
The finite Hecke algebra is the quotient of the braid group by the HOMFLYPT skein relation, and we define $P_{k,0} \in \Sk_q(T^2)$ by
\[
P_{k,0} := \frac{\{1\}}{\{k\}} \sum_{i=0}^{k-1}\cl(A_{i, k-i-1})
\]
where $\cl(-)$ is the closure map of Section \ref{sec_comparison}. In particular, $P_{k,0}$ crosses the meridian $k$ times and the longitude $0$ times.
Then the general elements $P_\bx$ are defined using the $\SL_2(\Z)$ action on $\Sk_q(T^2)$ -- by definition, $\gamma\cdot P_\bx := P_{\gamma \bx}$.

We would like to find elements in $\ah_n$ whose images are $P_{1,k}$ and $P_{j,1}$. Since the element $P_{1,k}$ is the simple closed curve which crosses the (horizontal) meridian once and the longitude $k$ times, it is the closure of $x_1^k \in \ah_1$. Since $P_{j,1}$ is the simple closed curve crossing the meridian $j$ times and the longitude once, it is the closure of an element in $\ah_j$, which we can take to be $\cl(x_j \sigma_{j-1}\cdots \sigma_1) = P_{j,1}$. \
\[
P_{5,1} \;\; = \;\;
\hackcenter{
\begin{tikzpicture} [scale=0.8]
\fill[gray!20!white] (-1.5,0) rectangle (2,3);
\draw[ thick, red, directed=0.7,directed=0.78] (-1.5,0) -- (-1.5,3);
\draw[ thick, red, directed=0.7,directed=0.78] (2,0) -- (2,3);
\draw[ thick, blue ] (2,0) -- (-1.5,0);
\draw[ thick, blue] (2,3) -- (-1.5,3);
\draw[ultra thick,<-] (-.6,3) .. controls ++(0,-1.0) and ++(.8,0) .. (-1.5,1.5);
\draw[ultra thick,->] (1.2,0) .. controls ++(0,1.0) and ++(-.5,0) .. (2,1.5);
\draw[ultra thick,->] (-0,0) .. controls ++(0,1.3) and ++(0,-1.3) .. (.6,3);
\draw[ultra thick,->] (-.6,0) .. controls ++(0,1.3) and ++(0,-1.3) .. (0,3);
\draw[ultra thick,->] (.6,0) .. controls ++(0,1.3) and ++(0,-1.3) .. (1.2,3);
\end{tikzpicture} }
\;\; = \;\;
\hackcenter{
\begin{tikzpicture} [scale=0.8]
\fill[gray!20!white] (-1,0) rectangle (2,3);
\draw[ thick, red, directed=0.7,directed=0.78] (-1,0) -- (-1,3);
\draw[ thick, red, directed=0.7,directed=0.78] (2,0) -- (2,3);
\draw[ thick, blue ] (2,0) -- (-1,0);
\draw[ thick, dotted ] (-1,1.5) -- (2,1.5);
\draw[ thick, blue] (2,3) -- (-1,3);
\draw[ultra thick,<-] (-.6,3) .. controls ++(0,-1.4) and ++(0,1) .. (1.3,1.5);
\draw[ultra thick] (1.3,1.5) .. controls ++(0,-.8) and ++(.4,0) .. (-1,1);
\fill[gray!20!white] (-.8,.5) rectangle (-.4,1.4);
\fill[gray!20!white] (-.2,.5) rectangle (.2,1.4);
\fill[gray!20!white] (.4,.5) rectangle (.8,1.4);
\fill[gray!20!white] (-.3,1.8) rectangle (.06,2.4);
\fill[gray!20!white] (.25,1.8) rectangle (.6,2.4);
\fill[gray!20!white] (.83,1.8) rectangle (1.1,2.4);
\draw[ultra thick,->] (1.2,0) .. controls ++(0,1.0) and ++(-.5,0) .. (2,1);
\draw[ultra thick,->] (-0,0) .. controls ++(0,2.2) and ++(0,-1.3) .. (.6,3);
\draw[ultra thick,->] (-.6,0) .. controls ++(0,2.2) and ++(0,-1.3) .. (0,3);
\draw[ultra thick,->] (.6,0) .. controls ++(0,2.2) and ++(0,-1.3) .. (1.2,3);
\end{tikzpicture} }
\;\; = \;\;
[x_4 \sigma_3 \sigma_2 \sigma_1].
\]
We now recall from \eqref{eq_ydef} that $y_i := (q-1)x_i - q/(q-1)$. Solving for $x_i$, we obtain
\[
x_i = \frac{1}{q-1} y_i + \frac{q}{(q-1)^2}.
\]
Recall that under the map $\dah_n \to \End(\P^{n})$,  the element $y_i$ is sent to a dotted strand. The point of the definition of the star-dot (see Definition \ref{eq:def-star}) is that it is the image in the quantum Heisenberg category of the element $x_i$ in the affine Hecke algebra.

To finish computing the images of the generators that we need, we recall that $t_i$ maps to a crossing of strands $i$ and $i+1$, and $t_i^{-1}$ maps to a circle crossing. Since $\sigma_i = q^{-1/2} t_i$, the diagrams in Definition \ref{def_ws} have been rescaled by a power of $q$ depending on the net number of crossings. We now summarize the discussion so far.

\begin{proposition}\label{prop:uppertrianglemap}
The algebra morphism $\varphi^+: \mathbb{E}^{>,\geq}(T^2) \to \Tr(\H)$ satisfies $\varphi(\bw_{a,b}) = w_{a,b}$, for values of $(a,b)$ for which the right hand side has been defined (see Definition \ref{def_ws}). Furthermore, there exist elements $w_{a,b} \in \Tr(\H)$ for $a > 0$ and $b \geq 0$ that satisfy
\begin{equation}\label{eq_comminprop}
[w_{a,b},w_{c,d}] = -(q^{k/2} - q^{-k/2}) w_{a+c,b+d}
\end{equation}
where $k=ad-bc$.
\end{proposition}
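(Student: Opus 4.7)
My plan is to separate the proposition into its two assertions and reduce the second to the first. Once one has an algebra morphism $\varphi^+:\mathbb{E}^{>,\geq}\to \Tr(\H)$ sending $\bw_{a,b}\mapsto w_{a,b}$, the commutator relations \eqref{eq_comminprop} should fall out for free: by Corollary~\ref{cor_trianglepres}(\ref{boldE-trianglepres2}) the elements $\bw_\bx\in\mathbb{E}^{>,\bullet}$ correspond to the skein generators $P_\bx$ of Theorem~\ref{thm_skeinpres}, and applying the algebra morphism $\varphi^+$ to the skein relation $[P_\bx,P_\by]=-\{d\}_q P_{\bx+\by}$ yields exactly \eqref{eq_comminprop}. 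This also lets us define $w_{a,b}:=\varphi^+(\bw_{a,b})$ for all $a>0,\,b\geq 0$, whether or not a diagrammatic description has been given in Definition~\ref{def_ws}.

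The substantial content of the proposition is therefore to verify that the three explicit formulas in Definition~\ref{def_ws}---namely $w_{j,0}$, $w_{1,k}$, and $w_{j,1}$---agree with the images of the corresponding $\bw_{a,b}$ under $\varphi^+$. To do this I will trace an element through the composite defining $\varphi^+$:
\[
\mathbb{E}^{>,\geq}\;\xrightarrow{\bw_\bx\,\mapsto\, P_\bx}\;\Sk_q^{>,\geq}(T^2)\;\xrightarrow{(\cl^+)^{-1}}\;\Tr(\ah^+)\;\xrightarrow{\tilde\varphi}\;\Tr(\H).
\]
On the affine Hecke side, Remark~\ref{rmk:allgens} records explicit preimages of the $P_\bx$, while on the Heisenberg side the map sends $y_i$ to a dotted strand and, via the substitution $x_i=\frac{1}{q-1}y_i+\frac{q}{(q-1)^2}$ of Lemma~\ref{lemma_filtgrad}, sends $x_i$ to precisely the star dot of \eqref{eq:def-star}. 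Thus any monomial $x_{i_1}^{a_1}\cdots x_{i_k}^{a_k}t_w\in\ah_n^+$ will map to the corresponding star-dotted braid diagram in $\H$.

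With this dictionary in hand, each of the three cases becomes a direct check. For $w_{1,k}$, the preimage of $P_{1,k}$ is $[x_1^k]\in\HH_0(\ah_1^+)$, which maps to $k$ star dots on a single upward strand. For $w_{j,1}$, the preimage is the closure of $x_j\sigma_{j-1}\cdots\sigma_1$, which after rescaling $\sigma_i=q^{-1/2}t_i$ yields the claimed diagram with overall prefactor $q^{(1-j)/2}$. For $w_{j,0}$, the preimage is $\frac{\{1\}}{\{j\}}\sum_{i=0}^{j-1}[\sigma_1\cdots\sigma_i\sigma_{i+1}^{-1}\cdots\sigma_{j-1}^{-1}]$; rewriting in terms of $t_i$ and $t_i^{-1}$ (the latter being a circle crossing in $\H$) and collecting scalars should recover the formula $\frac{1-q^{-1}}{1-q^{-j}}\sum_i q^{-i}(\ldots)$ of Definition~\ref{def_ws}. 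The only real obstacle I foresee is the careful bookkeeping of the scalar factors contributed by the braid-vs-Hecke rescaling $\sigma_i=q^{-1/2}t_i$ and by the star-dot/$x_i$ identification, which must combine to exactly the coefficients written down in Definition~\ref{def_ws}; this will be routine but attention-demanding, with no hidden conceptual difficulty.
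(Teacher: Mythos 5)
Your proposal is correct and follows essentially the same route as the paper: the paper's proof simply declares the first statement to be a summary of the preceding discussion in Section \ref{sec_diagdefs} (the dictionary $P_\bx \leftrightarrow$ explicit elements of $\ah_n$, the substitution $x_i = \tfrac{1}{q-1}y_i + \tfrac{q}{(q-1)^2}$ identifying $x_i$ with the star dot, and the $q^{\pm 1/2}$ rescaling from $\sigma_i = q^{-1/2}t_i$), and cites Corollary \ref{cor_upperiso} for the commutator relations, exactly as you do. Your write-up is if anything slightly more explicit about the case-by-case verification and the scalar bookkeeping than the paper itself.
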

\begin{proof}
The first statement is a summary of the discussion in this subsection. The second is a restatement of the second part of Corollary \ref{cor_upperiso}.
\end{proof}

\begin{remark}
For our purposes, we only formally need the definitions of the $w_{a,b}$ that we have given in Definition \ref{def_ws}. However, for the sake of completeness, we have given an algebraic description of elements in $\Tr(\ah)$ that map to the $w_{a,b}$ for $a > 0$ and $b \geq 0$ in Remark \ref{rmk:allgens}.
\end{remark}

For convenience while doing computations in the next section, we will remove the normalizations of the $w_{a,b}$ and define
\begin{align}
\tilde w_{\pm 1,k} &= w_{\pm 1,k}\notag \\
\tilde w_{j,0} &= \frac{1-q^{-j}}{1-q^{-1}} w_{j,0}\label{eq_normalizations}\\
\tilde w_{0,k} &= \left(q^{k/2}-q^{-k/2}\right) w_{0,k} = \{k\}w_{0,k}\notag\\
\wt_{j,1} &= q^{\frac{1-j}{2}} w_{j,1}  \quad \quad \text{ for } j>0  \notag .
\end{align}

Finally, to complete the proof of Theorem \ref{thm_mainthm} we need to prove that the cross-relations in Proposition \ref{prop_tridecomp} hold in $\Tr(\H)$. We now translate these cross-relations into our renormalized $\wt_{a,b}$.

\begin{proposition}\label{prop_cross}
In terms of the normalized generators $\wt_\bx$, the cross-relations needed to finish the proof of Theorem \ref{thm_mainthm} are the following:
\begin{description}
  \item[{\bf CR1}]  $[\wt_{\pm1, 0}, \wt_{\mp1, k}] =
\mp \wt_{0,k}$;
\medskip


\item[{\bf CR2}]
$[\tilde{w}_{\pm 1,r}, \tilde{w}_{0,k}]= \mp \{k\}^2 \tilde{w}_{1,k+r}$;
\smallskip

\item[{\bf CR3}]
$[\wt_{1,k},\wt_{-1,1}] = -\wt_{0,k+1}$;
\smallskip

\item[{\bf CR4}]
$[\wt_{m,0}, \wt_{-n,0}] = -n \frac{(1-q^{-n})(1-q^n)}{(1-q^{-1})^2} \delta_{m,n}$;
\smallskip

\item[{\bf CR5}]
$[\wt_{n,0},\wt_{-1,1}] = -q^{-n} \frac{(q^n-1)^2}{(q-1)}  \wt_{n-1,1}$.
\end{description}
\end{proposition}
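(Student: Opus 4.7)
The plan is to prove Proposition~\ref{prop_cross} as a direct translation of the cross-relations \eqref{eq_relhard}--\eqref{eq_rel6} of Proposition \ref{prop_tridecomp} through the map $\varphi$ and the renormalization \eqref{eq_normalizations}. By Proposition \ref{prop_tridecomp}, the algebra $\mathbb{E}^{\bullet,\geq}$ is presented by its triangular decomposition together with exactly the cross-relations \eqref{eq_relhard}--\eqref{eq_rel6}. Since $\varphi^+$, $\varphi^0$, $\varphi^-$ are individually algebra maps (by construction of $\varphi^0$, by Corollary \ref{cor_upperiso}, and by conjugation with $\Psi$ respectively) and $\varphi = \varphi^-\otimes\varphi^0\otimes\varphi^+$ is a linear isomorphism, upgrading $\varphi$ to an algebra isomorphism (and thereby completing Theorem \ref{thm_mainthm}) is equivalent to verifying that the $\varphi$-images of each cross-relation in Proposition \ref{prop_tridecomp} hold in $\Tr(\cal{H})$. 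Since $\varphi(\bw_{a,b}) = w_{a,b}$, these images are simply the same relations written in terms of the $w_{a,b}$; rewriting each of them in the normalized generators $\wt_{a,b}$ via \eqref{eq_normalizations} is what yields CR1--CR5.

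The correspondence is: CR1 comes from \eqref{eq_rel2b}, CR2 from \eqref{eq_relhard}, CR3 from \eqref{eq_rel3}, CR4 from \eqref{eq_rel4}, and CR5 from \eqref{eq_rel6}. Each translation is a short direct substitution. For instance, for CR2 one uses $\wt_{\pm1,r}=w_{\pm 1,r}$ and $\wt_{0,k}=\{k\}w_{0,k}$ to compute
\[
[\wt_{\pm 1,r},\wt_{0,k}]\;=\;\{k\}\,[w_{\pm 1,r},w_{0,k}]\;=\;\{k\}\{\mp k\}\,w_{\pm 1,r+k}\;=\;\mp\{k\}^2\,\wt_{\pm 1,r+k},
\]
recovering the desired formula; CR1 and CR3 arise analogously from the quadratic $\{k\}$-factors in $\wt_{0,k}$. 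For CR4 one substitutes $\wt_{n,0}=\tfrac{1-q^{-n}}{1-q^{-1}}w_{n,0}$ (and the corresponding formula for $\wt_{-n,0}$ obtained by applying $\Psi$) into $[w_{n,0},w_{-n,0}]=-n$, producing exactly the factor $\tfrac{(1-q^{-n})(1-q^n)}{(1-q^{-1})^2}$ appearing in CR4. CR5 arises similarly from \eqref{eq_rel6} by combining the normalizations of $\wt_{n,0}$ and $\wt_{n-1,1}$, using $\wt_{n-1,1} = q^{(2-n)/2}w_{n-1,1}$ and simplifying the resulting prefactor of $\{-n\}$ against the $q$-powers.

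The main obstacle is the careful bookkeeping of normalizations and signs for generators with negative first index, since the normalization formulas in \eqref{eq_normalizations} are most naturally stated for non-negative $j$ and must be extended consistently through the anti-involution $\Psi$ of Definition \ref{def_ws}. In particular, the $\pm=-$ instances of CR1 and CR2, as well as the analogue of CR5 obtained by flipping signs, are best obtained not from scratch but by applying $\Psi$ to the $\pm=+$ case, using the identity $\varphi^-(\bw_{-a,b})=\Psi(\varphi^+(\bw_{a,b}))$ and the compatibility of $\Psi$ with the scalar rescalings in \eqref{eq_normalizations}. Once these conventions are consistently fixed, each of CR1--CR5 reduces to a one-line algebraic verification, and the proposition is proved.
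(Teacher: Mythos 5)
Your proposal is correct and takes essentially the same approach as the paper, whose proof consists precisely of substituting the normalizations \eqref{eq_normalizations} into the cross-relations \eqref{eq_relhard}--\eqref{eq_rel6} of Proposition \ref{prop_tridecomp}; your matching of relations to CR1--CR5 and your sample computations fill in exactly what the paper leaves implicit. The one point to watch is that the stated form of CR4 forces the convention $\wt_{-n,0}=\tfrac{1-q^{n}}{1-q^{-1}}w_{-n,0}$ (i.e.\ substituting $j=-n$ directly into \eqref{eq_normalizations}) rather than applying $\Psi$ to $\wt_{n,0}$, which would yield $(1-q^{-n})^2$ in place of $(1-q^{-n})(1-q^{n})$.
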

\begin{proof}
This follows immediately from the normalizations in \eqref{eq_normalizations} and from the cross-relations in Proposition \ref{prop_tridecomp}.
\end{proof}

%
\subsection{Cross Relations }\label{sec_crossrelations}
%
In this section we prove the cross-relations between the diagrammatic generators $\wt_{a,b}$ listed in Proposition \ref{prop_cross} that are needed to finish the proof of Theorem \ref{thm_mainthm}.
Equation~{\bf CR1} is proven in Proposition~\ref{prop_lrvert},
{\bf CR2} is Corollary~\ref{cor_prob3tilde},
{\bf CR3} is Proposition~\ref{prop:prob4},
{\bf CR4} is Proposition~\ref{prop:heis-prob5}, and
{\bf CR5} is Corollary~\ref{cor:prob6}.

%
\subsubsection{Cross Relations involving bubbles}\label{sec_crossrelations-bub}
%

\begin{lemma} \label{lem:alt-w0k}
The element $\wt_{0,k}$ can alternatively be written as
\[
\wt_{0,k} =
\; \left[ \;\;
  \hackcenter{\begin{tikzpicture}[scale=0.8]
 \node at (1,.45)  {$\scs k $};
 \node[star,star points=6,star point ratio=0.5, fill] at  (.7,.125) {} ;
\draw  (0,0) arc (180:360:0.35cm) [thick];
\draw[->](.7,0) arc (0:180:0.35cm) [thick];
\end{tikzpicture}}
\;\;\right]
\;\; +\;\;
\frac{(q-1)}{q}
\sum_{i=1}^{k}
 \left[ \;\;
  \hackcenter{\begin{tikzpicture}[scale=0.8]
    \draw[thick] (0,2) .. controls ++(0,.45) and ++(0,.45) .. (.8,2)
        node[pos=.5, shape=coordinate](DOT2){};
    \node[star,star points=6,star point ratio=0.5, fill] at  (DOT2) {} ;
    \draw[thick] (.8,1) .. controls ++(0,-.45) and ++(0,-.45) .. (0,1);
    \draw[thick,->] (0,1) .. controls ++(0,.5) and ++(0,-.5) .. (.8,2)
           node[pos=.0, shape=coordinate](DOT){};
    \node[star,star points=6,star point ratio=0.5, fill] at  (DOT) {};
    \node at (-.45,.7) {$\scs k-i$};;
    \draw[thick, ->] (0,2).. controls ++(0,-.5) and ++(0,.5) .. (.8,1);
    \node at (.8,2.45) {$\scs i$};;
\end{tikzpicture}} \;\; \right] .
\]
\end{lemma}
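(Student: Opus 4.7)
The plan is to directly unpack both sides and match them using the identity \eqref{eq:figeight} governing the star-decorated figure eight. First I would recall from Definition~\ref{def_ws} and the normalization \eqref{eq_normalizations} that
\[
\wt_{0,k} \;=\; \{k\}\,w_{0,k} \;=\; \tilde C_k \;+\; \frac{(q-1)^2}{q}\sum_{j=1}^{k}(k+1-j)\,\tilde C_j\, C_{k-j},
\]
and observe that the first bubble on the right-hand side of the lemma is exactly $\tilde C_k$ by definition \eqref{eq:bubbles}. So the lemma reduces to showing that the sum of figure eights equals $\sum_{j=1}^k (k+1-j)\,\tilde C_j C_{k-j}$ up to the factor of $(q-1)$.

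Next I would apply relation \eqref{eq:figeight} to rewrite each figure-eight with $(k-i)$ star dots on the inner cap-cup and $i$ star dots on the outer cap as
\[
\left[\;\hackcenter{\begin{tikzpicture}[scale=0.6]
    \draw[thick] (0,2) .. controls ++(0,.45) and ++(0,.45) .. (.8,2)
        node[pos=.5, shape=coordinate](DOT2){};
    \node[star,star points=6,star point ratio=0.5, fill] at  (DOT2) {} ;
    \draw[thick] (.8,1) .. controls ++(0,-.45) and ++(0,-.45) .. (0,1);
    \draw[thick,->] (0,1) .. controls ++(0,.5) and ++(0,-.5) .. (.8,2)
           node[pos=.0, shape=coordinate](DOT){};
    \node[star,star points=6,star point ratio=0.5, fill] at  (DOT) {};
    \node at (-.5,.7) {$\scs k-i$};
    \draw[thick, ->] (0,2).. controls ++(0,-.5) and ++(0,.5) .. (.8,1);
    \node at (.8,2.5) {$\scs i$};
\end{tikzpicture}}\;\right]
\;=\; (q-1)\sum_{j=1}^{i}\tilde C_{j}\, C_{k-j},
\]
reading off $a=i$ and $b=k-i$ in the notation of \eqref{eq:figeight}. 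Substituting this into the right-hand side of the lemma converts $\frac{(q-1)}{q}\sum_{i=1}^k$(figure eight) into $\frac{(q-1)^2}{q}\sum_{i=1}^{k}\sum_{j=1}^{i}\tilde C_j C_{k-j}$.

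Finally I would swap the order of summation: for fixed $j$ the index $i$ runs from $j$ to $k$, contributing a multiplicity of $k+1-j$, so
\[
\sum_{i=1}^{k}\sum_{j=1}^{i}\tilde C_j C_{k-j} \;=\; \sum_{j=1}^{k}(k+1-j)\,\tilde C_j\, C_{k-j},
\]
which matches the expression for $\wt_{0,k}$ coming from Definition~\ref{def_ws} and finishes the proof. There is no genuine obstacle here — the entire argument is a bookkeeping application of \eqref{eq:figeight} plus a standard index swap in a double sum; the only thing to be careful about is matching the $a,b$ labels of \eqref{eq:figeight} correctly to the $i$ and $k-i$ appearing in the statement of the lemma.
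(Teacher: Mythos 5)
Your proposal is correct and follows essentially the same route as the paper: apply the first equality of \eqref{eq:figeight} with $a=i$, $b=k-i$ to convert each figure eight into $(q-1)\sum_{j=1}^{i}\tilde C_j C_{k-j}$, then reindex the double sum to recover the coefficient $(k+1-j)$ in the definition of $\wt_{0,k}$. The only difference is that you spell out the index swap which the paper leaves implicit in the phrase ``immediate from the definition.''
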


\begin{proof}
The first part of \eqref{eq:figeight} implies that
\[
\frac{(q-1)}{q}
\sum_{i=1}^{k}
 \left[ \;\;
  \hackcenter{\begin{tikzpicture}[scale=0.8]
    \draw[thick] (0,2) .. controls ++(0,.45) and ++(0,.45) .. (.8,2)
        node[pos=.5, shape=coordinate](DOT2){};
    \node[star,star points=6,star point ratio=0.5, fill] at  (DOT2) {} ;
    \draw[thick] (.8,1) .. controls ++(0,-.45) and ++(0,-.45) .. (0,1);
    \draw[thick,->] (0,1) .. controls ++(0,.5) and ++(0,-.5) .. (.8,2)
           node[pos=.0, shape=coordinate](DOT){};
    \node[star,star points=6,star point ratio=0.5, fill] at  (DOT) {};
    \node at (-.45,.7) {$\scs k-i$};;
    \draw[thick, ->] (0,2).. controls ++(0,-.5) and ++(0,.5) .. (.8,1);
    \node at (.8,2.45) {$\scs i$};;
\end{tikzpicture}} \;\; \right]
\;\; = \;\;
\frac{(q-1)^2}{q}
\sum_{i=1}^{k} \sum_{\ell=1}^{i}
 \hackcenter{ \begin{tikzpicture}
 \draw  (.7,.4) node {$\scs \ell$};
 \node[star,star points=6,star point ratio=0.5, fill] at  (.7,.125) {} ;
\draw  (0,0) arc (180:360:0.35cm) [thick];
\draw[->](.7,0) arc (0:180:0.35cm) [thick];
\end{tikzpicture} }
\;
 \hackcenter{ \begin{tikzpicture}
 \draw  (-.3,.4) node {$\scs k-\ell$};
 \node[star,star points=6,star point ratio=0.5, fill] at  (0,.125) {} ;
\draw  (0,0) arc (180:360:0.35cm) [thick];
\draw[<-](.7,0) arc (0:180:0.35cm) [thick];
\end{tikzpicture} }
\]
so the claim is immediate from the definition of $\wt_{0,k}$.
\end{proof}

\begin{proposition}
\begin{equation*}
\tilde{C}_k = \frac{1}{\{1\}^2} \tilde{C}_{k-1} + \sum_{j=1}^{k-1} \tilde{C}_j C_{k-1-j}.
\end{equation*}
\end{proposition}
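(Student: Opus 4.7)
The plan is to apply the defining star-dot identity \eqref{eq:def-star} to one of the $k$ star dots on $\tilde{C}_k$, and then evaluate the resulting bubble-with-a-dot via a close relative of \eqref{eq:bub-stardot}. Since the stars commute along the bubble, the particular choice of which star to pull off does not matter.

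Applying \eqref{eq:def-star} to a single star produces
\[
\tilde{C}_k \;=\; \frac{1}{q-1}\,Y \;+\; \frac{q}{(q-1)^2}\, \tilde{C}_{k-1},
\]
where $Y$ denotes the $\tilde{C}$-type bubble carrying $k-1$ stars together with one additional (non-starred) regular dot. Using $\{1\}^2 = (q-1)^2/q$, the second summand equals $\tilde{C}_{k-1}/\{1\}^2$, which is already the first term appearing on the right-hand side of the claim. The heart of the argument is then the identity
\[
Y \;=\; (q-1)\sum_{j=1}^{k-1}\tilde{C}_j\, C_{k-1-j},
\]
which is the $\tilde{C}$-flavored counterpart of \eqref{eq:bub-stardot}. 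It is obtained by running the figure-eight expansion \eqref{eq:figeight} on the orientation-reversed figure eight whose upper loop is a $\tilde{C}$-type bubble and whose lower loop is a $C$-type bubble, following the same sequence of star-slide moves \eqref{eq:ind-star} and curl reductions \eqref{eq:star-curl} that were used in the derivation of \eqref{eq:bub-stardot}. Substituting this expression for $Y$ into the first display above, and using that $\End(\mathbf{1})$ is commutative, yields the claimed formula.

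The main obstacle is running the figure-eight computation in the reversed orientation with enough care to recover exactly the sum $\sum_{j=1}^{k-1}\tilde{C}_j\, C_{k-1-j}$, rather than a shifted variant differing by the boundary terms $\tilde{C}_0\, C_{k-1}$ or $\tilde{C}_{k-1}\, C_0$. This reduces to keeping track of which side of the crossing the stars get deposited on as one applies \eqref{eq:ind-star} repeatedly, but the argument is otherwise a direct mirror of the derivation of \eqref{eq:bub-stardot} from \eqref{eq:figeight}, so no genuinely new relation is needed.
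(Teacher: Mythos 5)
The proposal is correct and is essentially the paper's own argument: pull one star off $\tilde{C}_k$ via \eqref{eq:def-star}, absorb $\frac{q}{(q-1)^2}=\frac{1}{\{1\}^2}$, and evaluate the remaining dotted bubble $Y$ via \eqref{eq:bub-stardot}. The ``orientation-reversed figure eight'' computation you flag as the main obstacle is not actually needed: the dotted bubble appearing in \eqref{eq:bub-stardot} is already the counterclockwise ($\tilde{C}$-type) one, namely the outer loop of the figure eight in \eqref{eq:figeight} carrying the $a$ stars --- and it must be, since the clockwise version of that identity would force polynomial relations among $c_0,c_1,c_2,\dots$, contradicting $\End(\mathbf{1})\cong\k[q,q^{-1}][c_0,c_1,\ldots]$ --- so your identity $Y=(q-1)\sum_{j=1}^{k-1}\tilde{C}_jC_{k-1-j}$ is \eqref{eq:bub-stardot} verbatim with $n=k-1$, and no new relation or mirrored star-slide argument is required.
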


\begin{proof}
The claim follows immediately from \eqref{eq:bub-stardot} and the definition \eqref{eq:def-star}.
\end{proof}


\begin{proposition}\label{prop_lrvert}
$[\wt_{\pm1, 0}, \wt_{\mp1, k}] =
\mp \wt_{0,k}$.
\end{proposition}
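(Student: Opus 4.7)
My plan is to first establish the $+$ case $[\wt_{1,0},\wt_{-1,k}]=-\wt_{0,k}$; the $-$ case will then follow by applying the anti-involution $\Psi$, which maps $\wt_{\pm1,0}$ to $\wt_{\mp1,0}$ and $\wt_{\mp1,k}$ to $\wt_{\pm1,k}$ and fixes $\wt_{0,k}$ (since $\Psi$ preserves the orientations of closed bubbles with stars).

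Since $\wt_{1,0}$ and $\wt_{-1,k}$ are represented by a single upward strand and a single downward strand with $k$ star dots respectively, the two products appearing in the commutator correspond to the trace classes of $\id_{\P\Q}$ and $\id_{\Q\P}$, each carrying $k$ stars on the $\Q$-strand. To compare them, I would apply the mixed-crossing relation \eqref{heis:downup}, rearranged as
\[
\id_{\Q\P} \;=\; q^{-1}\cdot(\text{double down-up crossing}) \;+\; (\text{cup-cap}),
\]
composed on the right with the identity on $\Q\P$ bearing $k$ star dots on the left strand, and then pass to the trace. The double-crossing summand would be rewritten using cyclicity of the trace as an endomorphism of $\P\Q$; after sliding the $k$ star dots across the two crossings via the star-sliding identities (see \eqref{eq:nil-star}) and applying \eqref{heis:up down}, this produces $\wt_{1,0}\wt_{-1,k}$ together with lower-degree correction terms. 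The cup-cap summand, once traced, collapses into a single closed loop in the annulus carrying the $k$ stars.

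To conclude, I would simplify both contributions using the closed-star calculus \eqref{eq:bub-stardot}, \eqref{eq:c-slide}, \eqref{eq:cc-slide}, and the figure-eight identity \eqref{eq:figeight}, and then identify the resulting expression with the alternative form of $\wt_{0,k}$ given in Lemma~\ref{lem:alt-w0k}. Combined with the double-crossing rewrite this will yield $\wt_{-1,k}\wt_{1,0}-\wt_{1,0}\wt_{-1,k}=\wt_{0,k}$, which is equivalent to the claimed commutator.

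The main obstacle is the careful bookkeeping of correction terms generated when star dots are slid past crossings or redistributed along the cup-cap loop. Each individual star slide introduces a sum of lower-degree terms, and verifying that all of these assemble exactly into the starred bubble and figure-eight combination specified by Lemma~\ref{lem:alt-w0k}, without spurious residue, is the most delicate part of the argument.
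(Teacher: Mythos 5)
Your proposal is correct and follows essentially the same route as the paper's proof: relate the $\P\Q$ and $\Q\P$ pictures via the mixed double-crossing relations \eqref{heis:downup}/\eqref{heis:up down}, slide the $k$ star dots across the crossings with \eqref{eq:ind-star} to collect the figure-eight correction terms, and identify the resulting bubble-plus-figure-eight expression with $\wt_{0,k}$ via Lemma~\ref{lem:alt-w0k} (the paper runs the computation starting from $\wt_{1,0}\wt_{-1,k}$ rather than from $\wt_{-1,k}\wt_{1,0}$, but this is immaterial). The bookkeeping you flag as the delicate point does work out exactly as you anticipate, and the paper likewise handles the second sign case either by $\Psi$ or by a symmetric direct computation.
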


\begin{proof}
This is a straightforward computation using inductive star slide and the trace relations.
\begin{align*}
\wt_{1,0}\wt_{-1,k}
\;\; &=\;\;
 \left[ \;\;\;  \hackcenter{\begin{tikzpicture}[scale=0.8]
    \draw[thick, ->] (0,0) -- (0,1.5) ;
    \node at (1.15,.8) {$\scs k$};
    \draw[thick, ->] (.8,1.5) -- (.8,0) node[pos=.55, shape=coordinate](DOT){};;
    \node[star,star points=6,star point ratio=0.5, fill] at  (DOT) {} ;
\end{tikzpicture}}
\; \right]
\;\; \refequal{\eqref{heis:up down}} \;\; q^{-1}
 \left[ \;\;
  \hackcenter{\begin{tikzpicture}[scale=0.8]
    \draw[thick] (0,0) .. controls ++(0,.75) and ++(0,-.45) .. (.8,1) ;
    \node at (1.1,.4) {$\scs k$};
    \draw[thick,<-] (.8,0) .. controls ++(0,.75) and ++(0,-.45) .. (0,1)
        node[pos=.2, shape=coordinate](DOT2){};
    \node[star,star points=6,star point ratio=0.5, fill] at  (DOT2) {} ;
    \draw[thick] (0,1 ) .. controls ++(0,.5) and ++(0,-.5) .. (.8,2);
    \draw[thick, ->] (.8,1) .. controls ++(0,.5) and ++(0,-.5) .. (0,2);
\end{tikzpicture}}
\;\; \right]
\;\;
=\;\; q^{-1}
 \left[ \;\;
  \hackcenter{\begin{tikzpicture}[scale=0.8]
    \draw[thick,<-] (0,0) .. controls ++(0,.5) and ++(0,-.5) .. (.8,1);
    \draw[thick] (.8,0) .. controls ++(0,.5) and ++(0,-.5) .. (0,1);
    \draw[thick,->] (0,1) .. controls ++(0,.5) and ++(0,-.5) .. (.8,2);
    \node at (1.15,1) {$\scs k$};;
    \draw[thick] (.8,1) .. controls ++(0,.5) and ++(0,-.5) .. (0,2)
        node[pos=.0, shape=coordinate](DOT2){};
    \node[star,star points=6,star point ratio=0.5, fill] at  (DOT2) {} ;
\end{tikzpicture}}
\;\; \right]
\\
\;\; &\refequal{\eqref{eq:ind-star}}\;\;
q^{-1}
 \left[ \;\;
  \hackcenter{\begin{tikzpicture}[scale=0.8]
    \draw[thick,<-] (0,0) .. controls ++(0,.75) and ++(0,-.45) .. (.8,1)
        node[pos=.2, shape=coordinate](DOT2){};
    \node[star,star points=6,star point ratio=0.5, fill] at  (DOT2) {} ;
    \draw[thick] (.8,0) .. controls ++(0,.75) and ++(0,-.45) .. (0,1);
    \draw[thick,->] (0,1) .. controls ++(0,.5) and ++(0,-.5) .. (.8,2);
    \node at (-.3,.3) {$\scs k$};;
    \draw[thick] (.8,1) .. controls ++(0,.5) and ++(0,-.5) .. (0,2);
\end{tikzpicture}}
\;\; \right]
\;\; -\;\;
\frac{(q-1)}{q}
\sum_{i=1}^k
 \left[ \;\;
  \hackcenter{\begin{tikzpicture}[scale=0.8]
    \draw[thick,<-] (0,0) .. controls ++(0,.45) and ++(0,.45) .. (.8,0)
        node[pos=.5, shape=coordinate](DOT2){};
    \node[star,star points=6,star point ratio=0.5, fill] at  (DOT2) {} ;
    \draw[thick] (.8,1) .. controls ++(0,-.35) and ++(0,-.35) .. (0,1);
    \draw[thick,->] (0,1) .. controls ++(0,.5) and ++(0,-.5) .. (.8,2)
           node[pos=.0, shape=coordinate](DOT){};
    \node[star,star points=6,star point ratio=0.5, fill] at  (DOT) {};
    \node at (-.45,.8) {$\scs k-i$};;
    \draw[thick] (.8,1) .. controls ++(0,.5) and ++(0,-.5) .. (0,2);
    \node at (.9,.45) {$\scs i$};;
\end{tikzpicture}} \;\; \right]
\\
\;\; &\refequal{\eqref{heis:downup}}\;\;
 \left[ \;\; \hackcenter{\begin{tikzpicture}[scale=0.8]
    \draw[thick, <-] (0,0) -- (0,1.5) node[pos=.55, shape=coordinate](DOT){};;
    \node[star,star points=6,star point ratio=0.5, fill] at  (DOT) {} ;
    \node at (-.35,.8) {$\scs k$};
    \draw[thick, <-] (.8,1.5) -- (.8,0) ;
\end{tikzpicture}}
\; \right]
\;\; - \;\;
 \left[ \;\;
  \hackcenter{\begin{tikzpicture}[scale=0.8]
    \draw[thick,<-] (0,0) .. controls ++(0,.55) and ++(0,.55) .. (.8,0)
        node[pos=.5, shape=coordinate](DOT2){};
    \node[star,star points=6,star point ratio=0.5, fill] at  (DOT2) {} ;
    \node at (.75,.6) {$\scs k$};;
    \draw[thick,<-] (.8,1.5) .. controls ++(0,-.55) and ++(0,-.55) .. (0,1.5);
\end{tikzpicture}} \;\; \right]
\;\; -\;\;
\frac{(q-1)}{q}
\sum_{i=1}^{k}
 \left[ \;\;
  \hackcenter{\begin{tikzpicture}[scale=0.8]
    \draw[thick,<-] (0,0) .. controls ++(0,.45) and ++(0,.45) .. (.8,0)
        node[pos=.5, shape=coordinate](DOT2){};
    \node[star,star points=6,star point ratio=0.5, fill] at  (DOT2) {} ;
    \draw[thick] (.8,1) .. controls ++(0,-.35) and ++(0,-.35) .. (0,1);
    \draw[thick,->] (0,1) .. controls ++(0,.5) and ++(0,-.5) .. (.8,2)
           node[pos=.0, shape=coordinate](DOT){};
    \node[star,star points=6,star point ratio=0.5, fill] at  (DOT) {};
    \node at (-.45,.7) {$\scs k-i$};;
    \draw[thick] (.8,1) .. controls ++(0,.5) and ++(0,-.5) .. (0,2);
    \node at (.8,.45) {$\scs i$};;
\end{tikzpicture}} \;\; \right]
\end{align*}
Thus, we have shown that
\[
[\wt_{1,0},\wt_{-1,k}]
\;\; = \;\;
-
\; \left[ \;\;
  \hackcenter{\begin{tikzpicture}[scale=0.8]
 \node at (1,.45)  {$\scs k $};
 \node[star,star points=6,star point ratio=0.5, fill] at  (.7,.125) {} ;
\draw  (0,0) arc (180:360:0.35cm) [thick];
\draw[->](.7,0) arc (0:180:0.35cm) [thick];
\end{tikzpicture}}
\;\;\right]
\;\; -\;\;
\frac{(q-1)}{q}
\sum_{i=1}^{k }
 \left[ \;\;
  \hackcenter{\begin{tikzpicture}[scale=0.8]
    \draw[thick] (0,2) .. controls ++(0,.45) and ++(0,.45) .. (.8,2)
        node[pos=.5, shape=coordinate](DOT2){};
    \node[star,star points=6,star point ratio=0.5, fill] at  (DOT2) {} ;
    \draw[thick] (.8,1) .. controls ++(0,-.45) and ++(0,-.45) .. (0,1);
    \draw[thick,->] (0,1) .. controls ++(0,.5) and ++(0,-.5) .. (.8,2)
           node[pos=.0, shape=coordinate](DOT){};
    \node[star,star points=6,star point ratio=0.5, fill] at  (DOT) {};
    \node at (-.55,.7) {$\scs k -i$};;
    \draw[thick, ->] (0,2).. controls ++(0,-.5) and ++(0,.5) .. (.8,1);
    \node at (.8,2.45) {$\scs i$};;
\end{tikzpicture}} \;\; \right]
\]
so that Lemma~\ref{lem:alt-w0k} completes the proof.

The second case can be proven by applying the anti-involution $\Psi$ to the case above.  Alternatively, it is an easy computation.
\begin{align*}
\wt_{-1,0}\wt_{1,k}
\;\; &=\;\;
 \left[ \;\;\;  \hackcenter{\begin{tikzpicture}[scale=0.8]
    \draw[thick, <-] (0,0) -- (0,1.5) ;
    \node at (1.15,.8) {$\scs k$};
    \draw[thick, <-] (.8,1.5) -- (.8,0) node[pos=.55, shape=coordinate](DOT){};;
    \node[star,star points=6,star point ratio=0.5, fill] at  (DOT) {} ;
\end{tikzpicture}}
\; \right]
\;\; \refequal{\eqref{heis:downup}} \;\; q^{-1}
 \left[ \;\;
  \hackcenter{\begin{tikzpicture}[scale=0.8]
    \draw[thick,<-] (0,0) .. controls ++(0,.75) and ++(0,-.45) .. (.8,1) ;
    \node at (1.1,.4) {$\scs k$};
    \draw[thick] (.8,0) .. controls ++(0,.75) and ++(0,-.45) .. (0,1)
        node[pos=.2, shape=coordinate](DOT2){};
    \node[star,star points=6,star point ratio=0.5, fill] at  (DOT2) {} ;
    \draw[thick, ->] (0,1 ) .. controls ++(0,.5) and ++(0,-.5) .. (.8,2);
    \draw[thick] (.8,1) .. controls ++(0,.5) and ++(0,-.5) .. (0,2);
\end{tikzpicture}}
\;\; \right]
\;\;
+\;\;
 \left[ \;\;
  \hackcenter{\begin{tikzpicture}[scale=0.8]
    \draw[thick,<-] (0,0) .. controls ++(0,.55) and ++(0,.55) .. (.8,0)
        node[pos=.5, shape=coordinate](DOT2){};
    \node[star,star points=6,star point ratio=0.5, fill] at  (DOT2) {} ;
    \node at (.75,.6) {$\scs k$};;
    \draw[thick,<-] (.8,1.5) .. controls ++(0,-.55) and ++(0,-.55) .. (0,1.5);
\end{tikzpicture}} \;\; \right]
\\
&\refequal{\text{trace}}
 \;\; q^{-1}
 \left[ \;\;
  \hackcenter{\begin{tikzpicture}[scale=0.8]
    \draw[thick] (0,0) .. controls ++(0,.5) and ++(0,-.5) .. (.8,1);
    \draw[thick,<-] (.8,0) .. controls ++(0,.5) and ++(0,-.5) .. (0,1);
    \draw[thick] (0,1) .. controls ++(0,.5) and ++(0,-.5) .. (.8,2);
    \node at (1.15,1) {$\scs k$};;
    \draw[thick,->] (.8,1) .. controls ++(0,.5) and ++(0,-.5) .. (0,2)
        node[pos=.0, shape=coordinate](DOT2){};
    \node[star,star points=6,star point ratio=0.5, fill] at  (DOT2) {} ;
\end{tikzpicture}}
\;\; \right]
\;\; + \;\;
\; \left[ \;\;
  \hackcenter{\begin{tikzpicture}[scale=0.8]
 \node at (1,.45)  {$\scs k$};
 \node[star,star points=6,star point ratio=0.5, fill] at  (.7,.125) {} ;
\draw  (0,0) arc (180:360:0.35cm) [thick];
\draw[->](.7,0) arc (0:180:0.35cm) [thick];
\end{tikzpicture}}
\;\;\right]
\\
&\refequal{\eqref{eq:ind-star}}
q^{-1}
 \left[ \;\;
  \hackcenter{\begin{tikzpicture}[scale=0.8]
    \draw[thick] (0,0) .. controls ++(0,.75) and ++(0,-.45) .. (.8,1)
        node[pos=.2, shape=coordinate](DOT2){};
    \node[star,star points=6,star point ratio=0.5, fill] at  (DOT2) {} ;
    \draw[thick,<-] (.8,0) .. controls ++(0,.75) and ++(0,-.45) .. (0,1);
    \draw[thick] (0,1) .. controls ++(0,.5) and ++(0,-.5) .. (.8,2);
    \node at (-.3,.3) {$\scs k$};;
    \draw[thick,->] (.8,1) .. controls ++(0,.5) and ++(0,-.5) .. (0,2);
\end{tikzpicture}}
\;\; \right]
\;\; +\;\;
\frac{(q-1)}{q}
\sum_{i=1}^k
 \left[ \;\;
  \hackcenter{\begin{tikzpicture}[scale=0.8]
    \draw[thick] (0,0) .. controls ++(0,.45) and ++(0,.45) .. (.8,0)
        node[pos=.5, shape=coordinate](DOT2){};
    \node[star,star points=6,star point ratio=0.5, fill] at  (DOT2) {} ;
    \draw[thick,<-] (.8,1) .. controls ++(0,-.35) and ++(0,-.35) .. (0,1);
    \draw[thick] (0,1) .. controls ++(0,.5) and ++(0,-.5) .. (.8,2)
           node[pos=.0, shape=coordinate](DOT){};
    \node[star,star points=6,star point ratio=0.5, fill] at  (DOT) {};
    \node at (-.35,.8) {$\scs i$};;
    \draw[thick,->] (.8,1) .. controls ++(0,.5) and ++(0,-.5) .. (0,2);
    \node at (.9,.45) {$\scs k-i$};;
\end{tikzpicture}} \;\; \right]
\;\; + \;\;
\; \left[ \;\;
  \hackcenter{\begin{tikzpicture}[scale=0.8]
 \node at (1,.45)  {$\scs k$};
 \node[star,star points=6,star point ratio=0.5, fill] at  (.7,.125) {} ;
\draw  (0,0) arc (180:360:0.35cm) [thick];
\draw[->](.7,0) arc (0:180:0.35cm) [thick];
\end{tikzpicture}}
\;\;\right]
\\
&\refequal{\eqref{heis:up down}}
 \left[ \;\; \hackcenter{\begin{tikzpicture}[scale=0.8]
    \draw[thick, ->] (0,0) -- (0,1.5) node[pos=.55, shape=coordinate](DOT){};;
    \node[star,star points=6,star point ratio=0.5, fill] at  (DOT) {} ;
    \node at (-.35,.8) {$\scs k$};
    \draw[thick, ->] (.8,1.5) -- (.8,0) ;
\end{tikzpicture}}  \;\; \right]
\;\; + \;\;
\; \left[ \;\;
  \hackcenter{\begin{tikzpicture}[scale=0.8]
 \node at (1,.45)  {$\scs k$};
 \node[star,star points=6,star point ratio=0.5, fill] at  (.7,.125) {} ;
\draw  (0,0) arc (180:360:0.35cm) [thick];
\draw[->](.7,0) arc (0:180:0.35cm) [thick];
\end{tikzpicture}}
\;\;\right]
\;\; + \;\;
\frac{(q-1)}{q}
\sum_{i=1}^{k }
 \left[ \;\;
  \hackcenter{\begin{tikzpicture}[scale=0.8]
    \draw[thick, ->] (0,2) .. controls ++(0,.45) and ++(0,.45) .. (.8,2)
        node[pos=.5, shape=coordinate](DOT2){};
    \node[star,star points=6,star point ratio=0.5, fill] at  (DOT2) {} ;
    \draw[thick,<-] (.8,1) .. controls ++(0,-.45) and ++(0,-.45) .. (0,1);
    \draw[thick] (0,1) .. controls ++(0,.5) and ++(0,-.5) .. (.8,2)
           node[pos=.0, shape=coordinate](DOT){};
    \node[star,star points=6,star point ratio=0.5, fill] at  (DOT) {};
    \node at (-.45,.7) {$\scs i$};;
    \draw[thick] (0,2).. controls ++(0,-.5) and ++(0,.5) .. (.8,1);
    \node at (.9,2.45) {$\scs k-i$};;
\end{tikzpicture}} \;\; \right]
\end{align*}

\end{proof}

%

We need the following combinatorial identities.

\begin{lemma}
\label{binomsquareconj}
For $\ell=0,1,2$ we have
$$ (\delta+1)^\ell \binom{\gamma}{\gamma}+\delta^\ell \binom{\gamma+1}{\gamma}+\cdots+1^\ell \binom{\gamma+\delta}{\gamma} =
\begin{cases}
\binom{\gamma+1+\delta}{\delta} & \text{ if } \ell=0 \\
\binom{\gamma+2+\delta}{\delta} & \text{ if } \ell=1 \\
\binom{\gamma+3+\delta}{\delta} + \binom{\gamma+2+\delta}{\delta-1} & \text{ if } \ell=2
\end{cases}$$
\end{lemma}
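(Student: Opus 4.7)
The plan is to reindex the sum by $m = \delta + 1 - j$ and then use a single combinatorial identity (a special case of Chu--Vandermonde) to handle all three cases uniformly.

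First, I would substitute $m = \delta + 1 - j$ so that the left-hand side becomes
\[
S_\ell \;:=\; \sum_{m=1}^{\delta+1} m^\ell \binom{\gamma+\delta+1-m}{\gamma}.
\]
Next I would expand $m^\ell$ in the basis of binomial coefficients $\binom{m}{k}$:
\[
m^0 = \binom{m}{0}, \qquad m^1 = \binom{m}{1}, \qquad m^2 = 2\binom{m}{2} + \binom{m}{1}.
\]
(When $m=0$ the term $\binom{m}{k}$ with $k\geq 1$ vanishes, so extending the sum to $m=0$ causes no change.) The key ingredient is then the standard identity
\[
\sum_{m=0}^{N}\binom{m}{a}\binom{N-m}{b} \;=\; \binom{N+1}{a+b+1},
\]
which I would apply with $N = \gamma+\delta+1$ and $b = \gamma$. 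Taking $a=0,1,2$ in turn gives immediately
\[
S_0 = \binom{\gamma+\delta+1}{\gamma+1}, \qquad
S_1 = \binom{\gamma+\delta+2}{\gamma+2}, \qquad
S_2 = 2\binom{\gamma+\delta+2}{\gamma+3} + \binom{\gamma+\delta+2}{\gamma+2}.
\]

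Finally, rewriting these using $\binom{n}{k} = \binom{n}{n-k}$ (so $\binom{\gamma+\delta+1}{\gamma+1} = \binom{\gamma+\delta+1}{\delta}$, etc.) matches the $\ell=0,1$ cases on the nose. For $\ell=2$ I would apply Pascal's rule to $\binom{\gamma+\delta+3}{\delta} = \binom{\gamma+\delta+2}{\delta} + \binom{\gamma+\delta+2}{\delta-1}$ to rewrite the claimed right-hand side as $\binom{\gamma+\delta+2}{\delta} + 2\binom{\gamma+\delta+2}{\delta-1}$, which is exactly $S_2$.

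There is no real obstacle here; the only subtlety is making sure the boundary terms in the Chu--Vandermonde identity vanish (which they do since $\binom{\gamma+\delta+1-m}{\gamma}=0$ for $m > \delta+1$ and $\binom{m}{k}=0$ for $m<k$). An alternative route would be induction on $\delta$, using Pascal's rule to reduce $S_\ell(\gamma,\delta)$ to $S_\ell(\gamma,\delta-1)$ plus a boundary contribution, but the Chu--Vandermonde approach is more direct and avoids separate case analyses for the three values of $\ell$.
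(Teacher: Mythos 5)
Your argument is correct, and it is essentially the discrete twin of the paper's proof rather than a fundamentally different one: the paper extracts coefficients from $\frac{1}{(1-x)^{\gamma+1}}$ and $\left(x\frac{d}{dx}\right)^{\ell}\frac{1}{1-x}$ and multiplies generating functions, whereas you expand $m^{\ell}$ in the binomial basis and invoke the upper-index Vandermonde convolution $\sum_{m}\binom{m}{a}\binom{N-m}{b}=\binom{N+1}{a+b+1}$, which is exactly the coefficient identity $[x^{N}]\,\frac{x^{a}}{(1-x)^{a+1}}\cdot\frac{x^{b}}{(1-x)^{b+1}}=[x^{N}]\,\frac{x^{a+b}}{(1-x)^{a+b+2}}$ in disguise. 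What your version buys is that all three cases are dispatched by a single quotable identity with no power-series manipulation; what the paper's version buys is that the same generating-function setup is reused verbatim in the companion Lemma on alternating sums. One bookkeeping point to repair: for $\ell=0$ your parenthetical about the $m=0$ term does not apply, since $\binom{0}{0}=1$, so extending the sum to $m=0$ does change it; applying your identity with $N=\gamma+\delta+1$ and $a=0$ gives $\binom{\gamma+\delta+2}{\gamma+1}$ for the extended sum, from which you must subtract the $m=0$ contribution $\binom{\gamma+\delta+1}{\gamma}$ (Pascal then yields the value $S_{0}=\binom{\gamma+\delta+1}{\gamma+1}$ you state), or equivalently shift the index by one and apply the identity with $N=\gamma+\delta$. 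The $\ell=1,2$ computations and the final Pascal manipulation for $\ell=2$ are correct as written.
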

\begin{proof}
For a power series $f(x)$ we will denote by $[x^a] f(x)$ the coefficient of $x^a$ in $f(x)$. We will also use the standard fact that
$$[x^a] \frac{1}{(1-x)^b} = \binom{b+a-1}{b-1}.$$

Consider the case $\ell=2$. Then
$$\binom{\gamma}{\gamma+i} = [x^i] \frac{1}{(1-x)^{\gamma+1}}$$
while
$$(\delta+1-i)^2 = [x^{\delta+1-i}] \left(x \frac{d}{dx} \right)^2 \frac{1}{1-x} = [x^{\delta+1-i}] \frac{x+x^2}{(1-x)^3}.$$
Thus we find that the LHS of the equality equals
\begin{align*}
\sum_{i=0}^{i=\delta} [x^i] \frac{1}{(1-x)^{\gamma+1}} [x^{\delta+1-i}] \frac{x+x^2}{(1-x)^3}
&= [x^{\delta+1}] \frac{x+x^2}{(1-x)^{\gamma+4}} \\
&= [x^\delta] \frac{1}{(1-x)^{\gamma+4}} + [x^{\delta-1}] \frac{1}{(1-x)^{\gamma+4}} \\
&= \binom{\gamma+3+\delta}{\delta} + \binom{\gamma+2+\delta}{\delta-1}.
\end{align*}

This proves the case $\ell=2$. The cases $\ell=0,1$ are proved similarly by replacing $(x \frac{d}{dx})^2 \frac{1}{1-x}$ above with $(x \frac{d}{dx})^\ell \frac{1}{1-x}$.
\end{proof}


\begin{lemma}\label{lemma_crazyidentitylemma1}
\begin{enumerate}
\item
For $0<p<k$ we have
\begin{equation*}
\sum_{j=0}^p (-1)^{p-j} \frac{1}{2k-j} \binom{2k-j}{p} \binom{p}{j}=0.
\end{equation*}
\item For $p=k$ we have
\begin{equation*}
\sum_{j=1}^k (-1)^{j+1} \frac{2k}{k+j} \binom{k+j}{2j}\binom{2j}{j}=\sum_{j=1}^k (-1)^{j+1} \frac{2k}{k+j}\binom{k+j}{j} \binom{k}{j}=2.
\end{equation*}

\end{enumerate}
\end{lemma}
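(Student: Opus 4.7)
The plan is to treat the two identities separately, noting that (1) reduces to the vanishing of a high-order finite difference of a polynomial, while (2) is essentially a specialization of Vandermonde's identity after rewriting the binomials.

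For part (1), the key observation is the identity
\[
\frac{1}{2k-j}\binom{2k-j}{p} \;=\; \frac{(2k-j-1)!}{p!\,(2k-j-p)!} \;=\; \frac{1}{p}\binom{2k-j-1}{p-1},
\]
which follows by direct manipulation of factorials. Substituting this into the sum factors out $1/p$ and leaves
\[
\frac{1}{p}\sum_{j=0}^{p}(-1)^{p-j}\binom{p}{j}\,f(j), \qquad f(j)\;:=\;\binom{2k-j-1}{p-1}.
\]
Now $f(j) = \frac{(2k-j-1)(2k-j-2)\cdots(2k-j-p+1)}{(p-1)!}$ is a polynomial in $j$ of degree exactly $p-1$. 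The expression above is (up to a sign and the factor $1/p$) the $p$-th forward difference $\Delta^{p}f(0)$, and the $p$-th finite difference of any polynomial of degree $<p$ vanishes identically. This yields (1).

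For part (2), I first verify that the two purported expressions are equal, which is immediate:
\[
\binom{k+j}{2j}\binom{2j}{j} \;=\; \frac{(k+j)!}{j!^{2}(k-j)!} \;=\; \binom{k+j}{j}\binom{k}{j}.
\]
Next, using $\binom{k+j}{j}=\tfrac{k+j}{k}\binom{k+j-1}{j}$, we get $\frac{2k}{k+j}\binom{k+j}{j}=2\binom{k+j-1}{j}$, so the identity to prove becomes
\[
\sum_{j=1}^{k}(-1)^{j+1}\binom{k+j-1}{j}\binom{k}{j} \;=\; 1.
\]
The $j=0$ term of the sum equals $\binom{k-1}{0}\binom{k}{0}=1$, so equivalently
\[
\sum_{j=0}^{k}(-1)^{j}\binom{k+j-1}{j}\binom{k}{j} \;=\; 0 \qquad (k\geq 1).
\]
Applying the standard identity $\binom{k+j-1}{j}=(-1)^{j}\binom{-k}{j}$ converts the sum to $\sum_{j=0}^{k}\binom{-k}{j}\binom{k}{j}$, and using $\binom{k}{j}=\binom{k}{k-j}$ followed by Vandermonde's identity yields
\[
\sum_{j=0}^{k}\binom{-k}{j}\binom{k}{k-j} \;=\; \binom{-k+k}{k} \;=\; \binom{0}{k} \;=\; 0 \quad (k\geq 1).
\]

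Neither part presents a serious obstacle: (1) is a finite-difference argument whose only subtlety is spotting the right factorial rewriting, and (2) reduces to a one-line application of Vandermonde after the substitution $\binom{k+j-1}{j}=(-1)^{j}\binom{-k}{j}$. The most conceptual step is recognizing that $\frac{1}{2k-j}\binom{2k-j}{p}$ can be shifted into $\binom{2k-j-1}{p-1}$ so that the summand becomes polynomial of exact degree $p-1$ in $j$, making the $p$-fold alternating sum automatically zero.
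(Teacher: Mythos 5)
Your proof is correct. It shares the paper's crucial first move in both parts --- the rewriting $\tfrac{1}{2k-j}\binom{2k-j}{p}=\tfrac1p\binom{2k-j-1}{p-1}$ in (1) and $\tfrac{2k}{k+j}\binom{k+j}{j}=2\binom{k+j-1}{j}$ in (2) --- but finishes by different means. For (1) the paper extracts the coefficient of $x^{2k-p}$ from the product $\tfrac{1}{(1-x)^p}\cdot(1-x)^p=1$, whereas you observe that $\binom{2k-j-1}{p-1}$ is a polynomial of degree $p-1$ in $j$ and is therefore annihilated by the $p$-th finite difference $\sum_j(-1)^{p-j}\binom{p}{j}f(j)$; both are standard, and yours arguably isolates more transparently \emph{why} the sum vanishes (degree $<p$). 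For (2) the paper again convolves $\tfrac{1}{(1-x)^k}$ with $(1-x)^k$ and reads off $[x^k](1)=0$, while you apply upper negation $\binom{k+j-1}{j}=(-1)^j\binom{-k}{j}$ and then Vandermonde, $\sum_j\binom{-k}{j}\binom{k}{k-j}=\binom{0}{k}=0$; these are really the same convolution identity in two costumes. One small point worth making explicit in your write-up of (1): the identification of $\binom{2k-j-1}{p-1}$ with the degree-$(p-1)$ polynomial $\tfrac{(2k-j-1)\cdots(2k-j-p+1)}{(p-1)!}$ is valid throughout the summation range because $2k-j-1\geq 2k-p-1>0$, so no boundary cases of the binomial coefficient interfere.
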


\begin{proof}

As before we use that $[x^a] \frac{1}{(1-x)^b} = \binom{b+a-1}{b-1}$.

We now prove the first part.
Replacing $j$ by $p-j$ one can rewrite the identity as
$$\sum_{j=0}^p (-1)^j \binom{2k-p+j-1}{p-1} \binom{p}{j} = 0.$$
Now, the LHS is equal to
$$(-1)^p \sum_{j=0}^p [x^{2k-2p+j}] \frac{1}{(1-x)^p} [x^{p-j}](1-x)^p.$$
This equals $(-1)^p [x^{2k-p}] (1)$ which is zero.

For the second part we will prove that
$$\sum_{j=0}^k (-1)^{j+1} \frac{2k}{k+j} \binom{k+j}{k} \binom{k}{j} = 0$$
where the sum on the LHS starts from $j=0$. This is clearly the same as the middle term equaling $2$ and it is easy to see that it is also the same as the LHS equaling $2$.

One can rewrite this as
$$\sum_{j=0}^k (-1)^{j+1} \binom{k+j-1}{k-1} \binom{k}{j} = 0.$$
Arguing as before the LHS is equal to
$$(-1)^{k+1} \sum_{j=0}^k [x^j] \frac{1}{(1-x)^k} [x^{k-j}] (1-x)^k.$$
This is equal to $(-1)^{k+1} [x^k] (1)$ which is again zero.

\end{proof}

\begin{lemma}\label{lemma_crazyidentity}
\begin{equation*}
\sum_{j=1}^k \{1\}^{2j} \left( -k \binom{k+j-1}{2j-1} + \sum_{l=1}^{k-j+1} (k-l)(l)\binom{k+j-l-2}{2j-3} \right) = -q^k + 2 - q^{-k}
\end{equation*}
\end{lemma}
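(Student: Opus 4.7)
The plan has two main steps. First, I would simplify the coefficient of $\{1\}^{2j}$ on the left-hand side to a clean closed form; second, I would apply a Chebyshev-type identity to conclude, using Lemma~\ref{lemma_crazyidentitylemma1} to extract coefficients.

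For the first step, let $A_{k,j}$ denote the inner parenthesized expression. The key observation is that $A_{k,j}$ has a compact closed form:
$$A_{k,j} \;=\; -\frac{2k}{k+j}\binom{k+j}{2j}.$$
To prove this I would apply the standard Vandermonde-type identities
$$\sum_{l\geq 0} l\binom{N-l}{M} \;=\; \binom{N+1}{M+2}, \qquad \sum_{l\geq 0} l^2\binom{N-l}{M} \;=\; 2\binom{N+1}{M+3}+\binom{N+1}{M+2}$$
(the latter via $l^2 = 2\binom{l}{2}+\binom{l}{1}$) with $N = k+j-2$ and $M = 2j-3$; the sums may be extended freely since the binomial coefficients vanish outside $1\leq l\leq k-j+1$. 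Writing $(k-l)l = kl - l^2$ and substituting gives $\sum_l(k-l)l\binom{k+j-l-2}{2j-3} = (k-1)\binom{k+j-1}{2j-1} - 2\binom{k+j-1}{2j}$. Combining with the $-k\binom{k+j-1}{2j-1}$ term and applying the Pascal-type identity $\binom{k+j-1}{2j-1}+2\binom{k+j-1}{2j} = \tfrac{2k}{k+j}\binom{k+j}{2j}$ then yields the claimed closed form.

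With this in hand, the lemma reduces to the Chebyshev-type identity
$$\sum_{j=1}^{k}\frac{2k}{k+j}\binom{k+j}{2j}\{1\}^{2j} \;=\; q^k - 2 + q^{-k}.$$
I would prove this by matching coefficients of $q^m$ on both sides, using the Laurent expansion $\{1\}^{2j} = \sum_{i=0}^{2j}(-1)^i\binom{2j}{i}q^{j-i}$. By the $q\leftrightarrow q^{-1}$ symmetry shared by both sides, it suffices to check $m \geq 0$. The case $m=k$ is immediate, and the case $m=0$ is precisely Lemma~\ref{lemma_crazyidentitylemma1}(2). For $0<m<k$, the substitution $t = k-j$ converts the required vanishing into
$$\sum_{t=0}^{k-m}(-1)^{k-m-t}\frac{(2k-t-1)!}{t!(k-m-t)!(k+m-t)!} \;=\; 0,$$
and rewriting the summand via $\frac{(2k-t-1)!}{t!(p-t)!(2k-p-t)!} = \frac{1}{2k-t}\binom{2k-t}{p}\binom{p}{t}$ with $p = k-m$ identifies this with Lemma~\ref{lemma_crazyidentitylemma1}(1) exactly (note $0 < p < k$).

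The main obstacle is the Pascal-based simplification of $A_{k,j}$: recognizing that the apparently complicated combination collapses to $-\tfrac{2k}{k+j}\binom{k+j}{2j}$ is the key insight, after which the remaining combinatorial identities of Lemma~\ref{lemma_crazyidentitylemma1} provide precisely the coefficient vanishings needed to conclude.
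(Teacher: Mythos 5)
Your proof is correct and follows essentially the same route as the paper's: the Vandermonde-type sums you invoke are the $\ell=0,1,2$ identities of Lemma~\ref{binomsquareconj}, the closed form $-\tfrac{2k}{k+j}\binom{k+j}{2j}$ for the inner bracket is exactly the simplification that makes Lemma~\ref{lemma_crazyidentitylemma1} applicable, and the coefficient extraction combined with the $q\leftrightarrow q^{-1}$ symmetry is precisely the paper's (very terse) argument written out in full. The only point to patch is $j=1$, where $2j-3=-1$ and the Vandermonde identities do not literally apply term-by-term; there the inner sum vanishes outright and $A_{k,1}=-k\binom{k}{1}=-k^2=-\tfrac{2k}{k+1}\binom{k+1}{2}$ by direct inspection, so the closed form still holds and the rest of the argument is unaffected.
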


\begin{proof}
The equality for the coefficients of $q^{-(k-1)}, \ldots, q^{-1},1$  follow from Lemmas ~\ref{binomsquareconj} and ~\ref{lemma_crazyidentitylemma1} after simplifying the left hand side. The equality for the coefficients of $1,q,\dots,q^{k-1}$ then follows since the left hand side is an even function of $q$. Finally, the coefficients of $q^{-k}$ and $q^k$ are easily seen to be $-1$.
\end{proof}

\begin{proposition}
\label{prob3bubblemove1}
The commutator $[\wt_{1,j}, C_k]$ equals
\begin{equation*}
\sum_{t=1}^k \sum_{l=1}^t \{1\}^{2l} \binom{l+t-1}{2l-1} C_{k-t} \wt_{1,t+j}
- \sum_{l=1}^{k+1} \{1\}^{2l} \binom{k+l}{2l-1}  \wt_{1,k+1+j}
+ \sum_{l=1}^{k} \{1\}^{2(l-1)} \binom{k+l-1}{2l-1}  \wt_{1,k+j}
\end{equation*}
\end{proposition}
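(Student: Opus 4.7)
The plan is to compute $[\wt_{1,j}, C_k]$ by direct diagrammatic manipulation, using the bubble slide formula \eqref{eq:c-slide}. By Definition~\ref{def_ws}, the generator $\wt_{1,j}$ is represented by a single upward-oriented strand decorated with $j$ star dots, while $C_k \in \End(\mathbf{1})$ is a closed clockwise bubble with $k$ star dots. In the trace algebra, the products $C_k \cdot \wt_{1,j}$ and $\wt_{1,j} \cdot C_k$ are represented respectively by placing the bubble to the left or to the right of the strand, and the slide formula relates these two configurations. Thus $[\wt_{1,j}, C_k]$ equals the difference of these configurations, which can be read off directly from \eqref{eq:c-slide}.

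First I would apply \eqref{eq:c-slide} once, with $n=k$ and with the strand carrying $j$ stars. This yields three kinds of correction terms: a multiple of $\wt_{1,j+k+1}$, a multiple of $\wt_{1,j+k}$, and a sum over $1 \le i \le k$ of terms of the form $(\text{strand with } j+i \text{ stars}) \cdot C_{k-i}$, with coefficients involving $\{1\}^2 = (q-1)^2/q$. In these mixed terms the residual bubble $C_{k-i}$ sits on the \emph{right} of the strand, whereas the statement of the proposition places the bubble on the left, in the form $C_{k-t}\,\wt_{1,t+j}$. To convert between the two, I would iteratively apply \eqref{eq:c-slide} to each mixed term; each successive slide introduces an additional $\{1\}^2$ factor, which is precisely the mechanism producing the powers $\{1\}^{2l}$ for $l$ ranging up to $k+1$ in the final formula.

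The remainder of the argument is combinatorial bookkeeping. After $l$ iterated slides, a resulting term is determined by the total number of star dots migrated out of the bubble and by whether a residual bubble survives. Sequences of slides that leave a residual $C_{k-t}$ with $1 \le t \le k$ contribute to the first sum of the statement, with multiplicity encoded by $\binom{l+t-1}{2l-1}$ (a stars-and-bars count for the distributions of intermediate dot migrations). Sequences that exhaust the bubble entirely contribute to the $\wt_{1,j+k+1}$ and $\wt_{1,j+k}$ terms, with multiplicities $\binom{k+l}{2l-1}$ and $\binom{k+l-1}{2l-1}$ respectively. A cleaner alternative route is to induct on $k$: the base case $k=0$ is immediate, and the inductive step reduces to a single application of \eqref{eq:c-slide} combined with the formula at $k-1$, provided the appropriate Pascal-type recurrences on the binomials are verified.

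The main obstacle will be the combinatorial verification. The iterated-slide approach produces nested summations that must be collapsed into the compact binomial form of the proposition, which requires careful application of standard identities in the spirit of Lemma~\ref{lemma_crazyidentitylemma1}. No new conceptual input is needed beyond the slide relation \eqref{eq:c-slide} and trace cyclicity, but the precise extraction and matching of the binomial coefficients is delicate and will likely be the most labour-intensive step.
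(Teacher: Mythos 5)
Your proposal is correct and matches the paper's proof, which is exactly the induction on $k$ via the bubble-slide relation \eqref{eq:c-slide} that you describe as the ``cleaner alternative route''; your primary iterated-slide bookkeeping is the same computation unrolled. The only small correction is that the binomial identity needed to collapse the nested sums is the $\ell=1$ case of Lemma~\ref{binomsquareconj} rather than anything from Lemma~\ref{lemma_crazyidentitylemma1}.
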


\begin{proof}
This is proved by induction on $k$ using ~\eqref{eq:c-slide} and the second part of Lemma ~\ref{binomsquareconj}.
\end{proof}

\begin{proposition}
\label{prob3bubblemove2}
\begin{equation*}
[\tilde{C}_k, \wt_{1,j}]=\sum_{t=1}^{k-1} \sum_{l=1}^t  \{1\}^{2l} \binom{t+l-1}{2l-1}
\wt_{1,j+t} \tilde{C}_{k-t}.
\end{equation*}
\end{proposition}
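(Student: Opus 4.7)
The plan is to mirror the proof sketch indicated for Proposition~\ref{prob3bubblemove1}, but to use the simpler bubble slide \eqref{eq:cc-slide} in place of \eqref{eq:c-slide}, and to proceed by induction on $k$. First, a single application of \eqref{eq:cc-slide} to the product $\wt_{1,j}\tilde{C}_k$ (recalling $\{1\}^2 = (q-1)^2/q$) and the reindexing $t = k-\ell$ yields the preliminary identity
\[
[\tilde{C}_k,\wt_{1,j}] \;=\; \{1\}^2 \sum_{t=1}^{k-1} t\,\tilde{C}_{k-t}\wt_{1,j+t}.
\]
The cases $k=0,1$ give empty sums, matching the claim, so these serve as the base of the induction.

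For the inductive step, the preliminary formula still has each $\tilde{C}_{k-t}$ to the \emph{left} of $\wt_{1,j+t}$, whereas the statement of the proposition requires them to the right. Since $k-t < k$ for $t \geq 1$, I would apply the inductive hypothesis to each mixed product,
\[
\tilde{C}_{k-t}\wt_{1,j+t} \;=\; \wt_{1,j+t}\tilde{C}_{k-t} + \sum_{t'=1}^{k-t-1}\sum_{m=1}^{t'}\{1\}^{2m}\binom{t'+m-1}{2m-1}\wt_{1,j+t+t'}\tilde{C}_{k-t-t'},
\]
substitute back, and group by $T := t+t'$ and the power $l$ of $\{1\}^2$. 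The outer factor $\{1\}^2$ forces $l = m+1$, so for fixed $T$ the coefficient of $\{1\}^{2l}\wt_{1,j+T}\tilde{C}_{k-T}$ is $T$ when $l=1$ (coming only from the preliminary formula) and $\sum_{t=1}^{T-1} t\binom{T-t+l-2}{2l-3}$ when $l \geq 2$ (coming only from the inductive contribution).

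The proof is then reduced to the combinatorial identity
\[
\sum_{t=1}^{T-1} t \binom{T-t+l-2}{2l-3} \;=\; \binom{T+l-1}{2l-1}, \qquad l \geq 2,
\]
which together with $T = \binom{T}{1}$ for $l=1$ matches the claimed coefficient $\binom{T+l-1}{2l-1}$. This identity is a standard binomial convolution and can be verified by extracting the coefficient of $x^{T-1}$ in the product of generating functions $\sum_{u\geq 1} u x^u = x/(1-x)^2$ and $\sum_{k \geq 0}\binom{k+l-1}{2l-3}x^k = x^{l-2}/(1-x)^{2l-2}$, which gives $x^{l-1}/(1-x)^{2l}$, whose $x^{T-1}$-coefficient is indeed $\binom{T+l-1}{2l-1}$. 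The main obstacle is purely combinatorial: keeping the double-sum indices straight and verifying this identity. Conceptually the proof is cleaner than that of Proposition~\ref{prob3bubblemove1} because the right-hand side of \eqref{eq:cc-slide} has no ``stray'' constant terms analogous to the $(n+1)\{1\}^2\wt_{1,j+n+1}-n\wt_{1,j+n}$ contributions appearing in \eqref{eq:c-slide}.
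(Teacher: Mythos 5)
Your proof is correct and follows essentially the same route as the paper: induction on $k$ using the bubble slide \eqref{eq:cc-slide}, with the resulting binomial convolution being exactly the $\ell=1$ case of Lemma~\ref{binomsquareconj} (which you re-derive by the same generating-function argument the paper uses to prove that lemma). The only difference is that you verify the identity $\sum_{t=1}^{T-1} t\binom{T-t+l-2}{2l-3}=\binom{T+l-1}{2l-1}$ directly rather than citing it.
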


\begin{proof}
This is proved by induction on $k$ using formula \eqref{eq:cc-slide} and the second part of Lemma ~\ref{binomsquareconj}.
\end{proof}

\begin{proposition}
\label{modifiedproblem3}
\begin{equation*}
[\tilde{w}_{1,r}, \tilde{w}_{0,k}]=
\tilde w_{1,k+r} \sum_{j=1}^k \{1\}^{2j} \left( -k \binom{k+j-1}{2j-1} + \sum_{l=1}^{k-j+1} (k-l)(l)\binom{k+j-l-2}{2j-3} \right)
\end{equation*}
\end{proposition}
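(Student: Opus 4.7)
The plan is to compute the commutator directly by expanding $\tilde w_{0,k}$ via its definition and then applying the bubble-sliding formulas in Propositions~\ref{prob3bubblemove1} and~\ref{prob3bubblemove2}. Observe first that $\{1\}^2 = (q-1)^2/q$, so the definition of $w_{0,k}$ together with the normalization $\tilde w_{0,k} = \{k\}w_{0,k}$ gives
\begin{equation*}
\tilde w_{0,k} \;=\; \tilde C_k \;+\; \{1\}^2\sum_{j=1}^k (k+1-j)\,\tilde C_j C_{k-j}.
\end{equation*}
Since bubbles are endomorphisms of the monoidal unit $\mathbf 1$, their classes are central in $\Tr(\H)$, so the Leibniz rule applies to commutators with bubble products.

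Next I would apply centrality to write
\begin{equation*}
[\tilde w_{1,r},\tilde w_{0,k}] \;=\; [\tilde w_{1,r},\tilde C_k] \;+\; \{1\}^2\sum_{j=1}^k(k+1-j)\bigl([\tilde w_{1,r},\tilde C_j]\,C_{k-j} + \tilde C_j[\tilde w_{1,r},C_{k-j}]\bigr),
\end{equation*}
and then substitute the explicit formulas from Propositions~\ref{prob3bubblemove1} and~\ref{prob3bubblemove2}. This yields a finite (but intricate) sum of monomials of the form (product of bubbles)$\cdot\tilde w_{1,r+s}$ with explicit binomial coefficients and powers of $\{1\}^2$.

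The heart of the argument is to show that, after collecting terms, every monomial containing a nontrivial bubble product cancels, leaving only a scalar multiple of $\tilde w_{1,k+r}$. The mechanism for these cancellations is the recursive identity
\begin{equation*}
\tilde C_k \;=\; \frac{1}{\{1\}^2}\tilde C_{k-1} + \sum_{j=1}^{k-1}\tilde C_j C_{k-1-j}
\end{equation*}
proved immediately after Lemma~\ref{lem:alt-w0k}: applied iteratively, it interchanges terms arising from $[\tilde w_{1,r},\tilde C_k]$ with those from the "mixed" contributions $\tilde C_j[\tilde w_{1,r},C_{k-j}]$ and $[\tilde w_{1,r},\tilde C_j]C_{k-j}$, with coefficients matched by the weight $(k+1-j)$ appearing in the definition of $\tilde w_{0,k}$. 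The fact that Burban and Schiffmann's generators were chosen so that the answer is a pure multiple of $\tilde w_{1,k+r}$ is precisely what makes this cancellation possible.

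The main obstacle is the combinatorial bookkeeping required to identify the surviving coefficient of $\tilde w_{1,k+r}$. The double-sum structure of the target, with indices $j, l$ and the weight $(k-l)(l)$, reflects the interaction of two nested summations: $j$ tracks the power of $\{1\}^2$ produced by the bubble-sliding formula and matches the inner variable of Propositions~\ref{prob3bubblemove1}--\ref{prob3bubblemove2}, while the bilinear factor $(k-l)(l)$ arises from combining the weight $(k+1-j)$ in the definition of $\tilde w_{0,k}$ with the range of summation in the sliding identity. The first term $-k\binom{k+j-1}{2j-1}$ comes from the isolated "middle" and "last" summands in Proposition~\ref{prob3bubblemove1} (the ones not involving a surviving bubble factor $C_{k-t}$), which collectively produce an overall factor of $k$ after pairing with the coefficient $\sum_j(k+1-j)$. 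Once the identification is made, Lemma~\ref{lemma_crazyidentity} will reduce the long double sum to $-\{k\}^2$, giving the cross-relation~\textbf{CR2} stated in Corollary~\ref{cor_prob3tilde}.
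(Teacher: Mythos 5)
Your overall strategy is the paper's: expand $\tilde w_{0,k}$ by its definition, apply the Leibniz rule, and feed the resulting commutators into Propositions~\ref{prob3bubblemove1} and~\ref{prob3bubblemove2}. One remark before the main issue: your claim that bubble classes are central in $\Tr(\H)$ is false --- if it were true, $\tilde w_{0,k}$ (a polynomial in bubbles) would be central and the left-hand side of the proposition would vanish identically; Propositions~\ref{prob3bubblemove1} and~\ref{prob3bubblemove2} exist precisely because sliding a bubble past an upward strand costs correction terms. Fortunately you never actually use centrality, since the Leibniz rule $[a,bc]=[a,b]c+b[a,c]$ holds in any associative algebra.

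The substantive gap is that the entire content of the proposition is the combinatorial reduction you defer. After substituting the two bubble-move propositions one obtains (as the paper writes out explicitly) eight multi-indexed families of terms, every one of which still carries a nontrivial bubble factor $\tilde C_l$, $\tilde C_l C_m$, or $\tilde C_j C_m$. Already for $k=1$ the expansion collapses to the single term $-\{1\}^4\,\tilde C_1\,\tilde w_{1,r+1}$, which equals the claimed $-\{1\}^2\tilde w_{1,r+1}$ only because $\tilde C_1 = \{1\}^{-2}$. So you are right that the recursion $\tilde C_k=\{1\}^{-2}\tilde C_{k-1}+\sum_{j}\tilde C_j C_{k-1-j}$ (equivalently \eqref{eq:bub-stardot}) must enter, a point the paper's own proof leaves implicit in the phrase ``combining like terms''; but you have not shown that the non-scalar monomials in the $C_i$ actually cancel, nor derived the stated coefficient. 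Your heuristic for the term $-k\binom{k+j-1}{2j-1}$ (``pairing with the coefficient $\sum_j(k+1-j)$ \ldots produce an overall factor of $k$'') does not survive inspection, since $\sum_{j=1}^k(k+1-j)=k(k+1)/2$, not $k$; the actual collapse of the nested binomial sums is carried out in the paper using Lemma~\ref{binomsquareconj}, which your outline never invokes at this stage (Lemma~\ref{lemma_crazyidentity} only enters afterwards, in Corollary~\ref{cor_prob3tilde}). As written, the proposal identifies the correct tools but omits the proof of the identity itself, which is the proposition.
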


\begin{proof}
By definition,
\begin{align}
[\wt_{1,r}, \wt_{0,k}] &= [\wt_{1,r}, \tilde{C}_k + \{1\}^2 \sum_{j=1}^k (k+1-j) \tilde{C}_j C_{k-j}] \nonumber \\
&= [\wt_{1,r}, \tilde{C}_k] + \{1\}^2 \sum_{j=1}^k (k+1-j)[\wt_{1,r}, \tilde{C}_j]C_{k-j}
+
\{1\}^2 \sum_{j=1}^k (k+1-j) \tilde{C}_j [\wt_{1,r}, C_{k-j}] \label{eq1modifiedproblem3}.
\end{align}
Now using Proposition ~\ref{prob3bubblemove1} and Proposition ~\ref{prob3bubblemove2} to move terms of the form $C_a$ and $\tilde{C}_b$ to the left in ~\eqref{eq1modifiedproblem3} we get

\begin{align*}
[\wt_{1,r}, \wt_{0,k}] =
& -\{1\}^2 \sum_{l=1}^{k-1} (k-l) \tilde{C}_l \wt_{1,k-l+r}
- \{1\}^4 \sum_{j=2}^k \sum_{l=1}^{j-1} (k+1-j)(j-l) \tilde{C}_l C_{k-j} \wt_{1,j-l+r} \\
&- \sum_{j=1}^k \sum_{l=1}^{j-1} \sum_{t=1}^{k-j} \sum_{p=1}^t (k+1-j)(j-l)
\binom{p+t-1}{2p-1} \{1\}^{2p+4} \tilde{C}_l C_{k-j-t} \wt_{1,t+j-l+r} \\
&+ \sum_{j=1}^k \sum_{l=1}^{j-1} \sum_{p=1}^{k-j+1} (k+1-j)(j-l)
\binom{k-j+p}{2p-1} \{1\}^{2p+4} \tilde{C}_l \wt_{1,k-l+1+r} \\
&- \sum_{j=1}^k \sum_{l=1}^{j-1} \sum_{p=1}^{k-j} (k+1-j)(j-l)
\binom{k-j+p-1}{2p-1} \{1\}^{2p+2} \tilde{C}_l \wt_{1,k-l+r} \\
&+ \sum_{j=1}^k \sum_{t=1}^{k-j} \sum_{l=1}^t (k+1-j) \binom{l+t-1}{2l-1}
\{1\}^{2l+2} \tilde{C}_j C_{k-j-t} \wt_{1,t+r} \\
&- \sum_{j=1}^k \sum_{l=1}^{k-j+1} (k+1-j) \binom{k-j+l}{2l-1}
\{1\}^{2l+2} \tilde{C}_j \wt_{1,k-j+1+r} \\
&+\sum_{j=1}^k \sum_{l=1}^{k-j} (k+1-j)\binom{k-j+l-1}{2l-1} \{ 1\}^{2l} \tilde{C}_j \wt_{1,k-j+r}.
\end{align*}
Combining like terms and using Lemma \ref{binomsquareconj} gives the proposition.
\end{proof}

\begin{corollary}\label{cor_prob3tilde} The following relations
\begin{enumerate}
\item
$[\tilde{w}_{1,r}, \tilde{w}_{0,k}]=
- \{k\}^2 \tilde{w}_{1,k+r}
$
\item
$[\tilde{w}_{-1,r}, \tilde{w}_{0,k}]=
\{k\}^2 \tilde{w}_{-1,k+r}
$
\end{enumerate}
hold in $\Tr(\H)$.
\end{corollary}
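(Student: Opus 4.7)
The proof will be short because nearly all of the heavy lifting has already been done. The plan is to recognize Corollary \ref{cor_prob3tilde}(1) as an immediate consequence of Proposition \ref{modifiedproblem3} combined with the combinatorial identity of Lemma \ref{lemma_crazyidentity}, and then to deduce Corollary \ref{cor_prob3tilde}(2) from part (1) by applying the diagrammatic anti-involution $\Psi$ introduced in Definition \ref{def_ws}.

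First, for part (1), I would simply note that Proposition \ref{modifiedproblem3} gives
\[
[\tilde{w}_{1,r},\tilde{w}_{0,k}]=S(k)\,\tilde{w}_{1,k+r},
\qquad
S(k):=\sum_{j=1}^k \{1\}^{2j}\!\left(-k\binom{k+j-1}{2j-1}+\sum_{l=1}^{k-j+1}(k-l)\,l\binom{k+j-l-2}{2j-3}\right).
\]
Lemma \ref{lemma_crazyidentity} identifies $S(k)=-q^k+2-q^{-k}=-(q^{k/2}-q^{-k/2})^2=-\{k\}^2$, which gives the first equality of the corollary.

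For part (2), the plan is to apply the $\k[q,q^{-1}]$-linear algebra anti-involution $\Psi$ of $\Tr(\H)$. By definition $\Psi(w_{a,b})=w_{-a,b}$, and since the normalization factors in \eqref{eq_normalizations} depend only on $|a|$ and on $b$, the same relation $\Psi(\tilde w_{a,b})=\tilde w_{-a,b}$ holds. Moreover $\Psi$ fixes $\tilde w_{0,k}$: $\Psi$ reflects a diagram across the $y$-axis and reverses all orientations, and one checks from the formula in Definition \ref{def_ws} that the bubbles $C_n$ and $\tilde C_n$ (and hence the linear combination defining $\tilde w_{0,k}$) are preserved. Applying $\Psi$ to the identity in part (1) and using that $\Psi$ is an anti-homomorphism (so $\Psi([a,b])=-[\Psi(a),\Psi(b)]$) yields
\[
-[\tilde w_{-1,r},\tilde w_{0,k}] \;=\; -\{k\}^2\,\tilde w_{-1,k+r},
\]
which, after negation, is exactly part (2).

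The main obstacle is not in this corollary itself but in the ingredients already established: the careful diagrammatic bubble-slide computations of Propositions \ref{prob3bubblemove1} and \ref{prob3bubblemove2} that produce Proposition \ref{modifiedproblem3}, and the nontrivial binomial identity of Lemma \ref{lemma_crazyidentity}. Given those, the corollary is essentially a bookkeeping statement. The only subtlety to double-check while writing it up is the compatibility of $\Psi$ with the normalization factor $q^{(1-j)/2}$ and with the definition of $\tilde w_{0,k}$; this amounts to verifying that reflecting the bubble diagrams across the $y$-axis and reversing orientation swaps $C_n \leftrightarrow \tilde C_n$ in a way that leaves the linear combination defining $\tilde w_{0,k}$ invariant.
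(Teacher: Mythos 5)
Your proposal is correct and is essentially identical to the paper's own proof: part (1) is deduced by combining Proposition \ref{modifiedproblem3} with Lemma \ref{lemma_crazyidentity}, and part (2) follows from part (1) by applying the anti-involution $\Psi$ (with the sign from $\Psi([a,b])=-[\Psi(a),\Psi(b)]$ handled exactly as you describe). The extra care you take over $\Psi$ fixing $\tilde w_{0,k}$ is a reasonable detail to verify but does not change the argument.
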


\begin{proof}
The first part follows from Proposition ~\ref{modifiedproblem3} and Lemma ~\ref{lemma_crazyidentity}.
The second item is obtained from the first part by using the involution $\Psi$.
\end{proof}

%
\subsubsection{Cross Relations for like orientations}\label{sec_crossrelations-mix}
%

Introduce the non-symmetric quantum integers
\begin{equation} \label{eq:nonsymmetric-int}
[n]_q := \frac{1-q^n}{1-q},
\end{equation}
so that if $n > 0$ then $[n]_q = 1+q+q^2 + \dots + q^{n-1}$ and
\begin{align}
 [-n]_q
 = \frac{1-q^{-n}}{1-q}
 = -\frac{1-q^n}{q^n(1-q)}
 = -\left(\frac{1}{q}+ \frac{1}{q^2} + \dots + \frac{1}{q^n} \right).
\end{align}
For $n>0$ the quantum factorials are then defined as
\begin{equation}
 [n]_q! := [n]_q [n-1]_q \dots [1]_q .
\end{equation}

We use the following idempotents in the Hecke algebra corresponding to the row and column partitions, respectively:

\begin{equation*}
e(n)=\frac{1}{[n]_q !}\sum_{w \in S_n} t_w
\hspace{.5in}
e'(n)=\frac{1}{[n]_{q^{-1}}!}\sum_{w \in S_n} (-q)^{-l(w)} t_w.
\end{equation*}

The following is an immediate consequence of the Chern character map and Theorem 4.7 of Licata-Savage~\cite{LS13} which implies the analogous results at the level of K-theory. (See Definitions \ref{def_yts} and \ref{def:si} for notation.)

\begin{proposition}\label{prop:relations} \hfill
\label{LSrelations}
\begin{enumerate}
\item $[e(n)]_{\downarrow} [e(m)]_{\downarrow} = [e(m)]_{\downarrow} [e(n)]_{\downarrow} $,
\item $[e'(n)]_{\uparrow} [e'(m)]_{\uparrow}=[e'(m)]_{\uparrow}[e'(n)]_{\uparrow}$,
\item $[e(n)]_{\downarrow}[e'(m)]_{\uparrow}=[e'(m)]_{\uparrow}[e(n)]_{\downarrow}+[e'(m-1)]_{\uparrow}[e(n-1)]_{\downarrow} $,
\item $[e(n)]_{\downarrow} [e(m)]_{\uparrow} = \sum_{k \geq 0}
[e(m-k)]_{\uparrow} [e(n-k)]_{\downarrow}$.
\end{enumerate}
\end{proposition}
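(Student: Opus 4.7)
The plan is to deduce this proposition from the corresponding statements in the split Grothendieck group via the Chern character map. Recall that for an idempotent $e \in \End_{\H}(X)$, the Karoubi envelope contains an object $(X,e)$ whose identity morphism is (the image of) $e$, and whose class in the Chern character image is exactly $[e] \in \Tr(\H)$; here we use Proposition~\ref{HH0HequalsHH0Hprime} to identify $\Tr(\H) \cong \Tr(Kar(\H))$.

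First I would set up notation by letting $E(n)^{\downarrow}, E(n)^{\uparrow}, E'(n)^{\downarrow}, E'(n)^{\uparrow}$ denote the objects of $Kar(\H)$ obtained from $e(n)$ and $e'(n)$ viewed as idempotents in $\End_{\H}(\Q^n)$ or $\End_{\H}(\P^n)$ respectively. These are the categorified analogues of the complete symmetric and exterior power generators of the Heisenberg algebra. Since $h_\H \colon K_0(Kar(\H)) \otimes \k \to \Tr(\H)$ sends $[(X,e)]_{\cong}$ to $[e]$, the Chern character translates any polynomial identity among isomorphism classes of these objects into the corresponding identity among the classes $[e(n)]_{\downarrow}$ and $[e'(n)]_{\uparrow}$.

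Second I would invoke Theorem~4.7 of \cite{LS13}, which establishes precisely these four identities at the level of the split Grothendieck group: relations (1) and (2) express the commutativity of the symmetric and exterior subalgebras; relation (3) is the Heisenberg-type relation between creation and annihilation functors applied to $E'(m)^{\uparrow}$ and $E(n)^{\downarrow}$; and relation (4) is the standard exponential identity expressing $E(n)^{\downarrow} \otimes E(m)^{\uparrow}$ as a direct sum of objects of the form $E(m-k)^{\uparrow} \otimes E(n-k)^{\downarrow}$. Because $\H$ is monoidal and $h_\H$ is an algebra homomorphism, applying $h_\H$ directly yields the stated identities in $\Tr(\H)$.

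The only mild subtlety is bookkeeping: one must verify that the notation $[e(n)]_{\downarrow}\cdot[e(m)]_{\downarrow}$ in the trace algebra (defined by horizontal juxtaposition via $\mu_{n,m}$) agrees with the product $[1_{E(n)^{\downarrow}\otimes E(m)^{\downarrow}}]$ in the Chern character image, but this is immediate from the compatibility of the Karoubi envelope with the monoidal structure and from the fact that $e(n)\otimes e(m)$ is the identity of the tensor product in $Kar(\H)$. No obstacle of substance arises; the main content of the proposition is already in the cited theorem, and the trace-level statement is its direct image under $h_\H$.
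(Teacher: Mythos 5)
Your proposal is correct and follows exactly the route the paper takes: the paper states these four identities as an immediate consequence of Theorem~4.7 of \cite{LS13} (which gives the analogous relations in the split Grothendieck group) pushed forward along the Chern character map, using that $h_{\H}$ is an algebra homomorphism and that $\Tr(\H)\cong\Tr(Kar(\H))$. Your additional bookkeeping about idempotents, Karoubi objects, and compatibility of products is a faithful elaboration of the same argument.
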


We define the following formal power series:
\begin{equation*}
A(u):=\sum_{n \geq 1}  \frac{{w}_{n,0}}{n} u^n
\hspace{.5in}
B(v):=\sum_{n \geq 1}  \frac{{w}_{-n,0}}{n} v^n
\end{equation*}
We also define
\begin{equation*}
H^+(u):=1+\sum_{n \geq 1} [e(n)]_{\uparrow} u^n
\hspace{.5in}
H^-(v):=1+\sum_{n \geq 1} [e(n)]_{\downarrow} v^n.
\end{equation*}

\begin{proposition}\label{prop_mortonseries}
We have the following equality of generating series
\begin{equation*}
H^+(u)= e^{A(u)}
\hspace{.5in}
H^-(v)= e^{B(v)}.
\end{equation*}
\end{proposition}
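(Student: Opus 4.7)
The approach is to convert the exponential identity into a Newton-style recursion and then transfer it to the skein algebra, where it becomes classical.

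Since $H^+(0) = 1$ by definition, the identity $H^+(u) = \exp(A(u))$ is equivalent to the formal differential equation $(H^+)'(u) = A'(u) \cdot H^+(u)$. Extracting the coefficient of $u^{n-1}$ reduces the problem to the Newton-type identity
\[ n \cdot [e(n)]_\uparrow \;=\; \sum_{k=1}^{n} w_{k,0} \cdot [e(n-k)]_\uparrow \quad \text{in } \Tr(\H), \]
with the convention $[e(0)]_\uparrow := 1$.

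Both sides of this identity lie in the rank-$n$, degree-$0$ graded piece of $\Tr(\H)$, which is contained in the image of the injective algebra homomorphism $\varphi^+ \colon \mathbb{E}^{>,\geq} \to \Tr(\H)$ (constructed in Corollary~\ref{cor_upperiso}, with $w_{k,0} = \varphi^+(\bw_{k,0})$ by Proposition~\ref{prop:uppertrianglemap}). It therefore suffices to verify the identity after pulling back through the chain of isomorphisms $\mathbb{E}^{>,\geq} \cong \Sk_q^{>,\geq}(T^2) \cong \Tr(\ah^+)$ of Corollary~\ref{cor_trianglepres}\eqref{boldE-trianglepres2} and Theorem~\ref{thm_ahatoskein}. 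Under these identifications $w_{k,0}$ corresponds to the meridional power-sum generator $P_{k,0}$ of \cite{MS14} (by Definition~\ref{def_ws} and Remark~\ref{rmk:allgens}), while $[e(n)]_\uparrow$ corresponds to the class of the Hecke symmetrizer $e(n) \in H_n \subset \ah_n^+$, whose skein closure is the classical complete symmetric function $h_n$ in the annular subalgebra of $\Sk_q^{>,\geq}(T^2)$ (the Aiston--Morton identification).

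The resulting identity $n h_n = \sum_{k=1}^n P_{k,0} \cdot h_{n-k}$ is then the standard Newton identity relating power sums and complete symmetric functions inside the commutative polynomial subalgebra of $\Sk_q^{>,\geq}(T^2)$ generated by either family. The statement for $H^-$ follows at once by applying the orientation-reversing algebra anti-involution $\Psi$ of Definition~\ref{def_ws}, which sends $[e(n)]_\uparrow \mapsto [e(n)]_\downarrow$ and $w_{k,0} \mapsto w_{-k,0}$, and hence $H^+(u) \mapsto H^-(u)$ and $A(u) \mapsto B(u)$.

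The main obstacle is a bookkeeping check: one must confirm that the specific scalar normalization of $w_{k,0}$ in Definition~\ref{def_ws} exactly reproduces the power-sum generator $P_{k,0}$ of \cite{MS14} on the nose, rather than up to a scalar, so that Newton's identity transfers without spurious correction factors. This matching is essentially built into the explicit construction recalled in Remark~\ref{rmk:allgens}, but needs to be tracked carefully.
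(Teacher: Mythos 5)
Your argument is correct and is in substance the paper's proof: the paper simply cites Morton--Manch\'on \cite[Section 4]{MM08} (which rests on \cite{Mor02}), and what that reference proves is exactly the skein-of-the-annulus identity you reconstruct via Newton's recursion together with the Aiston--Morton identification $\cl(w_{k,0})=\alpha(p_k)$ and $\cl(e(n))=\alpha(h_n)$, transferred to $\Tr(\H)$ through the algebra map of Corollary~\ref{cor_upperiso}. (One small remark: you do not actually need injectivity of $\varphi^+$ here --- since $w_{k,0}$ and $[e(n)]_\uparrow$ are by construction images of explicit classes in $\Tr(\ah^+)$, it suffices that $\tilde\varphi$ is an algebra homomorphism to push the annular Newton identity forward.)
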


\begin{proof}
See ~\cite[Section 4]{MM08} Morton-Manchon which relies on results found in ~\cite{Mor02}.
\end{proof}

\begin{lemma}
Consider two generating series $X(u)=X_1u+X_2u^2+\dots$ and $Y(v)=Y_1v+Y_2v^2+\dots$ where the coefficients do not necessarily commute. Then $X(u)$ and $Y(v)$ commute if and only if $\exp(X(u))$ and $\exp(Y(v))$ commute.
\end{lemma}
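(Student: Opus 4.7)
The plan is to work in the completed algebra $A[[u,v]]$, where $A$ is the ambient (possibly noncommutative) coefficient algebra, and to exploit the fact that $\exp$ and $\log$ are mutually inverse bijections between the ideal $\mathfrak{m}$ of formal power series with vanishing constant term (in $u$ or $v$) and the multiplicative coset $1+\mathfrak{m}$. Since both $X(u)$ and $Y(v)$ lie in $\mathfrak{m}$ by hypothesis, all of the exponentials and logarithms that appear are well-defined as formal power series (each coefficient being a finite sum), and one has the formal identities $\log(\exp(Z))=Z$ and $\exp(\log(1+Z))=1+Z$ for any $Z\in\mathfrak{m}$.

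For the forward direction, I would first expand the hypothesis $X(u)Y(v)=Y(v)X(u)$ in $A[[u,v]]$ and compare coefficients of $u^iv^j$ to get $X_iY_j=Y_jX_i$ for all $i,j\geq 1$. Writing $\exp(X(u))=1+\sum_{n\geq 1}E_nu^n$, each $E_n$ is a (noncommutative) polynomial in $X_1,\dots,X_n$; similarly $\exp(Y(v))=1+\sum_{m\geq 1}F_mv^m$ with $F_m$ a polynomial in $Y_1,\dots,Y_m$. Since each $X_i$ commutes with each $Y_j$, each $E_n$ commutes with each $F_m$, whence $\exp(X(u))\exp(Y(v))=\exp(Y(v))\exp(X(u))$.

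For the converse, suppose $\exp(X(u))\exp(Y(v))=\exp(Y(v))\exp(X(u))$. Comparing coefficients of $u^nv^m$ yields $E_nF_m=F_mE_n$ for all $n,m\geq 0$. Now the identity $\log(\exp(Z))=Z$ applied to $Z=X(u)$ expresses each $X_i$ as a noncommutative polynomial in $E_1,\dots,E_i$ (namely the coefficient of $u^i$ in $\sum_{k\geq 1}\tfrac{(-1)^{k+1}}{k}\bigl(\sum_{n\geq 1}E_nu^n\bigr)^k$), and symmetrically each $Y_j$ is a noncommutative polynomial in $F_1,\dots,F_j$. Since every $E_n$ commutes with every $F_m$, it follows that $X_iY_j=Y_jX_i$ for all $i,j$, and therefore $X(u)Y(v)=Y(v)X(u)$ in $A[[u,v]]$.

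The entire argument is purely formal, so there is no real obstacle; the only point requiring any care is the verification that $\log\circ\exp=\mathrm{id}$ on $\mathfrak{m}$ in the noncommutative setting. This holds for the same reason as in the commutative case: the identity is a universal formal equality in one variable, reducing the question to a single statement about coefficients of powers of a single formal series, where commutativity of that series with itself is automatic.
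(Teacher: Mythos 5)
Your proof is correct, but the converse is argued by a genuinely different mechanism than the paper's. The paper proceeds by induction on $i+j$: it observes that $[u^i]\exp(X(u))=X_i+(\text{products of }X_{i'}\text{ with }i'<i)$, hence $[u^iv^j]$ of the two products differ by $X_iY_j-Y_jX_i$ plus commutators already known to vanish by the inductive hypothesis. You instead extract the relations $E_nF_m=F_mE_n$ directly from the coefficient of $u^nv^m$ (which works because the cross terms in $\exp(X(u))\exp(Y(v))$ are exactly $E_nF_mu^nv^m$), and then invert via the formal logarithm to write each $X_i$ as a noncommutative polynomial in $E_1,\dots,E_i$ and each $Y_j$ as one in $F_1,\dots,F_j$. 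Both arguments are short and purely formal; yours trades the induction for the one-time verification that $\log\circ\exp=\mathrm{id}$ holds in the noncommutative setting, which you correctly reduce to the commutative one-variable identity since all powers of the single series $X(u)$ commute with one another. The paper's induction is slightly more elementary in that it never needs $\log$ at all, while your version makes the underlying structural reason (that the $X_i$ and the $E_n$ generate the same subalgebra, filtration by filtration) explicit, which is arguably more illuminating.
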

\begin{proof}
If $[X(u),Y(v)]=0$ then clearly $[\exp(X(u)),\exp(Y(v))]=0$. Conversely, assuming
$$[\exp(X(u)),\exp(Y(v))]=0$$
we need to show $[X_i,Y_j]=0$. We do this by induction on $i+j$. Note that
$$[u^i] \exp(X(u)) = X_i + \mbox{ terms of lower order }$$
where terms of lower order refers to products of various $X_{i'}$ with $i' < i$. Thus
$$[u^iv^j] \exp(X(u))\exp(Y(v)) = X_iY_j + \mbox{ terms of lower order }$$
and likewise
$$[u^iv^j] \exp(Y(v))\exp(X(u)) = Y_jX_i + \mbox{ terms of lower order }.$$
By induction on $i+j$ it follows that $[X_i,Y_j]=0$.
\end{proof}

\begin{corollary}
Consider two generating series $A(u)$ and $B(v)$ as above where the coefficients do not necessarily commute. Suppose that
$$\exp(B(v)) \exp(A(u)) = C(u,v) \exp(A(u)) \exp(B(v))$$
where $C(u,v)$ commutes with both $\exp(A(u))$ and $\exp(B(v))$. Then $C(u,v) = \exp(-[A(u),B(v)])$.
\end{corollary}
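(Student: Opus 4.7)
The plan is to take logarithms and use Baker--Campbell--Hausdorff-type identities. Define $\alpha(u,v) := \log C(u,v)$; this is a well-defined formal power series since $C - 1 = \exp(B)\exp(A)\exp(-B)\exp(-A) - 1$ has no constant term in $u,v$. By the preceding lemma, applied once with $Y = A(u)$ and once with $Y = B(v)$, the fact that $C = \exp(\alpha)$ commutes with both $\exp(A(u))$ and $\exp(B(v))$ implies that $\alpha$ commutes with both $A(u)$ and $B(v)$.

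Next, rewrite the hypothesis using the standard conjugation identity $\exp(B)\exp(A)\exp(-B) = \exp\!\bigl(e^{\operatorname{ad}_B}(A)\bigr)$, which gives
\[
\exp(\alpha)\exp(A) \;=\; \exp(B)\exp(A)\exp(-B) \;=\; \exp\!\bigl(e^{\operatorname{ad}_{B(v)}}(A(u))\bigr).
\]
Since $\alpha$ commutes with $A$, the left-hand side equals $\exp(\alpha + A)$; taking $\log$ (which is injective on $1+(\text{positive order})$ series) yields
\[
\alpha \;=\; e^{\operatorname{ad}_{B(v)}}(A(u)) - A(u) \;=\; [B,A] + \tfrac{1}{2}[B,[B,A]] + \tfrac{1}{6}[B,[B,[B,A]]] + \cdots .
\]

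The main step, and the part I expect to be the main obstacle, is to show that the commutativity hypothesis forces the terms of degree $\geq 3$ in this expansion to vanish. The constraint $[\alpha, B(v)] = 0$ rewrites as $\sum_{n\geq 2}\tfrac{1}{n!}\operatorname{ad}_{B(v)}^n(A(u)) = 0$, or equivalently $\psi(\operatorname{ad}_{B(v)})\bigl([B(v),[B(v),A(u)]]\bigr) = 0$, where $\psi(X) := \sum_{m\geq 0} X^m/(m+2)! = \tfrac{1}{2} + \tfrac{X}{6} + \cdots$. Since $\operatorname{ad}_{B(v)}$ strictly raises $v$-degree, $\psi(\operatorname{ad}_{B(v)})$ is invertible on the relevant $v$-adic completion (its leading term is the nonzero scalar $1/2$), so $[B(v),[B(v),A(u)]] = 0$ as a formal power series; the symmetric argument using $[\alpha, A(u)] = 0$ gives $[A(u),[A(u),B(v)]] = 0$. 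Together these assert that $[A(u),B(v)]$ commutes with both $A(u)$ and $B(v)$, so by Jacobi all iterated commutators of degree $\geq 3$ in the expansion for $\alpha$ vanish, collapsing it to $\alpha = [B(v),A(u)] = -[A(u),B(v)]$, whence $C(u,v) = \exp\bigl(-[A(u),B(v)]\bigr)$.
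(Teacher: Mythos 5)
Your proof is correct, and it shares the paper's overall skeleton: set $\alpha=\log C$, use the commutativity of $C$ with $\exp(A(u))$ and $\exp(B(v))$ (via the preceding lemma, extended to series in both variables --- an extension the paper also uses silently) to get $\exp(\alpha+A)=\exp(B)\exp(A)\exp(-B)=\exp\bigl(e^{\operatorname{ad}_B}A\bigr)$, and then kill the terms of $e^{\operatorname{ad}_B}(A)-A$ beyond $[B,A]$. Where you genuinely diverge is in how that last point is established. The paper deduces that $[A,B]$ commutes with $B$ through the chain ``$[\exp(B),A]=(\ln C)\exp(B)$ commutes with $B$, hence $[A,B]$ commutes with $\exp(B)$, hence (by the lemma again) with $B$,'' and then outsources the final identity $\exp(B)\exp(A)=\exp(-[A,B])\exp(A)\exp(B)$ to a standard BCH reference. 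You instead read $[\alpha,B]=0$ as $\psi(\operatorname{ad}_{B(v)})\bigl([B,[B,A]]\bigr)=0$ and invert $\psi(\operatorname{ad}_{B(v)})$ using that $\operatorname{ad}_{B(v)}$ strictly raises $v$-degree; this gives $[B,[B,A]]=0$ directly and collapses the series for $\alpha$ to $[B,A]$ with no external citation. Your route is more self-contained and makes explicit a step the paper leaves terse. Two cosmetic remarks: the constraint should read $\sum_{n\ge 2}\tfrac{1}{(n-1)!}\operatorname{ad}_B^n(A)=0$, so that $\psi(X)=\sum_{m\ge 0}X^m/(m+1)!$ has constant term $1$ rather than $1/2$ --- this changes nothing, since only the nonvanishing of the constant term is used; and once $[B,[B,A]]=0$ is known, every term $\tfrac{1}{n!}\operatorname{ad}_B^n(A)$ with $n\ge 2$ already vanishes, so the symmetric statement $[A,[A,B]]=0$ and the appeal to Jacobi are not needed for the conclusion.
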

\begin{proof}
Since $C(u,v)$ commutes with $\exp(B(v))$ we have
$$\exp(B(v)) \exp(A(u)) \exp(-B(v)) = \exp(\ln C(u,v)+A(u)).$$
The left hand side equals
$$\exp(\exp(B(v)) A(u) \exp(-B(v)))$$
from which we get
$$\exp(B(v)) A(u) \exp(-B(v)) = \ln C(u,v)+A(u)$$
or equivalently
$$[\exp(B(v)),A(u)] = \ln C(u,v) \exp(B(v)).$$
This means that $[\exp(B(v)),A(u)]$ commutes with $B(v)$ and hence $[A(u),B(v)]$ commutes with $\exp(B(v))$. This means $\exp([A(u),B(v)])$ commutes with $\exp(B(v))$ and by the previous Corollary $[A(u),B(v)]$ commutes with $B(v)$. A similar arguments shows $[A(u),B(v)]$ also commutes with $A(u)$.

Finally, this condition implies by a standard argument (see for instance Lemma 9.43 of \cite{N4}) that
$$\exp(B(v)) \exp(A(u)) = \exp(-[A(u),B(v)]) \exp(A(u)) \exp(B(v)).$$
It follows that $C(u,v) = \exp(-[A(u),B(v)])$.
\end{proof}

\begin{proposition} \label{prop:heis-prob5}
We have the following Heisenberg relation
\begin{equation}\label{eq:heisenberg}
[{w}_{n,0}, {w}_{-m,0}]=-n \delta_{n,m}.
\end{equation}
\end{proposition}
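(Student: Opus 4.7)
The plan is to deduce the Heisenberg relation from the commutation identity between the generating series $H^+(u)$ and $H^-(v)$, rather than attempting a direct diagrammatic computation of $[w_{n,0}, w_{-m,0}]$. The key input is item (3) (or more precisely item (4)) of Proposition~\ref{LSrelations}, which encodes the induction/restriction relation for the row-idempotents of the Hecke algebras in $\Tr(\H)$.

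First I would multiply out $H^-(v)\,H^+(u)$ using Proposition~\ref{LSrelations}(4): after re-indexing the double sum by $m' = m-k$, $n' = n-k$, the factor of $\sum_k (uv)^k$ separates, yielding
\begin{equation*}
H^-(v)\,H^+(u) \;=\; \frac{1}{1-uv}\, H^+(u)\, H^-(v) .
\end{equation*}
Next, Proposition~\ref{prop_mortonseries} identifies $H^+(u) = \exp(A(u))$ and $H^-(v) = \exp(B(v))$, so the displayed identity rewrites as $\exp(B(v))\exp(A(u)) = C(u,v)\exp(A(u))\exp(B(v))$ with $C(u,v) = (1-uv)^{-1}$, a scalar series that automatically commutes with both exponentials.

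The Corollary preceding the Proposition then applies and gives
\begin{equation*}
[A(u), B(v)] \;=\; -\log C(u,v) \;=\; \log(1-uv) \;=\; -\sum_{n\geq 1} \frac{(uv)^n}{n}.
\end{equation*}
Expanding the left side as $[A(u),B(v)] = \sum_{n,m \geq 1} \frac{[w_{n,0},w_{-m,0}]}{nm} u^n v^m$ and comparing coefficients of $u^n v^m$ yields $[w_{n,0},w_{-m,0}]/(nm) = -\delta_{n,m}/n$, i.e.\ the claimed identity $[w_{n,0},w_{-m,0}] = -n\delta_{n,m}$.

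The only subtlety — really just bookkeeping — is verifying that $C(u,v)$ indeed commutes with the exponentials, which is immediate here because it is a central (scalar) power series; there is no genuine obstacle. The main conceptual step was already carried out in Proposition~\ref{prop_mortonseries} (via Morton–Manchon), which converts the multiplicative boson-fermion-type relation of item (4) into the additive commutation relation of the Heisenberg algebra.
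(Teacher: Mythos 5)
Your proposal is correct and follows essentially the same route as the paper: deduce $H^-(v)H^+(u) = (1-uv)^{-1}H^+(u)H^-(v)$ from Proposition~\ref{prop:relations}(4), apply Proposition~\ref{prop_mortonseries} and the preceding corollary to get $[A(u),B(v)]=\log(1-uv)$, and compare coefficients. Your sign bookkeeping ($\log(1-uv)=-\sum_{n\geq 1}(uv)^n/n$) is in fact cleaner than the paper's own statement of that expansion.
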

\begin{proof}
It is easy to check that the last relation in Proposition \ref{prop:relations} implies that
$$\exp(B(v)) \exp(A(u)) = \frac{1}{1-uv} \exp(A(u)) \exp(B(v)).$$
Thus, taking $C(u,v) = \frac{1}{1-uv}$ it follows by the previous corollary that $\frac{1}{1-uv}=\exp(-[A(u),B(v)])$ and
$$[A(u),B(v)] = \ln(1-uv).$$
Since $\ln(1-uv)=\sum_{n \geq 1} \frac{(uv)^n}{n}$, this combined with Proposition ~\ref{prop_mortonseries} implies (\ref{eq:heisenberg}).
\end{proof}

%
\subsubsection{Cross Relations for mixed orientations}\label{sec_crossrelations-mix}
%

\begin{proposition} \label{prop:prob4}
For all $k\geq 0$ the equation
\[
[\wt_{\pm 1,k},\wt_{\mp 1,1}] = \mp \wt_{0,k+1}
\]
holds in $\Tr(\cal{H})$.
\end{proposition}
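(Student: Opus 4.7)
The strategy is direct diagrammatic computation of the commutator, paralleling the proof of Proposition \ref{prop_lrvert} (the CR1 identity above) but handling the additional complication that both strands now carry star dots. The key simplification is that $\wt_{1,k}$ and $\wt_{-1,1}$ are single-strand diagrams, so their products live on two-strand annular diagrams, and the entire computation should take place in the endomorphism algebra of $\P \Q$ together with its bubble extensions.

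First I would represent $\wt_{1,k}\wt_{-1,1}$ as an annular diagram with an upward strand carrying $k$ star dots placed to the left of a downward strand carrying one star dot. Applying \eqref{heis:up down} rewrites the parallel configuration in terms of a doubly-crossed one, and then the trace relation lets one of the resulting crossings slide once around the annulus. A second application of \eqref{heis:downup} puts the diagram into a form that can be matched against $\wt_{-1,1}\wt_{1,k}$, so that the two products differ only by a collection of cup-cap corrections.

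Next I would commute the $k+1$ star dots past the remaining mixed-orientation crossings using \eqref{eq:ind-star}, \eqref{eq:nil-circle-star}, and \eqref{eq:nil-circle-dotslide}, which produce further bubble terms at each step. Iterating these slides until no star dot remains on a strand involved in a cup-cap, and then applying the figure-eight reduction \eqref{eq:figeight}, yields an expression written entirely in terms of $\tilde{C}_j$, $C_j$, and figure-eight diagrams whose total star count is $k+1$. Using the alternative formula for $\wt_{0,k+1}$ in Lemma \ref{lem:alt-w0k}, I would then identify this expression with $-\wt_{0,k+1}$.

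The main obstacle is the combinatorial bookkeeping: each application of \eqref{eq:ind-star} produces $k$ summands with coefficients that must be carefully tracked through several layers of reduction, and verifying that all contributions collapse precisely to the alternative expression in Lemma \ref{lem:alt-w0k} will likely require an identity in the spirit of Lemma \ref{lemma_crazyidentity}. Once the first case is established, the companion identity $[\wt_{-1,k},\wt_{1,1}] = \wt_{0,k+1}$ follows by applying the anti-involution $\Psi$ of Definition \ref{def_ws}, since $\Psi$ reverses multiplication and exchanges $\wt_{1,a}\leftrightarrow\wt_{-1,a}$, once one checks that $\wt_{0,k+1}$ is $\Psi$-stable (this amounts to comparing the two equivalent presentations in Definition \ref{def_ws} and Lemma \ref{lem:alt-w0k}).
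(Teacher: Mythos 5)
Your proposal follows essentially the same route as the paper's proof: a direct two-strand annular computation using \eqref{heis:up down}, the trace relation, the star-slide identities \eqref{eq:nil-star} and \eqref{eq:ind-star}, then \eqref{heis:downup}, with the resulting bubble and figure-eight corrections identified with $-\wt_{0,k+1}$ via Lemma \ref{lem:alt-w0k}, and the opposite sign case obtained from $\Psi$. The only difference is that your worry about needing a cancellation identity in the spirit of Lemma \ref{lemma_crazyidentity} is unfounded here---the correction terms assemble directly into the alternative expression of Lemma \ref{lem:alt-w0k} with no further combinatorics.
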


\begin{proof}
This is a straightforward computation using inductive star slide and the trace relations.  We prove the first case, and the second follows by applying the anti-involution $\Psi$.
\begin{align*}
\wt_{1,k}\wt_{-1,1}
\;\; &:=\;\;
 \left[ \;\; \hackcenter{\begin{tikzpicture}[scale=0.8]
    \draw[thick, ->] (0,0) -- (0,1.5) node[pos=.55, shape=coordinate](DOT){};;
    \node[star,star points=6,star point ratio=0.5, fill] at  (DOT) {} ;
    \node at (-.35,.8) {$\scs k$};
    \draw[thick, ->] (.8,1.5) -- (.8,0) node[pos=.55, shape=coordinate](DOT){};;
    \node[star,star points=6,star point ratio=0.5, fill] at  (DOT) {} ;
\end{tikzpicture}}
\; \right]
\;\;\refequal{\eqref{heis:up down}} \;\; q^{-1}
 \left[ \;\;
  \hackcenter{\begin{tikzpicture}[scale=0.8]
    \draw[thick] (0,0) .. controls ++(0,.75) and ++(0,-.45) .. (.8,1)
        node[pos=.2, shape=coordinate](DOT){};
    \node[star,star points=6,star point ratio=0.5, fill] at  (DOT) {};
    \node at (-.3,.4) {$\scs k$};
    \draw[thick,<-] (.8,0) .. controls ++(0,.75) and ++(0,-.45) .. (0,1)
        node[pos=.2, shape=coordinate](DOT2){};
    \node[star,star points=6,star point ratio=0.5, fill] at  (DOT2) {} ;
    \draw[thick] (0,1 ) .. controls ++(0,.5) and ++(0,-.5) .. (.8,2);
    \draw[thick, ->] (.8,1) .. controls ++(0,.5) and ++(0,-.5) .. (0,2);
\end{tikzpicture}}
\;\; \right]
\;\;
\refequal{\text{trace}}\;\; q^{-1}
 \left[ \;\;
  \hackcenter{\begin{tikzpicture}[scale=0.8]
    \draw[thick,<-] (0,0) .. controls ++(0,.5) and ++(0,-.5) .. (.8,1);
    \draw[thick] (.8,0) .. controls ++(0,.5) and ++(0,-.5) .. (0,1);
    \draw[thick,->] (0,1) .. controls ++(0,.5) and ++(0,-.5) .. (.8,2)
           node[pos=.0, shape=coordinate](DOT){};
    \node[star,star points=6,star point ratio=0.5, fill] at  (DOT) {};
    \node at (-.35,1) {$\scs k$};;
    \draw[thick] (.8,1) .. controls ++(0,.5) and ++(0,-.5) .. (0,2)
        node[pos=.0, shape=coordinate](DOT2){};
    \node[star,star points=6,star point ratio=0.5, fill] at  (DOT2) {} ;
\end{tikzpicture}}
\;\; \right]
\\
&\refequal{\eqref{eq:nil-star}}\;\; q^{-1}
 \left[ \;\;
  \hackcenter{\begin{tikzpicture}[scale=0.8]
    \draw[thick,<-] (0,0) .. controls ++(0,.75) and ++(0,-.45) .. (.8,1)
        node[pos=.2, shape=coordinate](DOT2){};
    \node[star,star points=6,star point ratio=0.5, fill] at  (DOT2) {} ;
    \draw[thick] (.8,0) .. controls ++(0,.75) and ++(0,-.45) .. (0,1);
    \draw[thick,->] (0,1) .. controls ++(0,.5) and ++(0,-.5) .. (.8,2)
           node[pos=.0, shape=coordinate](DOT){};
    \node[star,star points=6,star point ratio=0.5, fill] at  (DOT) {};
    \node at (-.35,1) {$\scs k$};;
    \draw[thick] (.8,1) .. controls ++(0,.5) and ++(0,-.5) .. (0,2);
\end{tikzpicture}}
\;\; \right]
\;\; - \;\;
\frac{(q-1)}{q}
 \left[ \;\;
  \hackcenter{\begin{tikzpicture}[scale=0.8]
    \draw[thick,<-] (0,0) .. controls ++(0,.45) and ++(0,.45) .. (.8,0)
        node[pos=.5, shape=coordinate](DOT2){};
    \node[star,star points=6,star point ratio=0.5, fill] at  (DOT2) {} ;
    \draw[thick] (.8,1) .. controls ++(0,-.35) and ++(0,-.35) .. (0,1);
    \draw[thick,->] (0,1) .. controls ++(0,.5) and ++(0,-.5) .. (.8,2)
           node[pos=.0, shape=coordinate](DOT){};
    \node[star,star points=6,star point ratio=0.5, fill] at  (DOT) {};
    \node at (-.35,1) {$\scs k$};;
    \draw[thick] (.8,1) .. controls ++(0,.5) and ++(0,-.5) .. (0,2);
\end{tikzpicture}} \;\; \right]
\\
\;\; &\refequal{\eqref{eq:ind-star}}\;\;
q^{-1}
 \left[ \;\;
  \hackcenter{\begin{tikzpicture}[scale=0.8]
    \draw[thick,<-] (0,0) .. controls ++(0,.75) and ++(0,-.45) .. (.8,1)
        node[pos=.2, shape=coordinate](DOT2){};
    \node[star,star points=6,star point ratio=0.5, fill] at  (DOT2) {} ;
    \draw[thick] (.8,0) .. controls ++(0,.75) and ++(0,-.45) .. (0,1)
        node[pos=.2, shape=coordinate](DOT){};
    \node[star,star points=6,star point ratio=0.5, fill] at  (DOT) {};
    \draw[thick,->] (0,1) .. controls ++(0,.5) and ++(0,-.5) .. (.8,2);
    \node at (1.1,.3) {$\scs k$};;
    \draw[thick] (.8,1) .. controls ++(0,.5) and ++(0,-.5) .. (0,2);
\end{tikzpicture}}
\;\; \right]
\;\; -\;\;
\frac{(q-1)}{q}
\sum_{i=1}^k
 \left[ \;\;
  \hackcenter{\begin{tikzpicture}[scale=0.8]
    \draw[thick,<-] (0,0) .. controls ++(0,.45) and ++(0,.45) .. (.8,0)
        node[pos=.5, shape=coordinate](DOT2){};
    \node[star,star points=6,star point ratio=0.5, fill] at  (DOT2) {} ;
    \draw[thick] (.8,1) .. controls ++(0,-.35) and ++(0,-.35) .. (0,1);
    \draw[thick,->] (0,1) .. controls ++(0,.5) and ++(0,-.5) .. (.8,2)
           node[pos=.0, shape=coordinate](DOT){};
    \node[star,star points=6,star point ratio=0.5, fill] at  (DOT) {};
    \node at (-.45,.8) {$\scs k-i$};;
    \draw[thick] (.8,1) .. controls ++(0,.5) and ++(0,-.5) .. (0,2);
    \node at (.9,.45) {$\scs i+1$};;
\end{tikzpicture}} \;\; \right]
\;\; - \;\;
\frac{(q-1)}{q}
 \left[ \;\;
  \hackcenter{\begin{tikzpicture}[scale=0.8]
    \draw[thick,<-] (0,0) .. controls ++(0,.45) and ++(0,.45) .. (.8,0)
        node[pos=.5, shape=coordinate](DOT2){};
    \node[star,star points=6,star point ratio=0.5, fill] at  (DOT2) {} ;
    \draw[thick] (.8,1) .. controls ++(0,-.35) and ++(0,-.35) .. (0,1);
    \draw[thick,->] (0,1) .. controls ++(0,.5) and ++(0,-.5) .. (.8,2)
           node[pos=.0, shape=coordinate](DOT){};
    \node[star,star points=6,star point ratio=0.5, fill] at  (DOT) {};
    \node at (-.35,1) {$\scs k$};;
    \draw[thick] (.8,1) .. controls ++(0,.5) and ++(0,-.5) .. (0,2);
\end{tikzpicture}} \;\; \right]
\\
\;\; &\refequal{\eqref{heis:downup}}\;\;
 \left[ \;\; \hackcenter{\begin{tikzpicture}[scale=0.8]
    \draw[thick, <-] (0,0) -- (0,1.5) node[pos=.55, shape=coordinate](DOT){};;
    \node[star,star points=6,star point ratio=0.5, fill] at  (DOT) {} ;
    \node at (1.15,.8) {$\scs k$};
    \draw[thick, <-] (.8,1.5) -- (.8,0) node[pos=.55, shape=coordinate](DOT){};;
    \node[star,star points=6,star point ratio=0.5, fill] at  (DOT) {} ;
\end{tikzpicture}}
\; \right]
\;\; - \;\;
 \left[ \;\;
  \hackcenter{\begin{tikzpicture}[scale=0.8]
    \draw[thick,<-] (0,0) .. controls ++(0,.55) and ++(0,.55) .. (.8,0)
        node[pos=.5, shape=coordinate](DOT2){};
    \node[star,star points=6,star point ratio=0.5, fill] at  (DOT2) {} ;
    \node at (.85,.6) {$\scs k+1$};;
    \draw[thick,<-] (.8,1.5) .. controls ++(0,-.55) and ++(0,-.55) .. (0,1.5);
\end{tikzpicture}} \;\; \right]
\;\; -\;\;
\frac{(q-1)}{q}
\sum_{i=1}^{k+1}
 \left[ \;\;
  \hackcenter{\begin{tikzpicture}[scale=0.8]
    \draw[thick,<-] (0,0) .. controls ++(0,.45) and ++(0,.45) .. (.8,0)
        node[pos=.5, shape=coordinate](DOT2){};
    \node[star,star points=6,star point ratio=0.5, fill] at  (DOT2) {} ;
    \draw[thick] (.8,1) .. controls ++(0,-.35) and ++(0,-.35) .. (0,1);
    \draw[thick,->] (0,1) .. controls ++(0,.5) and ++(0,-.5) .. (.8,2)
           node[pos=.0, shape=coordinate](DOT){};
    \node[star,star points=6,star point ratio=0.5, fill] at  (DOT) {};
    \node at (-.65,.7) {$\scs k+1-i$};;
    \draw[thick] (.8,1) .. controls ++(0,.5) and ++(0,-.5) .. (0,2);
    \node at (.8,.45) {$\scs i$};;
\end{tikzpicture}} \;\; \right]
\end{align*}
Thus, we have shown that
\[
[\wt_{1,k},\wt_{-1,1}]
\;\; = \;\;
-
\; \left[ \;\;
  \hackcenter{\begin{tikzpicture}[scale=0.8]
 \node at (1,.45)  {$\scs k+1$};
 \node[star,star points=6,star point ratio=0.5, fill] at  (.7,.125) {} ;
\draw  (0,0) arc (180:360:0.35cm) [thick];
\draw[->](.7,0) arc (0:180:0.35cm) [thick];
\end{tikzpicture}}
\;\;\right]
\;\; -\;\;
\frac{(q-1)}{q}
\sum_{i=1}^{k+1}
 \left[ \;\;
  \hackcenter{\begin{tikzpicture}[scale=0.8]
    \draw[thick] (0,2) .. controls ++(0,.45) and ++(0,.45) .. (.8,2)
        node[pos=.5, shape=coordinate](DOT2){};
    \node[star,star points=6,star point ratio=0.5, fill] at  (DOT2) {} ;
    \draw[thick] (.8,1) .. controls ++(0,-.45) and ++(0,-.45) .. (0,1);
    \draw[thick,->] (0,1) .. controls ++(0,.5) and ++(0,-.5) .. (.8,2)
           node[pos=.0, shape=coordinate](DOT){};
    \node[star,star points=6,star point ratio=0.5, fill] at  (DOT) {};
    \node at (-.65,.7) {$\scs k+1-i$};;
    \draw[thick, ->] (0,2).. controls ++(0,-.5) and ++(0,.5) .. (.8,1);
    \node at (.8,2.45) {$\scs i$};;
\end{tikzpicture}} \;\; \right]
\]
so that equation~\eqref{eq:figeight} completes the proof.
\end{proof}


Recall from Definition~\ref{def:si} that $s_i$ denotes a star on the $i$th strand (counting from left to right).  Let $\tilde{t}_{m,0}$ denote the element in $\dah_n$ whose trace is $\wt_{m,0}$.  We write $s_i\circ\wt_{m,0} = [s_i\tilde{t}_{m,0}]$.

\begin{lemma} \label{lem:wj0star}
For $m>0$ the following
\[
 s_i\circ\wt_{m,0}
 \;\; = \;\;
 m \wt_{m,1} + \frac{(q-1)}{q}\sum_{\ell=i}^{m-1} \wt_{\ell,1}\wt_{m-\ell,0}
 -\frac{(q-1)}{q} \sum_{\ell=m-i+1}^{m-1} \wt_{m-\ell,0}\wt_{\ell,1}
\]
holds in $\Tr(\cal{H})$.
\end{lemma}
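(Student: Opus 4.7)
The plan is to start from the explicit diagrammatic formula for $\wt_{m,0}$ given in Definition~\ref{def_ws} (with the normalization $(1-q^{-m})/(1-q^{-1})$ absorbed), insert a star on the $i$-th strand, and then slide the star through the circle crossings until a trace identity reassembles the result. Concretely, $\wt_{m,0}=\sum_{j=0}^{m-1} q^{-j} D_j$ where each $D_j$ is a closed diagram in which the right-most strand crosses over the remaining $m-1$ strands via a chain of circle crossings, and $s_i\circ \wt_{m,0}$ is obtained by adding a star on the $i$-th strand of each $D_j$ before taking the trace.

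First I would slide the inserted star through the circle crossings using relations~\eqref{eq:nil-cool}, \eqref{eq:ind-star}, and \eqref{eq:nil-circle-star}. Under \eqref{eq:nil-cool} a star moves past a single circle crossing without a correction term, at the cost of a power of $q$; this is the key fact that makes the computation tractable. After sliding the star around one full cycle of the closure, the trace relation identifies the result with the same diagram but with the star translated to a different position; iterating this $m$ times and using trace cyclicity produces the dominant contribution, which telescopes to $m\,\wt_{m,1}$ (in effect, the star can be placed on any of the $m$ strands of the $(m,1)$-diagram).

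When the sliding star reaches a circle crossing from the \emph{wrong} side, equation~\eqref{eq:ind-star} splits the diagram into two pieces: a sub-braid containing the star closes up to give $\wt_{\ell,1}$, and the complementary sub-braid closes up to give $\wt_{m-\ell,0}$. Each such split contributes a factor of $(q-1)/q$ and produces either $\wt_{\ell,1}\wt_{m-\ell,0}$ or $\wt_{m-\ell,0}\wt_{\ell,1}$, depending on whether the split occurs while moving the star to the right or to the left of its original strand. The range of $\ell$ in each sum reflects exactly how many circle crossings must be traversed in each direction before reaching the boundary of the diagram: moving rightward from strand $i$ one encounters crossings labelled $\ell=i,\ldots,m-1$, while moving leftward one encounters crossings labelled $\ell = m-i+1,\ldots,m-1$, with an overall sign change because the two directions use the two different cases of \eqref{eq:ind-star}.

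The main obstacle will be the careful bookkeeping: one must verify that all the normalization constants (the $q^{-j}$ weights in the definition of $\wt_{m,0}$, the $q^{(1-j)/2}$ in the definition of $\wt_{j,1}$, and the factors of $q$ picked up while sliding) combine to give exactly the coefficient $(q-1)/q$ of the correction sums, and that no cross-terms survive. This is an explicit but delicate computation. Once the bookkeeping is completed and the correction terms are matched to the two sums on the right-hand side, the identity of the lemma follows.
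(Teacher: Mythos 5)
Your outline follows the same route as the paper's own (one\nobreakdash-sentence) proof: insert the star into the diagrammatic expansion of $\wt_{m,0}$, slide it through the crossings using the star calculus, and collect the resolution terms. Two details of your narrative need correcting before the bookkeeping can actually be carried out. First, the summands of $\wt_{m,0}$ are \emph{not} chains of circle crossings: the $j$-th term has $j$ ordinary crossings and $m-1-j$ circle crossings (this is the diagrammatic image of $\sigma_1\cdots\sigma_j\sigma_{j+1}^{-1}\cdots\sigma_{m-1}^{-1}$). This matters because the circle crossings are exactly the ones the star traverses for free via \eqref{eq:nil-cool}, while the ordinary crossings are the ones that generate the $\frac{q-1}{q}$ resolution terms via \eqref{eq:nil-star}/\eqref{eq:nil-circle-star} (equivalently, via \eqref{eq:inv-crossing} as the paper phrases it); if every crossing were a circle crossing there would be no correction terms at all. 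Second, the coefficient $m$ does not arise from ``iterating the slide $m$ times'' on a single diagram --- sliding the star once around the closure simply returns it to its starting strand up to a power of $q$, so that procedure yields an identity of the form $X=q^{\alpha}X+\cdots$, not a factor of $m$. Rather, $\wt_{m,0}$ is a sum of $m$ weighted diagrams, and each one contributes exactly one copy of $\wt_{m,1}$ once its star reaches the canonical position, the $q^{-j}$ weights cancelling the powers of $q$ picked up en route (one can check this explicitly for $m=2$: the all-circle-crossing term gives $\wt_{2,1}$ with no correction, the ordinary-crossing term gives $\wt_{2,1}+\frac{q-1}{q}\wt_{1,1}\wt_{1,0}$). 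With these two points repaired, the remaining work is exactly the bookkeeping you defer: matching, for each $j$ and each ordinary crossing traversed, the resulting split closure with a term $\wt_{\ell,1}\wt_{m-\ell,0}$ or $\wt_{m-\ell,0}\wt_{\ell,1}$ in the two sums, with the sign and the order of the product determined by the direction of the slide.
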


\begin{proof}
To prove this identity, use the star-slide relations \eqref{eq:nil-circle-star} and \eqref{eq:nil-cool} together with the definition of the inverse crossing \eqref{eq:inv-crossing} to rewrite normal crossings in terms of circle crossings and resolution terms.
\end{proof}

The following Lemmas will be useful for proving the remaining cross relation.

\begin{lemma} \label{lem:left}
\begin{align}
\hackcenter{
\begin{tikzpicture}[scale=0.8]
    \draw  [thick](0,0) .. controls (0,.5) and (.7,.5) .. (.9,0);
    \draw  [thick](0,0) .. controls (0,-.5) and (.7,-.5) .. (.9,0);
    \draw  [thick](1,-1) .. controls (1,-.5) .. (.9,0);
    \draw  [thick,->](.9,0) .. controls (1,.5) .. (1,1) ;
      \draw  (.88,0) circle (4pt);
    \node[star,star points=6,star point ratio=0.5, fill] at  (0,0) {} ;;
    \node at  (-.3,0) {$\scs 2$} ;;
\end{tikzpicture}}
\;\; &= \;\;
  \frac{1}{(q-1)}\;\;
\hackcenter{
\begin{tikzpicture}[scale=0.8]
    \draw[thick,->]  [thick](0,-1) -- (0,1);
   \node[star,star points=6,star point ratio=0.5, fill] at  (0,0) {} ;;
\end{tikzpicture}}
\\
\hackcenter{
\begin{tikzpicture}[scale=0.8]
    \draw  [thick](0,0) .. controls (0,.5) and (.7,.5) .. (.9,0);
    \draw  [thick](0,0) .. controls (0,-.5) and (.7,-.5) .. (.9,0);
    \draw  [thick](1,-1) .. controls (1,-.5) .. (.9,0);
    \draw  [thick,->](.9,0) .. controls (1,.5) .. (1,1) ;
    \node[star,star points=6,star point ratio=0.5, fill] at  (0,0) {} ;;
    \node at  (-.3,0) {$\scs 2$} ;;
\end{tikzpicture}}
\;\; &= \;\;
  \frac{q}{(q-1)}\;\;
\hackcenter{
\begin{tikzpicture}[scale=0.8]
    \draw[thick,->]  [thick](0,-1) -- (0,1);
   \node[star,star points=6,star point ratio=0.5, fill] at  (0,0) {} ;;
\end{tikzpicture}} \;\; + \;\;  (q-1)\;
\hackcenter{
\begin{tikzpicture}[scale=0.8]
    \draw[thick,->]  [thick](1.3,-1) -- (1.3,1);
    \node[star,star points=6,star point ratio=0.5, fill] at  (.7,.125) {} ;
    \node  at  (.85,.4) {$\scs 2$} ;
\draw  (0,0) arc (180:360:0.35cm) [thick];
\draw[->](.7,0) arc (0:180:0.35cm) [thick];
\end{tikzpicture}}
\end{align}
\end{lemma}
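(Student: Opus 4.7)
The plan is to reduce both identities to known single-star relations by unpacking the double star on the leftmost point of the cap. Using the definition \eqref{eq:def-star} of the star dot twice, the double star decomposes as
\[
s \cdot s \;=\; \frac{1}{(q-1)^2}\,\bullet\bullet \;+\; \frac{2q}{(q-1)^3}\,\bullet \;+\; \frac{q^2}{(q-1)^4}\,\mathrm{id},
\]
so each left-hand side becomes a linear combination of the corresponding left curl with $0$, $1$, or $2$ ordinary dots on the cap.

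For the first identity (circle crossing), the $0$-dot curl is immediately evaluated by \eqref{eq:circle-left-twist}, which gives $-\tfrac{q-1}{q}\,\mathrm{id}$. The $1$-dot value is determined by applying the same expansion to the single-star version in \eqref{eq:star-curl}, which vanishes, and solving for the dotted curl. For the $2$-dot case, I would slide one of the two dots across the circle crossing via \eqref{eq:nil-circle-dotslide}, reducing the result to the already-computed $0$- and $1$-dot curl values together with resolution terms that can be evaluated using the basic circle axiom \eqref{heis:downup} and the bubble axiom \eqref{eq:heis-bub}. Summing the three contributions and matching with
\[
\tfrac{1}{q-1}\cdot(\text{strand with one star}) \;=\; \tfrac{1}{(q-1)^2}\,\bullet \;+\; \tfrac{q}{(q-1)^3}\,\mathrm{id}
\]
should yield the first identity.

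For the second identity (ordinary crossing), the strategy is identical but uses the regular-crossing dot-slide \eqref{eq:nil-dot} together with the second part of \eqref{eq:star-curl}, which records $\tfrac{q}{q-1}\,\mathrm{id}$ for the singly-starred regular curl. The first summand $\tfrac{q}{q-1}\cdot(\text{single-starred strand})$ on the right is supplied by the same mechanism as before. The new feature is that dot-sliding via \eqref{eq:nil-dot} now produces resolution terms that detach as closed starred bubbles; these combine, via the figure-eight manipulation \eqref{eq:figeight} and the bubble evaluation \eqref{eq:bub-stardot}, to produce the remaining $(q-1)\,\tilde{C}_2$-term attached to a plain upward strand.

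The hard part will be purely computational: carefully tracking the coefficients produced by the inductive dot-slides and by the bubble evaluations, so that all unwanted dotted-strand and lower-dot bubble contributions cancel and the right-hand sides emerge precisely as stated. No identities beyond those already listed in Section~\ref{sec:qheis} should be required.
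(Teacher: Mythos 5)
Your proposal is correct in outline but takes a genuinely different route from the paper. The paper's proof is a one-line star-calculus argument: starting from the singly-starred curl evaluations in \eqref{eq:star-curl}, one slides the second star through the crossing using \eqref{eq:nil-star} (ordinary crossing) or \eqref{eq:nil-circle-star} (circle crossing); the slid term is again a singly-starred curl with known value, and the single resolution term produced by the slide directly yields the starred-strand and starred-bubble contributions appearing on the right-hand sides. You instead expand the double star via \eqref{eq:def-star} into ordinary dots and recompute the $0$-, $1$- and $2$-dot curls from scratch. This does work: the $0$- and $1$-dot values come out of \eqref{eq:circle-left-twist}, the left-curl axiom and \eqref{eq:star-curl} exactly as you say, and the values your plan forces for the $2$-dot curls (the $2$-dot circle curl equal to a once-dotted strand, and the $2$-dot ordinary curl equal to $q(q-1)$ times a once-dotted strand plus $(q-1)\tilde c_2 + 2q\tilde c_1$ times a plain strand) are consistent with the stated answers. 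The price is several dot slides in place of one star slide, plus a change of basis between dotted and starred bubbles at the end; for that last step \eqref{eq:def-star} alone suffices, and \eqref{eq:figeight} and \eqref{eq:bub-stardot} are not really needed. In short, your argument is more elementary in that it uses only the primary dot relations, while the paper's shorter proof is exactly the payoff of having set up the star calculus so that curl evaluations close up without this bookkeeping.
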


\begin{proof}
These relations follow immediately from \eqref{eq:star-curl} using the star sliding relations \eqref{eq:nil-star} and \eqref{eq:nil-circle-star}.
\end{proof}

\begin{lemma} \label{lem:middle}
\begin{align}
\hackcenter{\begin{tikzpicture}[scale=0.8]
    \draw[thick,->] (0,0) .. controls ++(0,1) and ++(0,-1) .. (1.6,2);
    \draw[thick,->] (1.6,0) .. controls ++(0,1.8) and ++(0,.8) .. (.1,1)
        node[pos=0.5, shape=coordinate](DOT){};
    \node[star,star points=6,star point ratio=0.5, fill] at  (DOT) {};
    \draw[thick,->] (.1,1) .. controls ++(.3,-1.0) and ++(.7,-1.6) .. (.6,1.3)
        node[pos=0.47, shape=coordinate](DOT){}
           .. controls ++(-.1,.3) and ++ (0,-.3) .. (.3,2) ;
    \node[star,star points=6,star point ratio=0.5, fill] at  (DOT) {} ; ;;
    \draw  (.74,.94) circle (4pt);
    \draw  (1.19,1.25) circle (4pt);
\end{tikzpicture}}
& \;\; = \;\; \frac{q}{(q-1)}\;\;
\hackcenter{\begin{tikzpicture}[scale=0.8]
    \draw[thick, ->] (0,0) .. controls (0,.75) and (.75,.75) .. (.75,1.5);
    \draw[thick, ->] (.75,0) .. controls (.75,.75) and (0,.75) .. (0,1.5)
        node[pos=.75, shape=coordinate](DOT){};
    \node[star,star points=6,star point ratio=0.5, fill] at  (DOT) {} ;
    \draw  (.375,.75) circle (4pt);
\end{tikzpicture}}
\;\;\; - \;\;\;
\hackcenter{\begin{tikzpicture}[scale=0.8]
    \draw[thick, ->] (0,0) -- (0,1.5);;
    \draw[thick, ->] (.75,0) -- (.75,1.5)node[pos=.55, shape=coordinate](DOT){};
    \node[star,star points=6,star point ratio=0.5, fill] at  (DOT) {} ;
\end{tikzpicture}} \;\; = \;\;
\frac{q}{(q-1)} \;\;
\hackcenter{\begin{tikzpicture}[scale=0.8]
    \draw[thick, ->] (0,0) .. controls (0,.75) and (.75,.75) .. (.75,1.5);
    \draw[thick, ->] (.75,0) .. controls (.75,.75) and (0,.75) .. (0,1.5)
        node[pos=.25, shape=coordinate](DOT){};
    \node[star,star points=6,star point ratio=0.5, fill] at  (DOT) {} ;
    \draw  (.375,.75) circle (4pt);
\end{tikzpicture}}
\\
\hackcenter{\begin{tikzpicture}[scale=0.8]
    \draw[thick,->] (0,0) .. controls ++(0,1) and ++(0,-1) .. (1.6,2);
    \draw[thick,->] (1.6,0) .. controls ++(0,1.8) and ++(0,.8) .. (.1,1)
        node[pos=0.5, shape=coordinate](DOT){};
    \node[star,star points=6,star point ratio=0.5, fill] at  (DOT) {};
    \draw[thick,->] (.1,1) .. controls ++(.3,-1.0) and ++(.7,-1.6) .. (.6,1.3)
        node[pos=0.47, shape=coordinate](DOT){}
           .. controls ++(-.1,.3) and ++ (0,-.3) .. (.3,2) ;
    \node[star,star points=6,star point ratio=0.5, fill] at  (DOT) {} ; ;;
    \draw  (1.19,1.25) circle (4pt);
\end{tikzpicture}}
& \;\; = \;\; \frac{q^2}{(q-1)}\;\;
\hackcenter{\begin{tikzpicture}[scale=0.8]
    \draw[thick, ->] (0,0) .. controls (0,.75) and (.75,.75) .. (.75,1.5);
    \draw[thick, ->] (.75,0) .. controls (.75,.75) and (0,.75) .. (0,1.5)
        node[pos=.75, shape=coordinate](DOT){};
    \node[star,star points=6,star point ratio=0.5, fill] at  (DOT) {} ;
    \draw  (.375,.75) circle (4pt);
\end{tikzpicture}}
\\
\hackcenter{\begin{tikzpicture}[scale=0.8]
    \draw[thick,->] (0,0) .. controls ++(0,1) and ++(0,-1) .. (1.6,2);
    \draw[thick,->] (1.6,0) .. controls ++(0,1.8) and ++(0,.8) .. (.1,1)
        node[pos=0.5, shape=coordinate](DOT){};
    \node[star,star points=6,star point ratio=0.5, fill] at  (DOT) {};
    \draw[thick,->] (.1,1) .. controls ++(.3,-1.0) and ++(.7,-1.6) .. (.6,1.3)
        node[pos=0.47, shape=coordinate](DOT){}
           .. controls ++(-.1,.3) and ++ (0,-.3) .. (.3,2) ;
    \node[star,star points=6,star point ratio=0.5, fill] at  (DOT) {} ; ;;
\end{tikzpicture}}
& \;\; = \;\; \frac{q^2}{(q-1)}\;\;
\hackcenter{\begin{tikzpicture}[scale=0.8]
    \draw[thick, ->] (0,0) .. controls (0,.75) and (.75,.75) .. (.75,1.5);
    \draw[thick, ->] (.75,0) .. controls (.75,.75) and (0,.75) .. (0,1.5)
        node[pos=.75, shape=coordinate](DOT){};
    \node[star,star points=6,star point ratio=0.5, fill] at  (DOT) {} ;
\end{tikzpicture}}
\end{align}
\end{lemma}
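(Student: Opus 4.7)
The plan is to prove each of the three equations by reducing the left-hand diagrams via local relations until the right-hand sides appear. The three diagrams differ only by the presence of circle crossings in the curl-like substructure, so I would aim to prove the third (circle-free) equation first and then use the cooling relation \eqref{eq:nil-cool} to derive the first two equations by passing stars through the circle crossings.

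\textbf{Third equation.} I would first apply the triple point move \eqref{heis:up-triple} (or its starred analogue obtained by inductive star-sliding) to pull the large arc that goes from $(0,0)$ to $(1.6,2)$ through the curl on the left. After this rearrangement, the resulting configuration contains a small curl on which I can apply the starred curl relation of \eqref{eq:star-curl}: the curl with a single star evaluates to $\frac{q}{q-1}$ times a straight strand. A second curl-reduction, together with repeated use of the star-slide \eqref{eq:nil-star} and \eqref{eq:nil-star-r2} type relations, collapses the entire figure to $\frac{q^2}{q-1}$ times the two-strand crossing with a star on the outgoing right strand, as claimed.

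\textbf{First and second equations.} Now I would reintroduce the circle crossings one at a time. Relation \eqref{eq:nil-cool} shows that inserting a circle crossing between two upward-pointing strands swaps the position of a star on an outgoing strand (top) into an incoming strand (bottom) at the cost of a factor of $q^{-1}$. Applying this once to the third equation produces the second equation (with an extra factor of $q^{-1}$, matching $\frac{q^2}{q-1} \to \frac{q}{q-1}$ only after the star position is adjusted), and a second application produces the first equation. The two alternative expressions on the right-hand side of the first equation are then related directly by the definition \eqref{eq:inv-crossing} of the circle crossing together with the star-slide \eqref{eq:nil-circle-star}, which accounts for the $-\,$vertical-strand-with-star correction term.

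\textbf{Main obstacle.} The computation is essentially bookkeeping, but the danger is that each application of a star-slide relation (particularly \eqref{eq:nil-circle-star} and \eqref{eq:ind-star}) produces correction terms with \emph{fewer} stars or crossings, which must be shown either to cancel in pairs or to assemble into the correction term $-\left[\hackcenter{\begin{tikzpicture}[scale=0.5]\draw[thick, ->] (0,0) -- (0,1);\node[star,star points=6,star point ratio=0.5, fill] at  (0,.5) {};\end{tikzpicture}}\right]$ that appears on the right of the first equation. The cleanest way to manage this is to organize the manipulations so that star-slide corrections are only invoked at the very last step, after all the topological simplifications (triple points, curl reductions) have already been carried out; this way the corrections that appear are precisely those needed on the right-hand side, without any further rearrangement.
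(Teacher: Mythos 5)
Your outline for the third (circle-free) identity is in the spirit of what the paper intends --- its entire proof is the sentence ``this is straight-forward using star calculus relations'' --- and a direct reduction via the triple-point moves \eqref{heis:up-triple}, \eqref{eq:circ-triple}, the decorated-curl evaluations, and the star slides \eqref{eq:nil-star}, \eqref{eq:ind-star} is the right kind of computation. (Minor point: you cite a relation ``nil-star-r2'' that the paper does not define; you presumably mean \eqref{eq:star-up-r2}.)

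The genuine gap is in the step that derives the first two identities from the third. Your quantitative claim is already inconsistent with the statement: you assert that each reinserted circle crossing costs a factor of $q^{-1}$, ``matching $\tfrac{q^2}{q-1}\to\tfrac{q}{q-1}$'' at the second identity, but the second identity has coefficient $\tfrac{q^2}{q-1}$, the same as the third; only the first drops to $\tfrac{q}{q-1}$, and it simultaneously acquires the correction term and a circle on the right-hand crossing. More fundamentally, replacing a plain crossing by a circle crossing inside a proven identity is not a legal inference: the two are different morphisms, related by \eqref{eq:inv-crossing}, so the substitution forces you either to expand the circle crossing and evaluate the additional resolved diagram, or to invoke \eqref{eq:nil-cool}, which requires a star to sit immediately at that crossing (in these diagrams the stars sit elsewhere on the wrapping strand, so transporting them to the crossing already produces \eqref{eq:nil-circle-star}-type corrections) and which relocates the star through the crossing, so the ``third identity'' you would then need is a variant with the stars in different positions --- not the statement you proved. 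The net effect, which you can check on the right-hand sides using \eqref{eq:nil-cool} and \eqref{eq:nil-star}, is that consecutive identities differ by both a power of $q$ and a nontrivial lower-order term, and none of this falls out of a clean $q^{-1}$-per-circle substitution. Each of the three identities should be verified by its own direct computation. The one part of your bootstrapping that does go through is the equality of the two right-hand forms of the first identity, which is exactly \eqref{eq:nil-circle-star} multiplied by $\tfrac{q}{q-1}$.
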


\begin{proof}
This is straight-forward using star calculus relations.
\end{proof}

\begin{lemma} \label{lem:right}
\begin{align}
\hackcenter{\begin{tikzpicture}[scale=0.8]
    \draw[thick] (0,0) .. controls ++(0,1) and ++(-.2,-1) .. (1,1.5);
    \draw[thick,->] (1,1.5) .. controls ++(.3,1) and ++(.3,1) .. (.3,1.5)
     node[pos=0.15, shape=coordinate](DOT){};;
    \node[star,star points=6,star point ratio=0.5, fill] at  (DOT) {} ; ;
    \draw[thick] (.3,1.5) .. controls ++(-.2,-1) and ++(.2,-1) .. (1,.8) node[pos=0.65, shape=coordinate](DOT){};;
    \node[star,star points=6,star point ratio=0.5, fill] at  (DOT) {} ; ;
    \draw[thick,->] (1,.8) .. controls ++(-.2,1) and ++(-.1,-1) .. (.4,2.8)  ;
    \draw  (.88,1.15) circle (4pt);
\end{tikzpicture}}
&\;\; =\;\; \frac{q}{q-1} \;\; \;
\hackcenter{\begin{tikzpicture}[scale=0.8]
    \draw[thick,->] (0,0) -- (0,2.5) node[pos=0.5, shape=coordinate](DOT){};
    \node[star,star points=6,star point ratio=0.5, fill] at  (DOT) {} ; ;
\end{tikzpicture}}
\;\; - \;\; (q-1)\;\;\;
\hackcenter{\begin{tikzpicture}[scale=0.8]
    \draw[thick,->]   (-.5,-1.25) -- (-.5,1.25) ;
    \node[star,star points=6,star point ratio=0.5, fill] at  (.7,.125) {} ;
    \node  at  (.9,.4) {$\scs 2$} ;
\draw  (0,0) arc (180:360:0.35cm) [thick];
\draw[->](.7,0) arc (0:180:0.35cm) [thick];
\end{tikzpicture}}  \\
\hackcenter{\begin{tikzpicture}[scale=0.8]
    \draw[thick] (0,0) .. controls ++(0,1) and ++(-.2,-1) .. (1,1.5);
    \draw[thick,->] (1,1.5) .. controls ++(.3,1) and ++(.3,1) .. (.3,1.5)
     node[pos=0.15, shape=coordinate](DOT){};;
    \node[star,star points=6,star point ratio=0.5, fill] at  (DOT) {} ; ;
    \draw[thick] (.3,1.5) .. controls ++(-.2,-1) and ++(.2,-1) .. (1,.8) node[pos=0.65, shape=coordinate](DOT){};;
    \node[star,star points=6,star point ratio=0.5, fill] at  (DOT) {} ; ;
    \draw[thick,->] (1,.8) .. controls ++(-.2,1) and ++(-.1,-1) .. (.4,2.8)  ;
\end{tikzpicture}}
&\;\; =\;\; \frac{q^2}{q-1} \;\; \;
\hackcenter{\begin{tikzpicture}[scale=0.8]
    \draw[thick,->] (0,0) -- (0,2.5) node[pos=0.5, shape=coordinate](DOT){};
    \node[star,star points=6,star point ratio=0.5, fill] at  (DOT) {} ; ;
\end{tikzpicture}}
\end{align}
\end{lemma}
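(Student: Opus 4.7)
The plan is to reduce each right-handed curl on the left-hand side to the left-handed curl settings already handled by Lemma~\ref{lem:left}, using the star-slide relations to gather the two star dots and the circle crossing into a controllable configuration, then applying~\eqref{eq:star-curl}.

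First I would show that the two equations in Lemma~\ref{lem:right} are interchangeable via a single application of~\eqref{eq:inv-crossing}: the difference between having and not having the circle on the crossing is an identity term plus a $\tfrac{q-1}{q}$-multiple of a smooth-strand resolution, and one checks that this scaling accounts for the ratio $\tfrac{q}{q-1}$ versus $\tfrac{q^2}{q-1}$ in front of the single-star term, as well as the appearance/disappearance of the bubble correction. So I would focus on the first identity and deduce the second from it.

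For the first identity, the strategy is to use~\eqref{eq:nil-circle-star} and~\eqref{eq:nil-cool} to slide one of the star dots through the circle crossing. This turns the right curl with two separated stars into a configuration where the two stars are adjacent on the same arc; the sliding cost is controlled and contributes terms that are either (i) a single starred strand (matching the $\tfrac{q}{q-1}$ term on the right), or (ii) a bubble with a doubly starred strand (matching the $-(q-1)$ figure-eight term on the right). At this point the ``doubly-starred'' right curl that remains can be rewritten, via the identity for double stars established in Lemma~\ref{lem:left} (second equation) combined with the no-star right-curl relation~\eqref{eq:star-curl}, in terms of a single starred vertical strand plus a starred bubble. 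Collecting the coefficients then yields the claimed formula.

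The main obstacle will be the bookkeeping in the final step: the figure-eight/bubble cancellations analogous to those in~\eqref{eq:figeight} and Lemma~\ref{lemma_crazyidentity} tend to produce sums over $i$ of bubble-times-dot terms, and the miracle is that these collapse to the single bubble $\tilde C$ of weight $s^2$ appearing on the right of the first equation. I would handle this by carefully writing the right curl in the basis of~\eqref{eq:bubbles}, checking by direct expansion (using \eqref{eq:ind-star} in the case $k=2$) that exactly one bubble term survives with coefficient $-(q-1)$, and that the remaining identity-on-strand terms combine with coefficient $\tfrac{q}{q-1}$. Once this coefficient comparison is done, equation~(2) of the lemma follows by re-inserting the circle via~\eqref{eq:inv-crossing} and simplifying.
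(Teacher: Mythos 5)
The paper's own justification for this lemma is the single line ``This is easy using star-calculus,'' so your plan of a direct star-calculus computation --- relating the two displayed identities through \eqref{eq:inv-crossing}, sliding stars with \eqref{eq:nil-circle-star} and \eqref{eq:nil-cool}, and producing the bubble correction from a figure-eight as in \eqref{eq:figeight} --- is exactly the intended route, and your opening observation that the two equations differ by one application of \eqref{eq:inv-crossing} (with the resolved term supplying the $2$-starred bubble and the factor $q^{-1}$ turning $\tfrac{q^2}{q-1}$ into $\tfrac{q}{q-1}$) is correct and is the efficient way to organize the check.

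One concrete caution about the middle of your argument: you propose to finish by importing the double-star identity of Lemma~\ref{lem:left} together with what you call ``the no-star right-curl relation~\eqref{eq:star-curl}.'' Both of those statements concern the opposite-handed curl --- the one that vanishes when undecorated (the last defining relation of $\H$, cf.\ \eqref{eq:circle-left-twist}) --- whereas the curls appearing in Lemma~\ref{lem:right} are of the other handedness, the kind whose undecorated version reduces to a dot rather than to zero. The two families are genuinely different identities and cannot be substituted for one another; the differing prefactors $\tfrac{q}{q-1}$ in Lemma~\ref{lem:left} versus $\tfrac{q^2}{q-1}$ here already reflect that. The correct substitutes are tools you in fact mention elsewhere in your outline: the inductive star slide \eqref{eq:ind-star} with $k=2$ to gather the two stars, the Reidemeister-II relations \eqref{eq:star-up-r2} and \eqref{eq:star-up-circler2} to pull the starred loop off the through-strand (this is where the extra factor of $q$ separating the two equations actually originates), and \eqref{eq:figeight} to reduce the resulting closed component to the $2$-starred bubble. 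With the appeal to Lemma~\ref{lem:left} replaced by these, the coefficient bookkeeping closes up as you predict and the proof goes through.
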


\begin{proof}
This is easy using star-calculus.
\end{proof}

\begin{proposition}\label{prop_prob6commutator}
For all $n>0$ we have
\begin{align}
 [\wt_{n,0},\wt_{-1,1}] &=
 -\left( n^2\frac{(q-1)}{q} +(n-1)^2\frac{(q-1)^3}{q^2}\right) \wt_{n-1,1} \nn \\
 & \qquad
-    \frac{(q-1)^2}{q^2}
 \sum_{\ell=1}^{n-2}(\ell+1)[\wt_{\ell,1},\wt_{n-1-\ell,0}]
-\frac{(q-1)^4}{q^3}
\left(
 \sum_{\ell=1}^{n-2} (\ell)[\wt_{\ell,1},\wt_{n-1-\ell,0}]
\right) \nn
\end{align}

\end{proposition}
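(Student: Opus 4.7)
The plan is to compute $\wt_{n,0}\cdot \wt_{-1,1}$ and $\wt_{-1,1}\cdot \wt_{n,0}$ separately as explicit linear combinations of closed diagrams and then take their difference. By Definition~\ref{def_ws} the element $\wt_{n,0}$ is a sum $\sum_{i=0}^{n-1}q^{-i} D_i$, where each $D_i$ is the closure of the braid in which the long strand from position $n$ on the bottom wraps to position $1$ on the top with $i+1$ circle crossings on one side (and $n-1-i$ circle crossings on the other); while $\wt_{-1,1}$ is the closure of a single downward star-dotted strand. When we stack $\wt_{-1,1}$ to one side of $D_i$ in the annulus, the star-strand can be isotoped via the trace relation to cross each of the upward strands of $D_i$ exactly once.

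Each crossing is resolved using \eqref{heis:up down}, \eqref{heis:downup}, and their starred variants \eqref{eq:nil-star}, \eqref{eq:nil-circle-star}, \eqref{eq:nil-cool}. The straight-through terms produce (up to a scalar power of $q$) the original diagram back; the cup-cap contributions produce two types of residues: either the cup-cap closes off a bubble, or it transfers the star-decoration onto one of the upward strands of $D_i$ while replacing the downward strand by a curl. The curls are then eliminated using the relations in Lemmas~\ref{lem:left}, \ref{lem:middle}, and \ref{lem:right}, which compute precisely the three local configurations that can appear. The net effect of all these simplifications is to express the commutator as a sum over which strand of $\wt_{n,0}$ received the star, yielding expressions of the form $s_i\circ \wt_{n,0}$ and their reflections.

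At this point I would apply Lemma~\ref{lem:wj0star}, which converts $s_i\circ \wt_{n,0}$ into $n\wt_{n,1}$ together with controlled bilinear terms $\wt_{\ell,1}\wt_{n-\ell,0}$ and $\wt_{n-\ell,0}\wt_{\ell,1}$; after shifting $n\rightsquigarrow n-1$ to account for the bubble/curl absorption that reduced the strand count, these bilinear pieces organize into the asserted commutators $[\wt_{\ell,1},\wt_{n-1-\ell,0}]$ rather than into unordered products, because the two orientations of crossing (the direct product on one side, and the rotated product arising from trace invariance on the other) appear with opposite signs in $\wt_{n,0}\wt_{-1,1}-\wt_{-1,1}\wt_{n,0}$. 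The geometric sums $\sum_{i=0}^{n-1}q^{-i}$ introduced by the definition of $\wt_{n,0}$, together with the factors of $q$ produced by \eqref{heis:up down} and the circle crossings, will consolidate into the prefactors in $n^2(q-1)/q$ and $(n-1)^2(q-1)^3/q^2$ attached to $\wt_{n-1,1}$.

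The main obstacle is the coefficient bookkeeping in the preceding paragraph. Each diagram $D_i$ contributes the same $\wt_{n-1,1}$ term but with a coefficient that depends on $i$ through the number of circle crossings on each side of the long strand; moreover, when the star passes through via a cup-cap it can land in either the ``left" or ``right" portion of the original braid, producing different coefficients for $[\wt_{\ell,1},\wt_{n-1-\ell,0}]$ depending on whether $\ell<i$ or $\ell\geq i$. Matching these two bookkeeping schemes with the stated coefficients $(\ell+1)\cdot (q-1)^2/q^2$ and $\ell\cdot (q-1)^4/q^3$ is the delicate part of the argument; it will be verified by reorganizing the double summation $\sum_{i,\ell}$ (switching the order of summation and using the identity $\sum_{i=0}^{n-1}q^{-i}=(1-q^{-n})/(1-q^{-1})$ in appropriate partial forms), which also explains the presence of the $(n-1)^2$ factor as the residue of the ``star closes off a bubble'' contributions.
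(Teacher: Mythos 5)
Your proposal follows essentially the same route as the paper's proof: isotope the downward star-dotted strand of $\wt_{-1,1}$ around the annulus through the strands of $\wt_{n,0}$, resolve the resulting double crossings via \eqref{heis:up down}, \eqref{heis:downup} and their starred variants, clean up the residual curls with Lemmas~\ref{lem:left}, \ref{lem:middle}, \ref{lem:right} to land on the star-decorated elements $s_i\circ\wt_{n-1,0}$, and then invoke Lemma~\ref{lem:wj0star} to convert these into $\wt_{n-1,1}$ plus the commutators $[\wt_{\ell,1},\wt_{n-1-\ell,0}]$. The coefficient bookkeeping you defer is exactly the content of the paper's intermediate identity expressing the commutator in terms of $s_1\circ\wt_{n-1,0}+s_{n-1}\circ\wt_{n-1,0}$ and $\sum_{i=1}^{n-1}s_i\circ\wt_{n-1,0}$, so the plan is sound and matches the published argument.
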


\begin{proof}
Recall that the generator $\wt_{n,0}$ is comprised of both crossings and circle crossings with various factors of $q$.   As a notational convenience we introduce a new symbol
\[
\hackcenter{\begin{tikzpicture}[scale=0.8]
    \draw[thick, ->] (0,0) .. controls (0,.75) and (.75,.75) .. (.75,1.5);
    \draw[thick, ->] (.75,0) .. controls (.75,.75) and (0,.75) .. (0,1.5);
    \draw[fill=blue]  (.375,.75) circle (4pt);
\end{tikzpicture}} \in
\left\{\;\;\;
\hackcenter{\begin{tikzpicture}[scale=0.8]
    \draw[thick, ->] (0,0) .. controls (0,.75) and (.75,.75) .. (.75,1.5);
    \draw[thick, ->] (.75,0) .. controls (.75,.75) and (0,.75) .. (0,1.5);
    \draw  (.375,.75) circle (4pt);
\end{tikzpicture}} \quad, \quad
\hackcenter{\begin{tikzpicture}[scale=0.8]
    \draw[thick, ->] (0,0) .. controls (0,.75) and (.75,.75) .. (.75,1.5);
    \draw[thick, ->] (.75,0) .. controls (.75,.75) and (0,.75) .. (0,1.5);
\end{tikzpicture}} \;\;\;
\right\}
\]
to stand for either a crossing or a circle crossing.   Then we have the following:
  \[
\left[ \;\;
\hackcenter{
\begin{tikzpicture}[scale=0.8]
 \begin{scope}[yscale=-1,shift={+(0,-2)}]
  \draw[thick,<-] (3,0) .. controls (3,1.25) and (0,.25) .. (0,2);
  \draw[thick,<-] (0,0) .. controls (0,1) and (.6,.8) .. (.6,2);
  \draw[thick,<-] (.6,0) .. controls (.6,1) and (1.2,.8) .. (1.2,2);
  \draw[thick,<-] (1.8,0) .. controls (1.8,1) and (2.4,.8) .. (2.4,2);
  \draw[thick,<-] (2.4,0) .. controls (2.4,1) and (3,.8) .. (3,2);
  \node at (1.2,.35) {$\dots$};
  \node at (1.8,1.65) {$\dots$};
  \draw[fill=blue]  (.4,1.08) circle (4pt);
    \draw[fill=blue]  (.9,.94) circle (4pt);
    \draw[fill=blue]  (1.99,.72) circle (4pt);
    \draw[fill=blue]  (2.51,.57) circle (4pt);
 \end{scope}
  \draw[thick,<-] (3.6,0) -- (3.6,2) node[pos=.5, shape=coordinate](DOT){};
  \node[star,star points=6,star point ratio=0.5, fill] at  (DOT) {} ;;
\end{tikzpicture}}
\; \;\;\right]
\refequal{\eqref{heis:up down}}\;  q^{-n} \;
\left[ \;
\hackcenter{
\includegraphics{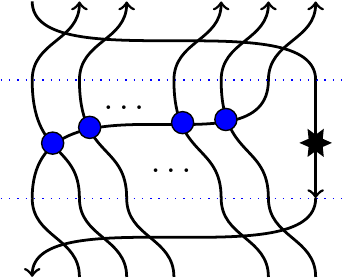}} \right]
\;\; \refequal{\eqref{heis:up-triple},\eqref{eq:circ-triple}} \;\;
q^{-n}\;
\left[ \;
\hackcenter{
    \includegraphics{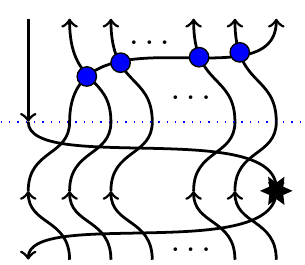}
} \right]
\]

\[
\refequal{\eqref{eq:doub-star}}
\left[ \;\;
\hackcenter{
\begin{tikzpicture}[scale=0.8]
 \begin{scope}[yscale=-1,shift={+(0,-2)}]
  \draw[thick,<-] (3,0) .. controls (3,1.25) and (0,.25) .. (0,2);
  \draw[thick,<-] (0,0) .. controls (0,1) and (.6,.8) .. (.6,2);
  \draw[thick,<-] (.6,0) .. controls (.6,1) and (1.2,.8) .. (1.2,2);
  \draw[thick,<-] (1.8,0) .. controls (1.8,1) and (2.4,.8) .. (2.4,2);
  \draw[thick,<-] (2.4,0) .. controls (2.4,1) and (3,.8) .. (3,2);
  \node at (1.2,.35) {$\dots$};
  \node at (1.8,1.65) {$\dots$};
  \draw[fill=blue]  (.4,1.08) circle (4pt);
    \draw[fill=blue]  (.9,.94) circle (4pt);
    \draw[fill=blue]  (1.99,.72) circle (4pt);
    \draw[fill=blue]  (2.51,.57) circle (4pt);
 \end{scope}
  \draw[thick,<-] (-.6,0) -- (-.6,2) node[pos=.5, shape=coordinate](DOT){};
  \node[star,star points=6,star point ratio=0.5, fill] at  (DOT) {} ;;
\end{tikzpicture}}
\; \;\;\right]
- (q-1)^2\sum_{j=0}^{n-1} q^{(-n+j)}\left[ \;\;\;
\hackcenter{
\includegraphics{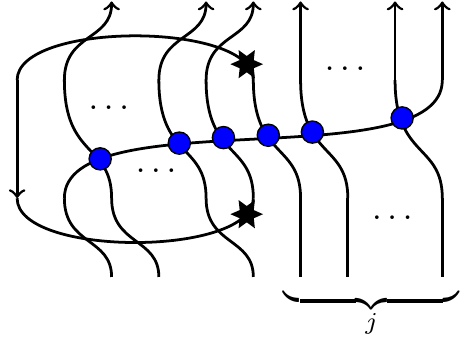}}
\;\;\; \right]
\]
Using \eqref{heis:up-triple} and \eqref{eq:circ-triple} the second term above can be written as
\[
- (q-1)^2 q^{-1} \left[ \;
\hackcenter{
\includegraphics{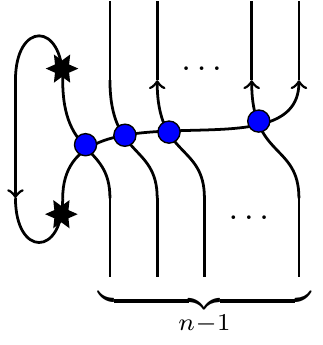}
} \;\;\right]
\;\;
- \;\;(q-1)^2\sum_{j=0}^{n-2} q^{(-n+j)}\left[ \;
\hackcenter{
\includegraphics{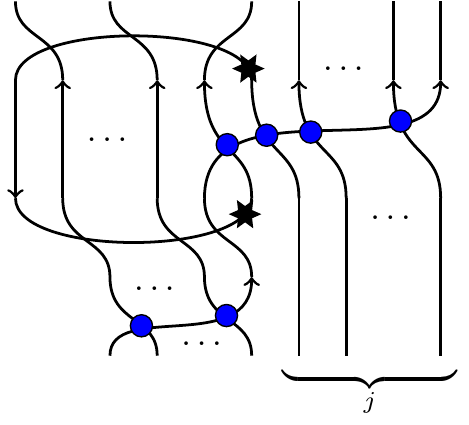}
} \;\; \right]
\]
\[
\refequal{\eqref{heis:up down}}
- (q-1)^2 q^{-1} \left[ \;\;
\hackcenter{
\includegraphics{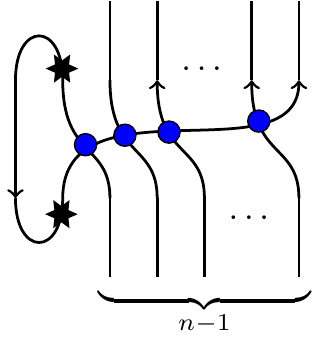}
} \;\; \right]
\;\;
- \;\;(q-1)^2q^{-2}\sum_{j=0}^{n-2}  \left[ \;\;
\hackcenter{
\includegraphics{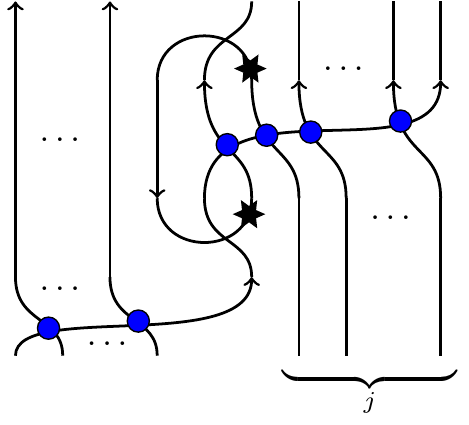}
} \;\;\right]
\]
For any choice of crossing the first term can be computed using Lemma~\ref{lem:left}.  The $j=0$ term of the sum can be simplified using Lemma~\ref{lem:right}.  The remaining terms in the $j$-summation can be simplified using Lemma~\ref{lem:middle}.

Using these facts, we can compute the commutator
\begin{align}
[\wt_{n,0},\wt_{-1,1}] &=
-\frac{(q-1)}{q} \big(\wt_{n-1,1}
+(s_1 \circ \wt_{n-1,0}) + (s_{n-1}\circ \wt_{n-1,0})
\big) \nn
\\
& \qquad
-\frac{(q-1)}{q}\left(1 + \frac{(q-1)^2}{q} \right) \sum_{i=1}^{n-1} (s_{i}\circ \wt_{n-1,0})
\end{align}
Lemma~\ref{lem:wj0star} implies that
\begin{align}
(s_1\circ \wt_{m,0}) + (s_m \circ \wt_{m,0})
& \;\; = \;\;
2m \wt_{m,1} + \frac{(q-1)}{q}
\left(\sum_{\ell=1}^{m-1} \wt_{\ell,1}\wt_{m-\ell,0}
 -  \sum_{\ell=1}^{m-1} \wt_{m-\ell,0}\wt_{\ell,1} \right) \nn \\
 & \;\; = \;\;
 2m \wt_{m,1} + \frac{(q-1)}{q}
 \sum_{\ell=1}^{m-1}[\wt_{\ell,1},\wt_{m-\ell,0}]
\end{align}
and also that
\begin{align}
 \sum_{i=1}^{m} s_i\circ\wt_{m,0}
 & \;\; = \;\;  m^2 \wt_{m,1}
 + \frac{(q-1)}{q}\sum_{i=1}^{m-1}\sum_{\ell=i}^{m-1} \wt_{\ell,1}\wt_{m-\ell,0}
 -\frac{(q-1)}{q} \sum_{i=2}^m\sum_{\ell=m-i+1}^{m-1} \wt_{m-\ell,0}\wt_{\ell,1} \nn \\
 &\;\; = \;\; m^2 \wt_{m,1}
 + \frac{(q-1)}{q}\left(\sum_{i=1}^{m-1}\sum_{\ell=i}^{m-1} \wt_{\ell,1}\wt_{m-\ell,0}
 -  \wt_{m-\ell,0}\wt_{\ell,1} \right) \\ \nn
  &\;\; = \;\; m^2 \wt_{m,1}
 + \frac{(q-1)}{q}
 \sum_{\ell=1}^{m-1} (\ell)[\wt_{\ell,1},\wt_{m-\ell,0}]
\end{align}
Hence,
\begin{align}
&[\wt_{n,0},\wt_{-1,1}] = \nn\\
&\;
-\frac{(q-1)}{q} \wt_{n-1,1}
-\frac{(q-1)}{q} \left( 2(n-1) \wt_{n-1,1} + \frac{(q-1)}{q}
 \sum_{\ell=1}^{n-2}[\wt_{\ell,1},\wt_{n-1-\ell,0}]
\right) \nn
\\
& \qquad
-\frac{(q-1)}{q}\left(1 + \frac{(q-1)^2}{q} \right)
\left(
(n-1)^2 \wt_{n-1,1}
 + \frac{(q-1)}{q}
 \sum_{\ell=1}^{n-2} (\ell)[\wt_{\ell,1},\wt_{n-1-\ell,0}]
\right)
\nn \\
 &=
 -\left( n^2\frac{(q-1)}{q} +(n-1)^2\frac{(q-1)^3}{q^2}\right) \wt_{n-1,1} \nn \\
 & \qquad
-    \frac{(q-1)^2}{q^2}
 \sum_{\ell=1}^{n-2}(\ell+1)[\wt_{\ell,1},\wt_{n-1-\ell,0}]
-\frac{(q-1)^4}{q^3}
\left(
 \sum_{\ell=1}^{n-2} (\ell)[\wt_{\ell,1},\wt_{n-1-\ell,0}]
\right). \nn
\end{align}
\end{proof}
\TL{My name is Tony, I have checked this proof.  }

\begin{corollary} \label{cor:prob6}
 We have the following relation:
 \begin{align}
 [\wt_{n,0},\wt_{-1,1}] &= -q^{-n} \frac{(q^n-1)^2}{(q-1)}  \wt_{n-1,1}.
\end{align}
\end{corollary}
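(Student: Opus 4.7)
The plan is to substitute into Proposition~\ref{prop_prob6commutator} the explicit values of the commutators $[\wt_{\ell,1},\wt_{n-1-\ell,0}]$, collect terms, and verify a resulting scalar identity of Laurent polynomials in $q$.

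First I would observe that for $1\le\ell\le n-2$, both $\bw_{\ell,1}$ and $\bw_{n-1-\ell,0}$ lie in $\mathbb{E}^{>,\geq}$, so Proposition~\ref{prop:uppertrianglemap} applies. Equation~\eqref{eq_comminprop} with $k = \ell\cdot 0 - 1\cdot(n-1-\ell) = -(n-1-\ell)$ yields $[w_{\ell,1}, w_{n-1-\ell,0}] = \{n-1-\ell\}\,w_{n-1,1}$, the sign coming from $q^{k/2}-q^{-k/2} = -\{n-1-\ell\}$. Converting to the $\wt$-normalizations of \eqref{eq_normalizations}, and using $q^{(n-1-\ell)/2}(1-q^{-(n-1-\ell)}) = \{n-1-\ell\}$, this becomes the clean formula
\[
[\wt_{\ell,1},\wt_{n-1-\ell,0}] = \frac{\{n-1-\ell\}^2}{1-q^{-1}}\,\wt_{n-1,1}.
\]

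Substituting this into Proposition~\ref{prop_prob6commutator} and replacing $1-q^{-1}$ by $(q-1)/q$ shows that $[\wt_{n,0},\wt_{-1,1}]$ is a scalar multiple of $\wt_{n-1,1}$. After reindexing $k := n-1-\ell$ in both sums, the corollary reduces to the polynomial identity
\[
\{1\}^2\!\left[n^2 + \sum_{k=1}^{n-2}(n-k)\{k\}^2\right] + \{1\}^4\!\left[(n-1)^2 + \sum_{k=1}^{n-2}(n-1-k)\{k\}^2\right] = \{n\}^2,
\]
obtained after multiplying the comparison through by $q^2(q-1)$ and using $(q-1)^2 = q\{1\}^2$ together with $q^{-n}(q^n-1)^2 = \{n\}^2$.

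The main obstacle is verifying this identity, and the cleanest route is via the elementary formula
\[
[m]^2 = m^2 + \sum_{k=1}^{m-1}(m-k)\{k\}^2,
\]
where $[m] = q^{(m-1)/2}+\cdots+q^{-(m-1)/2}$ is the symmetric quantum integer; this follows immediately by expanding $[m]^2 = \sum_{j=-(m-1)}^{m-1}(m-|j|)q^j$ and rewriting via $\{k\}^2 = q^k - 2 + q^{-k}$. Applying it with $m = n$ (splitting off the $k=n-1$ term) rewrites the first bracket as $[n]^2 - \{n-1\}^2$, while applying it with $m = n-1$ rewrites the second bracket as $[n-1]^2$. Using $\{m\}^2 = \{1\}^2[m]^2$, the left-hand side of the displayed identity becomes $\{n\}^2 - \{1\}^2\{n-1\}^2 + \{1\}^2\{n-1\}^2 = \{n\}^2$, completing the proof. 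The bookkeeping in passing between the $w$ and $\wt$ normalizations (and tracking the sign $\{-m\} = -\{m\}$) is the only real place where one could slip; the combinatorial identity itself collapses cleanly.
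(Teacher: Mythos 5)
Your proposal is correct and follows essentially the same route as the paper: substitute the positive-half commutators $[\wt_{\ell,1},\wt_{n-1-\ell,0}]$ (from Proposition~\ref{prop:uppertrianglemap}, rewritten in the $\wt$-normalization) into Proposition~\ref{prop_prob6commutator} and simplify; your conversion $[\wt_{\ell,1},\wt_{n-1-\ell,0}]=\{n-1-\ell\}^2(1-q^{-1})^{-1}\wt_{n-1,1}$ agrees with the paper's $q^{1-(n-1-\ell)}(q^{n-1-\ell}-1)^2(q-1)^{-1}\wt_{n-1,1}$. The paper states that the resulting expression ``reduces to'' the claimed formula without detail, whereas you supply that final verification cleanly via $[m]^2=m^2+\sum_{k=1}^{m-1}(m-k)\{k\}^2$ and $\{m\}^2=\{1\}^2[m]^2$, which checks out.
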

\begin{proof}
By rewriting the relations in Proposition \ref{prop:uppertrianglemap} in terms of the normalizations in \eqref{eq_normalizations}, we see that for $n,\ell > 0$ we have
\begin{align}
[\wt_{\ell,1},\wt_{n-1-\ell,0}]
= q^{1-(n-1-\ell)} \frac{(q^{n-1-\ell}-1)^2}{(q-1)} \wt_{n-1,1}.
\end{align}

Then for $n > 0$ the right hand side of Proposition \ref{prop_prob6commutator} becomes the following:
\begin{align}
 [\wt_{n,0},\wt_{-1,1}] &=
 -\left( n^2\frac{(q-1)}{q} +(n-1)^2\frac{(q-1)^3}{q^2}\right) \wt_{n-1,1} \nn \\
 & \qquad
-    \frac{(q-1)^2}{q^2}
 \sum_{\ell=1}^{n-2}(\ell+1)
 q^{ -(n-2-\ell)} \frac{(q^{n-1-\ell}-1)^2}{(q-1)}  \wt_{n-1,1} \nn \\
 & \qquad
-\frac{(q-1)^4}{q^3}
\left(
 \sum_{\ell=1}^{n-2} (\ell)
  q^{ -(n-2-\ell)} \frac{(q^{n-1-\ell}-1)^2}{(q-1)}  \wt_{n-1,1}.
\right) \nn
\end{align}
This formula reduces to the following:
\begin{align}
 [\wt_{n,0},\wt_{-1,1}] &= -q^{-n} \frac{(q^n-1)^2}{(q-1)}  \wt_{n-1,1}.
\end{align}
\end{proof}

\section{An action on symmetric functions}\label{sec:symaction}

The action of $\H$ on $\oplus_n H_n\fmod$ (see \cite{LS13}) induces an action of $\Tr(\H)$ on $\oplus_n\Tr(H_n\fmod)$. We identify the latter with the ring $\sym$ of symmetric functions. By Corollary~\ref{cor:generators}, $\Tr(\H)$ is generated by the elements $w_{k,0}$ for $k \in \Z$ and by $w_{0,1}$.  We compute the action of $w_{k,0}$ in Subsection \ref{sec:wk0act} and the action of $w_{0,1}$ in Subsection \ref{sec:w01act}. This characterizes the action of $\Tr(\H)$ on $\sym$.

In \cite{SV13}, Schiffmann and Vasserot constructed an action of $\hE_{\sigma,\bar \sigma}$ on $\sym$ using Hilbert schemes. In Subsection \ref{sec_SVact} we recall this action explicitly. We also show that the $\sigma = \bar\sigma^{-1}$ specialization of the Schiffmann-Vasserot action agrees with the action of $\Tr(\H)$ from the previous paragraph after twisting one action by a graded automorphism of $\Tr(\H)$.

\subsection{The Fock space representation}

The standard embeddings $H_{n-1} \subset H_n$ define induction and restriction bimodules where
\begin{itemize}
  \item induction, denoted ${}_n\bP_{n-1}$, is given by $H_n$ considered as an $(H_n,H_{n-1})$-bimodule and
  \item restriction, denoted ${}_{n-1}\bQ_{n}$, is given by $H_n$ considered as an $(H_{n-1},H_{n})$-bimodule.
\end{itemize}

Let $\mathbb{H}_k$ denote the category whose objects are direct summands of a finite sum of tensor products of induction and restriction bimodules that start with the category of $H_k$-modules.   For instance,
\begin{equation}
 (_{k+2}\bP_{k+1}) \otimes_{H_{k+1}}
 (_{k+1}\bQ_{k+2}) \otimes_{H_{k+2}}
 (_{k+2}\bP_{k+1}) \otimes_{H_{k+1}}
  ({}_{k+1}\bP_k) \nn
\end{equation}
is a bimodule in $\mathbb{H}_k$, that we often abbreviate as $\bP\bQ\bP\bP_{k}$ with the intermediate subscripts omitted. The morphisms in $\mathbb{H}_k$ are bimodule homomorphisms.  Note that we have defined $\mathbb{H}_k$ to be Karoubi complete, so that summands of finite direct sums of the above type of bimodules are also objects in this category. Note that $\mathbb{H}_k$ is not monoidal since one cannot compose these bimodules if the domain and range do not match up.

Following our earlier notation for $\H$, for a sequence $\epsilon=\epsilon_1\epsilon_2\dots\epsilon_{\ell}$, with each $\epsilon_j\in\{+,-\}$ we set ${}_{n}(\bP_{+})_{n-1}:={}_{n}\bP_{n-1}$ and ${}_{n-1}(\bP_{-})_{n}:={}_{n-1}\bQ_{n}$ and write $(\bP_{\epsilon})_k = (\bP_{\epsilon_1}\bP_{\epsilon_2}\dots\bP_{\epsilon_{\ell}})_k$.  By convention we set $\bP_{\epsilon}$ to zero if any of the intermediate subscripts are negative.  Define
\[
|\epsilon|=\#\{ i \mid \epsilon_i =+\} - \#\{ i \mid \epsilon_i =-\}.
 \]

For each $k \geq 0$, Licata and Savage define a (non-monoidal) functor  $\cal{F}_k\maps Kar(\H) \to \mathbb{H}_k$ determined by sending $\P_{\epsilon}$ to $(\bP_{\epsilon})_k$.  For example,
\[
\cal{F}_k(\P\P\Q\P) = \bP\bP\bQ\bP_{k}.
\]
The morphisms in $\H$ are sent to specific bimodule homomorphisms. The family of functors $\cal{F}_k$ fit into a 2-categorical framework giving rise to a categorification of the Bosonic Fock space representation of the Heisenberg algebra~\cite[Definition 5.9]{LS13}.

\subsection{Trace action on symmetric functions}\label{sec:wk0act}

The family of functors $\cal{F}_n\maps \cal{H} \to \bH_n$ induce a  $\Bbbk$-linear additive functor
\begin{equation} \label{eq:AA1}
 \cal{H} \otimes \bH_k \longrightarrow \bH_k
\end{equation}
defined on objects by sending $\P_{\epsilon} \otimes (\bP_{\epsilon'})_k$ with $|\epsilon'|=m$ to
\[
(\bP_{\epsilon} \bP_{\epsilon'})_k=(\bP_{\epsilon})_{k+m} (\bP_{\epsilon'})_k = \cal{F}_{k+m}(\P_{\epsilon})(\bP_{\epsilon'})_k
 \]
 in $\bH_k$. (This has a 2-categorical interpretation in \cite{LS13}.) This functor equips $\Tr(\bH_k)$ with the structure of a module over the algebra $\Tr(\H)$.

The category $\bH_0$ is a particularly well studied object.  The objects $\bP_{\epsilon}$ with $|\epsilon|=n$ of this category are just $(H_n,H_0)$-bimodules built from tensor products of induction and restriction bimodules.  But in our conventions $H_0=\Bbbk$, so that such a bimodule is simply an $H_n$-module.  Since all finite dimensional $H_n$-modules are direct summands of sums of modules induced from the trivial representation we have that the category $\bH_0$ decomposes as a direct sum
\[
\bH_0 = \bigoplus_{n \geq 0} H_n\fmod.
\]

The additive functor \eqref{eq:AA1} defines an action of $\Tr(\H)$ on
\[
\Tr(\bH_0) = \bigoplus_{n\geq 0} \Tr(H_n\fmod).
\]
The finite Hecke algebra $H_n$ is a symmetric algebra, see for example~\cite{Geck}.  This implies that the Chern character map
\[
h_n\maps K_0(H_n\fmod) \longrightarrow \Tr(H_n\fmod)
\]
is an isomorphism.  Taking the sum of these maps over all $n$ gives an isomorphism
\[
h \maps K_0(\bH_0) \longrightarrow \Tr(\bH_0).
\]
The natural inclusions $H_{n-1} \subset H_n$ gives rise to an induction product on $K_0(\bH_0)$ by sending $[N] \in K_0(H_n)$ and $[M]\in K_0(H_m)$ to
\[
 [N]\cdot [M] := [\Ind_{n,m}^{n+m} (N \otimes M)] \in K_0(H_{n+m}).
\]

Let $z_{\lambda}$ denote the Young symmetrizer corresponding to the irreducible $H_n$-module indexed by the partition $\lambda \vdash n$ defined below in equation \eqref{eq:zlambda}.  Then $z_{\lambda}$ defines an idempotent $\P^n\to \P^n$ in $\H$.  We write $\P^{(\lambda)}:=(\P^n,z_{\lambda})$ for the corresponding object in $Kar(\H)$.  Similarly, we set $\Q^{(\lambda)}:=(\Q^n,z_{\lambda})$.  The identification $\Tr(Kar(\H)) \cong \Tr(\H)$, the isomorphism $h^{-1}\maps \Tr(\bH_0) \to K_0(\bH_0)$, and the map \eqref{eq:AA1} give rise to an action
\[
  \Tr(Kar(\H)) \otimes K_0(\bH_0)  \to K_0(\bH_0).
\]
Under this action, the class $[M]_{\cong}$ of an $H_n$-module $M$ is acted upon by $[\Id_{\P^{(\lambda)}\Q^{(\mu)}}]\in \Tr(\H)$ with $|\lambda|-|\mu|=m$ by sending it to the class of the module
\[
(\bP^{(\lambda)}\bQ^{(\mu)})_n  \otimes_{H_n} M \in \H_{n+m}\fmod,
\]
where $(\bP^{(\lambda)})_n$, respectively $(\bQ^{(\mu)})_n$, denotes the image of $\P^{(\lambda)}$ (respectively $\Q^{(\mu)}$) under the induced functors $Kar(\cal{F}_n) \maps Kar(\H) \to \bH_n$.

The action of $\Tr(Kar(\H))$ on $K_0(\bH_0)$ is closely related to the induction product on $\bH_0$.
In particular, $\cal{F}_0(\Id_{\P^{(\lambda)}})$ is the $H_n$-module homomorphism given by multiplication by the central idempotent $z_{\lambda}$.  Under the inverse of the Chern characer map the trace of this $H_n$-module homomorphism is sent to the class of the irreducible $H_n$-module $[L_{\lambda}]$ indexed by $\lambda$.   The module $(\bP^{(\lambda)})_n \otimes_{H_n} M$ is just $\Ind_{n,m}^{n+m}(L_{\lambda} \otimes M)$.

Recall the following:

\begin{theorem}[{\cite[Thm.~5.7]{WW15}}]
There is an algebra isomorphism
\[
ch_q \maps \bigoplus_{n\geq 0} K_0(H_n)\otimes_{\Z}\C(q) \to \sym
\]
that sends the class of the irreducible $L_{\lambda}$ to the Schur function $s_{\lambda}$.
\end{theorem}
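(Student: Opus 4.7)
The statement is a well known classical result (attributed here to Wan--Wang), and my proof plan would follow the standard Frobenius-characteristic-style argument adapted to the Hecke setting. Since $H_n$ is semisimple over the generic base $\C(q)$ (by the Tits deformation theorem, $\C(q)\otimes H_n$ is isomorphic to $\C(q)\otimes\C[S_n]$), the irreducible modules $L_\lambda$ are naturally indexed by partitions $\lambda\vdash n$, and their classes form a $\C(q)$-basis of $K_0(H_n)\otimes\C(q)$. I would define $ch_q$ on this basis by $[L_\lambda]\mapsto s_\lambda$; since the Schur functions form a $\C$-basis of $\sym$, extending $\C(q)$-linearly and summing over $n$ produces a graded vector space isomorphism for free.

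The substantive content is that $ch_q$ is an algebra homomorphism, i.e.\ that the induction product on the left matches the product of Schur functions on the right. My plan is to reduce this to the classical Frobenius characteristic for the symmetric group. Concretely, I would first observe that the induction and restriction functors for the chain $H_{n-1}\subset H_n$ are compatible, under the Tits deformation isomorphism, with induction and restriction for $\C[S_{n-1}]\subset\C[S_n]$; this gives that the structure constants $[L_\lambda]\cdot[L_\mu] = \sum_\nu c^{\nu}_{\lambda\mu}[L_\nu]$ in $\bigoplus_n K_0(H_n)\otimes\C(q)$ agree with the classical Littlewood--Richardson coefficients. Since the Frobenius characteristic $[L^{S_n}_\lambda]\mapsto s_\lambda$ is an algebra isomorphism (the fundamental theorem of the Frobenius characteristic map), it follows that $ch_q$ is too.

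An alternative route, which avoids the explicit comparison with $\C[S_n]$ and which is in the spirit of the Wan--Wang paper, is to introduce quantum analogues of Young symmetrizers directly. Recall the idempotents $z_\lambda$ appearing in \eqref{eq:zlambda} below (to be defined), built from $q$-deformed row- and column-antisymmetrizers. One checks that $H_n z_\lambda$ realizes $L_\lambda$, and that the induction of these idempotented modules satisfies a Pieri-type rule on the Hecke side exactly mirroring the classical Pieri rule for multiplication by $s_{(k)}$ or $s_{(1^k)}$. Since the complete and elementary symmetric functions generate $\sym$, this pins down $ch_q$ on a generating set and proves it is multiplicative.

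The main obstacle is verifying the multiplicativity: either one must invoke the Tits deformation isomorphism with enough care to see that it intertwines induction (which requires checking compatibility of parabolic embeddings in both settings), or one must establish the quantum Pieri rule for induced Hecke modules by a direct argument with the $z_\lambda$. Either way, the linear isomorphism and the correct correspondence on irreducibles are essentially formal; the content is in matching the two ring structures.
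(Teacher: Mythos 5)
The paper does not prove this statement at all: it is imported verbatim as \cite[Thm.~5.7]{WW15} (``Recall the following''), so there is no in-paper argument to compare against. Judged on its own, your plan is the standard folklore proof and is essentially sound, but one step deserves more care than you give it. The Tits deformation theorem only produces an \emph{abstract} isomorphism of split semisimple algebras $\C(q)\otimes H_n\cong \C(q)\otimes\C[S_n]$; by itself this says nothing about whether it carries the parabolic $H_{n-1}\subset H_n$ to $\C[S_{n-1}]\subset\C[S_n]$, so it does not directly transport the induction product. The clean way to make your first route rigorous is to work with the specialization (decomposition) map at $q=1$: since $H_n$ is semisimple at generic $q$ and at $q=1$, the decomposition matrix is the identity, the induced map on Grothendieck groups is a bijection on irreducibles, and it manifestly commutes with parabolic induction because induction is defined by the same bimodules before and after specialization. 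That yields the equality of structure constants with Littlewood--Richardson coefficients, and then the classical Frobenius characteristic finishes the argument exactly as you say. One further convention check is needed: you must verify that the irreducible your $z_\lambda$ cuts out specializes to the Specht module $S^\lambda$ rather than $S^{\lambda'}$ (authors differ on this), since otherwise the map sends $[L_\lambda]$ to $s_{\lambda'}$. Your alternative route via $q$-symmetrizers and a Pieri rule is also viable and closer in spirit to Wan--Wang, but it carries the same burden of fixing the labeling convention and of actually proving the quantum Pieri rule for induced modules, which is not shorter than the specialization argument.
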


Putting these results together, we have the following result.

\begin{theorem} \label{thm:Fock-action}
The Fock space 2-representation of $\H$ induces an action
\begin{equation}
\Tr(Kar(\H)) \otimes \sym \longrightarrow \sym
\end{equation}
of $\Tr(Kar(\H))$ on $\sym$ where $[\P^{(\lambda)}]$ acts by multiplication by $s_{\lambda}$ and $[\Q^{(\mu)}] \cdot 1 = 0$.
\end{theorem}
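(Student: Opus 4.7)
The existence of the action is essentially built into the construction immediately preceding the statement: functoriality of $\Tr$ applied to the bilinear functor $\cal{H} \otimes \bH_0 \to \bH_0$ yields a $\Tr(\cal{H})$-module structure on $\Tr(\bH_0)$, and the Chern character maps $h_n \maps K_0(H_n\fmod) \to \Tr(H_n\fmod)$ are isomorphisms because each $H_n$ is a symmetric algebra (Geck). Combining these with the isomorphism $ch_q$ of \cite{WW15} identifies $\Tr(\bH_0)$ with $\sym$. Invoking Proposition~\ref{HH0HequalsHH0Hprime} transfers the action to $\Tr(Kar(\cal{H}))$. So the real content of the theorem is the two explicit formulas.

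For $[\P^{(\lambda)}]\cdot s_\mu = s_\lambda s_\mu$: the plan is to trace through the definitions. On a class $[M]$ with $M \in H_m\fmod$, the element $[\Id_{\P^{(\lambda)}}] \in \Tr(\cal{H})$ acts by sending $[M]$ to the class of the $H_{m+|\lambda|}$-module $(\bP^{(\lambda)})_m \otimes_{H_m} M$. By the definition of $\cal{F}_m$ via induction bimodules together with the Karoubi-enveloped idempotent $z_\lambda$, this module is exactly $\Ind_{m,|\lambda|}^{m+|\lambda|}(M \otimes L_\lambda)$. Under $ch_q$, the induction product becomes ordinary multiplication in $\sym$, and $[L_\lambda] \mapsto s_\lambda$, so the action is multiplication by $s_\lambda$. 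The verification amounts to checking that $h^{-1}([\Id_{\P^{(\lambda)}}]_{\bH_0}) = [L_\lambda]$, which follows because $z_\lambda$ is a central idempotent cutting out the irreducible $L_\lambda$.

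For $[\Q^{(\mu)}]\cdot 1 = 0$ (taking $|\mu| > 0$): the key observation is the convention that $\bP_\epsilon$ vanishes whenever an intermediate subscript is negative. The constant $1 \in \sym$ corresponds to the class of the trivial $H_0$-module, so we must apply $\cal{F}_0$ to $\Q^{(\mu)}$. But restriction from $H_0$ has nowhere to land, so $\cal{F}_0(\Q^{(\mu)}) = 0$ for $|\mu| > 0$, and consequently its class acts trivially on the degree-zero part of $\sym$.

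The only real obstacle is bookkeeping: one must confirm that the functor $\cal{H} \otimes \bH_k \to \bH_k$ defined via the tower $\{\cal{F}_k\}$ is strictly compatible with horizontal composition so that the induced map on traces is an algebra action, and that the monoidal structure on $\Tr(Kar(\cal{H}))$ corresponds under this identification to the induction product on $\bigoplus_n K_0(H_n\fmod)$. Once this compatibility is set up (which is implicit in the 2-categorical framework of \cite{LS13}), the identification of $[\P^{(\lambda)}]$ with $s_\lambda$ under $ch_q$ is a classical fact, and the vanishing of $[\Q^{(\mu)}]\cdot 1$ is just the vanishing of $\cal{F}_0(\Q^{(\mu)})$.
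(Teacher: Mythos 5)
Your proposal is correct and follows essentially the same route as the paper, which assembles the theorem from exactly the ingredients you list: functoriality of $\Tr$ applied to the functor $\cal{H}\otimes\bH_0\to\bH_0$, the Chern character isomorphism coming from $H_n$ being a symmetric algebra, the identification of $(\bP^{(\lambda)})_m\otimes_{H_m}M$ with the induction product against $L_\lambda$, the isomorphism $ch_q$ of Wan--Wang sending $[L_\lambda]$ to $s_\lambda$, and the vanishing convention for $\bP_\epsilon$ with negative intermediate subscripts to get $[\Q^{(\mu)}]\cdot 1=0$. Nothing is missing.
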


\begin{remark}
Note that $\Tr(\H)$ acts on the traces of $\bH_k$ for all $k \ge 0$. We postpone describing these representations for $k>0$ to later work.
\end{remark}

\subsection{Computing the action of $w_{0,1}$ on $\sym$}\label{sec:w01act}
In this section we compute the action of $w_{0,1}$ on $\sym$. Its action on an $H_n$-module is given by multiplying by a central element $w_n \in H_n$, which implies $w_{0,1}$ acts by a scalar on an irreducible $H_n$-module $L_\lambda$. This implies $w_{0,1}\cdot s_\lambda = c_\lambda s_\lambda$ for some constant $c_\lambda$. To compute this constant, we write the central element $w_n$ in terms of Jucys-Murphy elements and use the work of Dipper and James to compute the action of the Jucys-Murphy elements on the irreps $L_\lambda$.


\subsubsection{Irreducible representations for finite Hecke algebra}
Let $\lambda=(\lambda_1,\dots, \lambda_m)$ be a  composition of $n$. We write $\lambda'$ for the conjugate partition.   Let $S_n$ denote the symmetric group on $n$ letters and $S_{\lambda} = S_{\lambda_1} \times \dots \times S_{\lambda_m}$ be the corresponding Young subgroup.   We write $\mf{t}^{\lambda}$ for the standard $\lambda$-tableau with entries numbered sequentially along rows. Likewise, write $\mf{t}_{\lambda}$ for the standard $\lambda$-tableau with entries numbered sequentially along columns.  The symmetric group $S_n$ acts on  the right of  a $\lambda$-tableau by permuting the entries.  For each partition $\lambda$ we define an element $w_{\lambda}\in S_n$ as the unique permutation satisfying
\begin{equation}
\mf{t}_{\lambda} =\mf{t}^{\lambda}w_{\lambda}.
\end{equation}
(Note that this is equivalent to the unique $(S_{\lambda},S_{\lambda'})$ distinguished double coset representative with the trivial intersection property $w_{\lambda}^{-1}S_{\lambda}w_{\lambda} \cap W_{\lambda'} = \{1\}$.)
Define
\[
x_{\lambda} := \sum_{w \in W_{\lambda}}t_{w}, \qquad
y_{\lambda} := \sum_{w \in W_{\lambda}}(-q)^{-\ell(w)}t_w,
\]
The irreducible $H_n$-module   $L_{\lambda}$ is defined as the right ideal generated by
\begin{equation} \label{eq:zlambda}
  z_{\lambda} = x_{\lambda}t_{w_{\lambda}} y_{\lambda'}.
\end{equation}

Let $\ast$ denote the $\k(q)$-linear anti-involution of $H_n$ induced by the map $t_w \mapsto t_{w^{-1}}$.  Using this anti-involution we can convert from right modules to left modules. Setting
\begin{equation}
  z_{\lambda}^{\ast} = (x_{\lambda}t_{w_{\lambda}} y_{\lambda'})^\ast = y_{\lambda'}t_{w_{\lambda}^{-1}} x_{\lambda},
\end{equation}
we can consider the irreducible left $H_n$-module $L_{\lambda}$ as the left ideal generated by $z_{\lambda}^{\ast}$.

\subsubsection{Jucys-Murphy action on irreducibles}
The Jucys-Murphy elements $L_i=L_i^{LS}$ are $q^{-1}L_i = L_i^{DJ}$ of the operators $L_i^{DJ}$ appearing in the Dipper-James paper \cite{DJ87}.
We are following \cite{LS13} where
\begin{align}
 L_{k+1} = L_{k+1}^{LS} &= \sum_{i=1}^k q^{i-k}t_i \dots t_k \dots t_i \label{eq_JMdef1}\\
& = t_k + q^{-1}t_{k-1}t_kt_{k-1} + q^{-2}t_{k-2}t_{k-1}t_kt_{k-1}t_{k-2} + \dots
+ q^{1-k}t_1 \dots t_k \dots t_1\label{eq_JMdef2}.
\end{align}
Note that in our convention we set $L_1=L_1^{LS}=0$, but $L_1^{DJ}=1$.

We enumerate the nodes of a Young diagram $\lambda$ by $\{(i,j) \mid 1 \leq i, 1\leq j \leq \lambda_i\}$.  Following \cite{DJ87} define the residue of the $(i,j)$ node to be
$[(j-i)]_q$ using the non-symmetric quantum integer defined in \eqref{eq:nonsymmetric-int}.
For example, if $\lambda=(4,3,2)$ then the residues are given below.
%
%
\[
\begin{array}{cccc}
  0 & 1 & 1+q & 1+q+q^{2} \\
  -q^{-1} & 0 & 1 &  \\
  -q^{-1}-q^{-2}& -q^{-1} &  &
\end{array}
\;\; = \;\;
 \begin{array}{cccc}
  0    & \text{$[1]_q$}  & \text{$[2]_q$} & \text{$[3]_q$} \\
  \text{$[-1]_q$} &  0   & \text{$[1]_q$} &     \\
  \text{$[-2]_q$} & \text{$[-1]_q$} &     &
\end{array}
\]

For $w\in S_n$ and $1\leq m \leq n$, let the residue $r_{\lambda w}(m)$ of $m$ in $t^{\lambda}w$ be the residue of the node occupied by $m$ in $t^{\lambda}w$.

\begin{theorem}[Theorem 3.14 \cite{DJ87}] \label{thm:DJ}
Let $\lambda$ be a partition of $n$ and $1\leq m \leq n$.  Then $z_{\lambda}L_m^{DJ} = r_{\lambda w_{\lambda}}(m)z_{\lambda}$. In other words, $z_{\lambda}L_m^{LS} = q\cdot r_{\lambda w_{\lambda}}(m)z_{\lambda}$.
\end{theorem}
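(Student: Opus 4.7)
The plan is to view the stated identity as a quantum analogue of the classical fact that Young symmetrizers are joint eigenvectors for Jucys--Murphy elements in $\Bbbk[S_n]$, and to prove it via the seminormal form for $H_n$. First, I would establish that the subalgebra $\mathcal{L} := \langle L_1^{DJ}, \ldots, L_n^{DJ}\rangle \subset H_n$ is commutative (a quantum Gelfand--Tsetlin subalgebra) and acts semisimply at generic $q$, so that every finite-dimensional $H_n$-module decomposes into joint $\mathcal{L}$-weight spaces. Using the branching rule $\mathrm{Res}^{H_n}_{H_{n-1}} L_\lambda = \bigoplus_\mu L_\mu$ (sum over $\mu$ obtained from $\lambda$ by removing a removable box), one proves inductively in $n$ that the joint $\mathcal{L}$-eigenvectors in $L_\lambda$ are indexed by standard $\lambda$-tableaux $T$, and that on the weight vector $v_T$ the element $L_m^{DJ}$ acts by the residue $r_T(m)$ of the node containing $m$. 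The inductive step uses only the quadratic relation $(t_i-q)(t_i+1)=0$ and the recursion $L_{m+1}^{DJ} = q^{-1} t_m L_m^{DJ} t_m + t_m$ (in DJ-normalisation) together with the fact that $L_m^{DJ}$ commutes with $H_{m-1}$.

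The remaining, and main, task is to identify the particular weight vector to which $z_\lambda = x_\lambda\,t_{w_\lambda}\,y_{\lambda'}$ corresponds. I would argue that $z_\lambda$ lies in (and spans) the one-dimensional weight space labelled by the column-sequential tableau $\mf{t}_\lambda = \mf{t}^\lambda w_\lambda$. The element $x_\lambda$ is the row-symmetriser for $\mf{t}^\lambda$, the element $y_{\lambda'}$ is the column-antisymmetriser (which in $L_\lambda$ coincides with $y_{\lambda'}$ for the column structure of $\mf{t}_\lambda$), and $t_{w_\lambda}$ intertwines the two labelings. A direct computation using the explicit formulas \eqref{eq_JMdef1}--\eqref{eq_JMdef2} shows that $z_\lambda y_{\mu'} = 0$ whenever $\mu \neq \lambda$, so by the weight-space decomposition and a dimension-count on $z_\lambda H_n$, one concludes $z_\lambda$ must be proportional to $v_{\mf{t}_\lambda}$. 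Then $z_\lambda L_m^{DJ} = r_{\mf{t}_\lambda}(m)\,z_\lambda = r_{\lambda w_\lambda}(m)\,z_\lambda$ by construction, and the $L^{LS}$ version follows from the rescaling $L_m^{LS} = q\,L_m^{DJ}$ recorded just before Theorem~\ref{thm:DJ}.

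The expected main obstacle is the identification step: showing cleanly that $z_\lambda$ picks out the specific tableau $\mf{t}_\lambda$ rather than some other standard $\lambda$-tableau. In the classical ($q=1$) case this is Young's theorem, and at generic $q$ it is exactly the content of \cite[Thm.~3.14]{DJ87}, proved there via a careful analysis of the Murphy cellular basis; rather than reproving it in full, I would cite \emph{loc.~cit.} for this final combinatorial input.
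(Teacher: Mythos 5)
The paper does not prove this statement at all: it is imported verbatim as Theorem~3.14 of Dipper--James \cite{DJ87}, so there is no in-paper argument to compare against. Your proposal, read charitably, ends in the same place: the decisive step --- that $z_{\lambda}=x_{\lambda}t_{w_{\lambda}}y_{\lambda'}$ is a joint eigenvector for the Jucys--Murphy elements with eigenvalues the residues of the column-sequential tableau $\mf{t}_{\lambda}$ --- is exactly the content of \cite[Thm.~3.14]{DJ87}, and you explicitly cite \emph{loc.~cit.} for it. As a citation this is fine and matches what the paper does; as a self-contained proof it is not.

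If the middle paragraph was meant to carry the weight, there is a genuine gap there. The vanishing $z_{\lambda}y_{\mu'}=0$ for $\mu\neq\lambda$ together with a dimension count can show that the right ideal $z_{\lambda}H_n$ is irreducible of type $\lambda$, but it says nothing about where $z_{\lambda}$ sits \emph{inside} that module relative to the joint $\mathcal{L}$-weight decomposition: a priori $z_{\lambda}$ is a linear combination $\sum_T c_T v_T$ over all standard $\lambda$-tableaux $T$, and nothing in your argument forces all but one coefficient to vanish. Pinning $z_{\lambda}$ to the single weight space labelled by $\mf{t}_{\lambda}$ requires an actual triangularity computation for the action of the $L_m^{DJ}$ on $x_{\lambda}t_{w_{\lambda}}y_{\lambda'}$ (carried out one $m$ at a time against the Murphy basis), and that computation is precisely what Dipper and James do. The surrounding scaffolding --- commutativity and generic semisimplicity of the Gelfand--Tsetlin subalgebra, eigenvectors indexed by standard tableaux via the branching rule, and the final rescaling $L_m^{LS}=q\,L_m^{DJ}$ --- is standard and consistent with the conventions recorded in the paper.
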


Note here that $r_{\lambda w_{\lambda}}(m)$ is the residue of $m$ in the tableau $t^{\lambda}w_{\lambda}$, but this is just the column labeled standard $\lambda$-tableau $t_{\lambda}$. So the action of the $m$th Jucys-Murphy element is given by reading off the residue of the node labeled by $m$ in the column standard $\lambda$-tableau $t_{\lambda}$.

\begin{corollary} \label{cor:DJ}
Let $\lambda$ be a partition of $n$ and $1\leq m \leq n$.  Then
 $L_m z_{\lambda}^{\ast} = q\cdot r_{\lambda w_{\lambda}}(m)z_{\lambda}^{\ast}$.
\end{corollary}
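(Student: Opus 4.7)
The strategy is to deduce this from Theorem \ref{thm:DJ} by applying the anti-involution $\ast$ defined on $H_n$ by $t_w \mapsto t_{w^{-1}}$. Since $\ast$ reverses products of $t_i$'s and fixes scalars in $\k(q)$, applying it to the identity $z_{\lambda} L_m^{LS} = q\cdot r_{\lambda w_{\lambda}}(m) z_{\lambda}$ yields
\[
(L_m^{LS})^{\ast} z_{\lambda}^{\ast} = q\cdot r_{\lambda w_{\lambda}}(m) z_{\lambda}^{\ast},
\]
so the statement reduces to showing $L_m^{\ast} = L_m$.

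The key observation is that each term in the explicit expansion \eqref{eq_JMdef1} of $L_{k+1}$ is a product of the form $t_i t_{i+1} \cdots t_k \cdots t_{i+1} t_i$, which is a palindromic word in the generators. Since $\ast$ is the anti-involution sending $t_i \mapsto t_i$ on generators and reversing products, each such palindrome is fixed by $\ast$. The coefficients $q^{i-k}$ are scalars, so they are also fixed. Hence $L_m^{\ast} = L_m$, which is the only nontrivial input.

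Combining these observations gives the corollary immediately. No step presents any real obstacle; the main content is just the self-adjointness of the Jucys-Murphy elements with respect to $\ast$, which is visible from their palindromic expression.
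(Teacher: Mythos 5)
Your proof is correct and follows exactly the route the paper intends (the paper states the corollary without proof, but sets up the anti-involution $\ast$ for precisely this purpose): apply $\ast$ to Theorem \ref{thm:DJ} and use that each Jucys--Murphy element is a sum of palindromic words in the $t_i$, hence $\ast$-invariant. Nothing is missing.
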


\subsubsection{The action of $w_{0,1}$ on  $H_{n+1}$}
One can compute directly from the definition of $w_{0,1}$ in Definition~\ref{def_ws} that
\begin{equation}
\begin{split}
  w_{0,1} &= \frac{1}{\{1\}}\left( \tilde{C}_1 +\frac{(q-1)^2}{q}\tilde{C}_1C_0\right)=\frac{1}{\{1\}}\left(\frac{q}{(q-1)^2} + C_0\right) =
q^{-\frac{1}{2}}\frac{q}{(q-1)}\left( \frac{q}{(q-1)^2} + C_0\right) \\
&= q^{-\frac{1}{2}}\left( \frac{q^2}{(q-1)^3} + \frac{q}{(q-1)} C_0\right).
\end{split}
\end{equation}
Recall $C_0$ is the clockwise bubble with no stars.  From \cite{LS13} we compute that $C_0$ acts on $H_{n+1}$ by multiplication by the element
\begin{align}
x
&= 1 + \sum_{i=1}^{n} q^{i-(n+1)} t_i \dots t_{n-1}t_n t_n t_{n-1} \dots t_i\label{eq_x_try2} \\
&= 1+n + \frac{(q-1)}{q}\sum_{i=1}^{n+1} L_i \label{eq_xfinal_try2}
\end{align}
where we repeatedly applied the quadratic relation in the finite Hecke algebra and used the definition of the Jucys-Murphy elements.

Therefore we have shown that $w_{0,1}$ acts on $H_{n+1}$ by

\begin{equation}\label{eq_wact}
q^{-\frac{1}{2}}\left( \frac{q^2}{(q-1)^3} + \frac{q}{(q-1)}x\right) = \frac{1}{\{1\}}\left( \frac{1}{\{1\}^2} + x\right) =
\frac 1 {\{1\}}\left( \frac 1 {\{1\}^2} + n + 1 + \frac{ q-1} q \sum_{i=1}^{n+1} L_i\right).
\end{equation}


%
%

\subsubsection{The action of $w_{0,1}$ on the irreducible representation $L_{\lambda}$ of $H_{n+1}$}

Recall that we consider $L_{\lambda}$ as the left ideal generated by $z_{\lambda}^{\ast}$.  Our goal is to compute $w_{0,1}z_{\lambda}^{\ast}$.    Recall that in Theorem~\ref{thm:DJ} and Corollary~\ref{cor:DJ} the residue sequence of $t_{\lambda}$ determines the eigenvalues of the Jucys-Murphy elements.  Because $w_{0,1}z_{\lambda}^{\ast}$ involves the action of the sum $\sum_{i=1}^{n+1}L_i$ of all Jucys-Murphy operators,  the numbering of the tableau does not matter, only the residues of all boxes appearing in $\lambda$.
Define
\begin{equation}
 {\rm res}(\lambda) = \sum_{X \in {\rm Nodes}(\lambda)}  {\rm res}(X).
\end{equation}

\begin{lemma}\label{lemma_xact}
For any partition $\lambda$ of $n+1$ we have  (adding the extra $q$ to convert from $L_i^{DJ}$ to $L_i^{LS}$)
\[
\left(\sum_{i=1}^{n+1}L_i\right)z_{\lambda}^{\ast} = q\cdot {\rm res}(\lambda)z_{\lambda}^{\ast}.
\]
\end{lemma}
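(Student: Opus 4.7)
The plan is to reduce the sum to a direct application of Corollary~\ref{cor:DJ}. By that corollary, each Jucys--Murphy element acts as a scalar on $z_\lambda^{\ast}$, namely
\[
L_m \, z_\lambda^{\ast} \;=\; q\cdot r_{\lambda w_\lambda}(m)\, z_\lambda^{\ast},
\]
where $r_{\lambda w_\lambda}(m)$ is the residue of the node of $\lambda$ occupied by the entry $m$ in the tableau $t^\lambda w_\lambda = t_\lambda$. Since each $L_m$ acts as a scalar on $z_\lambda^\ast$, the sum $\sum_{m=1}^{n+1} L_m$ also acts as the scalar
\[
q\sum_{m=1}^{n+1} r_{\lambda w_\lambda}(m).
\]

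The main (and essentially only) point is a bijective re-indexing: because $t_\lambda$ is a standard tableau, its entries $1,2,\dots,n+1$ biject with the nodes of $\lambda$. Hence summing the residues $r_{\lambda w_\lambda}(m)$ over $m$ is the same as summing $\mathrm{res}(X)$ over all nodes $X$ of $\lambda$, which by definition equals $\mathrm{res}(\lambda)$. Combining these two observations yields
\[
\Big(\sum_{i=1}^{n+1} L_i\Big) z_\lambda^{\ast} \;=\; q \cdot \mathrm{res}(\lambda)\, z_\lambda^{\ast},
\]
as required.

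There is no genuine obstacle here: the content of the lemma is entirely contained in Corollary~\ref{cor:DJ}, together with the trivial observation that summing a quantity indexed by entries of a standard tableau is the same as summing it over the nodes of the underlying diagram. The only thing to verify carefully is the scalar factor of $q$ coming from the conversion between the Dipper--James normalization $L_i^{DJ}$ and the Licata--Savage normalization $L_i^{LS}$ used in \eqref{eq_JMdef1}--\eqref{eq_JMdef2}, which is already incorporated into the statement of Corollary~\ref{cor:DJ}.
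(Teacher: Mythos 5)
Your proof is correct and takes exactly the same route as the paper, which simply observes that the lemma is immediate from Corollary~\ref{cor:DJ}; your expansion (summing the scalar eigenvalues and re-indexing the sum from tableau entries to nodes of $\lambda$) is just a more explicit writing-out of that one-line argument.
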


\begin{proof}
This is immediate from Corollary~\ref{cor:DJ}.
\end{proof}

We can now compute the action of $x$ on $L_\lambda$:
\begin{align}
(x)z_\lambda^{\ast} &=\left( n + 1 + \frac{(q-1)}{q} \sum_{i=1}^{n+1}L_{n+1}\right)z_\lambda^{\ast}\notag \\
&= \left( n + 1 + \frac{q-1}{q} \sum_{y \in \rm{Nodes}(\lambda)} q\frac{q^{c(y)}-1}{q-1}\right)z_\lambda^{\ast}\notag\\
&= \left( \sum_{y \in \rm{Nodes}(\lambda)} q^{c(y)}\right) z_\lambda^{\ast}\label{eq_xactfinal}
\end{align}
where $c(y) := j-i$ is the \emph{content} of the box $y$, if the coordinates of $y$ are $(i,j)$.

\begin{theorem}\label{thm_tract}
We have  for any partition $\lambda$ of $n+1$ that
\begin{align}
w_{0,1}z_\lambda^{\ast} &= \frac{1}{\{1\}}\left( \frac{1}{\{1\}^2} + \sum_{y \in \lambda} q^{c(y)}\right) z_\lambda^{\ast}
\end{align}
(again where $c(y) = j-i$ if $y$ is the box with coordinates $(i,j)$).
\end{theorem}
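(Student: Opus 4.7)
The plan is to combine the formula for how $w_{0,1}$ acts on $H_{n+1}$ with the Dipper--James calculation of the Jucys--Murphy eigenvalues on $z_\lambda^\ast$; essentially all of the work has already been done in the paragraphs preceding the theorem.

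First I would recall from \eqref{eq_wact} that $w_{0,1}$ acts on $H_{n+1}$ as left multiplication by
\[
\frac{1}{\{1\}}\left(\frac{1}{\{1\}^2} + (n+1) + \frac{q-1}{q}\sum_{i=1}^{n+1} L_i\right),
\]
so the theorem reduces entirely to evaluating the scalar by which $(n+1) + \tfrac{q-1}{q}\sum_i L_i$ acts on $z_\lambda^\ast$. Since this is a sum of Jucys--Murphy operators, it is central on the irreducible $L_\lambda$ and acts by a scalar, so it suffices to compute this scalar.

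Next I would apply Corollary~\ref{cor:DJ}: for each $1 \le m \le n+1$, the operator $L_m$ acts on $z_\lambda^\ast$ by $q\cdot r_{\lambda w_\lambda}(m)$, where the residue of the $(i,j)$-box of $\lambda$ is the non-symmetric quantum integer $[j-i]_q = (q^{j-i}-1)/(q-1)$. Summing over $m$ gives
\[
\sum_{i=1}^{n+1} L_i \cdot z_\lambda^\ast \;=\; q\sum_{y\in\mathrm{Nodes}(\lambda)} \frac{q^{c(y)}-1}{q-1}\; z_\lambda^\ast,
\]
exactly as in Lemma~\ref{lemma_xact} and equation~\eqref{eq_xactfinal}. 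Plugging this in and simplifying,
\[
\left((n+1) + \frac{q-1}{q}\sum_i L_i\right) z_\lambda^\ast = \left(\sum_{y\in\lambda}\Bigl(1 + (q^{c(y)}-1)\Bigr)\right) z_\lambda^\ast = \left(\sum_{y\in\lambda} q^{c(y)}\right) z_\lambda^\ast,
\]
using that $|\lambda|=n+1$ equals the number of boxes. Substituting this into the expression for $w_{0,1}$ yields the claimed formula. There is no substantial obstacle: the only thing to watch carefully is the conversion between $L_i^{DJ}$ and $L_i^{LS}$ (an extra factor of $q$), and the cancellation $1 + (q^c-1) = q^c$ that eliminates the $(n+1)$ term against the constant contributions from the Jucys--Murphy sum.
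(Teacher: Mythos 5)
Your proposal is correct and follows exactly the paper's route: the paper's proof of Theorem~\ref{thm_tract} is precisely the combination of equation~\eqref{eq_wact} with the Jucys--Murphy eigenvalue computation of Lemma~\ref{lemma_xact} and the simplification in~\eqref{eq_xactfinal}, including the same cancellation $(n+1) + \sum_y (q^{c(y)}-1) = \sum_y q^{c(y)}$. Nothing to add.
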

\begin{proof}
This is equation \eqref{eq_xactfinal} combined with equation \eqref{eq_wact}.
\end{proof}

\subsection{Hilbert Schemes}\label{sec_SVact}
We would like to compare our action of $\Tr(\H)$ on $\sym$ with the action of the elliptic Hall algebra $\hE_{q,t}$ on $\sym$ constructed by Schiffmann and Vasserot using Hilbert schemes. Here we briefly recall their action from \cite{SV13} and translate this to an action of $\Tr(\H)$ using our isomorphism.

For clarity we first record the identifications we use between the parameters in \cite{SV13} and the standard $q_{Mac}$ and $t_{Mac}$ parameters for Macdonald polynomials (see \cite[Eq.~(1.6)]{SV13}).
\begin{equation}
\sigma = q^{-1}_{SV} = q^{-1}_{Mac},\quad \quad \bar \sigma = t^{-1}_{SV} = t_{Mac}
\end{equation}
These parameters are specialized to obtain the parameters in the present paper and \cite{MS14} as follows:
\begin{equation}
\sigma = q_{ours} = s^{2}_{MS},\quad \quad \bar \sigma = q^{-1}_{ours} = s^{-2}_{MS}
\end{equation}
Given a partition $\lambda$, let $P_\lambda(q_{Mac},t_{Mac}) \in \sym$ be the Macdonald polynomial of shape $\lambda$, with Macdonald's parameters $q$ and $t$. These form a basis of $\sym$ and satisfy the following properties:
\begin{equation}\label{eq_macprops}
P_\lambda(q_{Mac},t_{Mac}) = P_\lambda(q^{-1}_{Mac},t^{-1}_{Mac}),\quad \quad  P_\lambda(q_{Mac},q_{Mac}) = s_\lambda
\end{equation}

Recall the algebra $\hE_{1,\tau}$ from Section \ref{sec_specialization} (in the notation of \cite{SV13}, this is $\E_c$). This algebra is generated by elements $\bu_\bx$ for $\bx \in \Z^2$, and has parameters $\sigma$, $\bar \sigma$ (the central parameters have been specialized to $1$ and $\tau = (\sigma \bar \sigma)^{-1/2}$). Schiffmann and Vasserot defined an action of $\hE_{1,\tau}$ on $\sym$. To describe their action we first define some auxiliary operators on $\sym$. Let $p_\ell$ be the operator which multiplies by the power sum $p_\ell$, and let $\frac \partial {\partial p_\ell}$ be the corresponding differential operator. We also let $\Delta_{\pm \ell}^\infty$ be
the operator on $\sym$ defined as follows (see the line above \cite[Prop. 4.10]{SV13}):
\begin{equation}
\Delta_{\pm \ell}^\infty \cdot P_\lambda(q_{SV},t^{-1}_{SV}) :=\pm B_\lambda^{\pm \ell}(q_{SV},t_{SV}) P_\lambda(q_{SV},t_{SV}^{-1})
\end{equation}
where the polynomial $B_\lambda^{\pm \ell}$ is the following (see the first line of the proof of \cite[Prop. 4.7]{SV13}):
\[
B_\lambda^{\pm \ell}(q_{SV},t_{SV}) = \sum_{s \in \lambda} q_{SV}^{\ell x(s)}t_{SV}^{\ell y(s)} = \sum_{s \in \lambda} \sigma^{-\ell x(s)}\bar\sigma^{-\ell y(s)}.
\]
Here, if $s \in \lambda$ is a box with row-column coordinates $(i,j)$, then $x(s) = i-1$ and $y(s) = j-1$ (see \cite[Sec.~ 0.4]{SV13}).

\begin{theorem}[{\cite[Prop. 4.10]{SV13}}]
There is a unique action $\varphi$ of $\hE_{1,\tau}$ on $\sym$ such that
\begin{align*}
\varphi(\bu_{\ell,0}) &= \frac{\bar \sigma^{-\ell / 2}}{1 - \sigma^{-\ell}} p_\ell\\
\varphi(\bu_{-\ell,0}) &= -l \frac{\sigma^{-\ell/2}}{1 - \bar \sigma^{-\ell}} \frac{\partial}{\partial p_\ell}\\
\varphi(\bu_{0,\ell}) &= \Delta^\infty_{\ell} - \frac 1 {(1-\sigma^{-\ell})(1-\bar \sigma^{-\ell})}\\
\varphi(\bu_{0,-\ell}) &= \Delta^\infty_{-\ell} + \frac 1 {(1-\sigma^{\ell})(1-\bar\sigma^{\ell})}
\end{align*}
\end{theorem}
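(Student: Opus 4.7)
The plan has two components. Uniqueness follows once one checks that the listed elements generate $\hE_{1,\tau}$. For this I would invoke the triangular decomposition $\hE = \hE^{<,\bullet} \otimes \hE^{0,\bullet} \otimes \hE^{>,\bullet}$ of Proposition~\ref{prop_svtriangle} together with the cross-relations~\eqref{eq_nogoodnameforthis}, which let one produce $\bu_{1,k}$ for any $k \in \Z$ as (up to sign) $[\bu_{0,k},\bu_{1,0}]$, and symmetrically for $\bu_{-1,k}$. Hence the horizontal generators $\bu_{\pm\ell,0}$ together with the vertical generators $\bu_{0,\pm\ell}$ for $\ell \geq 1$ suffice to generate all three triangular pieces, and therefore all of $\hE_{1,\tau}$; uniqueness of $\varphi$ is then a consequence of multiplicativity.

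Existence is the substantive part: one must verify that the four families of operators on $\sym$ satisfy every defining relation of $\hE_{1,\tau}$. The verification splits naturally into three pieces. The relations~\eqref{eq_linerel} on the horizontal axis, after unpacking the normalization constants $\alpha_\ell$, reduce to the standard boson commutator $[p_\ell,\partial/\partial p_m] = \ell\,\delta_{\ell,m}$, and are immediate. Commutativity among the $\Delta^\infty_{\pm\ell}$ on the vertical axis follows because these operators are simultaneously diagonal in the Macdonald basis. What remains, and what forms the crux of the proof, is the set of cross-relations~\eqref{eq_oneone} mixing the two axes through the $\theta$-elements.

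The hard part is this last step. The $\theta$-elements are defined by an exponential generating function in the diagonal operators, and expanding them in the Macdonald basis leads to intricate identities. In~\cite{SV13} this is carried out geometrically: one realizes both $\hE_{1,\tau}$ and $\sym$ inside the equivariant $K$-theory $\bigoplus_n K^{\C^\times \times \C^\times}(\Hilb_n(\C^2))$, with the generators $\bu_{\pm 1,k}$ and $\bu_{0,\pm\ell}$ acting as convolution operators supported on flag Hilbert schemes; the relations then follow from equivariant localization and Nakajima-type commutator calculations on $K$-theory classes. An alternative is to use the shuffle-algebra realization of $\hE^+$ due to Feigin--Tsymbaliuk and Negu\c{t}, in which the action on $\sym$ becomes transparent and the cross-relations emerge from explicit contour-integral/residue manipulations.

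Finally, in the specialization $\sigma = \bar\sigma^{-1} = q$ relevant to the present paper, there is also a representation-theoretic shortcut available: $\Tr(\H)$ acts on $\sym$ by Theorem~\ref{thm:Fock-action}, and this action transports across the isomorphism $\mathbb{E}^{\bullet,\geq}\simeq\Tr(\H)$ of Theorem~\ref{thm_mainthm} to yield the positive half of the specialized Schiffmann--Vasserot representation; combined with the anti-involution $\bw_{a,b}\mapsto\bw_{-a,b}$ of Corollary~\ref{cor_trianglepres}, one recovers the full specialized statement. Promoting this specialized verification to the two-parameter theorem, however, genuinely requires one of the geometric or shuffle-theoretic inputs described above.
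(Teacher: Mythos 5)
The paper offers no proof of this statement: it is quoted verbatim as an external result, cited to \cite[Prop.~4.10]{SV13}. Your outline is accurate and consistent with that — the uniqueness argument via the triangular decomposition and the cross-relations \eqref{eq_nogoodnameforthis} is correct, the horizontal and vertical relations are indeed the easy checks you describe, and you rightly locate the substantive content (the mixed relations \eqref{eq_oneone} involving the $\theta$-elements) in the equivariant $K$-theoretic construction of \cite{SV13}, which cannot be reproduced from the tools of the present paper. Your closing remark is also apt: the $\sigma=\bar\sigma^{-1}$ shortcut via $\Tr(\H)$ would be circular here, since the paper uses this theorem as input to identify the two actions on $\sym$, not the other way around.
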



Let us now specialize these formulas to $\sigma = q_{ours}$ and $\bar \sigma = q^{-1}_{ours}$ and write them in terms of our renormalized generators $\bw_\bx := \{d(\bx)\} \bu_\bx$:
\begin{align*}
\varphi(\bw_{\ell,0}) &= q^{\ell} p_\ell\\
\varphi(\bw_{-l,0}) &= \ell q^{-\ell} \frac{\partial}{\partial p_\ell}\\
\varphi(\bw_{0,\ell}) &= \{\ell\} \Delta^\infty_{\ell} + \frac 1 {\{\ell\}}\\
\varphi(\bw_{0,-\ell}) &= \{\ell\} \Delta^\infty_{-\ell} - \frac 1 {\{\ell\}}
\end{align*}

We recall that in this specialization, the subalgebra generated by $\bw_{a,b}$ with $b \geq 0$ is called $\bE^{\bullet, \geq}$ and that it is isomorphic to $\Tr(\H)$, using the map $\bw_{a,b} \mapsto w_{a,b}$.

\begin{proposition}\label{lemma_rescale}
The map $\bw_{a,b} \mapsto q^{-a} \{1\}^{-b} \bw_{a,b}$ extends to an algebra isomorphism of $\bE^{\bullet, \geq}$. When the above representation of $\bE^{\bullet, \geq}$ is twisted by this automorphism, it is completely determined by the following formulas:
\begin{align}
\bw_{\ell,0}\cdot s_\lambda &= p_\ell \cdot s_\lambda\\
\bw_{-\ell,0} \cdot s_\lambda &= \ell \frac \partial {\partial p_\ell} s_\lambda\\
\bw_{0,\ell} \cdot s_\lambda &= \left( \frac 1 {\{1\}^\ell\{\ell\}} + \frac{\{\ell\}}{\{1\}^\ell}\sum_{y \in \lambda} q^{\ell c(y)}\right) s_\lambda
\end{align}
\end{proposition}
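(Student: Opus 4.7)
The plan has three main ingredients: verifying the twist is an automorphism, computing the twisted action on the given generators via the Schiffmann--Vasserot formulas, and invoking the generation statement.

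First I would check that $\bw_{a,b}\mapsto c_{a,b}\bw_{a,b}$ with $c_{a,b}:=q^{-a}\{1\}^{-b}$ extends to an algebra automorphism of $\bE^{\bullet,\geq}$. This is essentially automatic from Proposition~\ref{prop_tridecomp}: every relation there is either $[\bw_\bx,\bw_\by]=-\{d(\bx,\by)\}\bw_{\bx+\by}$ or the Heisenberg-type relation $[\bw_{k,0},\bw_{-k,0}]=-k$. The coefficient $c_{a,b}$ is additive in $(a,b)$, i.e.\ $c_{\bx+\by}=c_\bx c_\by$, so in the generic relation both sides pick up the common factor $c_{\bx+\by}$. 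For the Heisenberg relation one has $c_{k,0}c_{-k,0}=q^{-k}\cdot q^{k}=1$, matching the scalar $-k$ on the right. Hence the rescaling is a well-defined automorphism.

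Next I would compute the twisted action. Starting from the Schiffmann--Vasserot formulas in the renormalized generators $\bw_\bx=\{d(\bx)\}\bu_\bx$ and specializing $\sigma=q$, $\bar\sigma=q^{-1}$, the twisted generator $\bw_\bx$ acts on $\sym$ by $c_{\bx}\cdot \varphi(\bw_\bx)$. For $\bw_{\ell,0}$ this gives $q^{-\ell}\cdot q^{\ell}p_\ell=p_\ell$, and for $\bw_{-\ell,0}$ it gives $q^{\ell}\cdot\ell q^{-\ell}\partial/\partial p_\ell=\ell\,\partial/\partial p_\ell$. These match the first two claimed formulas on $s_\lambda$ directly. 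For $\bw_{0,\ell}$ the twist yields
\[
\{1\}^{-\ell}\bigl(\{\ell\}\,\Delta_{\ell}^{\infty}+1/\{\ell\}\bigr),
\]
so it remains to identify $\Delta_{\ell}^{\infty}s_\lambda$ with $\bigl(\sum_{y\in\lambda}q^{\ell c(y)}\bigr)s_\lambda$. Using the Macdonald symmetry $P_\lambda(q_{Mac},q_{Mac})=s_\lambda$ from~\eqref{eq_macprops}, we have $P_\lambda(q_{SV},t_{SV}^{-1})\big|_{\sigma=q,\bar\sigma=q^{-1}}=P_\lambda(q,q)=s_\lambda$; hence $\Delta_{\ell}^{\infty}$ acts diagonally on $s_\lambda$ with eigenvalue $B_\lambda^{\ell}(q,q^{-1})=\sum_{s\in\lambda}q^{\ell(x(s)-y(s))}$. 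Matching the row-column conventions of Schiffmann--Vasserot with the content $c(y)=j-i$ used in Theorem~\ref{thm_tract} rewrites this sum as $\sum_{y\in\lambda}q^{\ell c(y)}$, which combined with the prefactor gives the stated formula. This last convention matching is the main bookkeeping obstacle, and it is the place to double-check signs by testing $\ell=1$ against Theorem~\ref{thm_tract} (after noting that the latter computes the \emph{trace} action rather than the twisted SV action, which agree by Section~\ref{sec:symaction}).

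Finally, the claim that these formulas determine the whole representation is an immediate consequence of Corollary~\ref{cor:generators}: $\bE^{\bullet,\geq}$ is generated by $\bw_{0,1}$ together with the $\bw_{k,0}$ for $k\in\Z$, all of which appear in the list of formulas. Any action agreeing with the above on these generators is therefore determined on all of $\bE^{\bullet,\geq}$.
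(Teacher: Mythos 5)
Your proposal is correct and follows essentially the same route as the paper's (very terse) proof: the rescaling is an automorphism because it is multiplicative on the $\N\times\Z$-grading (with the Heisenberg relation surviving since $c_{k,0}c_{-k,0}=1$), the three formulas follow by rescaling the specialized Schiffmann--Vasserot operators and using the definition of $\Delta^\infty_\ell$ together with the Macdonald specialization $P_\lambda(q,q)=s_\lambda$, and completeness follows from Corollary~\ref{cor:generators}. The one slip is your intermediate evaluation $B_\lambda^{\ell}(q,q^{-1})=\sum_s q^{\ell(x(s)-y(s))}$: since $q_{SV}=\sigma^{-1}=q^{-1}$ and $t_{SV}=\bar\sigma^{-1}=q$, the correct substitution is $B_\lambda^{\ell}(q^{-1},q)=\sum_s q^{\ell(y(s)-x(s))}=\sum_s q^{\ell c(s)}$ directly (note $\sum_s q^{-\ell c(s)}\neq\sum_s q^{\ell c(s)}$ for general $\lambda$, so this cannot be absorbed into ``convention matching''), though your final formula is nevertheless the right one.
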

\begin{proof}
This first part is clear based on the presentation of $\Tr(\H)$ that we give, since it shows that $\Tr(\H)$ is $\mathbb N \times \Z$-graded. The claimed formulas for the action just follow from rescaling, from the definition of the operator $\Delta_l^\infty$, and from the two properties of Macdonald polynomials listed in \eqref{eq_macprops}.
\end{proof}

Let $\varphi': \Tr(\H) \to \bE^{\bullet, \geq} \to \End(\sym)$ be the representation of Proposition \ref{lemma_rescale} composed with the isomorphism $\Tr(\H) \to \bE^{\bullet,\geq}$ of Theorem \ref{thm_mainthm}, which sends $ w_\bx \mapsto\bw_\bx$.  Let $\varphi'': \Tr(\H) \to \End(\sym)$ be the representation discussed in Theorem \ref{thm:Fock-action}.

\begin{theorem}
The representations $\varphi'$ and $\varphi''$ are isomorphic.
\end{theorem}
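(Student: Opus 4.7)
The strategy is to reduce the problem to verifying that $\varphi'$ and $\varphi''$ agree on a small generating set of $\Tr(\H)$. By Corollary \ref{cor:generators} and the isomorphism of Theorem \ref{thm_mainthm} (which sends $w_\bx\mapsto \bw_\bx$), the algebra $\Tr(\H)$ is generated by $\{w_{k,0}:k\in\Z\}$ together with $w_{0,1}$. So it suffices to check equality of $\varphi'$ and $\varphi''$ on these generators, all of which have been computed explicitly in Sections \ref{sec:wk0act}, \ref{sec:w01act}, \ref{sec_SVact}.

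For $w_{n,0}$ with $n>0$, the plan is to use the Morton-Manchon generating series (Proposition \ref{prop_mortonseries}):
\[
\sum_{n\geq 0}[e(n)]_\uparrow u^n \;=\; \exp\!\left(\sum_{n\geq 1}\frac{w_{n,0}}{n}\,u^n\right).
\]
Since $[e(n)]_\uparrow = [\mathrm{Id}_{\P^{(n)}}]$ acts under $\varphi''$ by multiplication by $s_{(n)}=h_n$ (Theorem \ref{thm:Fock-action}), the classical identity $\sum_n h_n u^n = \exp(\sum_n p_n u^n/n)$ forces $\varphi''(w_{n,0})=p_n$, which is exactly the formula from Proposition \ref{lemma_rescale} for $\varphi'$.

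For $w_{-n,0}$ with $n>0$, I would proceed in two steps. First, Theorem \ref{thm:Fock-action} asserts $[\Q^{(\mu)}]\cdot 1=0$, so $\varphi''(w_{-n,0})\cdot 1 = 0$ (as $w_{-n,0}$ is built from downward strands via the involution $\Psi$). Second, the Heisenberg relation $[w_{n,0},w_{-m,0}]=-n\delta_{n,m}$ proved in Proposition \ref{prop:heis-prob5}, combined with the already-established identification of $\varphi''(w_{n,0})$ with multiplication by $p_n$, uniquely determines $\varphi''(w_{-n,0})$ as the adjoint operator $n\,\partial/\partial p_n$, matching $\varphi'(w_{-n,0})$.

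The main work is the matching on $w_{0,1}$. The computation proceeds in three stages. (i) Using the star-dot conventions of Section 7, rewrite $w_{0,1}$ as an endomorphism of $\mathbf 1$ whose image under $\cal F_{n+1}$ is multiplication by a specific central element $x\in Z(H_{n+1})$; this is encapsulated in equation \eqref{eq_wact} which expresses $x$ as $n+1 + \frac{q-1}{q}\sum_{i=1}^{n+1}L_i$, a polynomial in Jucys-Murphy elements. (ii) Apply Dipper-James (Theorem \ref{thm:DJ}, Corollary \ref{cor:DJ}) to show that $x$ acts on the irreducible $L_\lambda$ by the scalar $\sum_{y\in\lambda}q^{c(y)}$, which is Theorem \ref{thm_tract}. (iii) Compare the resulting eigenvalue for $\varphi''(w_{0,1})$ with the Schiffmann-Vasserot-derived formula for $\varphi'(w_{0,1})$ in Proposition \ref{lemma_rescale}, using the identity $P_\lambda(q_{Mac},q_{Mac})=s_\lambda$ and the evaluation $\Delta_1^\infty\cdot s_\lambda=(\sum_{y\in\lambda}q^{c(y)})\,s_\lambda$ under the specialization $\sigma=q=\bar\sigma^{-1}$. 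The main obstacle is not conceptual but computational: bookkeeping the various normalization conventions (in particular the rescaling $\bw_{a,b}\mapsto q^{-a}\{1\}^{-b}\bw_{a,b}$ of Proposition \ref{lemma_rescale}, the star-dot rescaling of \eqref{eq:def-star}, and the specialization conventions linking $(\sigma,\bar\sigma)$ to $q$) and checking that after all identifications the eigenvalue on $s_\lambda$ computed from the Fock-space side coincides with the one computed from the Hilbert-scheme side.
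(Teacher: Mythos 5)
Your proposal is correct, and for the generators $w_{0,1}$ and $w_{k,0}$ with $k<0$ it follows the paper's argument essentially verbatim: the $w_{0,1}$ case via the Jucys--Murphy/Dipper--James computation culminating in Theorem~\ref{thm_tract} compared against Proposition~\ref{lemma_rescale}, and the $k<0$ case via vanishing on $1$ (Lemma~\ref{lemma_wnegzero}) together with the Heisenberg commutator of Proposition~\ref{prop:heis-prob5} and the uniqueness argument of Lemma~\ref{lemma_diff}. The one place where you genuinely diverge is the case $k>0$. The paper's Lemma~\ref{lemma_pkact} passes through the skein of the annulus: it invokes Lukac's theorem that closures of the Young idempotents are Schur functions under his isomorphism $\alpha\colon \sym \to \Sk_q^+(ann)$, plus Morton's identity $\alpha(p_k)=\cl_k(w_{k,0})$. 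You instead apply $\varphi''$ to the generating-function identity $H^+(u)=e^{A(u)}$ of Proposition~\ref{prop_mortonseries}, use that $[e(n)]_\uparrow$ acts by multiplication by $s_{(n)}=h_n$ (Theorem~\ref{thm:Fock-action}), and read off $\varphi''(w_{n,0})=p_n$ from the classical identity $\sum_n h_n u^n=\exp\bigl(\sum_n p_n u^n/n\bigr)$; this is legitimate since the $w_{n,0}$ with $n>0$ pairwise commute and both series have constant term $1$, so the formal logarithm can be taken coefficientwise. Your route is slightly more economical in that it reuses only ingredients already established in the paper (Proposition~\ref{prop_mortonseries} is needed anyway for the Heisenberg relation) and avoids the extra citations of Lukac and of Morton's power-sum theorem; the paper's route, on the other hand, makes the dictionary with the annular skein module explicit, which is of independent interest given the skein-theoretic thread running through the whole argument. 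Ultimately both rest on Morton's work, just on different pieces of it.
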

\begin{proof}
First, the operator $\varphi'(w_{0,1})$ is the same as the operator $\varphi''(w_{0,1})$ by Proposition \ref{lemma_rescale} and Theorem \ref{thm_tract}.
By Lemma \ref{lemma_pkact} (below) the operators $\varphi'(w_{k,0})$ and $\varphi''(w_{k,0})$ are equal for $k > 0$, and by Lemmas \ref{lemma_wnegzero} and \ref{lemma_diff} the same is true for $k < 0$. By Corollary \ref{cor:generators}, $\Tr(\H)$ is generated as an algebra by $w_{0,1}$ and the $w_{k,0}$ for $k \in \Z$. This shows that the two representations $\varphi'$ and $\varphi''$ are equal.
\end{proof}

\begin{lemma}\label{lemma_pkact}
In the action of $\Tr(\H)$ on $\sym$ from Theorem~\ref{thm:Fock-action}, we have $w_{k,0} = p_k$ as operators, where $p_k$ acts by multiplication.
\end{lemma}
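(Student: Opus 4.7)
The plan is to derive this directly from the Morton--Manchon identity of Proposition~\ref{prop_mortonseries}, combined with the classical generating-function identity
\[
H(u) = 1+\sum_{n\geq 1} h_n u^n = \exp\!\left(\sum_{n\geq 1}\tfrac{p_n}{n}u^n\right)
\]
for the complete symmetric functions, and the fact that $[\P^{(\lambda)}]$ acts on $\sym$ as multiplication by $s_\lambda$.

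First I would note that the symmetrizer $e(n)$ is precisely the Young symmetrizer $z_{(n)}$, so $[e(n)]_{\uparrow} = [\Id_{\P^{((n))}}]$ in $\Tr(\H)$. By Theorem~\ref{thm:Fock-action} this element acts on $\sym$ as multiplication by $s_{(n)} = h_n$. Hence the series $H^+(u) = 1+\sum_{n\geq 1}[e(n)]_\uparrow u^n$ acts on $\sym$ as multiplication by the classical generating function $H(u)$.

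Next I would invoke relation \textbf{CR4} (equivalently \eqref{eq_rel4}) from Proposition~\ref{prop_cross}, which gives $[w_{m,0},w_{n,0}]=0$ for $m,n>0$. Consequently the power series $A(u)=\sum_{n\geq 1}\tfrac{w_{n,0}}{n}u^n$ has mutually commuting coefficients, so the formal exponential $\exp(A(u))$ makes sense in $\Tr(\H)[[u]]$ and its action on $\sym$ coincides with the formal exponential of the action of $A(u)$. Substituting Proposition~\ref{prop_mortonseries} then yields the identity of multiplication operators on $\sym$
\[
\exp\!\left(\sum_{n\geq 1}\tfrac{w_{n,0}}{n}u^n\right) \;=\; H(u) \;=\; \exp\!\left(\sum_{n\geq 1}\tfrac{p_n}{n}u^n\right).
\]

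Finally, both sides belong to $1 + u\,\sym[[u]]$ (or to $1 + u\,\End(\sym)[[u]]$), so the formal logarithm is well-defined term-by-term in the graded completion. Taking logs and comparing the coefficient of $u^n$ gives that $w_{n,0}$ acts as multiplication by $p_n$, as claimed. No serious obstruction arises; the only point to verify carefully is that formal $\exp$ and $\log$ intertwine with ``acting on $\sym$,'' which is immediate since both $\sym$ is commutative and the operators $\{w_{n,0}\}_{n>0}$ commute. The statement is essentially the decategorification of Proposition~\ref{prop_mortonseries} through Newton's identity.
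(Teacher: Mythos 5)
Your argument is correct, but it is a genuinely different route from the one the paper takes. The paper identifies the map $\Tr(H_n)\to\sym$ with the skein-theoretic closure into the annulus via Lukac's theorem that $\cl_n(z_\lambda)=\alpha(s_\lambda)$ \cite{Luk05}, and then quotes Morton--Aiston's identity $\alpha(p_k)=\cl_k(w_{k,0})$ \cite{Mor02}. You instead decategorify: $[e(n)]_\uparrow=[\Id_{\P^{((n))}}]$ acts by multiplication by $h_n=s_{(n)}$ by Theorem~\ref{thm:Fock-action}, so applying the representation to the identity $H^+(u)=e^{A(u)}$ of Proposition~\ref{prop_mortonseries} and comparing with $H(u)=\exp(\sum_n p_nu^n/n)$ gives the claim after taking formal logarithms (and $\log\circ\exp=\mathrm{id}$ holds for a single series $X\in u\End(\sym)[[u]]$, so even the commutativity of the $w_{n,0}$, which follows from \eqref{eq_rel4} rather than from {\bf CR4}, is not strictly needed). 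Your approach is more self-contained given what is already established in the paper --- it reuses Proposition~\ref{prop_mortonseries}, which is needed anyway for the Heisenberg relation, and avoids having to reconcile the Wan--Wang identification of $\Tr(\bH_0)$ with $\sym$ against the skein-of-the-annulus one --- while the paper's proof is shorter on the page at the cost of importing \cite{Luk05}. The only imprecision is that $e(n)$ is the normalized idempotent while $z_{(n)}=[n]_q!\,e(n)$ is only a quasi-idempotent; this is harmless since the trace class of the identity of the summand $\P^{((n))}$ is $[e(n)]_\uparrow$.
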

\begin{proof}
We recall from Theorem~\ref{thm:Fock-action} that we have identified $\Tr(\bH_0) \stackrel \sim \to \sym$, and that under this identification, the central primitive idempotent $z_\lambda$ corresponding to the irrep $S_\lambda$ is sent to the Schur function $s_\lambda$.

We recall a useful result of Lukac in \cite{Luk05}. There is a natural closure map $\cl_n: \Tr(Br_n) \to \Sk^+_q(ann)$ from the trace of the (group algebra of the) braid group to the (positive) skein module of the annulus, where a braid is sent to its closure in the annulus. (Here the `positive' skein module of the annulus is the subspace of $\Sk_q(ann)$ consisting of strands which only wrap counterclockwise around the annulus.) This map factors through to the quotient $\Tr(H_n) \to \Sk^+_q(ann)$. Lukac constructed a map $\alpha: \sym \stackrel \sim \to \Sk^+_q(ann)$ and showed that $\cl_n(z_\lambda) \in \Sk^+_q(ann)$ is equal to the image $\alpha(s_\lambda)$ of the Schur function \cite[Thm.~8.1]{Luk05}. (He also showed that $\alpha$ is an \emph{algebra} isomorphism \cite[Thm.~ 8.2]{Luk05}.) This shows that our map $\Tr(H_n) \to \sym$ agrees with the closure map $\Tr(H_n) \to \Sk_q^+(ann)$ composed with $\alpha^{-1}: \Sk_q^+(ann) \to \sym$. Finally, \cite[Thm.~ 3]{Mor02} (attributed there to Aiston) showed that $\alpha(p_k) = \cl_k(w_{k,0})$.
\end{proof}

\begin{lemma}\label{lemma_wnegzero}
In the action of $\Tr(\H)$ on $\sym$ from Theorem~\ref{thm:Fock-action},   we have $w_{-\ell,0}\cdot 1 = 0$.
\end{lemma}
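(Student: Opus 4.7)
The plan is to argue directly from the definition of the action in Theorem~\ref{thm:Fock-action} that $w_{-\ell,0}$ acts as zero on the vacuum vector $1 \in \sym$, by using the description of $w_{-\ell,0}$ as the trace class of an endomorphism of $\Q^\ell$ together with the vanishing of $(\bQ^\ell)_0$ in $\bH_0$.

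First, I would recall from Definition~\ref{def_ws} that $w_{-\ell,0} = \Psi(w_{\ell,0})$, where $\Psi$ is the anti-involution that flips diagrams across the $y$-axis and reverses the orientation of all strands. Since $w_{\ell,0}$ is represented by a linear combination of endomorphisms of $\P^\ell$ consisting of $\ell$ upward strands (with circle crossings), applying $\Psi$ gives that $w_{-\ell,0}$ is the class in $\Tr(\H)$ of an endomorphism $f \in \End_\H(\Q^\ell)$.

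Next, I would unpack the definition of the action: the element $1 \in \sym$ corresponds, under the identification $\sym \cong \Tr(\bH_0)$, to the class of the identity endomorphism of the trivial $H_0$-module $\k \in H_0\fmod \subset \bH_0$. Under the additive functor $\H \otimes \bH_0 \to \bH_0$ from \eqref{eq:AA1}, the action of $w_{-\ell,0}$ on $1$ is obtained by applying the functor to $\Q^\ell \otimes \k$ and taking the trace class of the resulting endomorphism. But $\Q^\ell$ is sent to $(\bQ^\ell)_0$, which by the conventions recalled in Section~\ref{sec:symaction} (a tensor product of restriction bimodules is declared zero whenever an intermediate subscript becomes negative) is the zero bimodule: the intermediate Hecke algebras $H_{-1}, \dots, H_{-\ell}$ all have negative index. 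Hence $(\bQ^\ell)_0 \otimes_{H_0} \k = 0$, the functor sends $f$ to the zero map on the zero object, and its class in $\Tr(\bH_0)$ vanishes.

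The only minor obstacle is to confirm that the representation of $w_{-\ell,0}$ as an endomorphism of $\Q^\ell$ is clean — i.e.\ that no bubble or cap/cup contributions arise that would live in $\End_\H(\Q^m)$ for some $m<\ell$ and could act nontrivially on $\k \in H_0\fmod$. This is immediate from Definition~\ref{def_ws}: the generators $w_{j,0}$ for $j>0$ are defined purely in terms of upward strands with circle crossings and no bubbles, so $w_{-\ell,0}$ lives in $\Tr(\End_\H(\Q^\ell))$ on the nose.
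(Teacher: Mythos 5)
Your proof is correct and rests on the same fact as the paper's: the paper's proof is a one-line citation of the claim $[\Q^{(\mu)}]\cdot 1 = 0$ in Theorem~\ref{thm:Fock-action}, while you rederive that vanishing directly from the convention that $(\bQ^{\ell})_0 = 0$ because the intermediate subscripts go negative. The extra check that $w_{-\ell,0}$ is supported entirely on $\End_{\H}(\Q^{\ell})$ (no bubble terms) is a reasonable precaution and is indeed immediate from Definition~\ref{def_ws}.
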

\begin{proof}
This is immediate from the claim about the action of $\Q^{(\mu)}$ in Theorem~\ref{thm:Fock-action}.
\end{proof}

\begin{lemma}\label{lemma_diff}
Suppose $Y:\sym \to \sym$ is an operator that satisfies
\[
Y\cdot 1 = 0,\quad \quad [Y,p_k] = \delta_{\ell,k} \ell
\]
Then $Y = \ell \frac \partial {\partial p_\ell}$ as operators on $\sym$.
\end{lemma}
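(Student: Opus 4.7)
The plan is to show both $Y$ and $\ell\,\partial/\partial p_\ell$ satisfy the same two conditions and conclude they coincide by a commutator argument, using that $\sym$ is the polynomial algebra on the $p_k$'s.

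First, I would verify that the operator $D := \ell\,\partial/\partial p_\ell$ satisfies the same two hypotheses as $Y$. Clearly $D\cdot 1 = 0$ since differentiation kills constants. For the commutator, $[D,p_k] = \ell[\partial/\partial p_\ell, p_k] = \ell\,\delta_{\ell,k}$, viewing $p_k$ as the operator of multiplication by $p_k$. So $D$ satisfies both conditions.

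Next, I would consider $Z := Y - D$. By linearity, $Z\cdot 1 = 0$ and $[Z, p_k] = 0$ for every $k \geq 1$; equivalently, $Z$ commutes with multiplication by $p_k$ for each $k$. Since $\sym = \mathbb{Q}[p_1, p_2, \ldots]$ is the polynomial algebra generated by the power sums, $Z$ therefore commutes with multiplication by every element of $\sym$.

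Finally, for any $f \in \sym$, writing $f = f\cdot 1$ and using that $Z$ commutes with multiplication by $f$, we get $Z(f) = Z(f\cdot 1) = f\cdot Z(1) = 0$. Hence $Z = 0$, i.e.\ $Y = \ell\,\partial/\partial p_\ell$. The only substantive input is the fact that $\sym$ is freely generated as a commutative algebra by $\{p_k\}_{k\geq 1}$, which is standard; no genuine obstacle arises.
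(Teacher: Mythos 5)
Your proof is correct and follows essentially the same route as the paper: both arguments reduce to the observation that an operator commuting with multiplication by all the $p_k$ (hence with all of $\sym = \Bbbk[p_1,p_2,\dots]$) is determined by its value on $1$, applied to the difference $Y - \ell\,\partial/\partial p_\ell$. Your write-up just makes the "determined by its action on $1$" step explicit via the operator $Z$.
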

\begin{proof}
The action of any operator which satisfies the listed commutator relations is completely determined by its action on $1$ (since
$\sym = \Bbbk[p_1,p_2,\cdots]$ as an algebra).   Then the operators $Y$ and $\ell \frac \partial {\partial p_\ell}$ satisfy the same commutation relations and act the same way on $1$, so they are equal.
\end{proof}

%


\begin{thebibliography}{BHLW14}

\bibitem[BGHL]{BGHL}
A.~Beliakova, Z.~Guliyev, K.~Habiro, and A.~Lauda, \emph{Trace as an
  alternative decategorification functor}, Acta Math. Vietnam., to appear.
  \href{http://arxiv.org/abs/arXiv:1409.1198 }{arXiv:1409.1198 }.

\bibitem[BHLW14]{BHLW}
A.~Beliakova, K.~Habiro, A.~Lauda, and B.~Webster, \emph{Current algebras and
  categorified quantum groups},
  \href{http://arxiv.org/abs/arXiv:1412.1417}{arXiv:1412.1417}.

\bibitem[BHLZ14]{BHLZ}
A.~Beliakova, K.~Habiro, A.~Lauda, and M.~Zivkovic, \emph{Trace
  decategorification of the categorified quantum sl(2)},
  \href{http://arxiv.org/abs/arXiv:1404.1806}{arXiv:1404.1806}.

\bibitem[BS12]{BS12}
Igor Burban and Olivier Schiffmann, \emph{On the {H}all algebra of an elliptic
  curve, {I}}, Duke Math. J. \textbf{161} (2012), no.~7, 1171--1231,
  \href{http://arxiv.org/abs/arXiv:math/0505148}{arXiv:math/0505148}.
  \MR{2922373}

\bibitem[CLLS15]{CLLS15}
S.~{Cautis}, A.~D. {Lauda}, A.~M. {Licata}, and J.~{Sussan}, \emph{{W-algebras
  from Heisenberg categories}},
  \href{http://arxiv.org/abs/arXiv:1501.00589}{arXiv:1501.00589}.

\bibitem[DI97]{DI97}
Jintai Ding and Kenji Iohara, \emph{Generalization of {D}rinfeld quantum affine
  algebras}, Lett. Math. Phys. \textbf{41} (1997), no.~2, 181--193,
  \href{http://arxiv.org/abs/arXiv:math/9905087}{arXiv:math/9905087}.
  \MR{1463869 (98e:17019)}

\bibitem[DJ87]{DJ87}
Richard Dipper and Gordon James, \emph{Blocks and idempotents of {H}ecke
  algebras of general linear groups}, Proc. London Math. Soc. (3) \textbf{54}
  (1987), no.~1, 57--82. \MR{872250}

\bibitem[EL16]{EL14}
Ben Elias and Aaron~D. Lauda, \emph{Trace decategorification of the {H}ecke
  category}, J. Algebra \textbf{449} (2016), 615--634,
  \href{http://arxiv.org/abs/arXiv:1504.05267}{arXiv:1504.05267}.

\bibitem[FFJ{\etalchar{+}}11]{FFJMM11a}
B.~Feigin, E.~Feigin, M.~Jimbo, T.~Miwa, and E.~Mukhin, \emph{Quantum
  continuous {$\mathfrak{gl}_\infty$}: semiinfinite construction of
  representations}, Kyoto J. Math. \textbf{51} (2011), no.~2, 337--364,
  \href{http://arxiv.org/abs/arXiv:1002.3100 }{arXiv:1002.3100 }. \MR{2793271
  (2012k:17024)}

\bibitem[FT11]{FT11}
B.~L. Feigin and A.~I. Tsymbaliuk, \emph{Equivariant {$K$}-theory of {H}ilbert
  schemes via shuffle algebra}, Kyoto J. Math. \textbf{51} (2011), no.~4,
  831--854, \href{http://arxiv.org/abs/arXiv:0904.1679}{arXiv:0904.1679}.
  \MR{2854154 (2012m:14018)}

\bibitem[GP00]{Geck}
Meinolf Geck and G{\"o}tz Pfeiffer, \emph{Characters of finite {C}oxeter groups
  and {I}wahori-{H}ecke algebras}, London Mathematical Society Monographs. New
  Series, vol.~21, The Clarendon Press, Oxford University Press, New York,
  2000. \MR{1778802}

\bibitem[Kho14]{Kh-Heis}
Mikhail Khovanov, \emph{Heisenberg algebra and a graphical calculus}, Fund.
  Math. \textbf{225} (2014), no.~1, 169--210,
  \href{http://arxiv.org/abs/arXiv:1009.3295}{arXiv:1009.3295}. \MR{3205569}

\bibitem[LS13]{LS13}
Anthony Licata and Alistair Savage, \emph{Hecke algebras, finite general linear
  groups, and {H}eisenberg categorification}, Quantum Topol. \textbf{4} (2013),
  no.~2, 125--185,
  \href{http://arxiv.org/abs/arXiv:1009.3295}{arXiv:1009.3295}. \MR{3032820}

\bibitem[Luk05]{Luk05}
Sascha~G. Lukac, \emph{Idempotents of the {H}ecke algebra become {S}chur
  functions in the skein of the annulus}, Math. Proc. Cambridge Philos. Soc.
  \textbf{138} (2005), no.~1, 79--96. \MR{2127229}

\bibitem[Mik07]{Mik07}
Kei Miki, \emph{A {$(q,\gamma)$} analog of the {$W_{1+\infty}$} algebra}, J.
  Math. Phys. \textbf{48} (2007), no.~12, 123520, 35. \MR{2377852
  (2009a:17024)}

\bibitem[MM08]{MM08}
H.~R. Morton and P.~M.~G. Manch{\'o}n, \emph{Geometrical relations and
  plethysms in the {H}omfly skein of the annulus}, J. Lond. Math. Soc. (2)
  \textbf{78} (2008), no.~2, 305--328,
  \href{http://arxiv.org/abs/arXiv:0707.2851}{arXiv:0707.2851}. \MR{2439627
  (2009h:57021)}

\bibitem[Mor02]{Mor02}
Hugh~R. Morton, \emph{Power sums and {H}omfly skein theory}, Invariants of
  knots and 3-manifolds ({K}yoto, 2001), Geom. Topol. Monogr., vol.~4, Geom.
  Topol. Publ., Coventry, 2002,
  \href{http://arxiv.org/abs/arXiv:math/0111101}{arXiv:math/0111101},
  pp.~235--244 (electronic). \MR{2002613 (2004g:57032)}

\bibitem[MS14]{MS14}
H.~Morton and P.~Samuelson, \emph{The {HOMFLYPT} skein algebra of the torus and
  the elliptic {H}all algebra},
  \href{http://arxiv.org/abs/arXiv:1410.0859}{arXiv:1410.0859}.

\bibitem[Nak99]{N4}
Hiraku Nakajima, \emph{Lectures on {H}ilbert schemes of points on surfaces},
  University Lecture Series, vol.~18, American Mathematical Society,
  Providence, RI, 1999. \MR{1711344}

\bibitem[Neg14]{Neg14}
Andrei Negut, \emph{The shuffle algebra revisited}, Int. Math. Res. Not. IMRN
  (2014), no.~22, 6242--6275,
  \href{http://arxiv.org/abs/arXiv:1209.3349}{arXiv:1209.3349}. \MR{3283004}

\bibitem[Neg15]{Neg15}
\bysame, \emph{Moduli of flags of sheaves and their {$K$}-theory}, Algebr.
  Geom. \textbf{2} (2015), no.~1, 19--43. \MR{3322196}

\bibitem[Prz92]{Prz92}
J{\'o}zef~H. Przytycki, \emph{Skein module of links in a handlebody}, Topology
  '90 ({C}olumbus, {OH}, 1990), Ohio State Univ. Math. Res. Inst. Publ.,
  vol.~1, de Gruyter, Berlin, 1992, pp.~315--342. \MR{1184418 (94b:57013)}

\bibitem[Rin90]{Rin90}
Claus~Michael Ringel, \emph{Hall algebras and quantum groups}, Invent. Math.
  \textbf{101} (1990), no.~3, 583--591. \MR{1062796}

\bibitem[SV11]{SV11}
O.~Schiffmann and E.~Vasserot, \emph{The elliptic {H}all algebra, {C}herednik
  {H}ecke algebras and {M}acdonald polynomials}, Compos. Math. \textbf{147}
  (2011), no.~1, 188--234,
  \href{http://arxiv.org/abs/arXiv:0802.4001}{arXiv:0802.4001}. \MR{2771130
  (2012f:20008)}

\bibitem[SV13a]{SV13b}
\bysame, \emph{Cherednik algebras, {W}-algebras and the equivariant cohomology
  of the moduli space of instantons on {$\bold{A}^2$}}, Publ. Math. Inst.
  Hautes \'Etudes Sci. \textbf{118} (2013), 213--342,
  \href{http://arxiv.org/abs/arXiv:1202.2756}{arXiv:1202.2756}. \MR{3150250}

\bibitem[SV13b]{SV13}
Olivier Schiffmann and E.~Vasserot, \emph{The elliptic {H}all algebra and the
  {$K$}-theory of the {H}ilbert scheme of {$\mathbb A^2$}}, Duke Math. J.
  \textbf{162} (2013), no.~2, 279--366,
  \href{http://arxiv.org/abs/arXiv:0905.2555}{arXiv:0905.2555}. \MR{3018956}

\bibitem[WW15]{WW15}
Jinkui Wan and Weiqiang Wang, \emph{Frobenius map for the centers of {H}ecke
  algebras}, Trans. Amer. Math. Soc. \textbf{367} (2015), no.~8, 5507--5520,
  \href{http://arxiv.org/abs/arXiv:1208.4446}{arXiv:1208.4446}. \MR{3347181}

\end{thebibliography}
%

\newcommand{\etalchar}[1]{$^{#1}$}
\providecommand{\bysame}{\leavevmode\hbox to3em{\hrulefill}\thinspace}
\providecommand{\MR}{\relax\ifhmode\unskip\space\fi MR }
\providecommand{\MRhref}[2]{%
  \href{http://www.ams.org/mathscinet-getitem?mr=#1}{#2}
}
\providecommand{\href}[2]{#2}

%

\end{document}